\documentclass{amsart}

\usepackage[T1]{fontenc}
\usepackage[utf8]{inputenc} 
\usepackage[english]{babel}
\usepackage{lmodern}
\usepackage{microtype}
\usepackage[a4paper,margin=1in]{geometry}
\usepackage{amsmath,amssymb,amsfonts,mathtools}
\usepackage{amsthm}
\usepackage{graphicx}
\graphicspath{{figs/}}
\usepackage[percent]{overpic}
\usepackage[section]{placeins}
\usepackage{enumitem}
\usepackage{csquotes}
\usepackage{silence}
\usepackage[
  backend=biber,
  style=alphabetic,
  maxnames=99,
  giveninits=true,
  doi=false,
  url=false,
  eprint=true,
  isbn=false
]{biblatex}
\usepackage[colorlinks=true,allcolors=blue]{hyperref}

\allowdisplaybreaks

\renewbibmacro{in:}{}
\DeclareFieldFormat{pages}{#1}

\numberwithin{equation}{section}
\numberwithin{figure}{section}
\newtheorem{theorem}{Theorem}[section]
\newtheorem{lemma}[theorem]{Lemma}
\newtheorem{proposition}[theorem]{Proposition}
\newtheorem{corollary}[theorem]{Corollary}
\theoremstyle{definition}
\newtheorem{definition}[theorem]{Definition}
\newtheorem{convention}[theorem]{Convention}
\newtheorem{example}[theorem]{Example}
\newtheorem{question}[theorem]{Question}

\theoremstyle{remark}
\newtheorem{remark}[theorem]{Remark}

\DeclareMathOperator{\tb}{tb}
\DeclareMathOperator{\rot}{rot}
\DeclareMathOperator{\lk}{lk}

\DeclareMathOperator{\id}{id}

\newcommand{\R}{\mathbb{R}}
\newcommand{\Z}{\mathbb{Z}}
\let\oldDelta\Delta
\renewcommand{\Delta}{\oldDelta} 

\let\oldSigma\Sigma
\renewcommand{\Sigma}{\oldSigma}
\let\oldtau\tau
\renewcommand{\tau}{\oldtau}
\let\oldOmega\Omega 
\renewcommand{\Omega}{\oldOmega}

\let\oldalpha\alpha 
\renewcommand{\alpha}{\oldalpha}
\newcommand{\alphast}{\oldalpha_{\mathrm{st}}}
\let\olddxi\xi 
\renewcommand{\xi}{\olddxi}
\newcommand{\xist}{\olddxi_{\mathrm{st}}}
\newcommand{\X}{X}

\newcommand{\G}{G}

\let\oldgamma\Gamma 
\renewcommand{\Gamma}{\oldgamma}
\newcommand{\MCG}{\mathrm{MCG}}
\renewcommand{\aa}{\mathfrak{a}}
\newcommand{\bb}{\mathfrak{b}}
\newcommand{\cc}{\mathfrak{c}}
\newcommand{\dd}{\mathfrak{d}}

\title{A contact version of Kirby's theorem}

\author{Marc Kegel}
\address{Universidad de Sevilla, Dpto.\ de Álgebra,
Avda.\ Reina Mercedes s/n,
41012 Sevilla, Spain}
\email{kegelmarc87@gmail.com}

\author{Eric Stenhede}

\author{Vera Vértesi}
\address{University of Vienna,
Faculty of Mathematics,
Oskar-Morgenstern-Platz 1,
1090 Vienna, Austria}
\email{eric.stenhede@univie.ac.at}
\email{vera.vertesi@univie.ac.at}

\date{\today}
\keywords{Contact surgery, open books, Legendrian knots, contact Kirby moves, Kirby's theorem}

\def\subjclassname{\textup{2020} Mathematics Subject Classification}
\expandafter\let\csname subjclassname@1991\endcsname=\subjclassname

\subjclass{53D35, 53D10, 57K10, 57R65, 57K33}

\begin{document}

\begin{abstract}
A theorem of Ding and Geiges states that every closed, connected contact $3$-manifold can be obtained from the standard tight contact $3$-sphere by contact $(\pm1)$-surgery along a Legendrian link. The literature also contains some examples of contact Kirby moves, i.e.\ explicit operations on front projections of Legendrian surgery links that change the surgery link but preserve the contactomorphism type of the surgered manifold. Among the most commonly used are cancelling pairs and contact handle slides; however, these moves alone are not sufficient to relate all contact surgery diagrams of contactomorphic contact manifolds.

In this article, we introduce two new families of contact Kirby moves, called \emph{lantern moves} and \emph{chain moves}, and use them to give a complete set of contact Kirby moves. More precisely, we show that two contact surgery diagrams represent contactomorphic contact manifolds if and only if they are related by a sequence of planar isotopies, Legendrian Reidemeister moves, insertions or removals of standard cancelling pairs, the two standard contact handle slides, the standard lantern move, and the standard chain move. All these moves are explicit diagrammatic operations in the front projection.

The proof follows an approach initiated by Avdek through his ribbon-move framework, which is rooted in the Giroux correspondence, and combines it with a presentation by Gervais of the mapping class group. We also discuss several consequences of the main theorem, illustrating the effectiveness of the contact Kirby calculus by recovering the invariance of Gompf's $d_3$-invariant purely diagrammatically and by deriving the topological Kirby theorem from contact-geometric methods.
\end{abstract}

\maketitle

\setcounter{tocdepth}{1}

\vspace{-0.6cm}
\section{Introduction}
A classical theorem due to Lickorish and Wallace~\cite{Lickorish,Wallace} states that any connected, closed, oriented $3$-manifold can be obtained by integral Dehn surgery on a link in the $3$-sphere $S^3$. Such a Dehn surgery presentation of a given \(3\)-manifold is, however, far from unique. Indeed, there are operations on Dehn surgery presentations, the so-called \emph{Kirby moves}, which change the surgery presentation but preserve the diffeomorphism type of the surgered \(3\)-manifold. These operations can be used to produce infinitely many surgery descriptions of the same \(3\)-manifold, along pairwise non-isotopic links in \(S^3\). Kirby's celebrated theorem~\cite{Kirby} gives a complete set of explicit moves relating any two Dehn surgery presentations of diffeomorphic $3$-manifolds: handle slides, blow-ups, and blow-downs.

The Lickorish--Wallace theorem admits a contact-geometric analogue. Ding and Geiges~\cite{Ding_Geiges} proved that every connected, closed, oriented contact $3$-manifold $(M,\xi)$ with positive, coorientable contact structure can be obtained from $(S^3,\xist)$ by contact $(\pm1)$-surgery along a Legendrian link. Since then, contact surgery has become a fundamental tool in $3$-dimensional contact topology. Its applications range from the study of symplectic fillings~\cite{Eliashberg_handle,Weinstein,Gompf,Gompf_Stipsicz,Ozbagci_Stipsicz} and the classification of tight contact structures~\cite{Honda_classification,Lisac_StipsiczI,Lisac_StipsiczII,Lisac_StipsiczIII,Wand} to the study of Legendrian knots~\cite{Geiges_Onaran} and Reeb dynamics~\cite{BEE,Avdek_surgery}. Apart from these structural results, contact surgery also provides a flexible and effective way to describe contact $3$-manifolds.

The literature contains several examples of contact Kirby moves\footnote{Here, a \emph{contact surgery link} $\boldsymbol{L}$ is a Legendrian link in $(\R^3,\xist)\subset (S^3,\xist)$ whose components are decorated with contact surgery coefficients $(\pm1)$. Its front projection is called a \emph{contact surgery diagram}. A \emph{contact Kirby move} is an explicit operation on contact surgery diagrams which replaces one diagram by another without changing the contactomorphism type of the surgered contact manifold. We refer to Section~\ref{sec:moves} for more discussion.}; see, for example,~\cite{Gompf,Ding_Geiges,Ding_Geiges_Stipsicz,Ding_Geiges_slides,Lisca_Stipsicz_lantern,Avdek13,Kegel_thesis,Casals_Etnyre_Kegel,Etnyre_Kegel_Onaran}. Until this work, however, it remained open whether there existed an explicit collection of contact Kirby moves sufficient to relate any two contact surgery links describing the same contact $3$-manifold.
The previously known moves are not sufficient for this purpose; see Section~\ref{sec:relation_moves} for further discussion. A decisive advance was made by Avdek~\cite{Avdek13}, who introduced \emph{ribbon moves} and proved that any two contact surgery links describing contactomorphic contact manifolds are related by a sequence of ribbon moves and Legendrian isotopies; see Section~\ref{sec:ribbon_moves_sec}. Ribbon moves, however, are not explicit operations on front projections (they are not contact Kirby moves), and they form an infinite class which does not seem to be describable by finitely many types of diagrammatic moves.

In this article, we build on Avdek's work to obtain a satisfactory diagrammatic calculus. We introduce two new families of contact Kirby moves, called \emph{chain moves} and \emph{lantern moves}, and use these to present a complete list of contact Kirby moves.

\begin{theorem}[Contact Kirby theorem]\label{thm:main}
Two contact surgery links describe contactomorphic contact $3$-manifolds if and only if their front projections are related by a sequence of 
\begin{itemize}
    \item planar isotopies,
    \item Legendrian Reidemeister moves,
    \item insertions or removals of standard cancelling pairs,
    \item standard contact handle slides,
    \item standard lantern moves, and
    \item standard chain moves.
\end{itemize}  
\end{theorem}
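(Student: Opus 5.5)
The "if" direction is local and diagrammatic. For each listed move I will check that the surgered contact manifolds are contactomorphic. Planar isotopies and Legendrian Reidemeister moves are Legendrian isotopies of the surgery link, so they induce contactomorphisms. Cancelling pairs and the two contact handle slides are already known from the literature cited in the introduction; I will re-derive them quickly in the open book picture below. For the lantern and chain moves I will realize both sides of the move on the pages of a common open book of $(S^3,\xist)$, for instance a stabilized annulus or genus-one open book. Contact $(\pm1)$-surgery along a Legendrian curve on a page changes the monodromy by a negative (respectively positive) Dehn twist. The two sides of a lantern move therefore give monodromies that agree by the lantern relation, and the two sides of a chain move give monodromies that agree by the $k$-chain relation. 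By the Giroux correspondence the resulting contact manifolds are contactomorphic.

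For the "only if" direction I will start from Avdek's theorem. Any two contact surgery links describing contactomorphic manifolds are related by Legendrian isotopies and ribbon moves. Legendrian isotopies are generated by planar isotopies and Legendrian Reidemeister moves, so it suffices to show that every ribbon move is a composition of the six listed moves. A ribbon move has the following form. We have a Legendrian ribbon $\Sigma$, a compact surface containing the link as Legendrian curves on a page of a compatible open book. The product of Dehn twists $\prod \tau_{\gamma_i}^{\mp1}$ determined by the link is replaced by a different word in Dehn twists along curves on $\Sigma$ that represents the same element of $\MCG(\Sigma,\partial\Sigma)$. So I need to show that any two words in Dehn twists representing the same mapping class are connected by moves realizable diagrammatically.

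Here I will use Gervais' presentation of the mapping class group of a compact surface with boundary. Its generators are Dehn twists along all (nonseparating) simple closed curves, and its relations are: braid-type conjugation relations $\tau_{\phi(c)}=\tau_\phi\tau_c\tau_\phi^{-1}$, commutation of disjoint twists, the trivial relations $\tau_c\tau_c^{-1}=1$, lantern relations, and chain relations. Each relation will be matched with a move:
\begin{itemize}
\item Inserting or deleting $\tau_c\tau_c^{-1}$ corresponds to a cancelling pair, namely a pushoff pair with coefficients $\pm1$.
\item Commutation and conjugation correspond to reordering pages and to handle slides. Conjugating a twist by $\tau_d^{\pm1}$ corresponds to sliding one component over another, which gives the standard contact handle slides.
\item Lantern and chain relations correspond to lantern and chain moves.
\end{itemize}

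Several preliminary steps are needed for this matching. First, every simple closed curve on the ribbon must be Legendrian realizable, after stabilizing the ribbon (Avdek's framework allows this). Second, the Legendrian realizations must be normalized. Third, enlarging the ribbon and changing the embedding of $\Sigma$ in $S^3$ must be shown to be realizable by the moves; stabilizations correspond to cancelling pairs and slides.

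The main obstacle is the \emph{standardness}: the moves in the theorem are single standard local pictures, while a lantern or chain relation may occur on an arbitrarily embedded, twisted subsurface of the ribbon. My first approach will be to use the change-of-coordinates principle. The subsurface supporting a relation (a four-holed sphere, or a regular neighborhood of a chain) is mapped to a standard model by a product of Dehn twists. Each such twist conjugation is realized by handle slides and cancelling pairs, so the general lantern or chain move reduces to the standard one conjugated by slides. The remaining work is bookkeeping: Legendrian isotopy within the page, carried out by Reidemeister moves, and the careful verification that conjugation of words corresponds exactly to the two standard slides, including the $\pm$ sign cases.
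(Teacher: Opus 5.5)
\textbf{Verdict: genuine gap.} Your outline has the same architecture as the paper: Avdek's theorem, then Gervais' presentation, then a relation-by-relation translation into moves. However, the step you file under ``normalization'' and ``bookkeeping'' is the heart of the argument, and it is missing.

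Knowing that $\tau_{\boldsymbol L}$ and $\tau_{\boldsymbol L'}$ differ by one relation tells you only which isotopy classes of curves on the ribbon $R$ the components represent. The components themselves are arbitrary Legendrian realizations of these classes; already the links coming out of Avdek's theorem are arbitrary. A diagrammatic move can be applied only once they sit in the model position. Two ingredients are needed for this.
\begin{enumerate}
\item A uniqueness statement. Two Legendrian realizations of the same isotopy class, lying on perturbations of slices of $R$, must be Legendrian isotopic inside a Reeb-invariant neighborhood $R\times[-\varepsilon,\varepsilon]$. For links, this isotopy must be done slab by slab, so that it never crosses the other components. This is Theorem~\ref{thm:uniqueness_ribbon} (Stenhede) together with Lemma~\ref{lem:uniq_Leg_link}.
\item An explicit model for both sides. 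All curves are homologically nontrivial (Lemma~\ref{lem:inverse_Leg_real}), so the underlying graph of the relation (e.g.\ $\aa\cup\bb$ for a lantern) is nonisolating. It can therefore be Legendrian realized as one graph with normalized vertices. Both sides of the move are then built as resolutions of this graph, hence are compatible with $R$ (Lemma~\ref{lem:resolution_comp}). Uniqueness then identifies $\boldsymbol L,\boldsymbol L'$ with the models up to Legendrian isotopy (Proposition~\ref{uniq_cont_Kirby}).
\end{enumerate}
Your phrase ``Legendrian isotopy within the page, carried out by Reidemeister moves'' presupposes exactly the isotopy whose existence is the content of the first ingredient.

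The same gap affects commutation. If $\aa\cup\bb$ is isolating, the two disjoint curves cannot be Legendrian realized on a common perturbed page. Swapping their heights is then not a mere ``reordering of pages''. The paper needs a separate explicit isotopy, using a chain of curves filling the subsurface bounded by $\aa\cup\bb$ (Section~\ref{ex:commutativity}).

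Your standardization step is a genuinely different route from the paper's, but it needs more than you provide.
\begin{itemize}
\item A standard contact handle slide realizes only the braid relation for curves meeting once. Conjugating by a twist $\tau_e$ whose curve meets the relevant curves several times is therefore not a slide. You would first have to derive the general conjugation relation from the once-intersecting braid relations.
\item You need the standard lantern and chain subsurfaces to actually exist in the ribbon.
\item They must lie in the same $\MCG(\Sigma,\partial\Sigma)$-orbit as the given ones, which requires controlling the boundary of the ribbon.
\end{itemize}

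The paper sidesteps all of this. It enlarges the Legendrian skeleton by a genus-two piece, by $G_{\mathrm{lantern}}$ and $G_{\mathrm{chain}}$, and by segments making the boundary connected. Enlarging costs no moves at all (Lemma~\ref{lem:Req_after_enlarg}), contrary to your worry that it must be realized by stabilizations. The paper then uses that Gervais' presentation contains only one lantern and one chain relation, and these can be chosen inside the attached standard pieces.

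The connected boundary also fixes another inaccuracy in your outline. It is not true that every simple closed curve can be made Legendrian realizable: homologically trivial curves never are. What is needed is that, on a ribbon with connected boundary, the homologically nontrivial curves are exactly the nonseparating ones, i.e.\ Gervais' generators. This lets every intermediate factorization be realized.

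Finally, in your ``if'' direction, realizing the two sides on a common open book of $(S^3,\xist)$ ignores the remaining components of the link. Handling those is exactly what Avdek's Proposition~\ref{ribbon_moves_prop} provides.
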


\begin{figure}[htbp]
    \centering
    \begin{overpic}[scale=1]{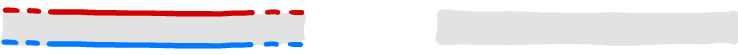}
    \put(47,2){$\leftrightarrow$}
    \put(78,2){$\boldsymbol{\emptyset}$}
    \put(-5,0){$\mp$}
    \put(-5,5){$\pm$}
    \end{overpic}
    \caption{A standard cancelling pair.}\label{fig:cancelling_intro}
\end{figure}

\begin{figure}[htbp]
    \centering
    \begin{overpic}[scale=1]{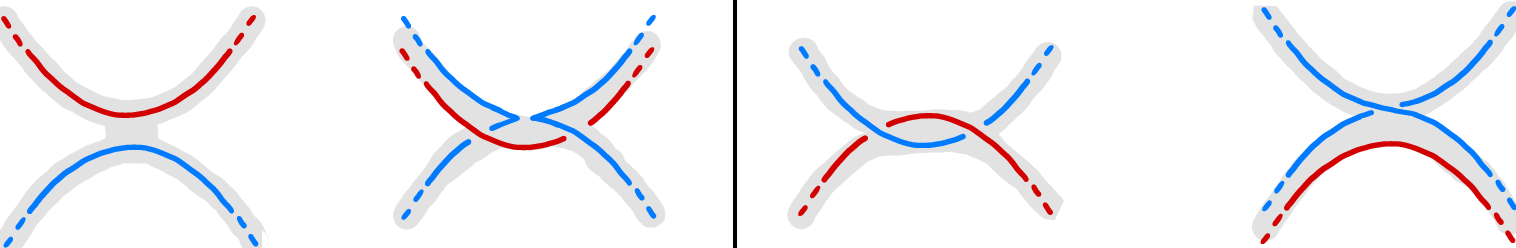}
    \put(20,8){$\leftrightarrow$}
    \put(75,8){$\leftrightarrow$}
    \put(-3,0){$-$}
    \put(-3,15){$-$}
    \put(23,2){$-$}
    \put(23,12){$-$}
    \put(50,2){$-$}
    \put(50,12){$-$}
    \put(80,15){$-$}
    \put(80,0){$-$}
    \end{overpic}
    \caption{The two standard contact handle slides needed in Theorem~\ref{thm:main}.}\label{fig:braid_intro}
\end{figure}

\begin{figure}[htbp]
    \centering
    \begin{overpic}[scale=1]{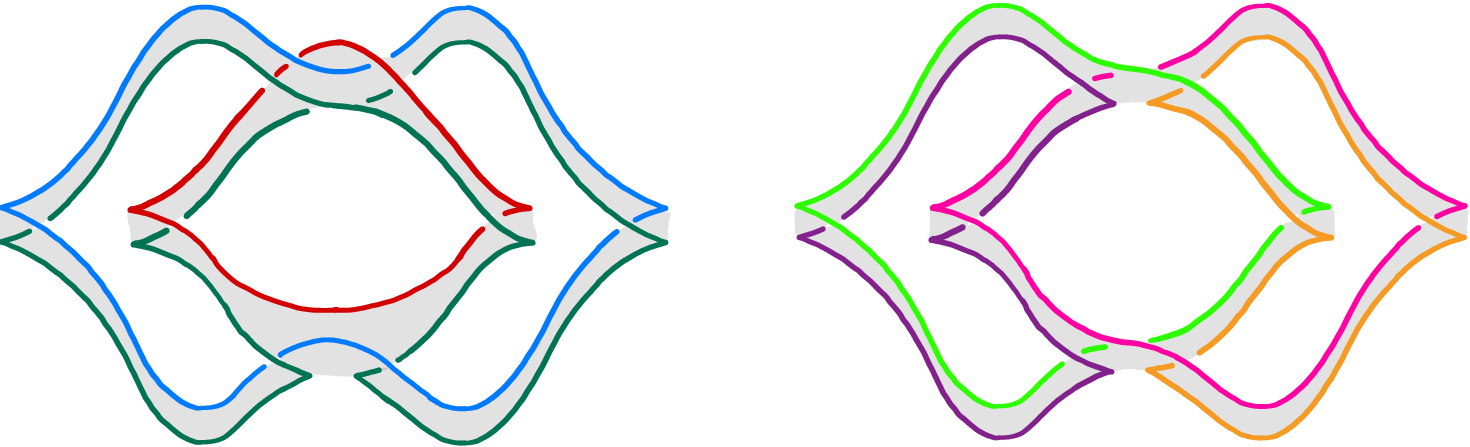}
    \put(-3,15.5){$-$}
    \put(-3,13.5){$-$}
    \put(6,15.5){$-$}
    \put(60,15.5){$-$}
    \put(60,13.5){$-$}
    \put(91,15.5){$-$}
    \put(91,13.5){$-$}
    \put(48,14.5){$\leftrightarrow$}
    \end{overpic}
    \caption{The standard lantern move.}\label{fig:lantern_intro}
\end{figure}

\begin{figure}[htbp]
    \centering
    \begin{overpic}[scale=1]{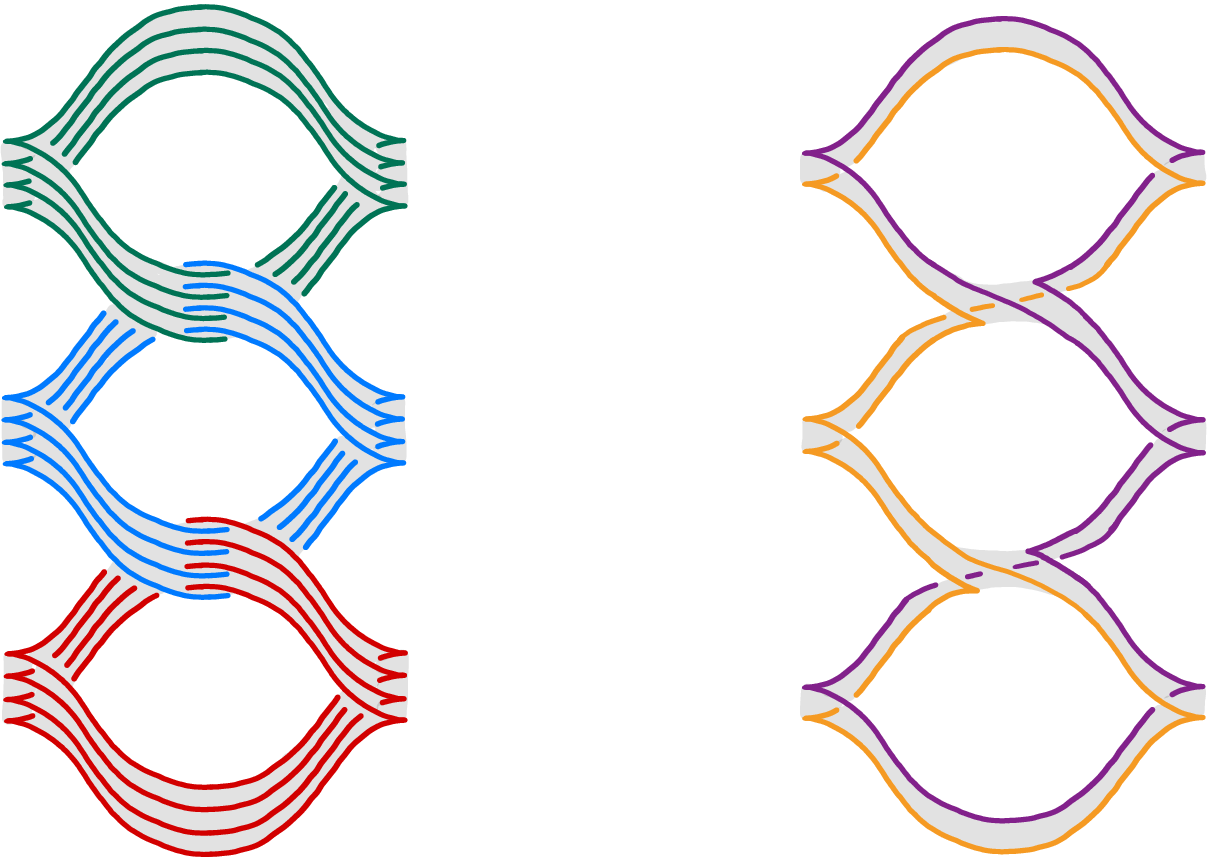}
    \put(-3,59){$-$}
    \put(-3,57.1){$-$}
    \put(-3,55.5){$-$}
    \put(-3,53.5){$-$}
    \put(-3,37.8){$-$}
    \put(-3,35.8){$-$}
    \put(-3,34.2){$-$}
    \put(-3,32.2){$-$}
    \put(-3,16.5){$-$}
    \put(-3,14.6){$-$}
    \put(-3,13){$-$}
    \put(-3,11){$-$}
    \put(48,35){$\leftrightarrow$}
    \put(63,58){$-$}
    \put(63,55.5){$-$}
    \end{overpic}
    \caption{The standard chain move.}\label{fig:chain_intro}
\end{figure}

\subsection{The standard contact Kirby moves}
We now describe the standard contact Kirby moves appearing in Theorem~\ref{thm:main}. In Section~\ref{sec:elementary_R_equivalences} we verify that these moves do not change the contactomorphism type of the surgered contact manifolds. In Figures~\ref{fig:cancelling_intro}--\ref{fig:chain_intro}, the shaded region indicates the front projection of the supporting surface or handlebody on which the move is performed. The corresponding supporting region is assumed to be disjoint from all components of the contact surgery link except those explicitly involved in the move. All components outside the supporting region are left unchanged.
\begin{itemize}
    \item \emph{Insertion or removal of a standard cancelling pair.} A standard cancelling pair consists of the front projection of two Legendrian knots which differ by vertical translation in the $\partial_z$-direction and are decorated with opposite contact surgery coefficients. Equivalently, the two knots cobound the annulus swept out by translating one of them to the other in the $\partial_z$-direction. The move consists of inserting or removing such a pair, provided that this annulus is disjoint from the remaining components of the contact surgery link. It is shown in Figure~\ref{fig:cancelling_intro}; the shaded region is the front projection of the annulus. 
    \item The two \emph{standard contact handle slides} are the two moves shown in Figure~\ref{fig:braid_intro}. Each is supported in a genus two handlebody containing precisely the two Legendrian components involved in the handle slide.
    \item The \emph{standard lantern move} is the move shown in Figure~\ref{fig:lantern_intro}. It is supported in a genus three handlebody containing precisely the Legendrian components involved in the move.
    \item The \emph{standard chain move} is the move shown in Figure~\ref{fig:chain_intro}. It is supported in a genus three handlebody containing precisely the Legendrian components involved in the move.
\end{itemize}

Cancelling pairs appeared first in~\cite{Ding_Geiges_fillable}, cf.~\cite{Ding_Geiges_Stipsicz}, contact handle slides were introduced in~\cite{Ding_Geiges_slides}, cf.~\cite{Avdek_surgery,Casals_Etnyre_Kegel}, and versions of the chain moves and lantern moves can be found in~\cite{Avdek13}. As their names suggest, these moves arise from the chain and lantern relations in the mapping class group.


\subsection{The independence of the standard contact Kirby moves}\label{sec:relation_moves}

Theorem~\ref{thm:main} gives a theoretically satisfying answer to the question of when two contact surgery diagrams describe contactomorphic contact manifolds. In practice, however, finding an explicit sequence of moves between two such diagrams can be difficult if one uses only the standard contact Kirby moves from Theorem~\ref{thm:main}. It is often useful to have more general contact Kirby moves at hand, even when these moves can ultimately be expressed as combinations of the moves from Theorem~\ref{thm:main}. For this reason, in Section~\ref{sec:moves} we collect more general versions of the lantern and chain moves together with several previously known contact Kirby moves. That section is also intended as a ready-to-use toolbox for readers who want to use or consult the moves without reading the entire article.
In the opposite direction, we demonstrate in Section~\ref{sec:proof_rel_kirbymoves} that the standard contact Kirby moves appearing in Theorem~\ref{thm:main} are all independent of each other.

\begin{theorem}[Independence of the standard contact Kirby moves]\label{thm:indep}
None of the following contact Kirby moves can be expressed as a sequence of the others, together with planar isotopies and Legendrian Reidemeister moves:
\begin{itemize}
    \item insertions or removals of standard cancelling pairs,
    \item the two standard contact handle slides,
    \item the standard lantern move, and
    \item  the standard chain move.
\end{itemize}
\end{theorem}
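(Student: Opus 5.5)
For each of the five moves, the plan is to exhibit an invariant of contact surgery diagrams that is preserved by planar isotopies, Legendrian Reidemeister moves and the other four standard moves, but changed by the move in question. The invariants should be elementary and read off directly from the front projection; each has to be checked against the local models in Figures~\ref{fig:cancelling_intro}--\ref{fig:chain_intro}, and this is a finite check.

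\emph{Cancelling pairs.} I would use the number of components with coefficient $(+1)$, counted modulo nothing. Handle slides, lantern and chain moves involve only $(-1)$-components, as the figures show, so they leave this count unchanged. Legendrian isotopy does not change it either. The insertion of a cancelling pair raises it by one.

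\emph{The two handle slides, lantern and chain moves.} These only touch $(-1)$-components, so I would track how they change the number $n_-$ of $(-1)$-components:
\begin{itemize}
\item handle slides preserve $n_-$;
\item the lantern move changes $n_-$ by one (three or four components are traded for three, matching the $3$ versus $4$ Dehn twists of the lantern relation);
\item the chain move changes $n_-$ by a fixed amount coming from the chain relation ($(T_aT_bT_c)^4=T_dT_e$, so $12$ components become $2$ in the figure).
\end{itemize}
Cancelling pairs add one $(+1)$- and one $(-1)$-component, so $n_-$ alone is not an invariant. Instead I would use $n_- - n_+$ together with a weighted count to separate the lantern move from the chain move. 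For example, $n_--n_+$ is preserved by cancelling pairs and handle slides, changes by $\pm1$ under the lantern move, and changes by $\pm10$ under the chain move. To separate lantern from chain, I would pair this with a second invariant such as the signature-type quantity $\sigma$ of the associated $4$-manifold, or simply the linking matrix data, or an Euler characteristic of the Stein cobordism. The lantern and chain relations differ in a known way in their contribution to the signature: Meyer's signature cocycle gives lantern $\mapsto$ a fixed value and chain $\mapsto$ another. Concretely, I expect the invariants $n_--n_+$ and $\sigma(X_{\boldsymbol L})+ (n_--n_+)\cdot c$ for a suitable constant $c$ to give independent linear functionals. Any move that is a composition of the others changes these by a combination of the others' changes, and linear independence of the change vectors then gives the lantern and chain cases.

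\emph{The two handle slides.} These are the main obstacle, since they preserve all such numerical data. The idea is that each slide alone changes a finer invariant, for instance an orientation- or ordering-sensitive quantity such as the rotation numbers of components. One slide adds $\pm$ rotation in one way, the other oppositely. I would first try the total rotation number $\sum \rot$ combined with $\sum \tb$, computed in the local models. If one slide preserves a combination that the other changes, while lantern, chain, cancelling pairs and Legendrian isotopy also preserve it, that slide is independent. If no such simple invariant works, I would fall back on a mod-$2$ or ordered invariant like the homotopy class data of $d_3$ with an ordering.
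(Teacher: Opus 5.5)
\textbf{Cancelling pairs, lantern and chain moves.} This part follows the paper's argument: additive numerical quantities of the surgery data whose change vectors are linearly independent. The paper uses $(n,\sigma,q,c^2)$, with $P=(2,0,1,0)$, $L=(1,-1,0,-1)$, $C=(-10,6,0,-2)$. Your triple $(q,\,n_--n_+,\,\sigma)$ already suffices; its change vectors $(1,0,0)$, $(0,1,-1)$, $(0,-10,6)$ are independent. However, you must compute the signature changes rather than ``expect'' them:
\begin{itemize}
\item $0$ for cancelling pairs, since for the local block one has $BA^{-1}B^T=0$ and $\det A=-1$;
\item $0$ for slides, which act by integral congruence;
\item $-1$ for the lantern move $3\to 4$ and $+6$ for the chain move $12\to 2$, by Lemma~\ref{lem:Schur_local} applied to the local matrices.
\end{itemize}
The Euler-characteristic option adds nothing, since it is a function of $n$.

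\textbf{Handle slides: this is a genuine gap.} Your candidate invariants fail.
\begin{itemize}
\item Over $\mathbb Q$, no nonzero combination of $\sum\tb$ and $\sum\rot$ survives cancelling pairs. A pair along $K$ changes these by $2\tb(K)$ and by $2\rot(K)$ or $0$ (depending on orientations). Moreover, $\sum\rot$ is not orientation-independent.
\item The effect of a slide is not a constant read off the local model. Sliding $L$ over $K$ changes $\rot(L)$ by $\pm\rot(K)$ and $\tb(L)$ by $\tb(K)-1\pm2\lk(L,K)$. So the change is governed by the component slid over, not by which of the two standard models is used. You also need not separate the two slides from each other: the statement, like the paper's proof, treats them as one item.
\item The fallback through $d_3$ or homotopy data cannot work, because these are contact invariants and slides preserve them.
\end{itemize}

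\textbf{What is missing} is an invariant of diagrams that is not a contact invariant, is insensitive to the other moves, and is changed by one explicit standard slide. The paper counts the number $N$ of smoothly unknotted components. A cancelling pair changes $N$ by an even number. Sliding the unknotted $(-1)$-component of one inserted cancelling pair over the knotted $(-1)$-component of another changes $N$ by one.

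\textbf{A repair closer to your idea} is to reduce mod $2$. The quantity $\sum_i\tb(L_i)\bmod 2$ is unchanged by:
\begin{itemize}
\item isotopy;
\item cancelling pairs, which add $2\tb$;
\item the standard lantern move, where total $\tb=-6$ on both sides;
\item the standard chain move, where the totals are $-12$ versus $-4$.
\end{itemize}
A slide over $K$, by contrast, changes it by $\tb(K)+1\equiv\rot(K)\pmod 2$. This is odd, for example, when $K$ is a once-stabilized unknot introduced via a cancelling pair.
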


Nevertheless, we may ask the following informal question; we refer to Section~\ref{sec:proof_rel_kirbymoves} for more discussion.

\begin{question}
Does there exist a simpler complete set of contact Kirby moves?\footnote{Here we leave the meaning of ``simpler'' intentionally open and ambiguous. It may refer, for example, to using only finitely many moves, fewer types of contact Kirby moves, or more local moves.}
\end{question}


\subsection{Invariants of contact manifolds from surgery descriptions}\label{sec:invariants}

Theorem~\ref{thm:main} provides a general strategy for defining invariants of contact manifolds from surgery diagrams, or for giving diagrammatic proofs of well-definedness: one gives a diagrammatic formula and then checks invariance under the standard contact Kirby moves. We illustrate this strategy with Gompf's \(d_3\)-invariant. Recall that \(d_3\) is a rational-valued invariant of oriented, tangential \(2\)-plane fields with torsion first Chern class~\cite{Gompf}. There are explicit formulas for computing it from contact surgery diagrams~\cite{Gompf,Ding_Geiges_Stipsicz,Durst_Kegel,Etnyre_Kegel_Onaran}. Here we use these formulas in the opposite direction: we define a quantity associated with a contact surgery diagram and then prove directly that it is invariant under the standard contact Kirby moves from Theorem~\ref{thm:main}. Here and throughout the paper, we use the shifted normalization of the \(d_3\)-invariant for which \(d_3(S^3,\xist)=0\); see the conventions at the end of the introduction.

Let \(\boldsymbol L\) be a contact surgery link with components $L_1,\ldots,L_n$ representing a contact manifold $(M,\xi)$ with torsion first Chern class. We choose orientations on the $L_i$ and denote by $Q_{\boldsymbol L}$ the linking matrix of the underlying smooth surgery diagram, with diagonal entries given by the topological framings \(\tb(L_i)\pm1\), and let $\boldsymbol r_{\boldsymbol L}
=
(\rot(L_1),\ldots,\rot(L_n))^T$
be the vector of the rotation numbers. Let \(q(\boldsymbol L)\) denote the number of components on which contact \((+1)\)-surgery is performed. Since the first Chern class is torsion, there is a rational solution $\boldsymbol{x}$ of $Q_{\boldsymbol L}\boldsymbol x=\boldsymbol r_{\boldsymbol L}$ \cite{Gompf}. 
For any such solution, we set $c_{\boldsymbol L}^2
:=
\boldsymbol x^TQ_{\boldsymbol L}\boldsymbol x$ and define
\[
d_3^{\mathrm{surg}}(\boldsymbol L)
:=
\frac14\left(
c_{\boldsymbol L}^2
-3\sigma(Q_{\boldsymbol L})
-2n
\right)
+
q(\boldsymbol L).
\]

\begin{proposition}[$d_3$-invariant via surgery diagrams]\label{prop:d3-invariance}
Let $\boldsymbol{L}$ be a contact surgery description of a contact manifold $(M,\xi)$ with torsion first Chern class.  
Then the rational number $d_3^{\mathrm{surg}}(\boldsymbol L)$ is unchanged under the standard contact Kirby moves from Theorem~\ref{thm:main} and therefore depends only on the contactomorphism type of $(M,\xi)$. In particular, $d_3^{\mathrm{surg}}(\boldsymbol L)$ is independent of the choice of orientation of $\boldsymbol{L}$ and of the solution \(\boldsymbol x\). 
\end{proposition}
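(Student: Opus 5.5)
First I check that $d_3^{\mathrm{surg}}(\boldsymbol L)$ is well defined for a fixed diagram, i.e.\ independent of the orientation of $\boldsymbol L$ and of the rational solution $\boldsymbol x$. Two solutions differ by an element $\boldsymbol y\in\ker Q_{\boldsymbol L}$. Because $Q_{\boldsymbol L}$ is symmetric, $\boldsymbol y^TQ_{\boldsymbol L}\boldsymbol x=(Q_{\boldsymbol L}\boldsymbol y)^T\boldsymbol x=0$, so $c^2_{\boldsymbol L}$ is unchanged. Reversing the orientation of $L_i$ conjugates $Q_{\boldsymbol L}$ by the diagonal matrix $D$ with $-1$ in position $i$, and it flips the sign of $\rot(L_i)$. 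Hence $D\boldsymbol x$ solves the new system, and $c^2_{\boldsymbol L}$, $\sigma$, $n$ and $q$ are all unchanged.

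Planar isotopies and Legendrian Reidemeister moves preserve Legendrian isotopy type, so they preserve $\tb$, $\rot$ and linking numbers, and therefore every ingredient. Once invariance under the remaining moves is established, Theorem~\ref{thm:main} gives that $d_3^{\mathrm{surg}}$ depends only on the contactomorphism type.

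\emph{Cancelling pair.} Add $K_+$ and $K_-$, two $\partial_z$-translates of a Legendrian $K$ with $t=\tb(K)$ and $r=\rot(K)$, oriented parallel. They are unlinked from the other components, since the annulus between them misses those components. Their linking with each other is $t$, and their linking with each other component is $\lk(K,L_j)$. I append the block
\[\begin{pmatrix} t+1 & t\\ t & t-1\end{pmatrix}\]
with off-diagonal columns $\boldsymbol v,\boldsymbol v$. The change of basis $e_+\mapsto e_+-e_-$ turns this into $\begin{pmatrix}1&0\\0&0\end{pmatrix}$-type data. Concretely, $(e_+-e_-)$ pairs to zero with all old vectors, and with itself it pairs to $(t+1)-2t+(t-1)=0$. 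Doing this carefully, I expect the new form to be congruent to $Q_{\boldsymbol L}\oplus\langle 1\rangle\oplus\langle -1\rangle$, possibly after further elimination, so $\sigma$ is unchanged. I will verify this directly. The rotation vector gains $(r,r)$, and I solve $Q\boldsymbol x=\boldsymbol r$ explicitly; the new $c^2$ should change by $0$. Meanwhile $n$ increases by $2$ and $q$ by $1$. The total change is then $\tfrac14(0-0-4)+1=0$. Since the pair is not topologically unlinked, the congruence computation is where the care is needed.

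\emph{Handle slides, lantern, chain moves.} These involve only contact $(-1)$-surgeries. So on the affected components the smooth framing is $\tb-1$, and the relevant theory is that of Stein handlebodies, for which $c^2-3\sigma-2n$ is $c_1^2-3\sigma-2\chi+2$ of the $4$-manifold. My plan is to compute everything directly from the fronts in Figures~\ref{fig:braid_intro}--\ref{fig:chain_intro}. Note that $n$ and $q$ are unchanged by the handle slides; the lantern and chain moves may change $n$, and then $q$ is still $0$ on the changed components.

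\begin{itemize}
\item For the handle slides, the new linking matrix is $P^TQP$ with $P$ an elementary matrix. I will check that the new rotation vector is $P^T\boldsymbol r$ by reading off the rotation numbers from the fronts. Then $\boldsymbol x'=P^{-1}\boldsymbol x$, and $c^2$ and $\sigma$ are invariant.
\item For the lantern and chain moves, the components of the local pieces have explicit $\tb$, $\rot$ and mutual linkings that I can read off. Each move replaces a sublink by another sublink with the same boundary behaviour. I will show that the two linking forms, restricted to the span of the moved components and paired with the outside, are related by integral congruence after a stabilisation that accounts for the change $\Delta n$. That is, $Q'\cong Q\oplus\langle -1\rangle^{\oplus k}$ or the reverse, giving $\Delta\sigma=-k=-\Delta n$. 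Under this identification the rotation vectors should correspond with $\rot=0$ on the extra $\langle-1\rangle$ summands, so $c^2$ is unchanged. Then the total change is $\tfrac14(0+3k-2k)$, which does not vanish unless signs work out differently. So I will instead find the exact congruence from the figures, expecting rotation numbers of $\pm1$ on the extra classes to contribute $-k$ to $c^2$ and balance the count.
\end{itemize}

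The main obstacle is this bookkeeping for the lantern and chain moves: extracting linking and rotation data from the fronts and finding the explicit lattice isomorphism.
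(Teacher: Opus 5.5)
Your handling of well-definedness (independence of $\boldsymbol x$ and of orientations), Legendrian isotopy and the standard handle slides is the same as in the paper.

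Your cancelling-pair argument also works once it is carried out, and it is a valid alternative to the paper's Schur-complement computation. Put $w=e_+-e_-$. Then $w\cdot w=0$, $w\cdot e_-=1$, and $w$ is orthogonal to every old component. Replacing each old $e_j$ by $e_j-v_jw$ splits off the block $\begin{pmatrix}0&1\\1&t-1\end{pmatrix}$, which has signature $0$. This block carries rotation data $(0,r)$, which contributes $0$ to $c^2$.

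\textbf{The gap is in the lantern and chain moves.} The mechanism you propose is an integral congruence $Q'\cong Q\oplus\langle -1\rangle^{\oplus k}$ with $\Delta\sigma=-\Delta n$. This is false.
\begin{itemize}
\item For the standard chain move, the $12$-component local linking matrix $A$ has signature $-8$, so $b^+=2$. The $2$-component side $A'=\begin{pmatrix}-3&-2\\-2&-3\end{pmatrix}$ has signature $-2$. So, going from twelve components to two, $\Delta n=-10$ but $\Delta\sigma=+6$.
\item Likewise $\Delta c^2=-2$: all twelve rotation numbers vanish, while $(\boldsymbol a')^T(A')^{-1}\boldsymbol a'=-2$. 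This is neither ``unchanged'' nor the $\mp k$ of your fallback. The formula balances only as $-2-3\cdot 6-2\cdot(-10)=0$.
\item For the lantern move the numbers $(\Delta n,\Delta\sigma,\Delta c^2)=(1,-1,-1)$ happen to fit your fallback. Even there the integral stabilisation fails: with no other components present, $-A'$ represents no vector of square $1$. Hence $A'\not\cong A\oplus\langle -1\rangle$ over $\mathbb Z$.
\end{itemize}
So ``finding the exact congruence from the figures'' in the form you describe cannot succeed.

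\textbf{The missing idea} is how to control the coupling to the untouched components $\boldsymbol L_{\mathrm{old}}$ when no integral orthogonal splitting exists. The paper works over $\mathbb Q$ with the block form $Q=\begin{pmatrix}A&B^T\\B&C\end{pmatrix}$. Suppose
\[
BA^{-1}B^T=B'(A')^{-1}(B')^T \quad\text{and}\quad BA^{-1}\boldsymbol a=B'(A')^{-1}\boldsymbol a'.
\]
Then the following hold.
\begin{itemize}
\item $Q$ and $Q'$ are congruent over $\mathbb R$ to $A\oplus S$ and $A'\oplus S$, with the same Schur complement $S=C-BA^{-1}B^T$.
\item Solvability of $Q\boldsymbol x=\boldsymbol r$ and of $Q'\boldsymbol x'=\boldsymbol r'$ agree.
\item $\Delta\sigma=\sigma(A')-\sigma(A)$.
\item $\Delta c^2=(\boldsymbol a')^T(A')^{-1}\boldsymbol a'-\boldsymbol a^TA^{-1}\boldsymbol a$.
\end{itemize}

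To finish, one reads off the off-diagonal blocks $B,B'$ from the fronts. For example, in the chain move the linking vector of $L_i$ depends only on $i$ modulo $3$, and both new components have linking vector $\boldsymbol\ell_1+\boldsymbol\ell_3$. One then verifies the two Schur identities, together with $\Delta c^2=3\Delta\sigma+2\Delta n-4\Delta q$, on the explicit local data. Your proposal contains neither this localization step nor correct local values. As written, it does not establish invariance under the lantern and chain moves.
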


The proof is given in Section~\ref{sec:diagrammatic_d3}. There we also obtain a diagrammatic mod \(8\) congruence for the quantity \(\sigma(Q_{\boldsymbol L})-c_{\boldsymbol L}^2\), which can be viewed as a shadow of the Spin\(^c\) congruence \(c_1^2\equiv\sigma\pmod 8\); we refer to Section~\ref{sec:mod8inv} for details.

In general, even more interesting are invariants of contact structures that do not depend only on the underlying tangential $2$-plane field. Such invariants are usually defined using holomorphic curves. Examples include contact homology~\cite{Contact_homology}, symplectic homology~\cite{symplectic_homology}, embedded contact homology~\cite{ECH}, and the contact class in Heegaard Floer homology~\cite{contact_class}. It would be very interesting to find combinatorial descriptions of such invariants in terms of contact surgery links and then use Theorem~\ref{thm:main} to give a combinatorial proof --- still relying on the Giroux correspondence --- of their well-definedness. Combinatorial formulations of analytic invariants can already be found, for example, in~\cite{Avdek13,BEE,contact_class_algo,Chekanov}.

\subsection{Rational contact surgery}
Although the theorem of Ding and Geiges shows that contact $(\pm1)$-surgery is sufficient to describe all closed contact $3$-manifolds, in concrete situations it is often more convenient to allow more general rational contact surgery coefficients. Recall that if the contact surgery coefficient is of the form $1/n$, with $n\in\mathbb{Z}$, then there is a unique tight contact structure on the glued-in solid torus that extends the contact structure on the knot exterior. 
For a general contact surgery coefficient $r\in\mathbb Q\setminus\{0\}$, there are only finitely many tight contact structures on the glued-in solid torus extending the contact structure on the exterior. Thus, for general contact surgery coefficients, the surgery data must specify which of these contact structures is used. 
The \textit{transformation lemma} and the \textit{replacement lemma} allow one to convert any contact surgery diagram with rational coefficients into a contact $(\pm1)$-surgery diagram. We refer to~\cite{Honda_classification,Ding_Geiges_fillable,Ding_Geiges_Stipsicz,Kegel_thesis,Etnyre_Kegel_Onaran} for further background and details. Thus Theorem~\ref{thm:main} directly implies the following Kirby theorem for rational contact surgery diagrams.

\begin{theorem}[Contact Kirby theorem for rational contact surgery]\label{prop:rational_contact_Kirby}
Two rational contact surgery links describe contactomorphic contact $3$-manifolds if and only if their front projections are related by a sequence of 
\begin{itemize}
    \item planar isotopies,
    \item Legendrian Reidemeister moves,
    \item  insertions or removals of standard cancelling pairs,
    \item standard contact handle slides,
    \item standard lantern moves,
    \item standard chain moves,
    \item replacement moves, and
    \item transformation moves.\qed
\end{itemize}      
\end{theorem}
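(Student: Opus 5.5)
The plan is to reduce the rational statement to Theorem~\ref{thm:main} via the replacement and transformation lemmas. The easy direction comes first. Each listed move preserves the contactomorphism type of the surgered manifold. For the six moves of Theorem~\ref{thm:main} this is the corresponding direction of that theorem, and it still applies when other components carry rational coefficients, since all moves are supported in a region disjoint from the remaining components. For the replacement and transformation moves it is exactly the content of the replacement and transformation lemmas. Concretely, contact $r$-surgery with a specified choice of stabilisations equals the contact $(\pm1)$-surgery on the associated stabilised Legendrian pushoffs, and contact $r$-surgery with $r<0$ can be traded for $(+1)$-surgery followed by $r/(1-r)$-surgery on a pushoff.

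For the converse, take two rational contact surgery links $\boldsymbol L$ and $\boldsymbol L'$ describing contactomorphic manifolds. First, apply transformation moves to each component with positive coefficient other than $1/n$ so that all coefficients become negative or of the form $1/n$. Then apply replacement moves to convert every rational component into a collection of Legendrian pushoffs with coefficients $\pm1$. Here $1/n$ becomes $|n|$ pushoffs with coefficient $\operatorname{sign}(n)$, and negative $r$ becomes its continued-fraction chain of stabilised pushoffs. This step realises the stated algorithm, namely that every rational diagram is related by replacement and transformation moves to a contact $(\pm1)$-surgery link $\boldsymbol L_\pm$. We do the same for $\boldsymbol L'$ and obtain $\boldsymbol L'_\pm$.

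Now $\boldsymbol L_\pm$ and $\boldsymbol L'_\pm$ describe contactomorphic manifolds. By Theorem~\ref{thm:main} they are therefore related by planar isotopies, Legendrian Reidemeister moves, cancelling pairs, handle slides, lantern moves and chain moves. Concatenating the three sequences, with the last one reversed, gives the required sequence from $\boldsymbol L$ to $\boldsymbol L'$. Replacement and transformation moves are reversible by definition, since they are statements that two diagrams give the same manifold.

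The only real subtlety is bookkeeping. Replacement and transformation moves must be taken in the precise form that includes the choice of tight structure on the glued-in torus, that is, which stabilisations are used. This is what makes the forward direction legitimate as a contactomorphism statement rather than just a diffeomorphism statement. It also makes the resulting $(\pm1)$-diagram well defined for the data specified in the rational diagram. Once the moves are formulated in this way, the argument is purely formal.
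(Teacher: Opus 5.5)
Your proposal is correct and follows the paper's route. The paper derives this theorem in one line: it uses the transformation and replacement lemmas to turn both rational diagrams into contact $(\pm1)$-surgery diagrams, then applies Theorem~\ref{thm:main}. One small slip: in your description of the transformation lemma the sign condition should be $r>0$ rather than $r<0$. The $r>0$ version, trading contact $r$-surgery for contact $(+1)$-surgery and contact $r/(1-r)$-surgery on a push-off, is the one you actually apply in the converse.
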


\subsection{Legendrian knots in surgery diagrams}

Let \(J\) be a Legendrian link in a contact \(3\)-manifold \((M,\xi)\), and let \(\boldsymbol L\) be a contact surgery link for \((M,\xi)\). Then \(J\) can be represented by a Legendrian link in the exterior of $\boldsymbol{L}$; see, for example,~\cite[Section~4.7]{Kegel_thesis}. Thus, one can describe $J$ in \((M,\xi)\) by the front projection of $J\cup \boldsymbol L$, where the components of \(\boldsymbol L\) carry contact surgery coefficients and the components of \(J\) are unframed. We say that two Legendrian links $J_i$ in $(M_i,\xi_i)$, for \(i=1,2\), are \emph{equivalent} if there is a contactomorphism $(M_1,\xi_1)\longrightarrow (M_2,\xi_2)$ sending \(J_1\) to \(J_2\).

Recall that from a three-dimensional perspective a contact handle slide of a Legendrian knot $J$ over a knot $L$ with contact surgery coefficient $(\pm1)$ can be seen as a Legendrian isotopy of $J$ over the newly glued-in meridional disk in the surgered manifold along $L$. In particular, this Legendrian isotopy is also valid if $J$ is unframed. We refer to \cite{Ding_Geiges_slides,Casals_Etnyre_Kegel} and Section~\ref{sec:ctchadleslidegeneral} for more discussion. The proof of the following reult can be found in Section~\ref{sec:proofLeg_links_Kirby_thm}

\begin{theorem}[Contact Kirby theorem for Legendrian links in surgery diagrams]\label{prop:Leg_links_Kirby_thm}
For $i=1,2$, let \(\boldsymbol L_i\) be contact surgery links for $(M_i,\xi_i)$ and let \(J_i\) in $(M_i,\xi_i)$ be Legendrian links presented by links in the exteriors of \(\boldsymbol L_i\). Then \(J_1\) and \(J_2\) are equivalent if and only if the front projections of $J_1\cup\boldsymbol L_1$ and $J_2\cup\boldsymbol L_2$ are related by a sequence of 
\begin{itemize}
    \item planar isotopies of the framed and unframed components,
    \item Legendrian Reidemeister moves of the framed and unframed components,
    \item insertions or removals of standard cancelling pairs,
    \item standard contact handle slides,
    \item standard lantern moves,
    \item standard chain moves, and
    \item standard contact handle slides of unframed components over framed surgery components.
\end{itemize}
\end{theorem}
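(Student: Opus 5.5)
We prove Theorem~\ref{prop:Leg_links_Kirby_thm} by reducing it to Theorem~\ref{thm:main}. The idea is to encode each unframed Legendrian component as a framed surgery component that does not change the ambient contact manifold.

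\emph{Easy direction.} The ``if'' direction is immediate, move by move:
\begin{itemize}
\item Planar isotopies and Legendrian Reidemeister moves of unframed components are Legendrian isotopies in the exterior of the surgery link.
\item The moves involving only $\boldsymbol L$ are supported in regions disjoint from $J$, and the contactomorphisms they induce can be taken to be the identity outside a neighbourhood of the supporting handlebody. Hence they carry $J$ to the same front.
\item A handle slide of an unframed component over a framed one is a Legendrian isotopy across the glued-in meridional disk, as recalled before the statement.
\end{itemize}

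\emph{Encoding step.} For the converse, suppose $\phi\colon (M_1,\xi_1)\to(M_2,\xi_2)$ is a contactomorphism with $\phi(J_1)=J_2$. For each component $J_i^k$ we take its Legendrian push-off $J_i^{k\prime}$ in the $\partial_z$-direction. We decorate $J_i^k$ with $(+1)$ and $J_i^{k\prime}$ with $(-1)$, so the pair forms a standard cancelling pair in the exterior of $\boldsymbol L_i$; this needs the annulus between them to avoid $\boldsymbol L_i$, which holds for small push-offs. The enlarged links $\boldsymbol L_i^+ := \boldsymbol L_i\cup\bigcup_k (J_i^k\cup J_i^{k\prime})$ still describe $(M_i,\xi_i)$. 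Moreover, $\phi$ carries the pair for $J_1$ to a standard neighbourhood of the pair for $J_2$, because the push-off is determined up to Legendrian isotopy by the contact framing.

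\emph{Relative version of the main theorem.} Applying Theorem~\ref{thm:main} to $\boldsymbol L_1^+$ and $\boldsymbol L_2^+$ only yields a sequence of moves relating the two links. What we actually need is a \emph{relative} version: a sequence realizing the specific contactomorphism $\phi$ and never touching the marked components, except by handle slides of other components over them, which we will later reinterpret.

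To get this, we go into the proof of Theorem~\ref{thm:main}, which passes through Avdek's ribbon moves and Gervais' presentation of the mapping class group. There we would use the Giroux correspondence relative to Legendrian links: $J_i$ can be put onto pages of an open book compatible with $\xi_i$ and adapted to $\boldsymbol L_i$, and stabilizations together with isotopies of pages relate two such adapted open books compatibly with $\phi$. The argument of Theorem~\ref{thm:main} then runs on page surfaces containing the curves $J_i$. Dehn twists along the $J_i$ appear as framed components in the ribbon picture, and the remaining moves (Gervais' relations) are performed away from, or by sliding across, these marked curves.

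\emph{Translation back.} Each move where a framed component crosses the marked curves must be translated back to unframed data, in two cases:
\begin{itemize}
\item A framed component sliding over $J$ is undone by moving $J$ instead. This is legitimate because $J$ together with its push-off forms a cancelling pair.
\item $J$ sliding over a framed component is exactly the extra allowed move: a standard handle slide of an unframed component.
\end{itemize}
Finally we remove the auxiliary push-offs.

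\emph{Main obstacle.} I expect the hardest step to be controlling the monodromy coming from the mapping class group in the presence of marked curves. The relations in Gervais' presentation hold in the mapping class group of the surface, but the realized contactomorphism might differ from $\phi$ by an element of the stabilizer of $J$ that is not Legendrian isotopic to the identity rel $J$.

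My first attempt would be to work in the mapping class group of the page cut open along $J$, that is, with $J$ as boundary components. Its presentation is again of Gervais type, with boundary twists. Boundary twists along $J$ correspond to the cancelling-pair insertion above, and the point-pushing and braid phenomena should be expressible through handle slides of unframed components.
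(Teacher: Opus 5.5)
There is a genuine gap. The converse direction is never proved, because all of it is delegated to a ``relative version'' of Theorem~\ref{thm:main} that you only describe.

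\emph{The encoding does no work.} Turning $J$ into the cancelling pair $J^{+}\cup J'^{-}$ buys nothing by itself. Theorem~\ref{thm:main} applied to $\boldsymbol L_1^+$ and $\boldsymbol L_2^+$ is free to slide other components over these two, cancel them against other components, or use them in lantern and chain moves. Its output therefore does not let you read off $J_2$. Forbidding this is exactly the relative statement, which is at least as strong as what you want to prove.

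\emph{The relative statement is not established.} Your sketch needs a Giroux correspondence for open books carrying $J$ on a page and realizing the given $\phi$. It also needs relative versions of Avdek's Theorem~\ref{Vague_Kirby_Theorem} and of Proposition~\ref{uniq_cont_Kirby}. None of these is proved.

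\emph{The algebraic fix does not fit.} The curves $\overline{L_i}$ in the factorizations generally intersect $\overline{J}$ on the ribbon. So the factorizations do not live in the mapping class group of the page cut open along $J$. The claim that the extra phenomena become handle slides of unframed components is a hope, not an argument.

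\emph{The translation rule is unjustified.} ``A framed component sliding over $J$ is undone by moving $J$'' does not hold up: such a slide changes the framed component itself, and no motion of $J$ restores it. Being half of a cancelling pair only lets you delete $J^{+}\cup J'^{-}$ when its annulus misses everything else.

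The idea you are missing is that none of this is necessary. The paper never frames $J$ and never reopens the mapping class group argument. It applies Theorem~\ref{thm:main} to $\boldsymbol L_1$ and $\boldsymbol L_2$ as a black box. Before each move, it pushes $J$ out of that move's supporting handlebody by standard handle slides of $J$ over the surgery components involved:
\begin{itemize}
\item for a cancelling pair, via an annulus twist, which is a sequence of such slides by Casals--Etnyre--Kegel;
\item for the lantern and chain moves, via the explicit local slides in Figures~\ref{fig:lantern_slide}--\ref{fig:chain_slide_2}.
\end{itemize}
These slides are precisely the extra move allowed in the statement. Once $J$ avoids the support, the move carries $J$ along unchanged. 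When the two surgery links agree, the paper concludes with Ding--Geiges' result for Legendrian links in the complement of a single fixed surgery link.

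Your concern that the contactomorphism produced by the moves might differ from $\phi$ is not handled by any relative machinery in the paper either. It is absorbed into that final appeal to Ding--Geiges, whose underlying argument concerns Legendrian isotopy in the surgered manifold. If you want to make that point airtight, the natural statement to aim for is that the sequence from Theorem~\ref{thm:main} can be chosen to realize $\phi$ up to contact isotopy. That is a question about the Giroux/Avdek step, not about a presentation of the cut-open page.
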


\subsection{Symplectic fillings}

Contact \((-1)\)-surgery corresponds to the attachment of a symplectic \(2\)-handle in dimension four~\cite{Eliashberg_handle,Weinstein}. Hence, a contact surgery link whose surgery coefficients are all equal to \((-1)\) determines a Stein filling of the surgered contact manifold, up to symplectomorphism and deformation equivalence. In this setting, the chain and lantern moves can be used to produce different Stein fillings of the same contact manifold.

\begin{proposition}[Non-equivalent symplectic fillings via contact Kirby moves]
    Let \((M,\xi)\) be a contact \(3\)-manifold with contact surgery diagram \(\boldsymbol L\) in which all contact surgery coefficients are equal to \((-1)\). Suppose that either a chain move or a lantern move can be applied to \(\boldsymbol L\). Then \((M,\xi)\) admits two Stein fillings that are not homotopy equivalent.
\end{proposition}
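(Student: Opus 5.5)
The idea is to compare the two Stein fillings before and after the move through a homotopy invariant read off from the surgery diagram, namely the Euler characteristic or the second Betti number. A contact surgery link $\boldsymbol L$ with $n$ components, all with coefficient $(-1)$, determines a Stein domain $W_{\boldsymbol L}$. It is obtained from $B^4$ by attaching $n$ Weinstein $2$-handles along the components of $\boldsymbol L$, with framing $\tb-1$. Hence $W_{\boldsymbol L}$ is homotopy equivalent to a wedge of $n$ two-spheres, and
\[
\chi(W_{\boldsymbol L}) = 1+n, \qquad b_2(W_{\boldsymbol L}) = n .
\]
These are homotopy invariants, so it suffices to show two things. First, applying a chain or lantern move to $\boldsymbol L$ produces a diagram $\boldsymbol L'$ whose coefficients are again all $(-1)$. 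Second, the number of components changes.

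\textbf{Step 1: read off the component counts from the standard moves.} From Figure~\ref{fig:lantern_intro}, every component on both sides of the standard lantern move carries coefficient $(-1)$. One side has two components and the other has four, since the lantern relation has three boundary twists plus one interior curve on one side and three curves on the other. After the boundary twists are realized inside the handlebody, this appears as a net change of the component count by $\pm 2$.

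Similarly, the chain move in Figure~\ref{fig:chain_intro} replaces a configuration of twelve $(-1)$-components (the chain of length $4$, realized as $(c_1c_2c_3)^4$) by two $(-1)$-components (the boundary twists). So it changes $n$ by $10$, up to the exact counts in the general chain/lantern moves of Section~\ref{sec:moves}.

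The key point is that both moves arise from relations in the mapping class group between positive Dehn twist factorizations of different lengths. Because all the twists involved are positive, the surgery coefficients stay $(-1)$ on both sides.

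\textbf{Step 2: handle the general versions.} If ``a chain move or lantern move can be applied'' is meant to include the generalized moves of Section~\ref{sec:moves}, I would check the same two facts for them. All twists in the relation are right-handed, so all coefficients are $(-1)$. The two factorizations have different lengths, because the lantern relation and the chain relations ($(c_1\cdots c_{2g})^{4g+2}=\partial$, $(c_1\cdots c_{2g+1})^{2g+2}=\partial_1\partial_2$) equate words of different lengths.

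This step is the main obstacle. I must verify that in the paper's diagrammatic realization every component on both sides really carries coefficient $(-1)$, with no $(+1)$ components used to implement conjugations or stabilizations. I must also verify that the move is performed in the direction where applicability is meaningful, and that either direction yields another all-$(-1)$ diagram.

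\textbf{Step 3: conclude.} By the move's validity (Section~\ref{sec:elementary_R_equivalences}), $\boldsymbol L'$ also describes $(M,\xi)$. It therefore gives a second Stein filling $W_{\boldsymbol L'}$, whose $b_2$ differs from that of $W_{\boldsymbol L}$. Hence $W_{\boldsymbol L}$ and $W_{\boldsymbol L'}$ are not homotopy equivalent.
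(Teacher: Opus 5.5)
Your argument is the same as the paper's. Both sides of any lantern or chain move carry only $(-1)$ coefficients, so each side gives a Stein filling obtained from $D^4$ by attaching one Weinstein $2$-handle per component. The rank of $H_2$ then equals the number of components, and the move changes that number.

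Your component count for the lantern move in Step~1 is wrong. The lantern relation
$\tau_{\aa}^+\tau_{\bb}^+\tau_{\aa\bb}^+=\tau_{\dd_4}^+\tau_{\dd_3}^+\tau_{\dd_2}^+\tau_{\dd_1}^+$
has three twists on one side and four boundary twists on the other. Accordingly, the standard lantern move replaces three $(-1)$-components by four; compare the $3\times 3$ block $A$ and the $4\times 4$ block $A'$ in Section~\ref{sec:diagrammatic_d3}. So $n$ changes by $1$, not by $\pm 2$. The conclusion is unaffected, since all you need is that the count changes.

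The obstacle you raise in Step~2 does not actually arise. Every lantern and chain move in Section~\ref{sec:moves} realizes one of only two relations. The chain moves all come from $(\tau_{\aa}^+\tau_{\bb}^+\tau_{\cc}^+)^4=\tau_{\dd_2}^+\tau_{\dd_1}^+$, not from the higher-genus chain relations you quote. In every such move, all components on both sides are decorated by $-$. Hence, in either direction, all coefficients stay $(-1)$ and $n$ changes by exactly $1$ for a lantern move or $10$ for a chain move.
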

    
\begin{proof}
All the chain moves and the lantern moves preserve the property that all contact surgery coefficients are equal to \((-1)\), see Section~\ref{sec:moves}. Thus, the contact surgery links before and after applying the move both determine Stein fillings of \((M,\xi)\).

These two Stein fillings are obtained from \(D^4\) by attaching Weinstein \(2\)-handles, one for each component of the corresponding surgery link. Therefore, their second homology groups are free abelian of ranks equal to the number of surgery components. A standard lantern move changes this number by one, while a standard chain move changes it by ten. Hence the two fillings have second homology groups of different ranks, and in particular, they are not homotopy equivalent.
\end{proof}

\subsection{A contact-geometric proof of Kirby's theorem}

In Appendix~\ref{sec:top_Kirby}, we sketch an argument to deduce the smooth Kirby theorem from Theorem~\ref{thm:main}. This gives a proof of the smooth Kirby theorem based on contact geometry and a presentation of the mapping class group. For other proofs of Kirby's theorem, we refer to~\cite{Kirby,Matveev_Polyak_Kirbytheorem,Lu_Kirbystheorem}.

\subsection{Proof strategy of the contact Kirby theorem}
Kirby's original proof that two framed links describe the same $3$-manifold if and only if they are related by handle slides and blow-ups/blow-downs uses Cerf theory in dimension five, viewing integral surgery as $4$-dimensional $2$-handle attachment to $D^4$~\cite{Kirby}. A similar strategy for a contact version of Kirby's theorem is, at least in principle, conceivable. Indeed, contact $(-1)$-surgery can be realized by attaching a Weinstein $2$-handle along the convex boundary of a symplectic manifold, while contact $(+1)$-surgery is naturally interpreted through the inverse symplectic cobordism; see, for instance,~\cite{Eliashberg_handle,Weinstein,Geiges_book}. This is not the approach taken here.

Instead, our strategy, building on Avdek's ribbon moves framework, is closer in spirit to Lu's proof of Kirby's theorem~\cite{Lu_Kirbystheorem}. Roughly speaking, Lu's argument proceeds by translating framed links describing the same $3$-manifold $M$ into Heegaard splitting data. After suitable modifications of the surgery diagrams by handle slides and blow-ups and blow-downs, the links are placed in a precise way on a common Heegaard surface $\Sigma$ of $S^3$ (more precisely, each component is embedded on a fiber of a tubular neighborhood $\Sigma\times[0,1]$ of $\Sigma$) and hence give rise to two factorizations of the same gluing map for a Heegaard splitting of $M$. One then compares these factorizations using a presentation of the mapping class group and interprets the resulting relations back on the level of framed links.

A crucial feature of Lu's proof is the final step: the moves on framed links arising from mapping class group relations are not left as a new auxiliary family of moves, but are shown to be expressible in terms of handle slides and blow-ups and blow-downs. In this way, Lu recovers the classical Kirby theorem.

Our proof follows the same general philosophy, but replaces Heegaard splittings with open book decompositions. Roughly speaking, Avdek's main result says that, up to known contact Kirby moves, two contact surgery links representing contactomorphic contact $3$-manifolds $(M,\xi)$ can be embedded on pages of a common open book supporting $(S^3,\xist)$ in such a way that the corresponding products of Dehn twists define the same element of the mapping class group of the page. In particular, this yields two abstract open book presentations of $(M,\xi)$. The main ingredients up to this point are an algorithm, by Avdek, that produces an open book supporting any given Legendrian link, and the fact that a positive stabilization of an open book corresponds to the insertion of a cancelling pair plus a standard handle slide.

At this stage, we can apply Gervais' presentation of the mapping class group from~\cite{Gervais} to break the equality of these factorizations into elementary relations. The key point is then to interpret each of these relations geometrically on the level of the surgery diagram. In this way, two contact surgery links defining the same element of the mapping class group as explained above differ by a finite list of diagrammatic operations, which we claim are precisely the desired contact Kirby moves. The main technical point is to show that any two pairs of contact surgery links differing by a given relation in the mapping class group admit the same normal form at the level of front projections, so that each relation can be represented by a single diagrammatic move.

At this point, however, the analogy with Lu's proof breaks down: there is no direct contact analogue of blow-up or blow-down. Indeed, contact $(\pm1)$-surgery on a Legendrian unknot never returns $(S^3,\xist)$~\cite{Kegel_Legendrian_complement}. For this reason, one cannot expect to carry out the final reduction step in Lu's argument in the same way in the contact setting.

\begin{remark}\label{rmk_weak_blowup}
There is also a more conceptual reason why one should not expect a contact analogue of blow-up or blow-down, even in a weaker sense. In the classical Heegaard-splitting picture, blow-ups and blow-downs can be interpreted in terms of boundary mapping classes that extend over a handlebody. In the contact setting, an open book gives rise to a contact Heegaard splitting, so one may ask whether an analogous mechanism could still produce a weak version of blow-up or blow-down. The page-supported mapping classes produced by contact surgery turn out to be too rigid for this: their intersection with the boundary mapping classes extending over the contact handlebody is trivial. We postpone the precise formulation and proof of this statement to Section~\ref{sec:blow_up_discussion}; see in particular Lemma~\ref{lem:trivial_intersection}.
\end{remark}

\subsection*{Conventions}

We assume familiarity with Dehn surgery, mapping class groups, and \(3\)-dimensional contact topology at the level of~\cite{Gompf_Stipsicz,FM12,Geiges_book}. For background on contact surgery and Kirby calculus in contact topology, we refer to~\cite{Gompf,Ozbagci_Stipsicz,Ding_Geiges_fillable,Ding_Geiges_Stipsicz,Ding_Geiges,Ding_Geiges_slides,Kegel_thesis,Kegel_Legendrian_complement,Kegel_Onaran,Etnyre_Kegel_Onaran,Casals_Etnyre_Kegel}.

We work in the smooth category. Unless otherwise stated, all manifolds, maps, and auxiliary objects are smooth and oriented, and all diffeomorphisms are orientation-preserving. All contact structures are assumed to be positive and coorientable, and all contactomorphisms are understood to preserve the coorientation. Since a contact manifold obtained from a contact surgery diagram or from an abstract open book is only determined up to contactomorphism, we usually consider contact manifolds up to contactomorphism (and not isotopy). By slight abuse of notation, Legendrian links in $(\R^3,\xist)$ and $(S^3,\xist)$ are sometimes considered up to isotopy, and Legendrian links in more general manifolds are sometimes considered up to equivalence.

We regard \((\R^3,\xist)\) as the complement of a point in \((S^3,\xist)\). Our convention for the standard contact structure on \(\R^3\) is $\xist=\ker(dz+x\,dy)$. Thus the Reeb vector field is \(\partial_z\), and the front projection is the projection to the \((y,z)\)-plane. Throughout the article, whenever we draw objects in $(\R^3,\xist)$, we use the front projection unless otherwise stated. We always assume that the front projections are generic, which can be achieved by a perturbation. We use the following conventions for such pictures. If Legendrian strands leave the depicted region in parallel, then they are understood to remain vertical translates of each other in the $\partial_z$-direction until they re-enter the picture. Since all Legendrian knots under consideration carry contact surgery coefficients $(\pm1)$, we usually indicate these coefficients simply by the symbols $\pm$. When colours are used, matching colours indicate the corresponding strands outside the depicted region.

Finally, throughout the introduction and until Section~\ref{sec:main_theorem_body}, the term \emph{contact Kirby move} is used in a diagrammatic sense: it means an explicit move on front projections of contact surgery diagrams. Later, in Section~\ref{sec:moves}, we also discuss the broader formulation of contact Kirby moves directly at the level of contact surgery links.

We normalize the $d_3$-invariant so that $d_3(S^3,\xist)=0$. With this convention, it takes integral values on homology spheres and is additive under connected sums, following conventions used, for example, in~\cite{Casals_Etnyre_Kegel,Etnyre_Kegel_Onaran,Kegel_Onaran}. This differs from the original normalization in~\cite{Gompf,Ding_Geiges_Stipsicz}.

\subsection*{Acknowledgements}
This project was initiated during a stay of the authors at the ICERM semester programme \textit{Braids} (February 1--May 6, 2022). We thank ICERM for the invitation, hospitality, and financial support. We thank Daniele Zuddas for early discussions with Marc Kegel and for preliminary explorations related to this project. We are also grateful to Filippo Bianchi for pointing us to~\cite{Gervais}, which turned out to be a crucial reference for this work, and to Russell Avdek for his encouragement and interest throughout the project.

\subsection*{Individual grant support}
Marc Kegel is supported by a Ram\'on y Cajal grant \mbox{(RYC2023-043251-I)} and PID2024-157173NB-I00 funded by MCIN/AEI/10.13039/501100011033, by ESF+, and by FEDER, EU; and by a VII Plan Propio de Investigación y Transferencia (SOL2025-36103) of the University of Sevilla, and was funded by the DFG, German Research Foundation (Project: 561898308). 
The research of Eric Stenhede and Vera Vértesi was supported by the Austrian Science Fund (FWF) projects PAT7436924 and P~34318. Eric Stenhede was also supported by the Vienna School of Mathematics.

\subsection*{Open access}
For the purpose of open access, the authors have applied a CC BY public copyright licence to any author-accepted manuscript version arising from this submission.


\section{Mapping class groups and Dehn twists}\label{sec:MCG}

Let $\Sigma$ be a compact, oriented, smooth surface with nonempty boundary. We write $\mathrm{Diff}^+(\Sigma,\partial\Sigma)$ for the group of orientation-preserving diffeomorphisms of $\Sigma$ that are equal to the identity near $\partial\Sigma$, and $\mathrm{Diff}^+_0(\Sigma,\partial\Sigma)$ for the identity component of this group. The \emph{mapping class group} of $\Sigma$ relative to the boundary is then defined by
\[
\MCG(\Sigma,\partial\Sigma)
=
\mathrm{Diff}^+(\Sigma,\partial\Sigma)/\mathrm{Diff}^+_0(\Sigma,\partial\Sigma).
\]
A theorem of Lickorish says that $\MCG(\Sigma,\partial\Sigma)$ is generated by Dehn twists along simple closed curves~\cite{Lickorish_MCG}. Given such a curve $\aa\subset\Sigma$, we denote by $\tau_\aa^\pm$ the mapping class of the positive or negative Dehn twist along~$\aa$.
We now recall a few standard relations among Dehn twists; see~\cite{FM12}. Since Dehn twists along isotopic curves define the same element of $\MCG(\Sigma,\partial\Sigma)$, the subscript in $\tau_\aa^\pm$ may be regarded as the isotopy class of $\aa$.

\begin{example}[Cancelling Dehn twists]\label{ex:cancellation_DT}
For any simple closed curve $\aa$, we have
\[
\tau_\aa^+\,\tau_\aa^-\;=\;\tau_\aa^-\,\tau_\aa^+\;=\;1.
\]
In particular, $\tau_\aa^-$ is the inverse of $\tau_\aa^+$.
\end{example}

\begin{example}[Commutativity]
If $\aa$ and $\bb$ are disjoint simple closed curves, then the corresponding Dehn twists commute, i.e.\
\[
\tau_\aa^+\,\tau_\bb^+ = \tau_\bb^+\,\tau_\aa^+
\qquad \text{if } \aa\cap\bb=\emptyset.
\]
\end{example}

Now let $\aa$ and $\bb$ be simple closed curves on $\Sigma$ that intersect transversely. We define a new curve $\aa\bb$ by smoothing each intersection point of the graph $\aa\cup\bb$ as follows: when traversing $\bb$ and approaching an intersection point, we turn onto the strand of $\aa$ lying to the right. See Figure~\ref{fig:resolution_curves}.

\begin{figure}[htbp]
    \centering
    \begin{overpic}[scale=1]{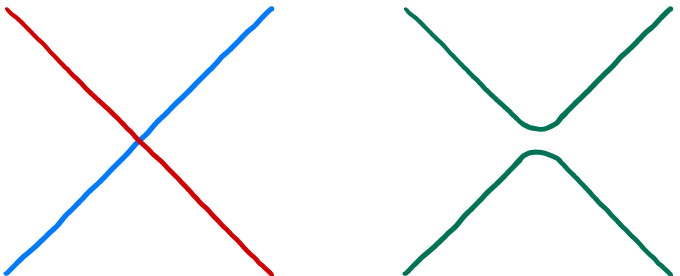}
        \put(7,35){$\aa$}
        \put(30,35){$\bb$}
        \put(66,35){$\aa\bb$}
    \end{overpic}
    \caption{The curves $\aa$, $\bb$ and $\aa\bb$ near the intersection point $\aa\cap\bb$.}\label{fig:resolution_curves}
\end{figure}

If $\aa$ and $\bb$ intersect transversely at a single point, then $\aa\bb$ is isotopic to $\tau_\aa^+(\bb)$.

\begin{example}[Braid relations]\label{ex:braid_relations}
Let $\aa$ and $\bb$ be simple closed curves intersecting transversely at a single point. Then
\[
\tau_\aa^+\;\tau_\bb^+ \;=\; \tau_{\aa\bb}^+\;\tau_\aa^+.
\]
This is a \emph{braid relation}; see Figure~\ref{fig:braid_relation}.
\end{example}

\begin{figure}[htbp]
    \centering
    \begin{overpic}[scale=1]{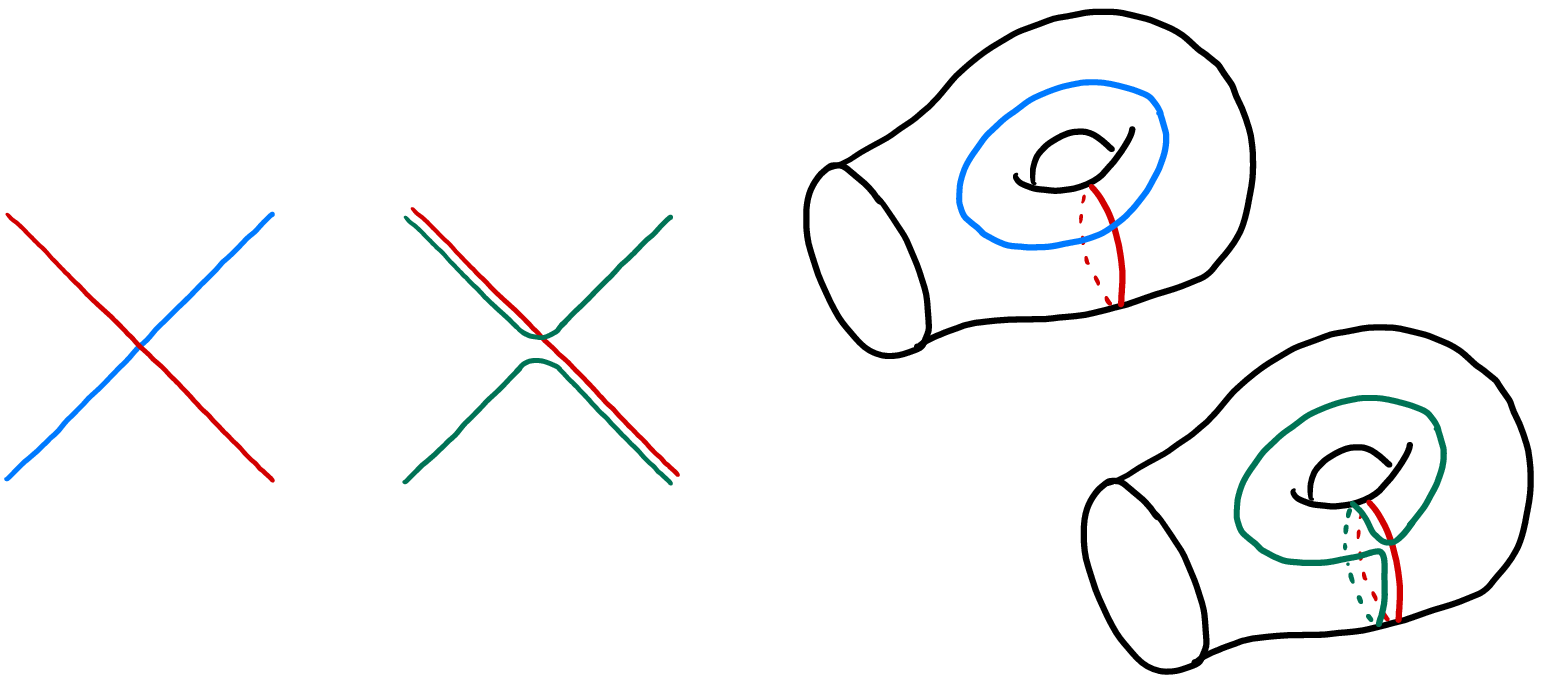}
        \put(4,28){$\aa$}
        \put(13,28){$\bb$}
        \put(30,28){$\aa$}
        \put(38,28){$\aa\bb$}
        \put(77,35){$\bb$}
        \put(74,27){$\aa$}
        \put(95,15){$\aa\bb$}
        \put(92,6){$\aa$}
    \end{overpic}
    \caption{Left: the curves $\aa$, $\bb$ and $\aa\bb$ near their intersection point. Right: a neighborhood of $\aa\cup\bb$ with the three curves drawn on it.}\label{fig:braid_relation}
\end{figure}

\begin{example}[Lantern relations]\label{ex:lantern_relations}
Let $\aa$ and $\bb$ be simple closed curves that intersect transversely in two points and algebraically zero times. Then
\[
\tau_\aa^+\;\tau_\bb^+\;\tau_{\aa\bb}^+
\;=\;
\tau_{\dd_4}^+\;\tau_{\dd_3}^+\;\tau_{\dd_2}^+\;\tau_{\dd_1}^+,
\]
where $\dd_1,\dd_2,\dd_3,\dd_4$ are the boundary components of a regular neighborhood of $\aa\cup\bb$. This is a \emph{lantern relation}; see Figure~\ref{fig:lantern_relation}.
\end{example}

\begin{figure}[htbp]
    \centering
    \begin{overpic}[scale=1]{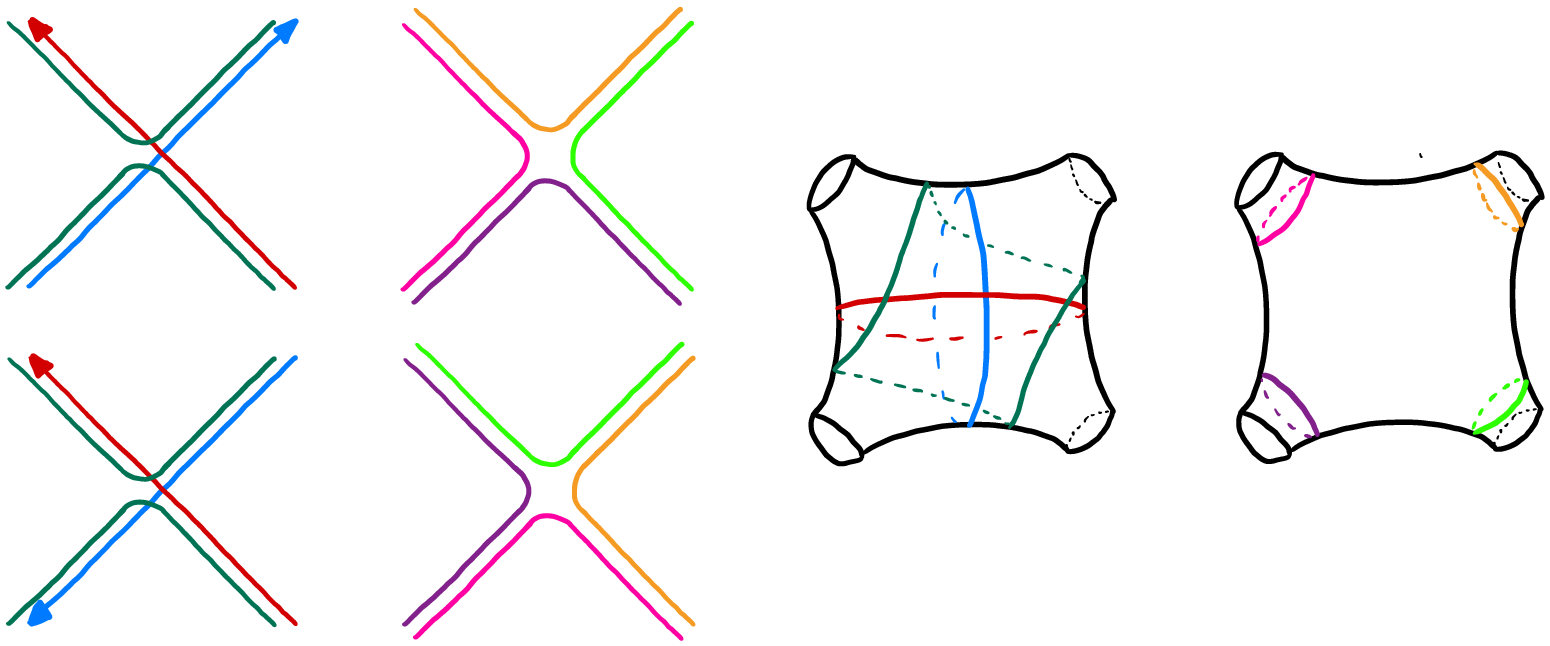}
        \put(5,38){$\aa$}
        \put(17,35){$\bb$}
        \put(12,38){$\aa\bb$}
        \put(31,38){$\dd_1$}
        \put(42,35){$\dd_3$}
        \put(37,24){$\dd_2$}
        \put(26,27){$\dd_4$}
        \put(55,25){$\aa\bb$}
        \put(62,17){$\bb$}
        \put(65,23){$\aa$}
        \put(84,26){$\dd_4$}
        \put(84,16){$\dd_3$}
        \put(93,27){$\dd_1$}
        \put(93,17){$\dd_2$}
    \end{overpic}
    \caption{Left: the curves in a lantern relation near the two intersection points of $\aa$ and $\bb$ (which have opposite sign). Right: a neighborhood of $\aa\cup\bb$ with all the curves drawn.}\label{fig:lantern_relation}
\end{figure}

\begin{example}[Chain relations]\label{ex:chain_relations}
Let $\aa$, $\bb$ and $\cc$ be simple closed curves such that $\aa$ and $\bb$ intersect transversely in one point, $\bb$ and $\cc$ intersect transversely in one point, and $\aa\cap\cc=\emptyset$. Then
\[
\bigl(\tau_\aa^+\;\tau_\bb^+\;\tau_\cc^+\bigr)^4
\;=\;
\tau_{\dd_2}^+\;\tau_{\dd_1}^+,
\]
where $\dd_1$ and $\dd_2$ are the boundary components of a regular neighborhood of $\aa\cup\bb\cup\cc$. This is a \emph{chain relation}; see Figure~\ref{fig:chain_relation}.
\end{example}

\begin{figure}[htbp]
    \centering
    \begin{overpic}[scale=1]{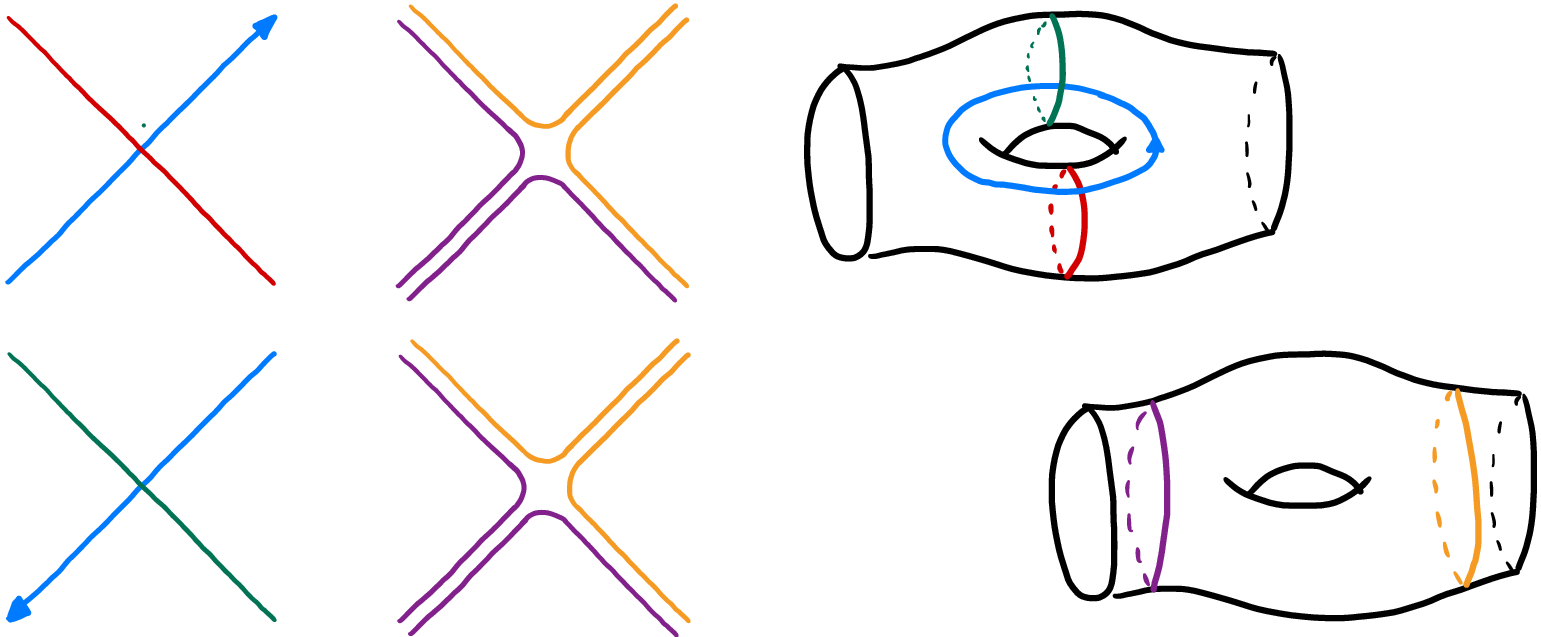}
        \put(4,38){$\aa$}
        \put(13,38){$\bb$}
        \put(4,16){$\cc$}
        \put(31,38){$\dd_1$}
        \put(37,24){$\dd_2$}
        \put(70,37){$\cc$}
        \put(76,32){$\bb$}
        \put(71,26){$\aa$}
        \put(77,12){$\dd_2$}
        \put(90,12){$\dd_1$}
    \end{overpic}
    \caption{Left: the curves in a chain relation near the intersection points of $\aa$ with $\bb$ and of $\bb$ with $\cc$. Right: a neighborhood of $\aa\cup\bb\cup\cc$ with the curves $\aa,\bb,\cc,\dd_1,\dd_2$ drawn.}\label{fig:chain_relation}
\end{figure}

\begin{theorem}[Gervais {\cite[Theorem~B]{Gervais}}]\label{thm:Gervais}
Let $\Sigma$ be a compact, oriented surface of genus $g\geq 2$. Then the mapping class group $\MCG(\Sigma,\partial\Sigma)$ admits the following presentation:
\begin{itemize}
    \item \textbf{Generators:} all Dehn twists $\tau_\aa^\pm$ along nonseparating simple closed curves $\aa$ in $\Sigma$.
    \item \textbf{Relations:}
    \begin{enumerate}[label=(\alph*)]
        \item $\tau_\aa^+ \tau_\aa^- = \tau_\aa^- \tau_\aa^+ = 1$ for all $\aa$ \textup{(cancelling Dehn twists)}.
        \item $\tau_\aa^+ \tau_\bb^+ = \tau_\bb^+ \tau_\aa^+$ whenever $\aa\cap \bb=\emptyset$ \textup{(commutativity relations)}.
        \item All braid relations between Dehn twists along curves intersecting in a single point.
        \item One lantern relation involving only Dehn twists along nonseparating curves.
        \item One chain relation involving only Dehn twists along nonseparating curves.\qed
    \end{enumerate}
\end{itemize}
\end{theorem}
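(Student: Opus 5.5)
The plan is to compare the group defined by the presentation in the statement with a known finite presentation of $\MCG(\Sigma,\partial\Sigma)$, for example Wajnryb's presentation or its multi-boundary extensions due to Matsumoto and Labruère--Paris. Let $G$ be the abstract group generated by symbols $t_\aa^{\pm}$, one for each isotopy class of nonseparating simple closed curve $\aa$, subject to relations (a)--(e). Sending $t_\aa^\pm\mapsto\tau_\aa^\pm$ defines a homomorphism $\Phi\colon G\to\MCG(\Sigma,\partial\Sigma)$, because each of the relations (a)--(e) holds in the mapping class group by Examples~\ref{ex:cancellation_DT}--\ref{ex:chain_relations}. The map $\Phi$ is surjective: Dehn twists along nonseparating curves generate when $g\ge 1$, by the Lickorish/Humphries generators, and boundary-parallel twists are products of nonseparating twists via lantern relations. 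It therefore remains to prove that $\Phi$ is injective.

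The first key step is to establish a \emph{conjugation formula} inside $G$: for every $h\in G$ and every nonseparating curve $\aa$,
\[
h\,t_\aa^+\,h^{-1}=t^+_{\Phi(h)(\aa)}.
\]
It suffices to verify this for $h=t_\bb^{\pm}$ a generator. There are three cases.
\begin{itemize}
\item If $\bb$ is disjoint from $\aa$, the formula is relation (b).
\item If $\bb$ meets $\aa$ once, it is the braid relation (c), rewritten as $t_\bb t_\aa t_\bb^{-1}=t_{\tau_\bb(\aa)}$.
\item If the intersection is larger, I would induct on the geometric intersection number $i(\aa,\bb)$. Using connectedness of the complex of nonseparating curves with edges for curves meeting once, I would factor $\tau_\bb$ as a product of twists along curves meeting $\aa$ in fewer points.
\end{itemize}
Once the formula holds for generators, it follows for all of $G$. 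Consequently, any relation obtained by applying a mapping class to a relation of $G$ again holds in $G$.

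Next, I would use the conjugation formula to show that \emph{all} lantern relations and \emph{all} chain relations among nonseparating curves hold in $G$, not just the single ones imposed in (d) and (e). For this, I classify the configurations up to the action of $\MCG(\Sigma,\partial\Sigma)$. A lantern configuration, meaning the embedded four-holed sphere with its curves, or a chain configuration, meaning a two-holed torus, is determined up to this action by the topological type of its complement, and there are finitely many types. Configurations of the same type are conjugate, so their relations agree by the conjugation formula. Different types can be related by chaining through intermediate configurations that share curves, using lantern relations to move between types; this is where $g\ge2$ is needed, to have room for such intermediate configurations. Finally, I would check that each relation of the chosen finite presentation follows from (a)--(e) together with these derived lantern and chain relations, after rewriting any boundary or separating twists that occur in it as products of nonseparating twists. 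This shows that $\Phi$ has trivial kernel.

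I expect the main obstacle to be the conjugation formula for curves with large intersection number. The induction has to be set up carefully so that the intermediate curves stay nonseparating and the rewriting uses only relations (b) and (c). If the direct induction becomes unwieldy, I would instead realize $G$ as acting on the curve set through $\Phi$ and show that the stabilizers agree. This amounts to a simple-connectivity argument, in the style of Hatcher--Thurston and Wajnryb, applied to a cut-system complex whose 2-cells correspond exactly to the braid, lantern and chain relations.
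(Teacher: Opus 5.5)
The paper does not prove this statement. It is quoted from Gervais and used as a black box, so your outline has to stand on its own.

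\textbf{What works.} Once the conjugation formula $h\,t_\aa^+h^{-1}=t^+_{\Phi(h)(\aa)}$ is available in $G$, the rest is routine. $G$ is then generated by the finitely many twists of a known finite presentation (Wajnryb's, or a multi-boundary version), and those relations hold in $G$. This gives a section $\Psi$ of $\Phi$ that is onto, so $\Phi$ is injective.

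\textbf{The gap.} The conjugation formula for $i(\aa,\bb)\ge 2$ is precisely the substance of the theorem, and you do not prove it.
\begin{itemize}
\item As written, the induction is circular. A factorization of $\tau_\bb$ in $\MCG(\Sigma,\partial\Sigma)$ says nothing about $t_\bb$ in $G$ unless the corresponding identity has already been derived there. Connectivity of the once-intersecting curve graph only produces paths, not relations.
\item An induction on $N=i(\aa,\bb)$ needs a reduction lemma. For example: $\aa=\tau_\dd^{\pm1}(\aa')$ with $i(\dd,\aa')$, $i(\dd,\bb)$, $i(\aa',\bb)$ all $<N$. Then $t_\bb t_\aa t_\bb^{-1}=(t_\bb t_\dd t_\bb^{-1})(t_\bb t_{\aa'}t_\bb^{-1})(t_\bb t_\dd t_\bb^{-1})^{-1}$ can be evaluated by the inductive hypothesis.
\item Surgering $\aa$ along an arc $\beta\subset\bb$ joining consecutive intersection points gives such $\dd,\aa'$ only when $\beta$ returns to the opposite side of $\aa$. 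When it returns to the same side, the two surgered curves are disjoint from $\aa$ and cobound a pair of pants with it, so this reduction stalls. This happens, for instance, whenever the signs of consecutive intersections along $\bb$ alternate. Your sketch does not address this case.
\item Your insistence on using only (b) and (c) is unjustified. If it worked, the conjugation formula would force $\ker\Phi$ to be central, so the group presented by (a)--(c) alone would be a central extension of $\MCG(\Sigma,\partial\Sigma)$. That is a strong claim for which you give no evidence.
\item The fallback does not repair this. ``The stabilizers agree'' is circular, since $\Phi^{-1}(\mathrm{Stab}(x))$ contains $\ker\Phi$. A genuine version means finding a simply connected complex whose $2$-cells are the braid, lantern and chain relations, and presenting vertex stabilizers inductively. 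That is redoing a Hatcher--Thurston/Wajnryb-type argument, not a single step.
\end{itemize}

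\textbf{A smaller point on the lantern/chain step.} Suppose all seven curves of a lantern are nonseparating. Then the complement of the four-holed sphere must be connected; otherwise some $\dd_i$, or whichever of $\aa,\bb,\aa\bb$ realizes the relevant pairing, separates. The complement therefore has genus $g-3$.
\begin{itemize}
\item Such lanterns exist only for $g\ge 3$. For $g=2$, relation (d) cannot even be instantiated, which is consistent with $H_1(\MCG)\cong\mathbb Z/10$ there.
\item For $g\ge 3$ these lanterns form a single orbit under $\MCG(\Sigma,\partial\Sigma)$.
\item Likewise, chain configurations with $\dd_1,\dd_2$ nonseparating have connected complement of genus $g-2$ and form a single orbit.
\end{itemize}
So no ``chaining between types'' is needed, and this is not where the genus hypothesis enters. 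Given the conjugation formula, one lantern and one chain relation yield all of them immediately.
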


\section{Contact structures and Legendrian graphs}\label{sec:Legendrian_graphs}

Let $M$ be a smooth, oriented $3$--manifold. A $1$--form $\alpha\in\Omega^1(M)$ is called a \emph{contact form} if
\[
\alpha\wedge d\alpha>0.
\]
A \emph{contact structure} on $M$ is a rank--$2$ distribution of the form
\[
\xi=\ker\alpha
\]
for some contact form $\alpha$.
On $\R^3$ we define the \textit{standard contact form}
\[
\alphast = dz + x\,dy
\]
and the corresponding \textit{standard contact structure}
\[
\xist=\ker\alphast.
\]
Likewise, on $S^3\subset\R^4$, we consider the \textit{standard contact structure}
\[
\xist = \ker\bigl(x_1dy_1 - y_1dx_1 + x_2dy_2 - y_2dx_2\bigr)\big|_{TS^3}.
\]
For every $p\in S^3$, the punctured sphere $(S^3\setminus\{p\},\xist|_{S^3\setminus\{p\}})$ is contactomorphic to $(\R^3,\xist)$; see \cite[Proposition~2.1.8]{Geiges_book}.
If $\alpha$ is a contact form, its \emph{Reeb vector field} $R_\alpha$ is characterized by the conditions
\[
d\alpha(R_\alpha,-)\equiv 0,
\qquad
\alpha(R_\alpha)\equiv 1.
\]
For the standard form $\alphast$, one has
\[
R_{\alphast}=\partial_z.
\]
A vector field $\X$ on $(M,\xi)$ is called a \emph{contact vector field} if its flow preserves the contact structure~$\xi$.
Let $(M,\xi)$ be a contact $3$--manifold. A smooth curve
\[
r\colon I\longrightarrow M,
\qquad I\subset\R \text{ or } I=S^1,
\]
is called \emph{Legendrian} if its tangent vector lies in the contact planes at every point, that is,
\[
r'(s)\in \xi_{r(s)}
\qquad\text{for all }s\in I.
\]
A properly embedded $1$--dimensional CW-complex is called a \emph{Legendrian graph} if each edge is Legendrian and, at every vertex, the incident edges determine distinct oriented tangencies inside the contact plane. Two Legendrian graphs are said to be \emph{Legendrian isotopic} if they can be joined by a smooth one-parameter family of Legendrian graphs that preserves the cyclic ordering of the edges at each vertex.

For Legendrian graphs in $(\R^3,\xist)$, two projections are particularly useful: the \emph{front projection} to the $(y,z)$--plane and the \emph{Lagrangian projection} to the $(x,y)$--plane. We use the coordinate convention shown in Figure~\ref{fig:coordinate_system}.
Now let
\[
r=(r_x,r_y,r_z)\colon I\to(\R^3,\xist)
\]
be a Legendrian immersion. Its front projection is the plane curve
\[
r_F=(r_y,r_z).
\]
For a generic front projection, every point has one of the local models displayed in Figure~\ref{fig:Legendrian_projection}: a smooth point, a transverse double point, a cusp, or a vertex of a Legendrian graph. Conversely, any planar graph in the $(y,z)$--plane whose local behavior is modeled on Figure~\ref{fig:Legendrian_projection}(a)--(e) admits a unique lift to a generic Legendrian graph in $(\R^3,\xist)$.

\begin{figure}[htbp]
    \centering
    \begin{overpic}[scale=1]{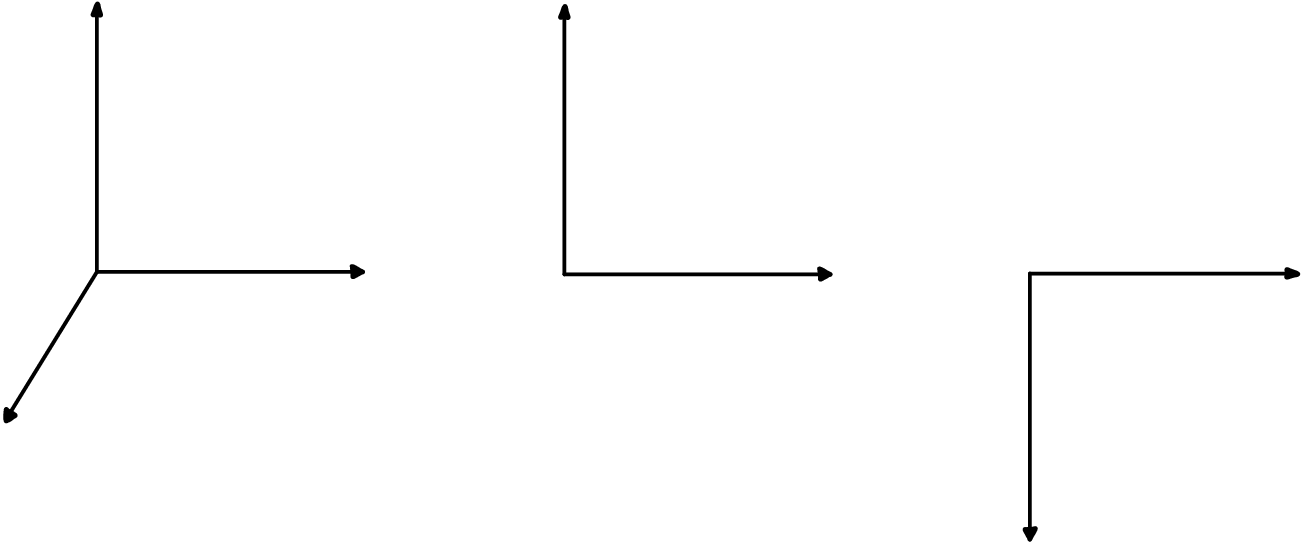}
        \put(-1,11){$x$}
        \put(5,39){$z$}
        \put(25,22){$y$}
        \put(41,39){$z$}
        \put(61,22){$y$}
        \put(77,2){$x$}
        \put(97,22){$y$}
    \end{overpic}
    \caption{Left: coordinates on $\R^3$. Center: the front projection. Right: the Lagrangian projection.}
    \label{fig:coordinate_system}
\end{figure}

\begin{figure}[htbp]
    \centering
    \begin{overpic}[scale=1]{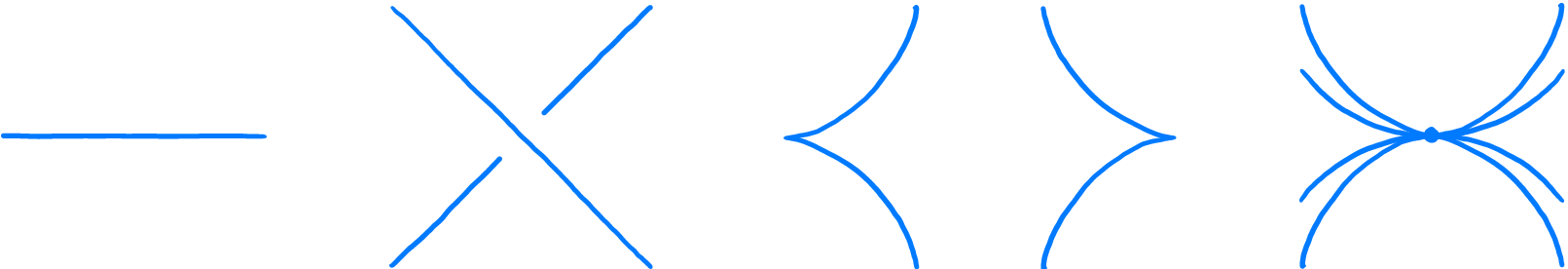}
        \put(-4,15){(a)}
        \put(21,15){(b)}
        \put(47,15){(c)}
        \put(62,15){(d)}
        \put(79,15){(e)}
        \put(83,10){$\boldsymbol\cdot$}
        \put(83,8){$\boldsymbol\cdot$}
        \put(83,6){$\boldsymbol\cdot$}
        \put(99,10){$\boldsymbol\cdot$}
        \put(99,8){$\boldsymbol\cdot$}
        \put(99,6){$\boldsymbol\cdot$}
    \end{overpic}
    \caption{Local models for points in the front projection of a generic Legendrian graph.}
    \label{fig:Legendrian_projection}
\end{figure}

The Lagrangian projection of $r$ is the immersed curve
\[
r_L=(r_x,r_y)
\]
in the $(x,y)$--plane. Once $r_L$ and the value of the $z$--coordinate at one point $r(s_0)$ are known, the full Legendrian curve can be reconstructed from the formula
\[
r_z(s)=r_z(s_0)-\int_{s_0}^s r_x(u)\,r_y'(u)\,du.
\]
In particular, if $r$ is closed, then
\[
\oint_{r_L} x\,dy = 0.
\]
Furthermore, $r$ is embedded precisely when every loop in the Lagrangian projection, except possibly the entire loop in the closed case, encloses nonzero signed area. The local models in the Lagrangian projection corresponding to Figure~\ref{fig:Legendrian_projection} are shown in Figure~\ref{fig:Lagrangian_projection}.

\begin{figure}[htbp]
    \centering
    \begin{overpic}[scale=1]{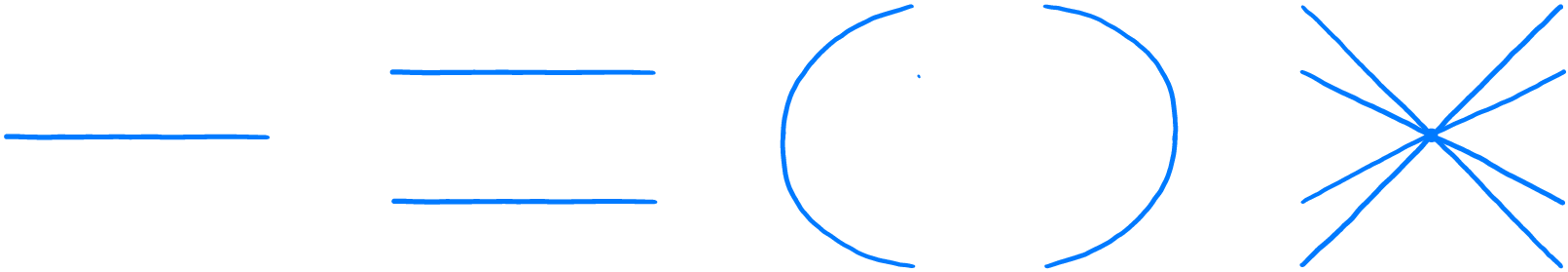}
        \put(-4,15){(a)}
        \put(21,15){(b)}
        \put(47,15){(c)}
        \put(62,15){(d)}
        \put(79,15){(e)}
        \put(83,10){$\boldsymbol\cdot$}
        \put(83,8){$\boldsymbol\cdot$}
        \put(83,6){$\boldsymbol\cdot$}
        \put(99,10){$\boldsymbol\cdot$}
        \put(99,8){$\boldsymbol\cdot$}
        \put(99,6){$\boldsymbol\cdot$}
    \end{overpic}
    \caption{Lagrangian projections corresponding to the front projections in Figure~\ref{fig:Legendrian_projection}. The $y$--coordinate agrees with that of the front projection, and the $x$--coordinate parametrizes the vertical direction (top to bottom is positive).}
    \label{fig:Lagrangian_projection}
\end{figure}

The following two Reidemeister theorems for Legendrian links and Legendrian graphs were first shown in~\cite{Legendrian_Reidemeister} and~\cite[Proposition~4.2]{BM09}.

\begin{theorem}[Swiatkowski \cite{Legendrian_Reidemeister}]\label{thm:Reidemeister_Legendrian}
   Two Legendrian links in $(\R^3,\xist)$ are isotopic if and only if their front projections are related by a sequence of planar isotopies and the Reidemeister moves I, II, III shown in Figure~\ref{fig:Reidemeister_Leg_link}.\qed
\end{theorem}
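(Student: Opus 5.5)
The statement is Swiatkowski's Legendrian Reidemeister theorem. I would prove it by reducing it to the classical Reidemeister theorem for smooth link diagrams, using the front projection as the diagram. The "if" direction is immediate. A planar isotopy of a generic front lifts uniquely to a Legendrian isotopy, since the $x$-coordinate is recovered as $x=-dz/dy$ and varies continuously in families. Each of the moves I, II, III in Figure~\ref{fig:Reidemeister_Leg_link} is realized by an explicit one-parameter family of fronts whose lifts stay embedded, because strands crossing in the front have distinct slopes and hence distinct $x$-values. All the work is in the converse.

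For the converse, take a Legendrian isotopy $L_t$, $t\in[0,1]$, between the two links and consider the one-parameter family of fronts $\pi_F(L_t)$. After a $C^\infty$-small perturbation inside the space of Legendrian isotopies, I would put this family in general position in the sense of singularity theory (Arnold's classification of codimension-one degenerations of Legendrian fronts). Then $\pi_F(L_t)$ is a generic front for all but finitely many times $t_1<\dots<t_k$. Between consecutive $t_i$ the fronts change by a planar isotopy preserving cusps and crossings.

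At each $t_i$ exactly one codimension-one degeneration occurs. The main step is to list these and show that each corresponds to one of the moves I, II, III or its inverse:
\begin{itemize}
\item A vertical tangency cannot occur, since fronts have no vertical tangencies away from cusps.
\item Two strands tangent in the front with equal slope would give an intersection of the Legendrian lift, which is excluded because the isotopy is through embeddings.
\item The remaining local events are: birth or death of a pair of cusps (a swallowtail-type event producing a zig-zag with a crossing), which is move I; a cusp passing through a strand, which is move II; and a triple point of three strands with distinct slopes, which is move III.
\end{itemize}
An important difference from the smooth case is that a smooth tangency of two strands, which gives a smooth Reidemeister II move, is impossible for fronts. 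The slopes would agree, so the lifts would meet. This is why Legendrian move II involves a cusp.

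I expect the main obstacle to be the genericity statement itself. One must show that a generic path in the space of Legendrian embeddings, which is parametrized by the fronts, i.e.\ by functions $z(y)$ with cusp singularities, meets only these strata of the discriminant, and each transversally. I would first transfer the problem to the $1$-jet space $J^1(\R)\cong(\R^3,\xist)$ with Legendrian links as graphs of multivalued functions, and then use the standard Thom transversality argument applied to generating families. Finally, I would check the local normal forms of each stratum, for instance the swallowtail $z^3$-type singularity for the move-I event, to confirm they give precisely the pictured moves up to planar isotopy and reflection.
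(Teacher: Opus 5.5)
The paper does not prove this statement. It is imported from Swiatkowski and closed with \qed, so there is no internal argument to compare with.

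Your outline is the standard general-position (Cerf-type) argument for the Legendrian Reidemeister theorem, and it is sound.
\begin{itemize}
\item The ``if'' direction is as you say.
\item Equal slopes force equal $x$-coordinates. This is exactly what rules out front self-tangencies, and it explains why Legendrian move~II involves a cusp.
\item The surviving codimension-one events are the swallowtail, a cusp passing a strand, and a triple point with pairwise distinct slopes. These are precisely moves I, II, III up to reflection.
\end{itemize}
Citing buys the paper brevity. Your route buys a self-contained proof, at the price of the transversality theory, which you correctly identify as the real content.

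Several points need tightening.
\begin{itemize}
\item This is not a reduction to the classical Reidemeister theorem, and it cannot be one. Smoothly isotopic Legendrian links need not be Legendrian isotopic (for instance a knot and its stabilization). You must start from a Legendrian isotopy and rerun the general-position argument in the Legendrian category, which is what your second paragraph actually does.
\item Global generating families need not exist for Legendrian links in $J^1(\R)$; stabilized knots admit none. The transversality argument should therefore be run locally:
  \begin{itemize}
  \item where $y'\neq 0$, a Legendrian perturbation is just a perturbation of the local function $z(y)$;
  \item near cusps, one uses a local generating function;
  \item every relevant degeneration involves at most three points of the link, so multijet transversality for one-parameter families, with perturbations supported near finitely many points, suffices.
  \end{itemize}
\item If you quote Arnold's list of perestroikas of plane fronts, it also contains lips and beak-to-beak events. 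You must exclude these using the fact that the family consists of immersions of a fixed link.
  \begin{itemize}
  \item Wherever $y'=0$ one has $x'\neq 0$. Hence the only codimension-one degeneration of a single branch is $y'=y''=0$.
  \item Its transverse unfolding $y=s^3+a(t)s$ yields exactly the swallowtail, with two cusps and one crossing.
  \item A strand passing through a cusp point with the cusp's tangent slope is excluded for the same reason as a tangency: the lifts would meet.
  \end{itemize}
\item ``Planar isotopy'' has to be read as an isotopy through fronts, with no vertical tangencies, including at cusps. This is exactly what the family provides between the singular times.
\end{itemize}
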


\begin{figure}[htbp]
    \centering
    \begin{overpic}[scale=1]{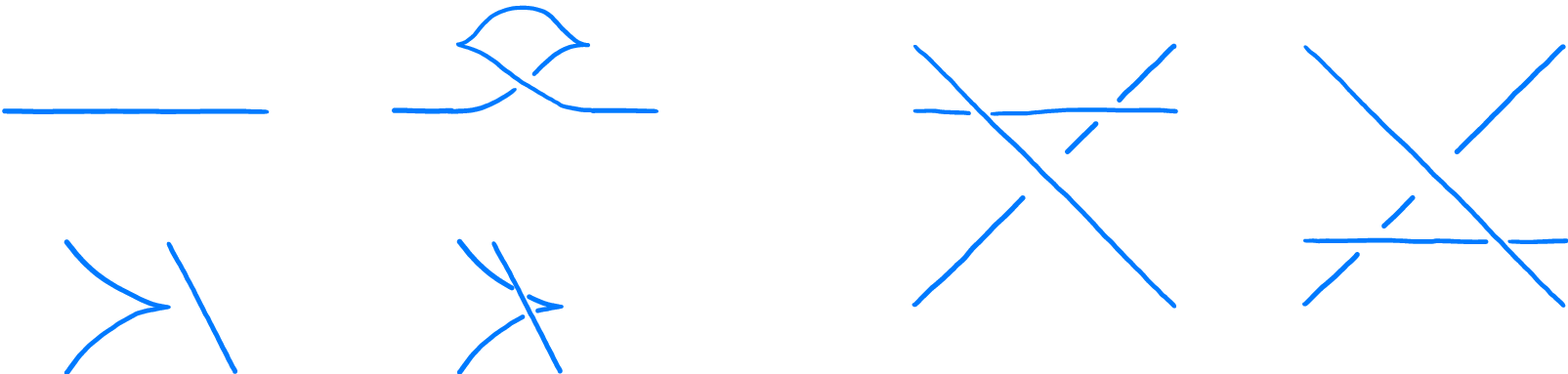}
        \put(20,16){$\xleftrightarrow[]{\mathrm{I}}$}
        \put(20,4){$\xleftrightarrow[]{\mathrm{II}}$}
        \put(77,12){$\xleftrightarrow[]{\mathrm{III}}$}
    \end{overpic}
    \caption{Legendrian Reidemeister moves for Legendrian links. Horizontal and vertical reflections of these moves are also allowed, with crossings adjusted appropriately. No other Legendrian strand of the diagram is allowed to appear in these pictures.}\label{fig:Reidemeister_Leg_link}
\end{figure}

\begin{theorem}[Baader--Ishikawa \cite{BM09}]\label{thm:Reidemeister_graph}
    Two Legendrian graphs in $(\R^3,\xist)$ are isotopic if and only if their front projections are related by a sequence of planar isotopies and the moves I, II, III, together with the additional moves $\mathrm{II}_G$ and $R$ shown in Figure~\ref{fig:Reidemeister_Leg_graph}.\qed
\end{theorem}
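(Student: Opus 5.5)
The plan is to deduce the graph statement from the link statement, Theorem~\ref{thm:Reidemeister_Legendrian}, by a local analysis near the vertices. This mirrors the classical proof of Reidemeister's theorem for spatial graphs.

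The "if" direction is immediate. Planar isotopies and the moves I, II, III lift to Legendrian isotopies of the graph away from the vertices. The moves $\mathrm{II}_G$ (a strand passing over or under a vertex) and $R$ (rotating the cyclic order of the edges at a vertex in the front) can each be realised by an explicit local Legendrian isotopy. To check this, one writes down a one-parameter family of Legendrian germs near the vertex. In the first case the family translates a nearby strand in the $z$-direction past the vertex. In the second it rotates the tangent directions of the edges inside the contact plane $\xi_p$ while keeping them distinct and preserving their cyclic order. The unique-lift property of generic fronts recorded before Figure~\ref{fig:Lagrangian_projection} then shows that these local families are Legendrian.

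For the "only if" direction, I start from a Legendrian isotopy $G_t$ and make it generic. First I put every vertex in a standard local model: after a contact isotopy supported near the vertex, the edges are straight Legendrian segments through $p$ with distinct slopes in the front, as in Figure~\ref{fig:Legendrian_projection}(e). Next I perturb the family so that the front of $G_t$ is generic except at finitely many times. At each of these times exactly one codimension-one degeneracy occurs. Away from the vertices, the degeneracies are the standard ones for Legendrian links, namely a cusp birth or death, a tangency of two strands, or a triple point. These give moves I, II, III by the argument behind Theorem~\ref{thm:Reidemeister_Legendrian}.

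The new codimension-one events involve a vertex, and there are two kinds. In the first, another strand passes through the vertex in the front; this is move $\mathrm{II}_G$. In the second, one edge at the vertex becomes vertical in the front, meaning its tangent direction is $\partial_x$, which is invisible in the front projection. As the isotopy passes through this moment, the edge moves from one side of the vertex to the other and develops or loses a cusp adjacent to the vertex; this is move $R$. Other possible degeneracies, such as two edges at a vertex becoming tangent to each other, are excluded by the definition of a Legendrian graph, which requires distinct tangencies.

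I expect the main obstacle to be this classification of vertex degeneracies. In particular, I must check that the local model of an edge sweeping through the vertical direction really matches the front picture of $R$. The plan is to parametrise the edge germs near the vertex by their slopes $dz/dy = -x$ and observe that the vertical direction corresponds to slope $\pm\infty$. When the slope passes through infinity the edge is forced to reverse its $y$-direction, which produces the cusp shown in $R$. The remaining issue is transversality, namely that a generic one-parameter family meets each of these strata transversally. This follows from a standard jet-transversality argument applied to the finitely many vertex germs.
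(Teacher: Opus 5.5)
The paper does not prove this statement; it quotes it from Baader--Ishikawa~\cite{BM09}, so the comparison is with the natural argument. Your architecture is the right one: a generic one-parameter family, Theorem~\ref{thm:Reidemeister_Legendrian} away from vertices, and a classification of codimension-one events at vertices. Your final list of vertex events is also correct.

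There is, however, a genuine gap in the local analysis that is supposed to justify that list, which you rightly call the crux. It rests on a wrong picture of a vertex in the front. The front projection kills $\partial_x$, so it maps the whole contact plane $\xi_p$ onto the single line of slope $-x(p)$. Hence \emph{all} edges at a vertex have the same front slope at $p$.

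This makes your normal form impossible. A straight front segment of slope $s$ lifts to an arc with constant $x=-s$. So straight front segments through $p$ with distinct slopes have different $x$-coordinates there and form a crossing, not a vertex. For the same reason, parametrising edge germs by $dz/dy=-x$ gives the same value for every edge at $p$, so it cannot detect the degeneration behind move $\mathrm{R}$.

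What distinguishes the edges is their direction in $\xi_p$, i.e.\ $dx/dy\in\R\cup\{\infty\}$. This quantity is minus the front curvature at $p$ and equals the slope in the Lagrangian projection. The correct normal form is straight segments with distinct slopes in the Lagrangian projection. Their fronts are parabolas tangent at $p$ with distinct curvatures, as in Figure~\ref{fig:Legendrian_projection}(e).

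With this correction the classification goes through. Distinct tangencies forbid two edges on the same side from exchanging their curvature order. So there are only two vertex events:
\begin{itemize}
\item a strand passing through $p$ in the front (your $\mathrm{II}_G$). This is fine: generically that strand has slope $\neq -x(p)$, so it passes entirely over or entirely under the bundle of edges;
\item $dx/dy$ of one edge passing through $\infty$, i.e.\ its direction passing through $\pm\partial_x$.
\end{itemize}
At the instant of the second event, the edge's front is a semicubical half-cusp at $p$. Across it, for example, the lowest right edge (curvature $\to-\infty$) becomes the highest left edge (curvature $\to+\infty$). It is followed by a new cusp whose returning branch crosses all the other left edges and passes below $p$ before continuing to the right. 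This is the picture you must match with $\mathrm{R}$.

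Two smaller points. First, at the singular instant the front is not one of the generic models (a)--(e), so the unique-lift property does not by itself certify the family in your ``if'' direction. Write it in the Lagrangian projection instead, where it is just the rotation of one straight ray. Second, in your list of link events, ``a tangency of two strands'' cannot occur in a front during a Legendrian isotopy, since it would be a double point in $\R^3$. The front version of move~II is a cusp passing through another strand.
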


\begin{figure}[htbp]
    \centering
    \begin{overpic}[scale=1]{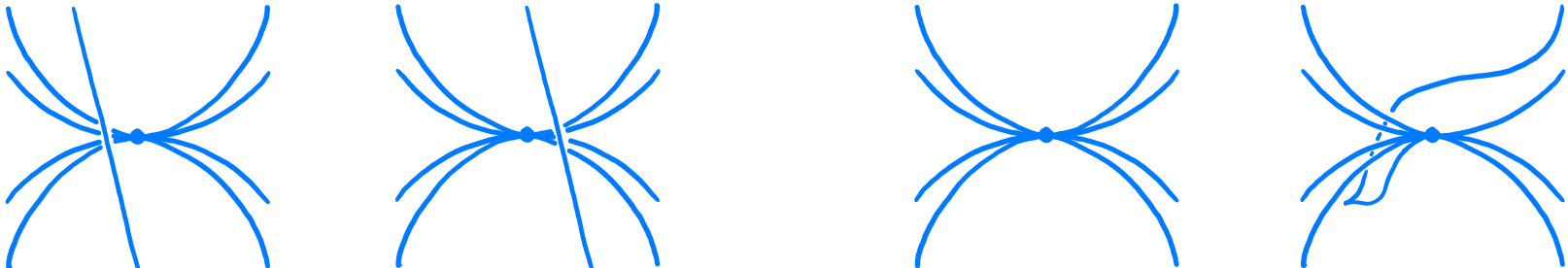}
        \put(19,8){$\xleftrightarrow[]{\mathrm{II}_G}$}
        \put(77,8){$\xleftrightarrow[]{\mathrm{R}}$}
        \put(83,10){$\boldsymbol\cdot$}
        \put(83,8){$\boldsymbol\cdot$}
        \put(83,6){$\boldsymbol\cdot$}
        \put(99,10){$\boldsymbol\cdot$}
        \put(99,8){$\boldsymbol\cdot$}
        \put(99,6){$\boldsymbol\cdot$}
    \end{overpic}
    \caption{Additional Legendrian Reidemeister moves for Legendrian graphs. Horizontal and vertical reflections of these moves are again allowed.}\label{fig:Reidemeister_Leg_graph}
\end{figure}

\section{Convex surfaces}\label{sec:convex}

Let $\Sigma$ be an oriented surface embedded in a contact manifold $(M,\xi)$. We denote by $\Sigma_\xi$ the characteristic foliation induced on~$\Sigma$. The surface $\Sigma$ is called \emph{convex} if there exists a contact vector field $\X$ which is everywhere transverse to~$\Sigma$. An equivalent formulation is that $\Sigma$ admits a neighborhood of the form $\Sigma\times(-\varepsilon,\varepsilon)$ in which the contact structure is invariant under translation in the second factor. In such a vertically invariant neighborhood, one may write the contact structure as the kernel of a $1$--form
\[
\beta+u\,dt,
\]
where $\beta$ and $u$ are the pull-backs, via the projection $\Sigma\times(-\varepsilon,\varepsilon)\to\Sigma$, of a $1$-form and a smooth function on~$\Sigma$, respectively, and where $t\in(-\varepsilon,\varepsilon)$ is the coordinate in the second factor.

The \emph{dividing set} $\Gamma_\Sigma$ consists of those points of $\Sigma$ at which the chosen transverse contact vector field lies in the contact planes. In the vertically invariant model above, this is exactly the zero set of the function~$u$. The \emph{positive region} of $\Sigma$ is where $u>0$, while the \emph{negative region} where $u<0$.

A singular foliation $\mathcal F$ on $\Sigma$ is said to be \emph{divided} by a multicurve $\Gamma$ if $\Gamma$ is transverse to $\mathcal F$ and there exists an area form $\omega$ on $\Sigma$ and a vector field $v$ generating $\mathcal F$ such that $\mathcal L_v\omega$ is nowhere zero on $\Sigma\setminus\Gamma$, with $v$ pointing out of the positive region and into the negative region along~$\Gamma$. Whenever $\Sigma$ is convex, its characteristic foliation is divided by~$\Gamma_\Sigma$.

Giroux's flexibility theorem asserts that, once the dividing set is fixed, the characteristic foliation may be altered arbitrarily within the class of singular foliations divided by it.

\begin{theorem}[Giroux's flexibility theorem \cite{Giroux91,Etnyre_VHM11}]\label{thm:Giroux_flexibility}
Let $\Sigma$ be a convex surface in a contact manifold $(M,\xi)$ with respect to a contact vector field $\X$. If $\Sigma$ has boundary, assume that $\partial\Sigma$ coincides with the dividing set $\Gamma_\Sigma$. Let $\mathcal{F}$ be a singular foliation on $\Sigma$ divided by $\Gamma_\Sigma$, and let $\mathcal{N}(\Sigma)$ be any neighborhood of $\Sigma$ of the form $\Sigma\times(-\varepsilon,\varepsilon)$ in $M$. Then there exists an isotopy
\[
\psi_t\colon \Sigma \longrightarrow \mathcal{N}(\Sigma), \qquad t\in[0,1],
\]
of embeddings such that:
\begin{itemize}
    \item $\psi_0$ is the inclusion $\Sigma\hookrightarrow M$;
    \item for all $t\in[0,1]$, the surface $\psi_t(\Sigma)$ is convex with respect to $\X$ and has dividing set $\psi_t(\Gamma_\Sigma)$;
    \item the characteristic foliation $(\psi_1(\Sigma))_\xi$ coincides with $\psi_1(\mathcal{F})$.
\end{itemize}
Moreover, if $\mathcal{F}$ agrees with $\Sigma_\xi$ on a neighborhood of $\Gamma_\Sigma$ in $\Sigma$, then the isotopy may be chosen to be fixed on a sufficiently small neighborhood of $\Gamma_\Sigma$.\qed
\end{theorem}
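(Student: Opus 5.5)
The plan is the classical Giroux argument. Work in a vertically invariant neighborhood $\Sigma\times(-\varepsilon,\varepsilon)$ where $\xi=\ker(\beta+u\,dt)$, and reduce everything to a statement about pairs $(\beta,u)$ on $\Sigma$. The key observation is that $\beta+u\,dt$ is a contact form if and only if
\[
u\,d\beta+\beta\wedge du>0 \quad\text{on }\Sigma ,
\]
and that this condition is linear in $\beta$ for fixed $u$ and linear in $u$ for fixed $\beta$. Moreover, the characteristic foliation of the slice $\Sigma\times\{0\}$ is $\ker\beta$, and the dividing set is $\{u=0\}$. So it suffices to do two things: produce a pair $(\beta',u')$ satisfying the inequality with $\ker\beta'=\mathcal F$ and $\{u'=0\}=\Gamma_\Sigma$, and connect it to $(\beta,u)$ through contact pairs with the same zero set of~$u$.

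\textbf{Step 1 (realizing $\mathcal F$).} Since $\mathcal F$ is divided by $\Gamma_\Sigma$, choose the area form $\omega$ and vector field $v$ from the definition, and set $\beta':=\iota_v\omega$. Then $\ker\beta'=\mathcal F$ and $d\beta'=(\operatorname{div}_\omega v)\,\omega$, with $\operatorname{div}_\omega v\neq0$ off $\Gamma_\Sigma$. Fixing sign conventions so that the divergence is positive on the positive region, we get
\[
u'\,d\beta'+\beta'\wedge du' = \bigl(u'\operatorname{div}_\omega v - du'(v)\bigr)\,\omega .
\]
Choose $u'$ with zero set $\Gamma_\Sigma$, of the right sign on each region, and with $du'(v)<0$ near $\Gamma_\Sigma$. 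This is possible because $v$ is transverse to $\Gamma_\Sigma$ and points out of the positive region. Then the bracket is positive: near $\Gamma_\Sigma$ the second term dominates, and away from $\Gamma_\Sigma$ both terms have the right sign after choosing $u'$ to be locally constant there.

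\textbf{Step 2 (connecting the data).} First interpolate $u$ to $u'$ with $\beta$ fixed, and then $\beta$ to $\beta'$ with $u'$ fixed. Along each linear segment the inequality is preserved. For the $u$-segment, one may have to first rescale $u'$ or pass through an intermediate $u''$ adapted to both $\beta$ and $\beta'$ near $\Gamma_\Sigma$. Note that $\beta$ already satisfies the analogous condition for $\Sigma_\xi$. The zero set stays equal to $\Gamma_\Sigma$ throughout, since all the $u$'s have the same sign on each region.

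\textbf{Step 3 (Gray stability).} This yields a path $\alpha_s=\beta_s+u_s\,dt$ of $t$-invariant contact forms on $\Sigma\times(-\varepsilon,\varepsilon)$, and $\partial_t$ is a contact vector field for every $s$. Apply Gray's theorem, or Moser's trick, with the vector field solving $\alpha_s(Y_s)=0$ and $\iota_{Y_s}d\alpha_s|_{\xi_s}=-\dot\alpha_s|_{\xi_s}$. This produces an isotopy $\phi_s$ with $\phi_s^*\xi_s=\xi_0$, supported near $\Sigma\times\{0\}$ after cutting off in~$t$. Setting $\psi_s:=\phi_s^{-1}|_{\Sigma\times\{0\}}$ gives surfaces whose characteristic foliation is the pullback of $\ker\beta_s$, ending at $\mathcal F$.

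\textbf{Main obstacle.} I expect the hardest part to be keeping the conclusion ``convex with respect to the same $\X$, with dividing set $\psi_t(\Gamma_\Sigma)$'' and the relative version. To handle convexity, I would arrange that $\phi_s$ commutes with $\partial_t$; this holds because $\dot\alpha_s$ is $t$-invariant, so $Y_s$ is too, apart from the cutoff region away from $t=0$. For the relative version, I would choose $\beta'=\beta$ and $u'=u$ on a neighborhood of $\Gamma_\Sigma$, where $\mathcal F=\Sigma_\xi$. Then $\dot\alpha_s=0$ there, so $Y_s=0$ there and the isotopy is fixed near $\Gamma_\Sigma$. The same argument keeps it fixed near $\partial\Sigma=\Gamma_\Sigma$ in the bounded case.
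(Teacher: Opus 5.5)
\textbf{Gap 1: the isotopy need not stay in the given neighborhood.} Your route is the classical Giroux argument: the $t$-invariant model, convexity of $u\,d\beta+\beta\wedge du>0$ in $\beta$ and in $u$ separately, and a Gray/Moser isotopy with $t$-invariant vector field so that the moved surfaces are graphs. The paper only cites the result. The genuine gap is in Step~3.

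The statement requires the isotopy to lie in the \emph{given} neighborhood $\Sigma\times(-\varepsilon,\varepsilon)$, with $\varepsilon$ arbitrary. You never control how far the Gray flow moves $\Sigma\times\{0\}$ in the $t$-direction. That displacement is determined by $(\beta,u)$, $(\beta',u')$ and the chosen path, not by $\varepsilon$. It is positive whenever $\mathcal F$ is not already isotopic to $\Sigma_\xi$, because every horizontal slice $\Sigma\times\{c\}$ carries $\Sigma_\xi$.

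So for small $\varepsilon$ the backward trajectories from $\Sigma\times\{0\}$ enter the region where you cut $Y_s$ off. There $\phi_s^*\xi_s=\xi_0$ fails and $Y_s$ is no longer $t$-invariant. Neither your identification of the characteristic foliation nor the graph (hence convexity) argument survives. Your phrase ``apart from the cutoff region away from $t=0$'' silently assumes the displacement is smaller than the cutoff scale.

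A clean fix has two parts:
\begin{enumerate}
\item Run Gray on the whole abstract model $\Sigma\times\mathbb R$ without cutoff; the flow exists since $Y_s$ is $t$-invariant and $\Sigma$ is compact. This gives graphs $S_s=\phi_s^{-1}(\Sigma\times\{0\})$.
\item Compress them with a global contact dilation. Let $\Omega=u\,d\beta+\beta\wedge du$, define $v_\beta$ by $\iota_{v_\beta}\Omega=\beta$, and set $Z=-u\,v_\beta-t\,\partial_t$. A direct computation gives $\mathcal L_Z(\beta+u\,dt)=-(1+du(v_\beta))(\beta+u\,dt)$, so $Z$ is contact. Its flow is $\Phi_T(x,t)=(\varphi_T(x),e^{-T}t)$, where $\varphi_T$ is the flow of $-u\,v_\beta$, which vanishes on $\Gamma_\Sigma$. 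It sends the graph of $g$ to the graph of $e^{-T}g\circ\varphi_T^{-1}$, and it scales $\partial_t$ by $e^{-T}$.
\end{enumerate}
Then $T'\mapsto\Phi_{T'}|_{\Sigma\times\{0\}}$ for $T'\in[0,T]$, followed by $s\mapsto\Phi_T(S_s)$, is an isotopy through convex surfaces with the correct dividing sets, ending with foliation $\psi_1(\mathcal F)$. It lies in $\Sigma\times(-\varepsilon,\varepsilon)$ once $T$ is large. For the relative statement you must still reparametrize along the leaves of $\Sigma_\xi=\mathcal F$ near $\Gamma_\Sigma$, since $\varphi_T$ slides points along them.

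\textbf{Gap 2: Step~2 fails as written.} The pair $(\beta,u')$ need not be contact. Where $u'$ is locally constant you need $u'\,d\beta>0$, but a defining form of $\Sigma_\xi$ may be closed on open subsets of $\Sigma\setminus\Gamma_\Sigma$, with the contact condition carried by $\beta\wedge du$ there. Your fallback $u''$ is only adapted near $\Gamma_\Sigma$. The remedy is to skip the $u$-interpolation entirely. Since $fu\,d(f\beta)+f\beta\wedge d(fu)=f^2(u\,d\beta+\beta\wedge du)$, the pair $((u'/u)\beta,u')$ is contact and defines the same contact structure as $(\beta,u)$. Here $u'/u$ extends smoothly and positively over $\Gamma_\Sigma$, because $u$ and $u'$ both vanish transversally there with the same signs. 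Then interpolate only in $\beta$.

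\textbf{Two smaller points.}
\begin{itemize}
\item In the relative case, ``choose $\beta'=\beta$, $u'=u$ near $\Gamma_\Sigma$'' needs an actual gluing in a collar, because the derivative of the cutoff enters the contact condition. Writing both structures as $\ker(d\theta+\rho\,dt)$ in the collar, one needs a decreasing interpolation of the two functions $\rho$; this exists after scaling $u'$ by a large constant.
\item In the bounded non-relative case, first isotope $\mathcal F$ to agree with $\Sigma_\xi$ near $\partial\Sigma$. Otherwise $Y_s$ need not be tangent to $\partial\Sigma\times\mathbb R$.
\end{itemize}
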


Now let $\Sigma$ be a convex surface, and in the case $\partial\Sigma\neq\emptyset$ assume again that $\partial\Sigma=\Gamma_\Sigma$. A graph $C\subset\Sigma$ is called \emph{nonisolating}\footnote{This is the notion of nonisolating that is relevant for our purposes. For convex surfaces with Legendrian boundary, or for convex surfaces with transverse boundary and dividing set disjoint from the boundary, the definition is different.} if it satisfies the following conditions.
\begin{itemize}
    \item $C$ is properly embedded in $\Sigma$;
    \item $C$ is disjoint from $\partial\Sigma$;
    \item $C$ is transverse to the dividing set $\Gamma_\Sigma$ whenever the two intersect; and
    \item every connected component of $\Sigma\setminus C$ has nonempty intersection with $\Gamma_\Sigma$.
\end{itemize}

\begin{theorem}[Legendrian realization principle \cite{Kanda1998,Honda_classification,Etnyre04}]\label{thm:Legendrian_realization}
Let $\Sigma$ be a convex surface in a contact manifold $(M,\xi)$ with respect to a contact vector field $\X$. If $\Sigma$ has boundary, assume that $\partial\Sigma$ coincides with the dividing set $\Gamma_\Sigma$. Let $C\subset\Sigma$ be a nonisolating graph, and let $\mathcal{N}(\Sigma)$ be any neighborhood of $\Sigma$ of the form $\Sigma\times(-\varepsilon,\varepsilon)$ in $M$. Then there exists an isotopy
\[
\psi_t\colon \Sigma \longrightarrow \mathcal{N}(\Sigma),\qquad t\in[0,1],
\]
of embeddings such that
\begin{itemize}
    \item $\psi_0$ is the inclusion $\Sigma\hookrightarrow M$, and $\psi_t$ is fixed on a neighborhood of $\Gamma_\Sigma$;
    \item for all $t\in[0,1]$, the surface $\psi_t(\Sigma)$ is convex with respect to $\X$ and has dividing set $\Gamma_{\psi_t(\Sigma)}=\Gamma_\Sigma$;
    \item the graph $\psi_1(C)\subset \psi_1(\Sigma)$ is Legendrian.\qed
\end{itemize}
\end{theorem}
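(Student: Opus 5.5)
The plan is to reduce the statement to Giroux's flexibility theorem (Theorem~\ref{thm:Giroux_flexibility}). We will construct a singular foliation $\mathcal F$ on $\Sigma$ that is divided by $\Gamma_\Sigma$, agrees with $\Sigma_\xi$ near $\Gamma_\Sigma$, and contains $C$ as a union of leaves and singular points. Flexibility then gives an isotopy $\psi_t$ inside $\mathcal N(\Sigma)$, through surfaces convex for $\X$ with dividing set $\Gamma_\Sigma$ and fixed near $\Gamma_\Sigma$, such that $(\psi_1(\Sigma))_\xi=\psi_1(\mathcal F)$. Since every leaf of a characteristic foliation is tangent to $\xi$, the image $\psi_1(C)$ is then Legendrian. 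When $\partial\Sigma=\Gamma_\Sigma$, we may take $\mathcal F=\Sigma_\xi$ on a collar of the boundary, because $C$ avoids $\partial\Sigma$.

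\emph{Step 1: the foliation near $C$.} After a small isotopy of $C$ fixed near $\Gamma_\Sigma$, we may assume $C$ meets $\Gamma_\Sigma$ transversely in finitely many points, away from the vertices. We choose a vector field $v$ tangent to $C$ as follows. Every vertex of $C$, and one interior point of every edge-segment not containing a vertex, becomes a zero of $v$. These zeros are elliptic sources (positive singularities) if they lie in $R_+$ and sinks (negative singularities) if they lie in $R_-$. Along each segment, $v$ flows from the positive zeros toward $\Gamma_\Sigma$, crosses $\Gamma_\Sigma$ transversely from $R_+$ into $R_-$, and ends at the negative zeros. Hyperbolic zeros may be inserted where needed to keep the signs consistent along edges. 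We then extend $v$ to a thin regular neighbourhood $N(C)$ and choose an area form $\omega$ there with $\mathcal L_v\omega>0$ on $R_+$ and $\mathcal L_v\omega<0$ on $R_-$; this is possible because the zeros have the right types. Near $\Gamma_\Sigma$ we take $v$ to agree with a generator of $\Sigma_\xi$.

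\emph{Step 2: extension over the complement (main obstacle).} Let $P$ be a component of $\Sigma\setminus N(C)$. On $\partial P\cap R_+$ the field $v$ points into $P$, and on $\partial P\cap R_-$ it points out of $P$. A divided vector field must have positive divergence on $R_+$. If $P$ were contained entirely in $R_+$, the flow would have no outlet, and a positive-divergence field entering through the whole boundary contradicts Stokes' theorem. This is exactly where the nonisolating hypothesis enters: it gives $P\cap\Gamma_\Sigma\neq\emptyset$.

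Given that, I would first try the following construction. Take a function $f$ on $P$ with $\Gamma_\Sigma\cap P=f^{-1}(0)$, with $f>0$ on $R_+$ and $f<0$ on $R_-$, and build $v$ on $P$ as the gradient-like field of a Morse function that decreases from $R_+$ to $R_-$. This field should have sources only in $R_+$ and sinks only in $R_-$, and should interpolate with the boundary data from $N(C)$ and with $\Sigma_\xi$ near $\Gamma_\Sigma$. The area form can then be rescaled on each side, in the standard way, to realize the required divergence signs. Checking that this interpolation can always be done compatibly with the prescribed boundary behaviour is the delicate part. If a direct construction is awkward, the fallback is to cite the standard argument of Kanda and Honda for exactly this step.

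\emph{Step 3: conclusion.} The resulting $\mathcal F$ is divided by $\Gamma_\Sigma$, agrees with $\Sigma_\xi$ near $\Gamma_\Sigma$, and contains $C$ in its leaves. The ``moreover'' clause of Theorem~\ref{thm:Giroux_flexibility} then yields the required isotopy, fixed near $\Gamma_\Sigma$, with $\psi_1(C)$ Legendrian. This proves Theorem~\ref{thm:Legendrian_realization}.
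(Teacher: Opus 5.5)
The paper gives no proof of this theorem; it only cites Kanda, Honda and Etnyre. Your reduction to Giroux flexibility is the standard argument behind those references: you build a foliation $\mathcal F$ divided by $\Gamma_\Sigma$ that contains $C$ as a union of leaves and singular points. In the bordered case $\partial\Sigma=\Gamma_\Sigma$, which is the only case the paper uses later, your argument works, and Step~2 is not delicate:

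\begin{itemize}
\item $C\cap\Gamma_\Sigma=\emptyset$, and each component of $\operatorname{int}\Sigma$ lies in a single region, say $R_+$.
\item So every component $P$ of $\Sigma\setminus\operatorname{int}N(C)$ is a cobordism from $P\cap\partial N(C)$ to $P\cap\partial\Sigma$, and its outgoing end is nonempty because $C$ is nonisolating.
\item Such a $P$ carries a Morse function without interior local maxima.
\item Take a gradient-like field for it with positive trace at the saddles, matching your $v$ near $\partial N(C)$ and a generator of $\Sigma_\xi$ near $\partial\Sigma$. It has positive divergence for a suitable area form: rescale a fixed one by $e^{Kf}$, with $f$ a Lyapunov function and $K\gg0$.
\end{itemize}

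The gap is the closed case with $C\cap\Gamma_\Sigma\neq\emptyset$. Step~1 requires $v$ to be tangent to $C$ and, near $\Gamma_\Sigma$, to be a generator of $\Sigma_\xi$. This forces $C$ to be tangent to $\Sigma_\xi$ near every point of $C\cap\Gamma_\Sigma$. The nonisolating hypothesis only gives transversality to $\Gamma_\Sigma$, and an isotopy fixed near $\Gamma_\Sigma$ cannot arrange tangency.

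In fact no argument can, because the first bullet is too strong for closed $\Sigma$. The foliation $\Sigma_\xi$ is nonsingular near $\Gamma_\Sigma$. So if $\psi_t$ is the identity on a neighbourhood $U$ of $\Gamma_\Sigma$, then $\psi_1(C)\cap U=C\cap U$ is Legendrian only if $C$ was already tangent to $\Sigma_\xi$ on $U$.

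The correct closed-case statement requires only $\psi_t|_{\Gamma_\Sigma}=\mathrm{id}$. To prove it, first isotope $\Sigma$ within itself, supported near $C\cap\Gamma_\Sigma$ and fixing $\Gamma_\Sigma$ pointwise; a shear in flow-box coordinates for $\Sigma_\xi$ does this. Afterwards $C$ runs along leaves of $\Sigma_\xi$ near $\Gamma_\Sigma$, and your Steps~1--3 apply.

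In Step~2 of that case you also need more than $P\cap\Gamma_\Sigma\neq\emptyset$. Every component of $P\cap R_+$ and of $P\cap R_-$ must meet $\Gamma_\Sigma$. This follows from the connectedness of $P$, and it is what rules out forced sinks in $R_+$ and forced sources in $R_-$.
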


\begin{theorem}[Giroux's surface neighborhood theorem~\cite{Giroux91}]\label{thm:neigh_thm}
Let $S_i$ be compact oriented surfaces embedded in contact $3$--manifolds $(M_i,\xi_i)$, for $i=0,1$, and let
\[
\phi: S_0 \to S_1
\]
be a diffeomorphism which identifies the characteristic foliations induced by $\xi_0$ and $\xi_1$ (with orientation). Then there is a contactomorphism of neighborhoods
\[
\psi : \mathcal{N}(S_0) \to \mathcal{N}(S_1),
\]
sending $S_0$ to $S_1$, and such that on $S_0$ the restriction $\psi|_{S_0}$ is isotopic to $\phi$ via an isotopy preserving the characteristic foliation.\qed
\end{theorem}

\section{Contact surgery, open books, and Giroux's correspondence}\label{sec:surgery_OB_Giroux}

Let \(L\) be a Legendrian knot in a contact \(3\)-manifold \((M,\xi)\), and let \(\mathcal N(L)\) be a standard neighborhood of \(L\). Thus \(\mathcal N(L)\) is a solid torus with convex boundary, and the dividing set on \(\partial\mathcal N(L)\) determines the \emph{contact framing} of \(L\). Equivalently, the contact framing is the framing induced by a Legendrian push-off of \(L\).

Given a rational number \(r\), one may remove the interior of \(\mathcal N(L)\) and glue back a solid torus by topological surgery with coefficient \(r\) measured with respect to the contact framing. To perform a contact surgery, we want to extend the contact structure from the knot exterior over the newly glued-in solid torus such that the contact structure restricted to the solid torus is tight. For coefficients of the form \(1/n\), with \(n\in\mathbb Z\), this extension is unique up to isotopy, and the resulting contact manifold is therefore well-defined up to contactomorphism. This operation is called \emph{contact \((1/n)\)-surgery} on \(L\).

In this article, most of the time we only use contact \((\pm1)\)-surgery. If a Legendrian knot \(L\) is equipped with contact surgery coefficient \((+1)\) or \((-1)\), we write \(L^+\) or \(L^-\), respectively. When \(L\subset(S^3,\xist)\), the corresponding smooth surgery framing is \(\tb(L)+1\) for \(L^+\), and \(\tb(L)-1\) for \(L^-\).

An \emph{open book} on a closed oriented $3$--manifold $M$ is a pair $(B,\pi)$ where $B\subset M$ is an oriented link, called the \emph{binding}, and $\pi\colon M\setminus B\to S^1$ is a fibration which coincides with the angular coordinate on a neighborhood $B\times D^2$ of the binding and whose fibers are interiors of Seifert surfaces of $B$. The closures of the fibers of $\pi$ are the \emph{pages}. An \emph{abstract open book} is a pair $(\Sigma,\phi)$ where $\Sigma$ is a compact oriented surface with nonempty boundary and $\phi\in\mathrm{Diff}^+(\Sigma,\partial\Sigma)$ is the identity near the boundary. Equivalent abstract open books correspond to diffeomorphic open books. Every open book determines the diffeomorphism class of an abstract open book. Conversely, every abstract open book determines the diffeomorphism type of a smooth $3$-manifold together with an open book.

We say that a contact structure $\xi$ on $M$ is \emph{compatible with} (or \emph{supported by}) an open book $(B,\pi)$ if there exists a contact form $\alpha$ for $\xi$ such that the binding is positively transverse and $d\alpha$ restricts to a positive area form on the interior of every page. The fundamental correspondence due to Giroux can be stated as follows.

\begin{theorem}[Giroux's correspondence \cite{Giroux02,BHH24,LicVer1_24,LicVer2_24}]\label{thm:Giroux_correspondence}
Let $M$ be a closed, oriented $3$--manifold. There is a one-to-one correspondence between positive cooriented contact structures on $M$ up to isotopy and open books on $M$ up to isotopy and positive stabilization.\qed
\end{theorem}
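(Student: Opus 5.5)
This statement is quoted from the literature rather than proved here, but the strategy I would follow has three parts. First, the map from open books to contact structures is well defined. Second, it is surjective. Third, two open books compatible with the same contact structure are related by isotopy and positive stabilizations. Injectivity modulo stabilization then follows once we also know that positive stabilization preserves the compatible contact structure.

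\emph{Step 1 (existence and uniqueness of compatible structures).} Given an open book $(B,\pi)$, I would follow Thurston--Winkelnkemper. Choose a $1$-form $\beta$ on the page $\Sigma$ with $d\beta>0$ and the standard form near $\partial\Sigma$. Interpolate $\beta$ with $\phi^*\beta$ over the mapping torus. Add $K\,d\theta$ for $K\gg0$, and glue in the standard model $dz+r^2d\theta$ near the binding.

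Uniqueness up to isotopy rests on convexity of the space of compatible forms. If $\alpha_0,\alpha_1$ are both compatible, then $\alpha_s=\alpha_0+s\alpha_1+Kd\theta$-type combinations remain contact. This is because $d\alpha_s$ is positive on pages and the binding stays positively transverse. Gray's theorem then gives an isotopy.

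Positive stabilization corresponds to a contact connected sum with $(S^3,\xist)$ equipped with its Hopf-band open book. Hence it preserves the compatible structure up to isotopy. Together these show the map $\{\text{open books}\}/(\text{isotopy, stabilization})\to\{\text{contact structures}\}/\text{isotopy}$ is well defined.

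\emph{Step 2 (surjectivity).} For a given $(M,\xi)$, I would choose a contact cell decomposition. Its $1$-skeleton $G$ should be a Legendrian graph, and its $2$-cells should have $\tb=-1$ boundaries. Its $3$-cells should be standard tight Darboux balls. Such a decomposition exists by Legendrian approximation of the $1$-skeleton of a fine triangulation, followed by stabilizing edges as needed. A ribbon of $G$ then serves as the page $\Sigma$. The complement of a standard neighborhood of $G$ is a handlebody with a product-disc decomposition. This identifies $M\setminus\mathcal N(\Sigma)$ with $\Sigma\times[0,1]$, and the resulting open book supports $\xi$.

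\emph{Step 3 (stabilization equivalence).} This is the main obstacle. Let two open books support the same $\xi$ up to isotopy. I would represent each by a contact cell decomposition, so that the page is a ribbon of the $1$-skeleton. Then I would pass to a common refinement of the two decompositions. Adding a Legendrian edge to the $1$-skeleton, or subdividing a $2$-cell, changes the ribbon by a $1$-handle together with a monodromy change of a positive Dehn twist. That is exactly a positive stabilization.

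The delicate point is making the common refinement exist while keeping every new $2$-cell with $\tb=-1$ boundary. I would attempt this by a convex-surface argument, as in Breen--Honda--Huang: put the pages of both open books in general position, use Giroux flexibility (Theorem~\ref{thm:Giroux_flexibility}) and Legendrian realization (Theorem~\ref{thm:Legendrian_realization}) to make intersection graphs Legendrian, and control bypasses during the isotopy. Alternatively, I would follow Licata--Vértesi's foliated open books, which track the characteristic foliations of the pages through a generic isotopy. Each elementary change in such a foliation would be checked to correspond to a stabilization or destabilization. Finally, the surface neighborhood theorem (Theorem~\ref{thm:neigh_thm}) would identify neighborhoods along the way.
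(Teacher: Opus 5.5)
\paragraph{Comparison with the paper.} The paper does not prove this statement. It quotes it, attributing the original sketch to Giroux and complete proofs of the uniqueness part to Breen--Honda--Huang (all dimensions) and to Licata--Vértesi and Licata--Scharitzer--Vértesi (dimension $3$). Your outline is the standard route:
\begin{itemize}
\item Thurston--Winkelnkemper plus Gray for well-definedness;
\item Torisu's identification of plumbing a positive Hopf band with a connected sum with $(S^3,\xist)$ for invariance under stabilization;
\item contact cell decompositions for surjectivity.
\end{itemize}
For the decisive Step~3 you rely on exactly the references the paper cites, so that step is cited, not proved.

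Be aware that the naive common refinement of two contact cell decompositions, as you describe it, is the step that was incomplete in Giroux's original sketch. Putting two decompositions in general position produces intersected cells that neither have Legendrian boundary nor twisting $-1$. Repairing this requires isotopies whose effect on the decompositions your argument does not control.

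A minor point in Step~1: the contact condition for the interpolating forms away from $B$ comes from $K\,d\theta\wedge d\alpha_s>0$ for $K$ large, not from positivity of $d\alpha_s$ on pages alone. Since $d\theta$ is singular along $B$, you must first bring both forms into a common normal form near the binding and use a cut-off.

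\paragraph{A step that fails as written.} In Step~2, stabilizing edges cannot produce twisting $-1$. Suppose you Legendrian-approximate the $1$-skeleton of a triangulation subordinate to Darboux balls. Then every $2$-cell $D$ already satisfies $\operatorname{tw}(\partial D,D)\le -1$, by the Bennequin bound in the tight ball, and in general the inequality is strict. Stabilizing an edge lowers the twisting of every adjacent $2$-cell by one, so it moves away from $-1$, never towards it.

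The correct fix is subdivision:
\begin{itemize}
\item perturb $D$ relative to its boundary to be convex;
\item its dividing set consists of $|\operatorname{tw}(\partial D,D)|$ arcs, with no closed curves by Giroux's criterion in the tight ball;
\item add Legendrian-realized arcs in $D$ that cut it into subdiscs, each containing exactly one dividing arc.
\end{itemize}
These arcs become new edges of the $1$-skeleton, and this is the same ``adding an edge'' operation you identify with positive stabilization in Step~3.

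Also, to conclude that the resulting open book supports $\xi$, you need that the contact structure on the complementary handlebody is forced. That handlebody is a union of tight balls glued along discs each meeting the dividing set in two points. By uniqueness of the tight structure on a ball with connected dividing set, it agrees with the product structure of the complementary half open book. Your sketch asserts support without this argument, and it is the actual contact-geometric content of Step~2.
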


In particular, any abstract open book determines the contactomorphism type of a contact manifold.

\section{Ribbon surfaces}\label{sec:ribbon_surfaces}

A basic role in what follows is played by ribbon surfaces associated with Legendrian graphs.

\begin{definition}\label{def:ribbon}
    Let $\G$ be a Legendrian graph in a contact manifold $(M,\xi)$. A \emph{ribbon surface} for~$\G$ is a compact, oriented surface $\Sigma\subset M$ such that
    \begin{itemize}
        \item the graph $\G$ is contained in $\Sigma$;
        \item there is a contact form $\alpha$ defining $(M,\xi)$ whose Reeb vector field $R_\alpha$ is positively transverse to~$\Sigma$; 
        \item there exists a vector field $v$ on $\Sigma$ directing the characteristic foliation, positively transverse to $\partial\Sigma$, whose time-$t$ flow $\Phi^t_v$ satisfies
        \[
        \bigcap_{t\in(0,\infty)} \Phi_v^{-t}(\Sigma) = \G.
        \]
    \end{itemize}
    In this situation, $\G$ is called the \emph{Legendrian skeleton} of~$\Sigma$.
\end{definition}

It is implicit in~\cite{Avdek_surgery} that every Legendrian graph admits a ribbon surface.
Every ribbon surface is convex, and its dividing set may be taken to be the boundary. Consequently, the converse to the Legendrian realization principle (see, for instance,~\cite{Durst_Kegel_OB}) shows that the curves relevant for Legendrian realization on a ribbon surface are precisely the homologically nontrivial ones.

\begin{lemma}\label{lem:inverse_Leg_real}
    Let $L$ be a Legendrian knot in a contact manifold $(M,\xi)$. Assume that $L$ is contained in a convex surface $\Sigma$ whose dividing set agrees with $\partial\Sigma$. Then $L$ is homologically nontrivial in~$\Sigma$.\qed
\end{lemma}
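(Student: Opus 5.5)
The plan is to argue by contradiction using the Thurston--Bennequin invariant of $L$ relative to the surface $\Sigma$. Suppose $L$ is homologically trivial in $\Sigma$. Then $L$ separates $\Sigma$, and at least one complementary piece $S$ is a compact subsurface with $\partial S = L\,\sqcup\,(\text{some components of }\partial\Sigma)$. I would first reduce to the case where $S$ meets no component of $\partial\Sigma$, i.e.\ $\partial S=L$. If both sides of $L$ contain boundary components of $\Sigma$, then $L$ is homologous to a union of boundary components, which is still nontrivial in $H_1(\Sigma)$ unless it is null-homologous. In that case one side must have $\partial S = L$ exactly; otherwise the class of $L$ equals a nonzero sum of boundary classes, and the only relation among these is the sum of all of them. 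So homological triviality means $L$ bounds a subsurface $S\subset\Sigma$ with $\partial S=L$ and $S\cap\partial\Sigma=\emptyset$.

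Next, I would compute the twisting of $\xi$ along $L$ relative to the framing induced by $\Sigma$ (equivalently by $S$), using the dividing set. Since $L$ is Legendrian on the convex surface $\Sigma$, the standard formula (Kanda) gives
\[
\operatorname{tw}(L,\Sigma)=-\tfrac12\,\#(L\cap\Gamma_\Sigma).
\]
The dividing set $\Gamma_\Sigma=\partial\Sigma$ is disjoint from $L$ after a small isotopy, or $L$ lies in the interior anyway, so $\operatorname{tw}(L,\Sigma)=0$.

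On the other hand, $S$ is a convex surface with Legendrian boundary $L$. Its dividing set is $\Gamma_\Sigma\cap S=\emptyset$. Then $\operatorname{tw}(L,S)=-\tfrac12\#(L\cap\Gamma_S)=0$, which is consistent. So the contradiction must instead come from Giroux's criterion. A convex surface with Legendrian boundary and empty dividing set is impossible: every convex surface with Legendrian boundary satisfies that $\Gamma_S$ meets each boundary component, since the contact planes rotate relative to the surface along a Legendrian boundary with nonpositive twisting. More directly, the sign function $u$ (with $\beta+u\,dt$) cannot be of one sign on all of $S$. Indeed, if $u>0$ on $S$, then $\beta/u$ would satisfy $d(\beta/u)>0$ on $S$ as an area form (this is the contact condition in the vertically invariant model). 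By Stokes' theorem, $\int_S d(\beta/u)=\int_L \beta/u=0$, because $\beta$ vanishes on the Legendrian curve $L$ (it is tangent to $\xi\cap T\Sigma$). This contradicts positivity of the area form.

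So the key steps, in order, are: (1) the topological reduction to $\partial S=L$ with $S$ in the interior; (2) the vertically invariant model $\beta+u\,dt$ near $\Sigma$ with $u>0$ on $S$, because $S$ lies in one region of $\Sigma\setminus\Gamma_\Sigma$; (3) the Stokes argument above. The main obstacle I expect is making step (2)--(3) precise: I need to check that the contact condition $(\beta+u\,dt)\wedge d(\beta+u\,dt)>0$ indeed yields $u\,d\beta-\beta\wedge du>0$, i.e.\ $u^2\,d(\beta/u)>0$ where $u\neq0$, and that $\beta|_L=0$ holds because $L$ is Legendrian and lies on $\Sigma=\{t=0\}$. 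Once this is verified, the contradiction is immediate. The reduction in step (1) also needs care when $\Sigma$ is disconnected, but I would simply apply it to the component containing $L$.
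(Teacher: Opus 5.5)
Your proof is correct. The paper gives no argument of its own here: it quotes the lemma as the converse of the Legendrian realization principle, with a pointer to Durst--Kegel. You instead supply a self-contained proof in exactly the stated generality.

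Your steps are:
\begin{itemize}
\item reduce to $L=\partial S$, with $S$ a connected subsurface in the interior of $\Sigma$;
\item observe that $u$ has no zeros on $S$ because $\Gamma_\Sigma=\partial\Sigma$;
\item apply Stokes to $\beta/u$, which is a primitive of a nowhere-vanishing $2$-form on $S$ and vanishes along the Legendrian curve $L$.
\end{itemize}
For ribbon surfaces, where the Reeb field itself is a transverse contact vector field, you may take $u\equiv 1$. Your argument then collapses to the familiar $0\neq\int_S d\alpha=\int_L\alpha=0$. The general version has the advantage of showing exactly where the hypothesis on the dividing set enters: it guarantees that $u$ does not vanish on the subsurface bounded by $L$.

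Three small corrections for the write-up.

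First, a sign. The contact condition for $\beta+u\,dt$ reads $(u\,d\beta+\beta\wedge du)\wedge dt>0$, i.e.\ $u\,d\beta-du\wedge\beta>0$, not $u\,d\beta-\beta\wedge du$. This expression equals $u^2\,d(\beta/u)$, so $d(\beta/u)$ is nowhere zero on $S$ whatever the sign of $u$. You therefore need neither the normalization $u>0$ nor any orientation bookkeeping, only that $S$ is connected and nonempty.

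Second, do not obtain $L\cap\partial\Sigma=\emptyset$ by ``a small isotopy'', since an isotopy of $L$ inside $\Sigma$ will in general not keep it Legendrian. The disjointness is automatic. Where $u=0$, the contact condition gives $\beta\wedge du>0$, so $\ker\beta=\xi\cap T\Sigma$ is transverse to $\Gamma_\Sigma=\partial\Sigma$. A closed curve in $\Sigma$ tangent to $\ker\beta$ therefore cannot touch $\partial\Sigma$. You need this so that $\beta/u$ is smooth up to $L=\partial S$.

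Third, drop the twisting-number detour. The claim that the dividing set of a convex surface with Legendrian boundary meets every boundary component is false. For example, $A=\{t=0\}$ in $S^1\times[-1,1]\times\mathbb{R}$ with contact form $(s^2-1)\,d\theta+s\,dt$ is a convex annulus with Legendrian boundary whose dividing set is the core circle $\{s=0\}$. What is true, and what your Stokes computation actually proves, is only that the dividing set of such a surface is nonempty. That is all steps (1)--(3) use.
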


\begin{definition}\label{def:leg_real}
    Let $\Sigma$ be a compact, oriented surface, and let $\xi$ be a vertically invariant contact structure on $\Sigma\times I$, where either $I=[-\varepsilon,\varepsilon]$ or $I=\R$.
    
    An isotopy $\phi_t$ of $\Sigma=\Sigma\times\{0\}$ inside $\Sigma\times I$ is called \emph{admissible} if each surface $\phi_t(\Sigma)$ is transverse to $\partial_z$ and if $\phi_t$ is the identity on a neighborhood of $\partial\Sigma$ whenever $\partial\Sigma\neq\emptyset$.
    
    Let $\aa$ be the isotopy class of a simple closed curve on $\Sigma$. A Legendrian knot $L\subset\Sigma\times I$ is said to be a \emph{Legendrian realization} of $\aa$ if there exists an admissible isotopy $\phi_t$ such that $L\subset\phi_1(\Sigma)$ and the isotopy class of $\phi_1^{-1}(L)$ on $\Sigma$ is equal to~$\aa$.
\end{definition}

\begin{theorem}[{\cite[Theorem~3.4]{StenhedeAlgorithm}}]\label{thm:uniqueness_ribbon}
    Let $\Sigma$ be a compact, oriented surface with $\partial\Sigma\neq\emptyset$. Let $\xi=\ker\alpha$ be a vertically invariant contact structure on $\Sigma\times I$, where $I=[-\varepsilon,\varepsilon]$ or $I=\R$, such that $\partial_z$ is a Reeb vector field, and assume that $\partial\Sigma$ is a positive transverse link. Then any two Legendrian realizations of the same isotopy class of a simple closed curve on $\Sigma$ are Legendrian isotopic.\qed
\end{theorem}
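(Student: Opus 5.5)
The plan is to reduce the statement to a problem about closed curves on $\Sigma$ together with the $1$-form obtained by restricting $\alpha$, and to solve that problem by an area (flux) correction.

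\textbf{Normal form.} Since $\xi$ is vertically invariant with Reeb field $\partial_z$, we may write $\alpha=\beta+dz$, where $\beta$ is a $1$-form on $\Sigma$ with $d\beta>0$. Every surface $\phi_t(\Sigma)$ in an admissible isotopy is transverse to $\partial_z$. Hence it is the graph of a function $f_t$ on $\Sigma$ that vanishes near $\partial\Sigma$, and the vertical projection $p\colon\Sigma\times I\to\Sigma$ restricts to a diffeomorphism on it.

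\textbf{Dictionary between realizations and curves.} If $L$ is a Legendrian realization of $\aa$, then $\gamma=p(L)$ is an embedded curve isotopic to a representative of $\aa$. Since $L$ is Legendrian and closed, $\int_\gamma\beta=0$. Conversely, let $\gamma$ be an embedded curve in the class $\aa$ with $\int_\gamma\beta=0$. Then $\gamma$ has a Legendrian lift, obtained by integrating $dz=-\beta(\dot\gamma)$. This lift is unique up to vertical translation, which is a Legendrian isotopy. The lift extends to the graph of a function vanishing near $\partial\Sigma$, so it is a Legendrian realization of $\aa$. If the interval $I$ is bounded, the $z$-range has to be controlled; I would handle this by the same shrinking trick described below.

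So the problem becomes the following. Given two embedded curves $\gamma_0,\gamma_1$ in the class $\aa$ with $\int\beta=0$, connect them by an isotopy $\gamma_t$ such that $c(t):=\int_{\gamma_t}\beta$ vanishes for every $t$. Lifting such a family then gives a Legendrian isotopy.

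\textbf{Steps.}
\begin{enumerate}
\item Choose an arbitrary isotopy $\gamma_t$ on $\Sigma$ from $\gamma_0$ to $\gamma_1$. Its flux $c(t)$ is continuous and vanishes at $t=0,1$.
\item Let $V$ be the Liouville vector field, defined by $\iota_V d\beta=\beta$. Since $\partial\Sigma$ is positively transverse, $V$ points outward along $\partial\Sigma$. Its backward flow $\Phi^{-s}$ maps $\Sigma$ into itself and satisfies $(\Phi^{-s})^*\beta=e^{-s}\beta$. Consequently $\int_{\Phi^{-s}\gamma}\beta=e^{-s}\int_\gamma\beta$.
\item Replace the isotopy by the concatenation of three pieces:
\begin{itemize}
\item $\Phi^{-s}\gamma_0$ for $s\in[0,T]$, whose flux stays $0$;
\item $\Phi^{-T}\gamma_t$ for $t\in[0,1]$, whose flux $e^{-T}c(t)$ is uniformly tiny;
\item $\Phi^{-s}\gamma_1$ run backwards, whose flux again stays $0$.
\end{itemize}
\item Correct the middle piece. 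Fix a small embedded disc $D$ that meets every $\Phi^{-T}\gamma_t$ in a single arc; after shrinking the parameter intervals this can be done locally in $t$ and patched. Inside $D$, push a finger of the arc to one side. The finger sweeps a region of signed area $a$ and changes $\int\beta$ by $\pm a$ by Stokes. The achievable values of $a$ fill an interval around $0$ whose size depends only on $D$. Once $e^{-T}\max|c|$ lies in this interval, choose the fingers continuously in $t$, with zero finger at $t=0,1$, so that the corrected flux vanishes identically.
\end{enumerate}

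\textbf{Expected obstacle.} The hard step is step 4: performing the finger correction continuously in $t$ and uniformly. The fingers must remain embedded and disjoint from $\partial\Sigma$, and they must respect the requirement that the lifted surfaces fix a neighborhood of $\partial\Sigma$. I would handle this with a finite cover of $[0,1]$ by intervals on each of which one fixed disc works, blending the finger sizes by a partition of unity. At each time the total finger is required to cancel exactly $e^{-T}c(t)$.

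A secondary issue is keeping the $z$-coordinates of the lifts inside $I$ when $I=[-\varepsilon,\varepsilon]$. This is also handled by choosing $T$ large, because the $z$-oscillation of the lift of $\Phi^{-T}\gamma$ is at most $e^{-T}\int_\gamma|\beta|$.
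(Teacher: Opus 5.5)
The paper does not prove this statement; it quotes it from Stenhede's paper with \qed. Your argument therefore has to stand on its own.

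\textbf{What is sound.} The reduction is correct. Writing $\alpha=\beta+dz$, surfaces transverse to $\partial_z$ and fixed near $\partial\Sigma$ are graphs. Legendrian realizations of $\aa$ correspond to lifts of embedded curves $\gamma\in\aa$ with $\int_\gamma\beta=0$, and an isotopy through such curves lifts to a Legendrian isotopy. Shrinking by the backward Liouville flow is the right idea.

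\textbf{The gap in Step~4.} You fix a disc $D$ meeting $\Phi^{-T}\gamma_t$ in a single arc. You say the finger areas available in $D$ form an interval depending only on $D$, and then take $T$ large. But $D$ can only be chosen after $T$, because it has to meet the shrunken curves. Suppose $D$ is a small disc around an arc of $\Phi^{-T}\gamma_t$, or more generally lies in $\Phi^{-T}(\Sigma)$. Since $(\Phi^{-T})^*d\beta=e^{-T}d\beta$, applying $\Phi^{T}$ turns fingers in $D$ into fingers in $\Phi^T(D)$ correcting the unscaled error $c(t)$. So the correction capacity shrinks at exactly the same rate $e^{-T}$ as the error. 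A finger confined to a disc of area $A$ changes the flux by less than $A$, and nothing forces $|c(t)|<A$. As written, the scaling buys nothing.

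\textbf{The missing idea.} The fingers must leave $\Phi^{-T}(\Sigma)$ and spread into the collar $\Sigma\setminus\Phi^{-T}(\Sigma)$. The part of this collar near a boundary component $C$ has area $(1-e^{-T})\int_C\beta$, which is bounded below independently of $T$.

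\textbf{Where the topology of $\aa$ enters.} To get both signs of correction you need a path from each side of the curve to $\partial\Sigma$ that avoids the curve. Pushing it forward by $\Phi^{-T}$ gives a path from the shrunken curve into the collar. Equivalently, no side of $\gamma_t$ may be a subsurface disjoint from $\partial\Sigma$. Your argument never uses this. It holds because $\gamma_t$ is isotopic to $\gamma_0$: if $\gamma_0$ cut off such a subsurface $S$, then $\int_{\gamma_0}\beta=\pm\operatorname{Area}(S)\neq0$ (compare Lemma~\ref{lem:inverse_Leg_real}).

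\textbf{Completing the construction.} Locally in $t$, take a thin tube along such a path, ending in a large disc in the collar. On overlapping parameter intervals, the paths can be chosen disjoint. This works because an arc from the curve to the collar does not disconnect the complementary region.

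\textbf{The bounded interval case.} Your $z$-range estimate for $I=[-\varepsilon,\varepsilon]$ covers only $\Phi^{-T}\gamma_t$, not the fingers, which are not scaled by $e^{-T}$. Take the part of each path inside $\Phi^{-T}(\Sigma)$ to be a $\Phi^{-T}$-image. Take the part in the collar to be a Liouville flow line, along which $\beta(V)=d\beta(V,V)=0$. Then the $z$-oscillation a finger adds is of order $e^{-T}$ plus the area it sweeps, which is at most $e^{-T}\max|c|$.
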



\section{Legendrian links and ribbon surfaces}\label{sec:Leglinkandribbon}

In this section, we discuss compatibility properties between Legendrian links and ribbon surfaces. 

\begin{convention}
From now on, we will work in $(\R^3,\xist)\subset(S^3,\xist)$. We use the following conventions.
\begin{itemize}
    \item In the introduction, we distinguished carefully between Legendrian links and their front projections. In the rest of this article, we will often suppress this distinction. Thus, when no confusion can arise, we use the same symbol for a Legendrian link and for its front projection, and we refer to figures as showing Legendrian links rather than their front projections. This is only a notational simplification.
    \item We denote by \(R\), or by \(R_{\G}\), a ribbon surface of a Legendrian graph \(\G\).
    \item Every ribbon surface considered below is assumed to be transverse to the Reeb vector field \(\partial_z\).
    \item If \(\Sigma\) is a surface transverse to \(\partial_z\), then
$\Sigma\times[-\varepsilon,\varepsilon]$
    denotes the neighborhood obtained by flowing \(\Sigma\) along \(\partial_z\). In particular, for a ribbon surface \(R\), each slice \(R\times\{t\}\) is again a ribbon surface.
    \item Let \(\alpha\) be a curve on a ribbon surface \(R\), and let
    \[
    R\times[-\varepsilon,\varepsilon]
    \]
    be a Reeb-invariant neighborhood of \(R\) as above. We write
    \[
    \alpha\times\{t\}
    \]
    for the corresponding curve on the slice \(R\times\{t\}\).
    \item If \(\alpha\subset R\) is a Legendrian-realizable, simple closed curve, we denote by \(\alpha(t)\) the Legendrian realization of \(\alpha\times\{t\}\). More precisely, \(\alpha(t)\) is contained in a boundary-relative \(C^0\)-small perturbation of \(R\times\{t\}\) through surfaces transverse to \(\partial_z\).
\end{itemize}
\end{convention}

\begin{definition}\label{def:perturbation}
  Let $\Sigma\subset(\R^3,\xist)$ be a compact surface with nonempty boundary that is transverse to $\partial_z$. A \emph{perturbation} of $\Sigma$ is a surface $\Sigma'$ obtained by isotoping $\Sigma$ relative to $\partial\Sigma$ through surfaces transverse to $\partial_z$.
\end{definition}

Note that every ribbon surface we consider satisfies the hypotheses of Definition~\ref{def:perturbation}.

\begin{remark}
  If $\Sigma$ is a convex surface transverse to a contact vector field $X$, then in Giroux's terminology~\cite{Giroux91} an isotopy of $\Sigma$ through surfaces transverse to $X$ is called \emph{admissible}. In our setting, $\partial_z$ is a contact vector field, so perturbations are admissible isotopies in this sense.

  In \cite{StenhedeAlgorithm}, admissible isotopies were used for what we now call perturbations. We have switched to the term ``perturbation'' because it shortens our sentences (e.g.\ ``perturb $R$'' rather than ``isotope $R$ by an admissible isotopy'').
\end{remark}

\begin{lemma}\label{lem:perturbation}
  Assume that $L$ is a Legendrian link contained in a perturbation $R'$ of a ribbon surface $R$. Let $U$ be a neighborhood of $R$. Then, after a Legendrian isotopy of $L$, we may assume that $L$ is embedded in a perturbation of $R$ contained in $U$.
\end{lemma}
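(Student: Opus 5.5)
The idea is to use the Reeb flow to squeeze the perturbation $R'$ back towards $R$. Since $R$ is compact and transverse to $\partial_z$, some Reeb-invariant collar $R\times[-\delta,\delta]$ lies inside $U$ for small $\delta>0$. The obstacle is that $R'$ is only \emph{some} isotopy of $R$ rel boundary through surfaces transverse to $\partial_z$, so it may leave $U$.

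\textbf{Step 1: write $R'$ as a graph.} I would first show that $R'$ lies in a (possibly large) Reeb-invariant neighborhood $R\times[-T,T]$ and is the graph of a function. Transversality to $\partial_z$ along the whole isotopy, together with the fact that the boundary is fixed, means that projection along Reeb orbits gives a diffeomorphism from each intermediate surface onto $R$. Hence $R'=\{(p,f(p)) : p\in R\}$ for a smooth function $f\colon R\to\R$ vanishing near $\partial R$. If flowing $R$ far along $\partial_z$ is an issue in $\R^3$, there is no problem, since $\partial_z$ is complete there.

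\textbf{Step 2: rescale the graph.} Consider the family of surfaces
\[
R_s=\{(p,sf(p)) : p\in R\},\qquad s\in[0,1].
\]
These are graphs, so each $R_s$ is transverse to $\partial_z$ and agrees with $R$ near $\partial R$. Thus each $R_s$ is a perturbation of $R$, and $R_s\subset R\times[-\delta,\delta]\subset U$ once $s\le \delta/\max|f|$.

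\textbf{Step 3: carry $L$ along.} It remains to move $L$ from $R_1=R'$ to $R_s$ by a Legendrian isotopy.

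1. The contact structure in the collar is vertically invariant, $\xi=\ker(\beta+u\,dt)$ with $u>0$ since $\partial_z$ is Reeb. So the characteristic foliation of the graph of $g$ is the kernel of $\beta+u\,dg$ on $R$.
2. Because $L\subset R_1$ is Legendrian, $L$ corresponds to a curve $\gamma\subset R$ that is tangent to $\ker(\beta+u\,df)$ and lies in the complement of the singular points. By Lemma~\ref{lem:inverse_Leg_real}, $\gamma$ is homologically essential, and in particular nonisolating on the ribbon surface.
3. Apply Theorem~\ref{thm:Legendrian_realization}: the graph $R_s$ is convex with dividing set $\partial R$, so the isotopy class of $\gamma$ has a Legendrian realization $L'$ on a perturbation of $R_s$. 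This perturbation is $C^0$-small, so it stays in $U$.
4. Both $L$ and $L'$ are Legendrian realizations, in the sense of Definition~\ref{def:leg_real}, of the same isotopy class of simple closed curve on $R$. Theorem~\ref{thm:uniqueness_ribbon} then shows that they are Legendrian isotopic. For a link, do this componentwise.

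\textbf{Main obstacle.} The delicate point is the componentwise argument for links. The components must be realized simultaneously, disjointly, on a single perturbation, and Theorem~\ref{thm:uniqueness_ribbon} is stated for knots.

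I would try first to avoid Theorem~\ref{thm:uniqueness_ribbon} entirely. Run Theorem~\ref{thm:Giroux_flexibility} in the one-parameter family $R_s$: prescribe on $R_s$ a foliation divided by $\partial R$ that contains $\gamma$ (the whole multicurve, for a link) as a union of leaves. This yields, for each $s$, an isotopic perturbation containing a Legendrian copy $L_s$ of $L$, depending continuously on $s$. The resulting family $L_s$ is a Legendrian isotopy from $L$ into $U$, and it handles all components at once.
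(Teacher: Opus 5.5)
Your Step~3 is exactly the paper's proof. The paper Legendrian-realizes the isotopy class of $\overline{L}$ on a perturbation of $R$ inside $U$ using Theorem~\ref{thm:Legendrian_realization}, then concludes with Theorem~\ref{thm:uniqueness_ribbon}; it too applies this to the link at once, without a separate componentwise discussion. The paper realizes directly on $R$, choosing the collar $\mathcal N(R)\subset U$, so your Steps~1--2 and the flexibility-family alternative are unnecessary. That is just as well, since $R\times[-T,T]$ is generally not an embedded neighborhood for large $T$: $R$ can have several sheets over one point of the Lagrangian projection.
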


\begin{proof}
  By the Legendrian realization principle (Theorem~\ref{thm:Legendrian_realization}), we can Legendrian realize a curve on $R$ that defines the same isotopy class as $L$ so that the result $L'$ lies in $U$. By Theorem~\ref{thm:uniqueness_ribbon}, $L$ and $L'$ are Legendrian isotopic.
\end{proof}

By Lemma~\ref{lem:perturbation}, whenever multiple Legendrian knots are compatible with different ribbon surfaces, we may assume that each knot lies in a small perturbation of its corresponding ribbon surface, and that these perturbations are supported in disjoint neighborhoods.

\begin{definition}
  Let $L=\bigcup_{i=1}^n L_i$ be a Legendrian link and $R$ a ribbon surface. We say that $L$ is \emph{compatible} with $R$ if there exists a neighborhood $R\times[-\varepsilon,\varepsilon]$ of $R$, with $\varepsilon>0$, such that each component $L_i$ is embedded in a perturbation of $R\times\{t_i\}$ for some $t_i\in(-\varepsilon,\varepsilon)$. Moreover, we assume that the components of $L$ are indexed so that $j>i$ implies $t_j>t_i$.
  When we write $L\subset R\times[-\varepsilon,\varepsilon]$, we implicitly mean that $L$ is compatible with $R$ and contained in this neighborhood.
\end{definition}

\begin{example}
  Let $R$ be a ribbon surface of a Legendrian knot $L$. Then $L$ is compatible with $R$; in fact, $L$ lies in the characteristic foliation of $R$.
\end{example}

\begin{example}\label{compatible_braid1}
  Consider the Legendrian graph $G$ and its ribbon surface $R$ from Figure~\ref{fig:comp_ex_1}. We claim that the two Legendrian links $L=L_1\cup L_2$ and $L'=L'_1\cup L'_2$ in Figure~\ref{fig:comp_ex_2} are both compatible with $R$ (individually, not simultaneously).

\begin{figure}[htbp]
    \centering
    \begin{overpic}[scale=1]{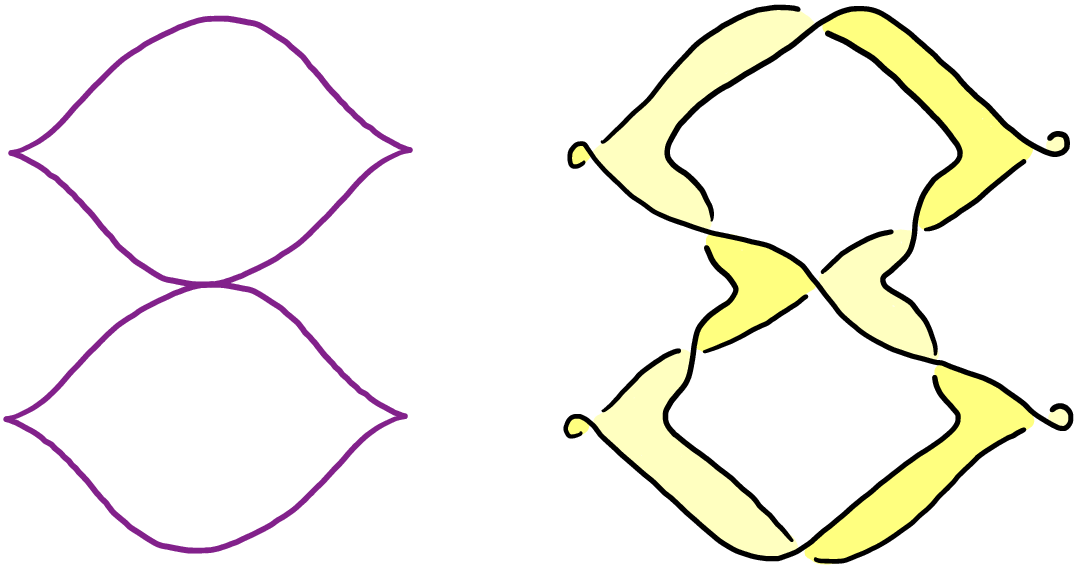}
      \put(32,45){$G$}
      \put(92,45){$R$}
    \end{overpic}
    \caption{Left: a Legendrian graph $G$. Right: a ribbon surface $R$ of $G$.}
    \label{fig:comp_ex_1}
  \end{figure}
  
  \begin{figure}[htbp]
    \centering
    \begin{overpic}[scale=1]{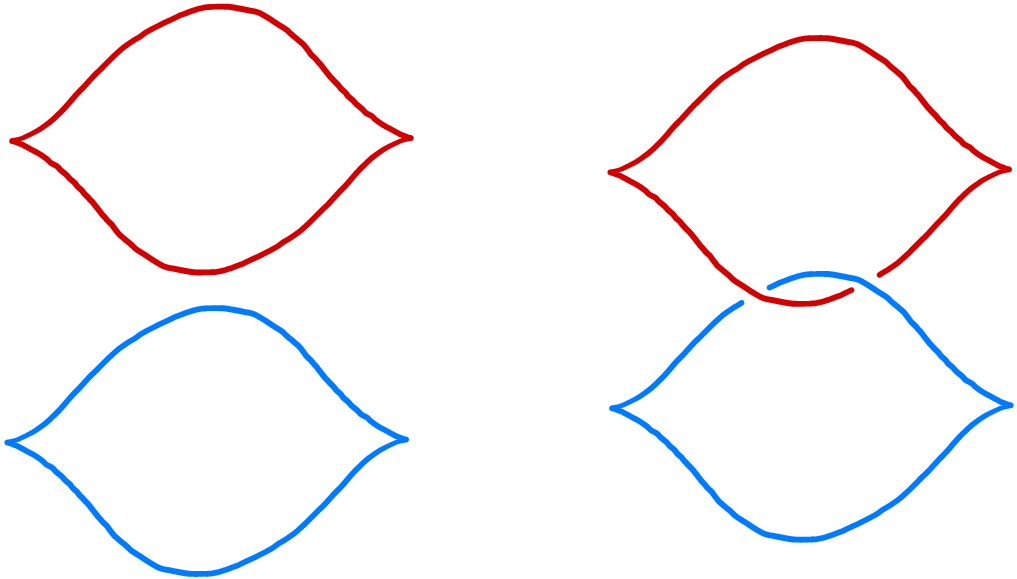}
      \put(35,50){$L_2$}
      \put(35,20){$L_1$}
      \put(95,47){$L'_1$}
      \put(95,23){$L'_2$}
    \end{overpic}
    \caption{Left: a Legendrian link $L=L_1\cup L_2$. Right: another Legendrian link $L'=L'_1\cup L'_2$.}
    \label{fig:comp_ex_2}
  \end{figure}

  For $L$, we can embed $L_1$ in a perturbation of $R\times\{t_1\}$ and $L_2$ in a perturbation of $R\times\{t_2\}$ with $t_2>t_1$, as illustrated in Figure~\ref{fig:comp_ex_3}.
  Since $t_2>t_1$, the surfaces $R\times\{t_2\}$ and $R\times\{t_1\}$ are disjoint and the two components of $L$ are unlinked.  
  The link $L'$ is also compatible with $R$. In this case $L'_1$ is embedded in the upper part of $R\times\{t_1\}$ and $L'_2$ in the upper part of $R\times\{t_2\}$ with $t_2>t_1$, so the two components are linked.
\end{example}

\begin{figure}[htbp]
    \centering
    \begin{overpic}[scale=1]{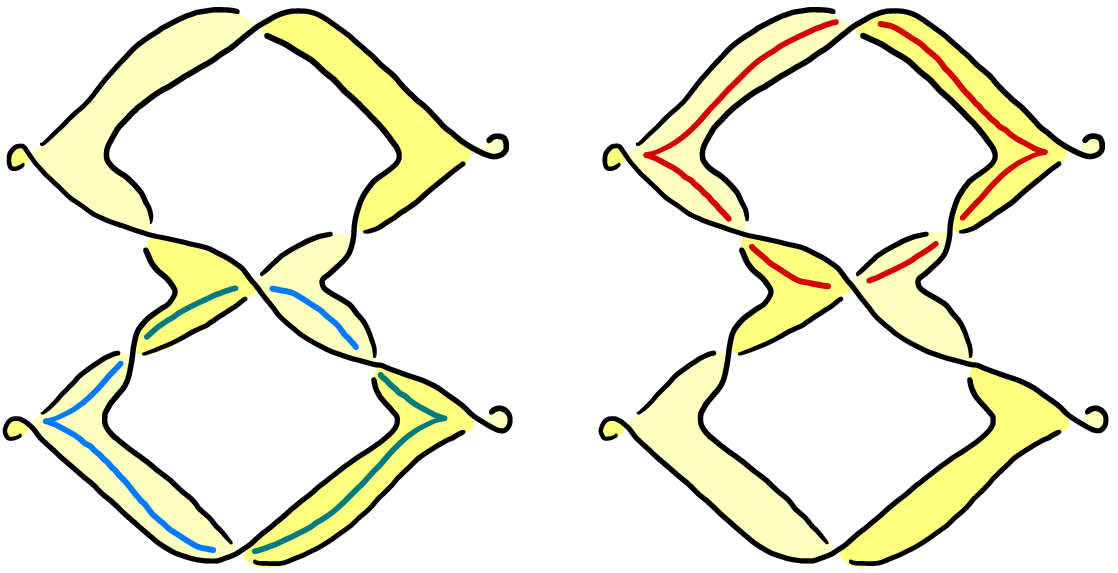}
    \end{overpic}
    \caption{Left: $L_1$ embedded in $R\times\{t_1\}$. Right: $L_2$ embedded in $R\times\{t_2\}$ with $t_2>t_1$.}
    \label{fig:comp_ex_3}
  \end{figure}

\begin{example}
  Let $R$ be a ribbon surface and let $\aa$ be a homologically nontrivial, simple closed curve on $R$. Then the Legendrian realization $\aa(0)$ of $\aa$ is compatible with $R$: by construction, $\aa(0)$ lies on a small perturbation of $R=R\times\{0\}$.  

  More generally, if $\aa_1,\dots,\aa_n$ are homologically nontrivial, simple closed curves on $R$ and $t_1<\dots<t_n$ are numbers in $(-\varepsilon,\varepsilon)$, then the Legendrian link
  \[
    L = \aa_1(t_1)\cup\cdots\cup \aa_n(t_n)
  \]
  is compatible with $R$.
\end{example}

Before we discuss the next lemma, we need a definition. Given a Legendrian graph $G$ whose vertices all have valency $4$ and whose front projections look as on the left of Figure~\ref{fig:resolutions}, a \emph{resolution} of $G$ is a connected component of the Legendrian graph obtained by replacing, at some of the vertices, the local picture by one of the models on the right of Figure~\ref{fig:resolutions}.

\begin{figure}[htbp]
  \centering
  \begin{overpic}[scale=1]{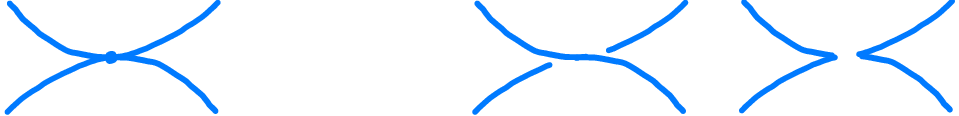}
    \put(10,10){$v$}
  \end{overpic}
  \caption{Left: the local model around a generic valency $4$ vertex $v$. Right: the two possible resolutions of $v$.}
  \label{fig:resolutions}
\end{figure}

\begin{lemma}\label{lem:resolution_comp}
  Let $G$ be a Legendrian graph whose vertices all have valency $4$ and look as in Figure~\ref{fig:resolutions} (left). Assume $G$ is embedded in a perturbation $\widetilde{R}$ of a ribbon surface $R$. Then any resolution of $G$ is also embedded in a (possibly different) perturbation of $R$.
\end{lemma}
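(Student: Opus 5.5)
The plan is to work on the abstract surface and then return to the contact setting through the Legendrian realization principle (Theorem~\ref{thm:Legendrian_realization}) and the uniqueness result Theorem~\ref{thm:uniqueness_ribbon}. Since $\widetilde R$ is a perturbation of $R$, it is convex with dividing set $\partial\widetilde R=\partial R$, and there is a diffeomorphism $\widetilde R\to R$, coming from the isotopy through surfaces transverse to $\partial_z$, that is the identity near the boundary. A resolution $G'$ of $G$ is the following curve: near each resolved vertex $v$ of the chosen subset, replace the four incident edges by one of the two smoothings in Figure~\ref{fig:resolutions}, and take a connected component. This smoothing can be done abstractly inside $\widetilde R$, since $G\subset\widetilde R$ and a small disk neighborhood of $v$ in $\widetilde R$ contains the local picture. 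So $G'$ determines a smooth embedded graph, or simple closed curve, $C\subset\widetilde R$ obtained by smoothing $G$ within the surface. The first step is to check that this smoothing in $\widetilde R$ is consistent with the front-projection resolution. Because $\widetilde R$ is transverse to $\partial_z$, it is locally a graph over the front/Lagrangian data near $v$. The two resolutions on the right of Figure~\ref{fig:resolutions} are exactly the two ways to join the four half-edges in cyclic order inside the plane $\xi_v=T_v\widetilde R$. Hence a $C^0$-small Legendrian model of each smoothing lies in a small modification of $\widetilde R$ near $v$.

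The second step is to show that $C$ is nonisolating in $\widetilde R$. Then Theorem~\ref{thm:Legendrian_realization} yields a further perturbation of $\widetilde R$, hence a perturbation of $R$, containing a Legendrian realization of $C$. The conditions that $C$ be properly embedded, disjoint from $\partial \widetilde R$, and vacuously transverse to $\Gamma$ are immediate. The real condition is that every component of $\widetilde R\setminus C$ meets $\partial\widetilde R$. For this I would use that $G$ itself sits in $\widetilde R$, which retracts (after perturbation back to $R$ via the flow of $v$ in Definition~\ref{def:ribbon}) onto its Legendrian skeleton. Since $\widetilde R$ deformation retracts to the skeleton with all of $\widetilde R\setminus$skeleton flowing out to the boundary, every component of $\widetilde R\setminus G$ reaches $\partial\widetilde R$. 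The components of $\widetilde R\setminus C$ are unions of components of $\widetilde R\setminus G$ glued along the deleted edge pieces near vertices, so they also meet the boundary. A subtlety is that $G$ need not be the skeleton of $\widetilde R$. In that case I would instead argue via Lemma~\ref{lem:inverse_Leg_real}-type reasoning: a component of $\widetilde R\setminus G$ disjoint from $\partial\widetilde R$ would be a closed subsurface bounded by Legendrian arcs with no dividing curves, which contradicts Giroux's criterion for a region of a convex surface with boundary along Legendrian curves, as in the converse of Legendrian realization. Since $C$ lies in a small neighborhood of $G$, its complement components inherit boundary contact.

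The third step identifies the realization with $G'$ itself. Theorem~\ref{thm:uniqueness_ribbon} says any two Legendrian realizations of the isotopy class of $C$ are Legendrian isotopic. The local model from step one shows that $G'$ is such a realization. The family of such realizations is then connected by Legendrian isotopy, and tracking the surfaces shows that $G'$ itself lies in a perturbation of $R$. Here one can also invoke Lemma~\ref{lem:perturbation}.

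I expect the main obstacle to be the first step. One must produce, near each vertex, an explicit perturbation of $\widetilde R$ transverse to $\partial_z$ that contains the resolved Legendrian arcs rel boundary of a small ball. The plan is to write $\widetilde R$ locally as a graph $z=f(x,y)$ with $df$ close to $-x\,dy$ at $v$. Then the resolved front (two arcs near the cusp-free crossing model) is lifted, and $f$ is modified by a compactly supported function to interpolate. Transversality to $\partial_z$ is automatic for graphs, which is what makes this work.
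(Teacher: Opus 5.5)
Your step 1, made concrete in your last paragraph, is exactly the paper's proof and already proves the lemma: near each vertex, write $\widetilde R$ as a graph $z=f(x,y)$. The resolved Legendrian arcs have embedded, pairwise disjoint Lagrangian projections (this is the real content of your ``cyclic order'' remark) and the same endpoints as $G$, so you can modify $f$ by a compactly supported function, and graphs over the $(x,y)$-plane stay transverse to $\partial_z$. You can drop steps 2 and 3. Step 3 assumes what step 1 already proves, namely that $G'$ itself lies on a perturbation. Moreover, Theorem~\ref{thm:uniqueness_ribbon} only concerns simple closed curves, so it would not apply when the resolution is still a graph.
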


\begin{proof}
  Since $\widetilde{R}$ is transverse to $\partial_z$, its Lagrangian projection is a local diffeomorphism. A small neighborhood of a vertex of $G$ in $\widetilde{R}$ projects as on the left of Figure~\ref{fig:resolutions_lag}.

  \begin{figure}[htbp]
    \centering
    \begin{overpic}[scale=1]{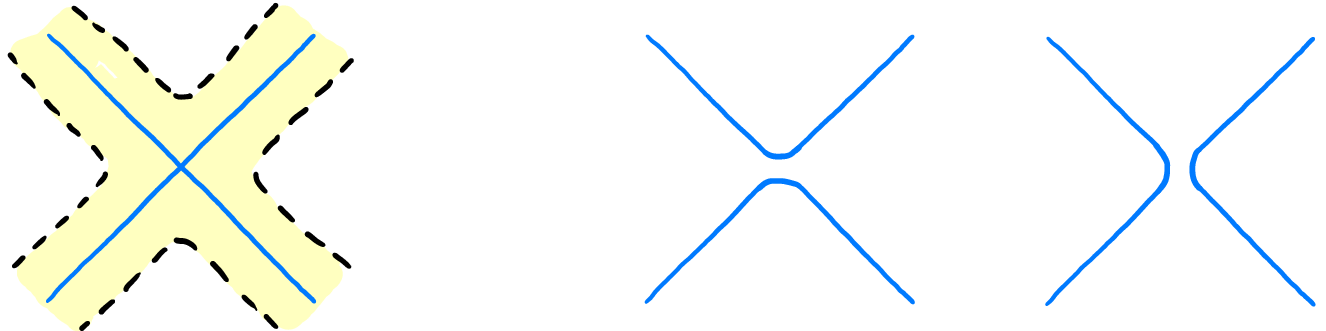}
    \end{overpic}
    \caption{Left: Lagrangian projection of a neighborhood in $\widetilde{R}$ of a vertex of $G$. Right: Lagrangian projections of the two possible resolutions.}
    \label{fig:resolutions_lag}
  \end{figure}

  The right side of Figure~\ref{fig:resolutions_lag} shows the Lagrangian projection of the possible resolutions of $G$. Since these curves in the Lagrangian projection do not (self-)intersect (in the neighborhood considered), we see that we can further perturb the surface $\widetilde{R}$ only in a neighborhood of $G$ through surfaces transverse to $\partial_z$ so that it contains the resolution.
\end{proof}

\begin{remark}\label{rmk:modi_valency_4}
  Generically, a valency $4$ vertex of a Legendrian graph in the front projection looks as in Figure~\ref{fig:Legendrian_projection}(e), but it is not guaranteed that two edges come from the right and two from the left. If $G$ is contained in a perturbation of a ribbon surface $R$, we can modify $G$ by a Legendrian Reidemeister move R (see Section~\ref{thm:Reidemeister_graph} and Figure~\ref{fig:Reidemeister_Leg_graph}) so that each valency $4$ vertex looks as in Figure~\ref{fig:resolutions} (left), without losing compatibility with $R$.

  An example is shown in Figure~\ref{fig:valency4modif}: on the top left, we see the original front projection; the bottom left shows the corresponding Lagrangian projection in the perturbation of $R$. On the top right, we see the front after the Reidemeister move R; the bottom right shows that this modification behaves well with respect to the Lagrangian projection, and so $G$ after this Legendrian isotopy can still be embedded in a perturbation of $R$.
\end{remark}

\begin{figure}[htbp]
    \centering
    \begin{overpic}[scale=1]{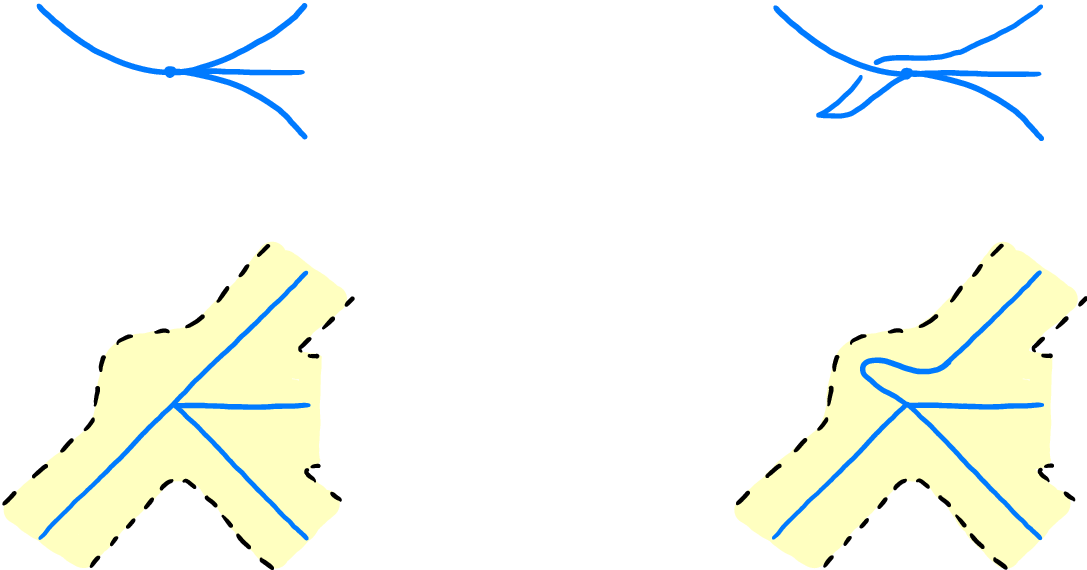}
    \end{overpic}
    \caption{Top: front projections of a valency $4$ vertex before and after a Reidemeister move R. Bottom: corresponding Lagrangian projections.}
    \label{fig:valency4modif}
  \end{figure}

\begin{corollary}\label{cor:resolution_comp}
  Any resolution of a Legendrian graph $G$ whose vertices have valency $4$ is embedded in a perturbation of any ribbon surface of $G$.
\end{corollary}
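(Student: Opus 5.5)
The corollary follows by combining Remark~\ref{rmk:modi_valency_4} with Lemma~\ref{lem:resolution_comp}. Let $G$ be a Legendrian graph with valency-$4$ vertices and let $R$ be any ribbon surface of $G$. By Definition~\ref{def:ribbon}, $G$ is the Legendrian skeleton of $R$, so $G\subset R$. In particular, $G$ is embedded in the trivial perturbation of $R$ (the identity isotopy), and so the compatibility hypotheses of the remark and the lemma apply.

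The first step puts the vertices into the standard form. A generic valency-$4$ vertex looks as in Figure~\ref{fig:Legendrian_projection}(e), but the edges need not split two-and-two between the left and the right. At each such vertex we apply a Legendrian Reidemeister move $\mathrm{R}$ (Theorem~\ref{thm:Reidemeister_graph}), as described in Remark~\ref{rmk:modi_valency_4}, so that the vertex looks as in Figure~\ref{fig:resolutions} (left). The remark guarantees that after each move the graph is still contained in a perturbation of $R$. The justification is the Lagrangian-projection picture: the modification is supported near the vertex and produces no new self-intersections in the Lagrangian projection. A surface transverse to $\partial_z$ is exactly one whose Lagrangian projection is a local diffeomorphism, so the perturbed surface can be adjusted locally to contain the modified graph.

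The main subtle point comes next. A resolution is defined for the graph in this normalized form, and the R-moves are Legendrian isotopies of graphs preserving the cyclic orders at the vertices. We should therefore check that resolutions of the original $G$ correspond, up to Legendrian isotopy, to resolutions of the normalized graph. I would argue this locally. Each resolution smooths a vertex into two arcs, and pairing the edges according to the cyclic order is unaffected by the move $\mathrm{R}$, which only rotates the vertex picture. So the resolution of the modified graph is Legendrian isotopic to the corresponding resolution of $G$ by an isotopy supported near the vertex.

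Finally, we apply Lemma~\ref{lem:resolution_comp} to the normalized graph, which lies in a perturbation $\widetilde R$ of $R$. This shows that any resolution lies in a further perturbation of $R$. Combining this with the previous step, together with Lemma~\ref{lem:perturbation} and Theorem~\ref{thm:uniqueness_ribbon} to absorb the Legendrian isotopy if needed, gives the statement of Corollary~\ref{cor:resolution_comp}.
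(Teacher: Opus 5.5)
Your core argument is exactly the paper's one-line proof: $G\subset R$ lies in the trivial perturbation of $R$, so Lemma~\ref{lem:resolution_comp} applies directly. The normalization via Remark~\ref{rmk:modi_valency_4} is unnecessary, because resolutions are only defined for graphs whose vertices already look as in Figure~\ref{fig:resolutions} (left), and you should drop it. In the extra generality you attempt, its last step is also not justified: Lemma~\ref{lem:perturbation} and Theorem~\ref{thm:uniqueness_ribbon} only produce Legendrian isotopies, and they cannot show that a knot which is merely Legendrian isotopic to one in a perturbation of $R$ is itself embedded in such a perturbation.
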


\begin{proof}
  By definition, $G$ is contained in any ribbon surface $R$ of itself. The claim follows from Lemma~\ref{lem:resolution_comp}.
\end{proof}

Another useful lemma that will be used in the proof of the main theorem is the following.

\begin{lemma}\label{compatibility_extension}
  Let $L$ be a Legendrian link compatible with a ribbon surface $R_G$ of a Legendrian graph $G$. Then $L$ is also compatible with any ribbon surface $R_{G'}$ of any Legendrian graph $G'$ containing $G$.
\end{lemma}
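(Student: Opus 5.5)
The plan is to show that the whole problem lives near $G$. Compatibility of $L$ with a ribbon surface only involves the components of $L$ lying on perturbations of slices $R\times\{t_i\}$. By Lemma~\ref{lem:perturbation}, these perturbations can be pushed into an arbitrarily small neighborhood of $R$. Since $R_G$ retracts onto $G$ along the characteristic foliation, we want to push even further, into a small neighborhood of the skeleton $G$. Inside $R_{G'}$ a neighborhood of $G$ looks like a ribbon surface of $G$, and so $L$ can be transplanted there.

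\textbf{Step 1: localizing in $R_G$.} For each component $L_i$, the flow $\Phi_v^{-t}$ of the vector field directing the characteristic foliation of $R_G$ shrinks $R_G$ onto $G$, by the third condition of Definition~\ref{def:ribbon}. For large $T$, the surface $\Phi_v^{-T}(R_G)$ is a ribbon surface of $G$ contained in any prescribed neighborhood $V$ of $G$ in $R_G$. The isotopy from $R_G$ to $\Phi_v^{-T}(R_G)$ is realized inside $R_G$ itself, so it carries isotopy classes of curves on $R_G$ bijectively to classes on the shrunken surface. Let $\aa_i$ be the isotopy class on $R_G$ determined by $L_i$. By Lemma~\ref{lem:inverse_Leg_real} it is homologically nontrivial, so it is nonisolating. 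Using Theorem~\ref{thm:Legendrian_realization}, I would realize the corresponding curve on $\Phi_v^{-T}(R_G)\times\{t_i\}$. Theorem~\ref{thm:uniqueness_ribbon}, applied on a Reeb-invariant neighborhood of $R_G$, then shows that this realization is Legendrian isotopic to $L_i$, with the height ordering preserved. So we may assume every $L_i$ lies on a perturbation of $N\times\{t_i\}$, where $N$ is an arbitrarily thin regular neighborhood of $G$ that is itself a ribbon surface of $G$.

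\textbf{Step 2: matching with $R_{G'}$.} The graph $G$ sits inside $G'\subset R_{G'}$. Let $N'$ be a thin neighborhood of $G$ in $R_{G'}$, taken as a union of thin bands along the edges of $G$ and disks at its vertices. Both $N$ and $N'$ are surfaces transverse to $\partial_z$ that contain $G$ and are tangent to $\xi$ along $G$. At the vertices, the cyclic order of edges is the same in both, since it is determined by the contact plane. Hence $N$ and $N'$ are isotopic through surfaces transverse to $\partial_z$, relative to $G$, by an isotopy supported near $G$; equivalently, their Lagrangian projections are local diffeomorphisms that agree along $G$. I would run this isotopy on all the Reeb translates $N\times\{t\}$ at once, and after that each $L_i$ lies on a perturbation of $N'\times\{t_i\}\subset R_{G'}\times\{t_i\}$.

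\textbf{Step 3: extending to all of $R_{G'}$.} The perturbation of $N'\times\{t_i\}$ has to be extended to a perturbation of the whole slice $R_{G'}\times\{t_i\}$, relative to $\partial R_{G'}$. Since the isotopy is supported in a small neighborhood of $G$, which is away from $\partial R_{G'}$, I would cut it off using the Reeb-invariant neighborhood. The heights $t_i$ are unchanged, so compatibility with $R_{G'}$ follows.

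I expect Step 2 to be the main obstacle. The difficulty is to make precise that two surfaces transverse to $\partial_z$ containing the Legendrian graph $G$ have isotopic germs along $G$ through such surfaces. First I would try to compare the two germs in the Lagrangian projection. Transversality to $\partial_z$ means the projection is an immersion near $G$, and both germs project onto neighborhoods of the same immersed graph $\pi(G)$ with the same ribbon structure at each point. A linear interpolation of the $z$-coordinate, viewed as a function over this common projected neighborhood, then gives the isotopy, and every intermediate surface is a graph over the projection and hence automatically transverse to $\partial_z$.
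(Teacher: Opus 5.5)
Your proposal is correct in substance, but it is organized differently from the paper, and a few fixable points (listed in the second paragraph) need tidying.

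\textbf{Comparison with the paper.} The paper's proof is essentially your Step~3 alone. It notes that a ribbon of $G'$ is obtained from a ribbon of $G$ by attaching $0$- and $1$-handles, regards $R_G$ as a subsurface of $R_{G'}$, and extends a perturbation of $R_G$ containing $L$ by the identity on $R_{G'}\setminus R_G$. This is short and does not move $L$. However, it tacitly assumes that the \emph{given} $R_{G'}$ literally contains the given $R_G$, which need not hold for arbitrary choices (e.g.\ a wide $R_G$ and a thin $R_{G'}$). Your Steps~1--2 supply exactly this missing justification:
\begin{itemize}
\item shrink $R_G$ onto $G$ along its characteristic foliation;
\item observe that any two germs along $G$ of surfaces transverse to $\partial_z$ are graphs over the same thickening of the Lagrangian projection of $G$, so their height functions can be interpolated.
\end{itemize}
The price is that you prove compatibility only for a Legendrian-isotopic copy of $L$, isotopic inside $R_G\times[-\varepsilon,\varepsilon]$. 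This matches the paper's standing practice (Lemma~\ref{lem:perturbation}) and is all that Lemma~\ref{lem:Req_after_enlarg} and the proof of Theorem~\ref{thm:main} use, but you should state it explicitly.

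\textbf{Points to tidy up.}

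\emph{Tangency along $G$.} Ribbon surfaces in the sense of Definition~\ref{def:ribbon} need not be tangent to $\xi$ along $G$. One has $TR=\xi$ only at zeros of $v$ (e.g.\ at vertices whose edge tangents span $\xi_v$); along edges $v$ is typically nonzero and tangent to $G$. Your interpolation never uses tangency. It only needs that the Lagrangian projection restricted to each surface is a local diffeomorphism near $G$, and this already forces the planar cyclic order at vertices.

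\emph{Step~1 for links.} Applying Theorem~\ref{thm:uniqueness_ribbon} component by component does not by itself give an isotopy of the link. Either invoke the slab argument of Lemma~\ref{lem:uniq_Leg_link}, or, more simply, replace Step~1 by the flow of the contact vector field $-v_\beta-z\,\partial_z$ on $(R_G\times\R,\,dz+\beta)$, where $\beta=\alpha|_{R_G}$ and $\iota_{v_\beta}d\beta=\beta$. This flow:
\begin{itemize}
\item sends $R_G\times\{t\}$ to $\Phi^{-T}_{v_\beta}(R_G)\times\{e^{-T}t\}$;
\item preserves transversality to $\partial_z$, since $[-v_\beta-z\,\partial_z,\partial_z]=\partial_z$, and preserves the height order;
\item shrinks the heights $t_i$ into any interval on which $R_{G'}\times[-\varepsilon',\varepsilon']$ is an embedded product.
\end{itemize}
The last property matters: your Step~3 (``the heights $t_i$ are unchanged'') overlooks it.

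\emph{Boundary in Step~2.} Perturbations are relative to the boundary and $\partial N\neq\partial N'$, so you cannot isotope $N$ onto $N'$ outright. Instead interpolate the height functions $f$ of $N$ and $f'$ of $N'$ over the common thickening with a cut-off, $\chi f+(1-\chi)f'$, where $\chi=1$ on a thinner neighborhood containing the realizations and $\chi=0$ near $\partial N'$.
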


\begin{proof}
  A ribbon surface $R_{G'}$ can be obtained from $R_G$ by attaching embedded $0$- and $1$-handles, so $R_G$ is a subsurface of $R_{G'}$. Any perturbation of $R_G$ that contains $L$ extends (by the identity on $R_{G'}\setminus R_G$) to a perturbation of $R_{G'}$ which also contains $L$. Thus $L$ is compatible with $R_{G'}$.
\end{proof}

\section{Contact handlebodies and half open books}\label{sec:contact_handlebodies}

This section is a digression: the material here is not strictly needed for the proof of Theorem~\ref{thm:main}, but it provides a useful conceptual picture. It explains how ribbon surfaces arise naturally as pages of certain ``half open books'' on contact handlebodies. The ideas come from \cite{Giroux91,Giroux02} and are at the core of many proofs in \cite{Avdek13}. In this section, we drop the standing assumption that we work in $(\R^3,\xist)$.

\begin{definition}
  Let $H$ be a handlebody (i.e.\ a $3$-ball with $3$-dimensional $1$–handles attached) in a $3$-manifold $M$. A \emph{half open book on $H$} is a pair $(B,\pi)$, where
  \begin{itemize}
    \item $B$ is an oriented link in $\partial H$, and
    \item $\pi\colon H\setminus B\to[-\varepsilon,\varepsilon]$ is a fibration whose fibers are the interiors of Seifert surfaces of $B$.
  \end{itemize}
  By analogy with open books, $B$ is called the \emph{binding}, and the closures of the fibers are called \emph{pages}.
\end{definition}

\begin{example}
  Let $(B,\pi)$ be an open book on a closed $3$-manifold $M$, with $\pi\colon M\setminus B\to S^1=\R/\Z$. For $\varepsilon<\tfrac{1}{2}$, the subset
  \[
    H := \pi^{-1}([-\varepsilon,\varepsilon])
  \]
  is a handlebody, and the restriction of $(B,\pi)$ to $H$ is a half open book.
\end{example}

\begin{remark}
  In general, a half open book $(B,\pi)$ on a handlebody $H\subset M$ does not extend to an open book of the whole manifold $M$. For such an extension to exist, the closure of $M\setminus H$ must also be a handlebody carrying a half open book structure with the same binding $B$.
\end{remark}

\begin{definition}
  Let $(H,\xi)$ be a contact handlebody (in the sense of~\cite{Giroux91}). A half open book $(B,\pi)$ on $H$ is said to \emph{support} $\xi$ if
  \begin{itemize}
    \item $B$ is positively transverse to $\xi$; and
    \item there exists a contact form $\lambda$ for $\xi$ such that $d\lambda$ restricts to a positive area form on the interior of each page.
  \end{itemize}
  In this case, we also say that $(H,\xi)$ is \emph{supported} by $(B,\pi)$.
\end{definition}

As in the case of an open book supporting a contact structure, the pages of a half open book supporting $(H,\xi)$ are convex surfaces whose dividing set can be taken to be the boundary of the page. Moreover, we may assume that the dividing set of $\partial H$ is the binding $B$.

\begin{remark}
  Any two half open books supporting a given contact handlebody are isotopic (through half open books supporting the same contact structure).
\end{remark}

If $G$ is a compact connected Legendrian graph in a contact manifold $(M,\xi)$, then a standard regular neighborhood $H$ of $G$ (with the restriction of $\xi$) is a contact handlebody.

\begin{example}\label{half_ob_ribbon}
  Let $\Sigma$ be a ribbon surface of a Legendrian graph $G$ in $(M,\xi)$ (Definition~\ref{def:ribbon}). Using the flow of the Reeb vector field $R_\alpha$ appearing in the definition, we can construct a neighborhood $U$ of $G$ contactomorphic to $(\Sigma\times[-\varepsilon,\varepsilon], dz + \alpha|_{T\Sigma})$, where the $z$–coordinate corresponds to the flow lines of $R_\alpha$; see Figure~\ref{fig:ribbonOB} (left and middle).

  \begin{figure}[htbp]
    \centering
    \begin{overpic}[scale=1]{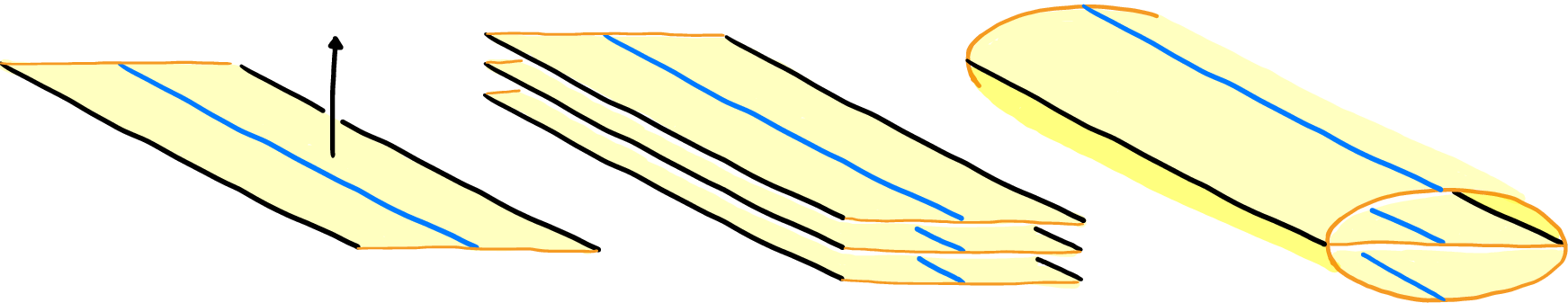}
      \put(22,13){$R_\alpha$}
      \put(31,1){$G$}
    \end{overpic}
    \caption{Left: a ribbon surface $\Sigma$. Middle: the product $\Sigma\times[-\varepsilon,\varepsilon]$. Right: a regular neighborhood $H$ of $G$ equipped with a half open book structure.}
    \label{fig:ribbonOB}
  \end{figure}
  
  \begin{figure}[htbp]
    \centering
    \begin{overpic}[scale=1]{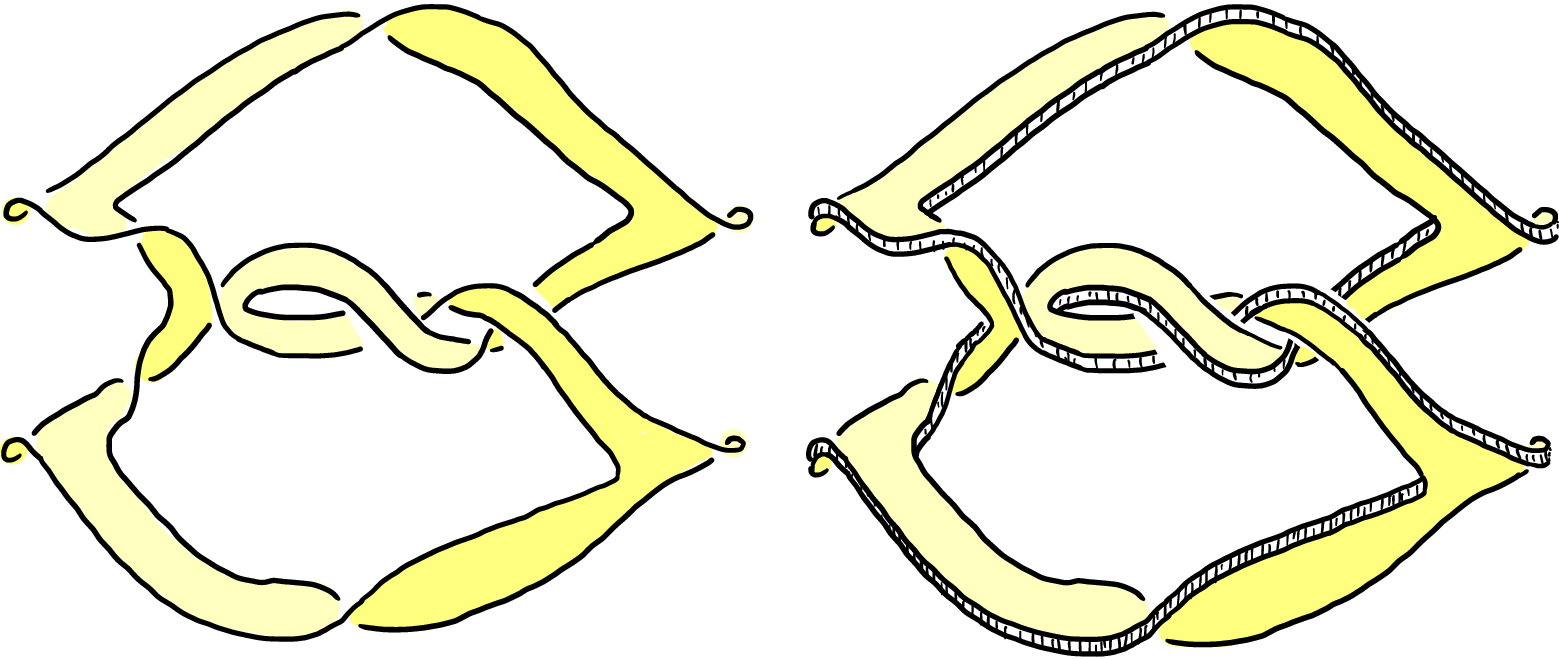}
    \end{overpic}
    \caption{Left: a ribbon surface $R$. Right: the product $R\times[-\varepsilon,\varepsilon]$, viewed as a neighborhood of the Legendrian graph.}
    \label{fig:ribbonOB2}
  \end{figure}
  
  By rounding the edges of $U$, we obtain a standard regular neighborhood $H$ of $G$, which can be seen as
  \[
    H \cong \Sigma\times[-\varepsilon,\varepsilon]/\!\sim,
  \]
  where $(x,z)\sim(x,z')$ if $x\in\partial\Sigma$ (Figure~\ref{fig:ribbonOB}, right). The projection to the second factor defines a half open book $(B,\pi)$ on $H$ whose pages are the images of $\Sigma\times\{t\}$ and whose binding is the image of $\partial\Sigma\times[-\varepsilon,\varepsilon]$. It is straightforward to check that this half open book supports the given contact structure.
Figure~\ref{fig:ribbonOB2} shows part of this construction in $(S^3,\xist)$.
\end{example}

In particular, every ribbon surface appears as a page of a half open book on a contact handlebody.

\begin{theorem}[{\cite{Giroux02}}]\label{thm:contact_cell}
    A ribbon surface $R_G$ on a contact manifold $(M,\xi)$ is the page of an open book compatible with $\xi$ if $G$ is the $1$--skeleton of a \emph{contact cell decomposition}. \qed
\end{theorem}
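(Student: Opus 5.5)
The plan is to build the open book directly from the half open book on a neighbourhood of $G$ in Example~\ref{half_ob_ribbon}, and to show that the complementary handlebody carries a second half open book with the same binding. Recall that a contact cell decomposition of $(M,\xi)$ is a CW decomposition such that:
\begin{itemize}
    \item the $1$-skeleton $G$ is Legendrian;
    \item every $2$-cell $D$ is convex with Legendrian boundary and $\tb(\partial D)=-1$, i.e.\ the twisting of $\xi$ along $\partial D$ relative to $D$ is $-1$;
    \item every $3$-cell lies in a Darboux ball, so $\xi$ is tight on it.
\end{itemize}

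\emph{Step 1.} Let $H=R_G\times[-\varepsilon,\varepsilon]/\!\sim$ be the standard neighbourhood of $G$ from Example~\ref{half_ob_ribbon}. It carries a supporting half open book with pages $R_G\times\{t\}$, binding $B=\partial R_G$, and $\partial H$ convex with dividing set $\Gamma_{\partial H}=B$. Put $H'=\overline{M\setminus H}$. Since $M$ is the union of the closed cells, $H'$ deformation retracts onto the union of the $2$- and $3$-cells minus a collar. The $2$-cells intersect $H'$ in properly embedded discs $D'$ that cut $H'$ into $3$-balls, one for each $3$-cell, so $H'$ is a handlebody. After a perturbation rel boundary, each $D'$ is a convex disc with Legendrian boundary on $\partial H$.

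\emph{Step 2.} The condition $\tb(\partial D)=-1$ means that $\partial D'$ meets $\Gamma_{\partial H}=B$ in exactly two points. Hence the dividing set of $D'$ is a single arc, so each meridian disc $D'$ is a product disc: it looks like $\{\text{arc}\}\times[\varepsilon,1-\varepsilon]$ with respect to the splitting of $\partial H'$ into its positive and negative regions, which are $R_G\times\{\pm\varepsilon\}$. Cutting $H'$ along all the $D'$ (rounding edges as in Giroux's edge-rounding lemma) gives balls whose boundaries carry a connected dividing set. These balls are tight because they lie in the $3$-cells, so by Eliashberg's uniqueness each is the standard tight ball. Each such ball is $\Delta\times[\varepsilon,1-\varepsilon]/\!\sim$ for a disc $\Delta$, and this ball carries a supporting half open book.

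\emph{Step 3.} Glue these product pieces back along the $D'$. The disc pieces $\Delta$ glue along the arcs into a surface $\Sigma'$ with $\partial\Sigma'=B$. Moreover, the positive region of $\partial H'$ is identified with the page $R_G\times\{\varepsilon\}$, so $\Sigma'\cong R_G$. Thus $H'\cong \Sigma'\times[\varepsilon,1-\varepsilon]/\!\sim$ is a product handlebody. Its contact structure is tight on a product with dividing set $B$ on the boundary, which is exactly the structure supported by the product half open book. Here I would invoke Torisu's uniqueness of such contact structures on product handlebodies, or argue directly from the ball decomposition above.

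\emph{Step 4.} Since the binding agrees on $\partial H=\partial H'$, the two half open books glue to an open book on $M$ with pages $R_G\times\{t\}$. The monodromy comes from how the two ends $R_G\times\{\pm\varepsilon\}$ are identified through $H'$. The supporting contact form glues after interpolating near $\partial H$ in the usual way, so the open book is compatible with $\xi$ and $R_G$ is one of its pages.

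I expect the main obstacle to be Step~3. One must identify the contact structure on $H'$ with the product model rel boundary, which requires careful control of the edge rounding along the discs $D'$, and must check that the resulting identification of pages matches $R_G$ including its characteristic foliation near the binding.
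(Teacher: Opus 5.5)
Your proposal is correct and is essentially the standard argument behind this result (Giroux's proof as written up in Etnyre's lectures, combined with Torisu's uniqueness for product handlebodies); the paper gives no proof of its own and only cites Giroux. The one point to phrase carefully is in Step~3: tightness of $\xi|_{H'}$ is not known in advance, so it should be an output of your disk decomposition rather than a hypothesis fed into Torisu's theorem. Concretely, tightness of the $3$-cells forces single-arc dividing sets on the $D'$ and standard balls by Eliashberg, and regluing then recovers the product model; this is your ``argue directly from the ball decomposition'' option.
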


We refer to~\cite{Etnyre06} for the definition of a contact cell decomposition. 
An important point is that, when working in $(\R^3,\xist)$, it is particularly easy to determine when a Legendrian graph is the $1$--skeleton of such a decomposition (see~\cite{Avdek13}). 
Theorem~\ref{thm:contact_cell} is at the heart of the proof of the main result in \cite[Theorem 1.10]{Avdek13}, which will be our starting point.

\section{Contact surgery links and ribbon surfaces}\label{sec:ribbon_moves_sec}

We now explain how a contact surgery link compatible with a ribbon surface $R$ determines a product of Dehn twists in the mapping class group $\mathrm{MCG}(R,\partial R)$.

Before the next definition, we make a brief convention. Let $L$ be a contact surgery link compatible with a ribbon surface $R$. Since each component $L_i$ lies in a perturbation of some $R\times\{t_i\}$, we can regard $L_i$ as a simple closed curve on $R$, well-defined up to isotopy: pull it back to $R\times\{t_i\}$ via the inverse of the perturbation, then project to $R$.
We will freely switch between these two points of view (Legendrian knot in $\R^3$ vs.\ isotopy class of a curve on $R$); the meaning should always be clear from the context.\footnote{Here, regarding $R$ as an abstract surface is analogous to the familiar process of viewing a page of an open book as the surface of the corresponding abstract open book.}

\begin{convention}
  Given a Legendrian knot $L$ compatible with a ribbon surface $R$, we denote by $\overline{L}$ the simple closed curve on $R$ defined by $L$ up to isotopy.
\end{convention}

\begin{definition}[{\cite[Definition~6.1]{Avdek13}}]\label{def:mapping_class_determined}
  Let $R$ be a ribbon surface and
  \[
    \boldsymbol{L}
    =
    \bigcup_{i=1}^n L_i^{\delta_i}, 
    \qquad
    \delta_i\in\{+,-\},
  \]
  be a contact surgery link compatible with $R$.  
The \emph{mapping class determined by $\boldsymbol{L}$} is
  \[
    \tau_{\scriptscriptstyle{\boldsymbol{L}}}
    :=
    \tau_{\scriptscriptstyle{\overline{L_n}}}^{-\delta_n}
    \cdots
    \tau_{\scriptscriptstyle{\overline{L_1}}}^{-\delta_1}
    \in \mathrm{MCG}(R,\partial R).
  \]
\end{definition}

Note that a contact surgery link compatible with a ribbon surface determines more than just an element of the mapping class group. It provides a specific
factorization in terms of Dehn twists, where each Dehn twist corresponds to a component of the contact surgery link. Since, in the following sections, the actual
factorization will be important; we sometimes think of
the “mapping class determined” by a contact surgery link as the data of the
factorization.
The motivation for Definition~\ref{def:mapping_class_determined} is the following well-known result.

\begin{proposition}[{\cite{Gay,Etnyre06}}]\label{prop:Dehn_surgery_Dehn_twist}
  Let
  \[
    \boldsymbol{L}
    =
    \bigcup_{i=1}^n L_i^{\delta_i},
    \qquad
    \delta_i\in\{+,-\},
  \]
  be a contact surgery link whose components are embedded in different pages of an open book $(B,\pi)$ supporting a contact structure $\xi$ in a manifold $M$. Let $(\Sigma,h)$ be the corresponding abstract open book, where $\Sigma$ is a page chosen just before the page containing $L_1$. Then the contact manifold obtained by contact surgery on $\boldsymbol{L}$ is contactomorphic to the contact manifold supported by the abstract open book $(\Sigma, h\circ\tau_{\scriptscriptstyle{\boldsymbol{L}}})$.\qed
\end{proposition}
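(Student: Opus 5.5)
The plan is to reduce the statement to the case of a single surgery component and then argue by induction on $n$. The single-component case will come from the local model of contact $(\pm1)$-surgery along a Legendrian knot lying on a page.

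\emph{Base case.} Let $L$ be a Legendrian knot on a page $\Sigma_t$ of the open book $(B,\pi)$ supporting $\xi$. By the Legendrian realization principle (Theorem~\ref{thm:Legendrian_realization}), together with Lemma~\ref{lem:inverse_Leg_real} to guarantee that $\overline L$ is homologically nontrivial, we may isotope the page so that $L$ lies on it. Its contact framing then agrees with the page framing: a convex page with dividing set $\partial\Sigma_t$ and a Legendrian curve disjoint from the dividing set has twisting $0$ relative to the surface. Consequently, smooth surgery with coefficient $\pm1$ relative to the contact framing is surgery with coefficient $\pm1$ relative to the page framing.

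The classical computation for page-framed surgery on a curve in a fibre then applies. Cut $M$ along the page, and reglue by a Dehn twist along $L$ on that page. Page framing $-1$ inserts a right-handed (positive) twist $\tau_{\overline L}^+$ into the monodromy, while framing $+1$ inserts $\tau_{\overline L}^-$. With the paper's sign convention $\tau^{-\delta}$, contact $(-1)$-surgery gives $\tau^+$, as required.

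It remains to see that the contact structures agree. Away from a neighbourhood of $L$ the open book is unchanged, so it still supports $\xi$ there. Inside the reglued solid torus, the structure built from the new open book is tight and agrees with $\xi$ near the boundary. By the uniqueness of the tight extension for coefficients $\pm1$, recalled in Section~\ref{sec:surgery_OB_Giroux}, it coincides with the contact surgery. The Giroux correspondence (Theorem~\ref{thm:Giroux_correspondence}) then identifies the result with the contact manifold of the modified abstract open book $(\Sigma,h\circ\tau_{\overline L}^{\mp})$.

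\emph{Induction.} Now suppose the components $L_1,\dots,L_n$ lie on pages at increasing times $t_1<\dots<t_n$. Perform surgery on $L_1$ first; the base case gives a new supporting open book whose pages are unchanged outside a thin slab around $\Sigma_{t_1}$. The remaining components still lie on distinct pages of this new open book, so the base case applies again. Fix the reference page $\Sigma$ just before $t_1$ and track the monodromy as the composition of the successive first-return maps. The twists then accumulate in the order $h\circ\tau_{\overline{L_n}}^{-\delta_n}\cdots\tau_{\overline{L_1}}^{-\delta_1}$, provided the composition convention for the monodromy is fixed consistently. This is exactly $h\circ\tau_{\boldsymbol L}$.

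The main obstacle is bookkeeping the orders and signs. This means checking that the composition order matches Definition~\ref{def:mapping_class_determined} under the chosen convention for reading the monodromy, and that framing $-1$ indeed corresponds to a positive twist. I would verify this first on a Hopf band, where $(-1)$-surgery on the core of the annulus page must produce the positive stabilization and hence $(S^3,\xist)$ again.
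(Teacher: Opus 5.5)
Your skeleton is the standard one: contact framing equals page framing for a Legendrian on a page, page-framed $\mp1$ surgery is cutting along the page and regluing by $\tau^{\pm}_{\overline L}$, uniqueness of the tight $(\pm1)$-extension identifies the contact structures, and then you induct over the pages in order. The paper gives no proof and simply cites Gay and Etnyre.

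The gap is that the step carrying all of the contact-geometric content is only asserted: ``inside the reglued solid torus, the structure built from the new open book is tight and agrees with $\xi$ near the boundary.'' A contact structure supported by $(\Sigma,h\circ\tau^{\mp}_{\overline L})$, whether it comes from the Thurston--Winkelnkemper construction or from the Giroux correspondence, is only determined up to isotopy on the whole surgered manifold. Nothing you wrote arranges that it restricts to $\xi$ on $M\setminus N(L)$. No reason at all is given for tightness on the glued-in torus. Without both facts, the uniqueness of the tight extension cannot be invoked.

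The missing idea is an explicit local model in which the Dehn twist is a contactomorphism. Near the page containing $L$, write the supporting form as $dt+\lambda$ on $\Sigma\times[-\varepsilon,\varepsilon]$ with $d\lambda>0$, by flowing the page along the Reeb field. Because $L$ is Legendrian, $\lambda|_L=0$. So on an annulus $A=\{|s|\le\eta\}$ around $L=\{s=0\}$ you can normalize $\lambda=s\,d\theta$ up to an exact term. A twist $\tau(\theta,s)=(\theta+g(s),s)$ with $g'$ even then satisfies $\tau^*\lambda=\lambda+dF$, with $F$ supported in the interior of $A$.

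Hence $(x,t)\mapsto(\tau(x),t-F(x))$ is a contactomorphism of $(A\times\R,dt+\lambda)$ that is the identity near $\partial A\times\R$, and similarly for $\tau^{-1}$. Regluing along it yields a contact structure equal to $\xi$ outside $A\times[-\varepsilon,\varepsilon]$ and visibly supported by the new open book.

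This model is the same for every Legendrian knot on a page, so tightness of the new solid torus can be checked once in a tight example for each sign. The core of a page of $(A,\mathrm{id})$, which supports $(S^1\times S^2,\xi_{\mathrm{std}})$, reglued by $\tau^+$ gives $(A,\tau^+)$, i.e.\ $(S^3,\xist)$. The core of a page of $(A,\tau^+)$ reglued by $\tau^-$ gives $(A,\mathrm{id})$. Only after this does uniqueness of the tight extension identify the result with contact $(\pm1)$-surgery.

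Two minor points:
\begin{enumerate}
\item The Legendrian realization step in your base case is superfluous, since $L$ lies on a page by hypothesis.
\item Your Hopf band check can only detect the sign contact-geometrically, because $(A,\tau^+)$ and $(A,\tau^-)$ are both $S^3$ topologically. Also, the manifold you start from there is $(A,\mathrm{id})=S^1\times S^2$, not $S^3$.
\end{enumerate}
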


\section{Ribbon moves}\label{sec:ribbonmovesadvek}

In this section, we recall the definition of ribbon moves given in \cite{Avdek13}, and we state the main result there, namely Theorem~\ref{Vague_Kirby_Theorem}.

\begin{definition}[{\cite[Definition~6.2]{Avdek13}}]\label{RequivalentV1}
  Let $\boldsymbol{L}$ and $\boldsymbol{L}'$ be contact surgery links in $(S^3,\xist)$. We say that $\boldsymbol{L}$ and $\boldsymbol{L}'$ are \emph{$R$–equivalent} if
  \begin{itemize}
    \item both links are compatible with the same ribbon surface $R$, and
    \item $\tau_{\scriptscriptstyle{\boldsymbol{L}}}=\tau_{\scriptscriptstyle{\boldsymbol{L}'}}$ in $\mathrm{MCG}(R,\partial R)$.
  \end{itemize}
\end{definition}

\begin{example}\label{ex:R-eq}
  Consider the two contact surgery links $\boldsymbol{L}=L_1^-\cup L_2^-$ on the left and $\boldsymbol{L}'=L_1'^-\cup L_2'^-$ on the right of Figure~\ref{fig:ex_R_eq1}. In Example~\ref{compatible_braid1} we already saw that $\boldsymbol{L}$ is compatible with the ribbon $R$ from Figure~\ref{fig:comp_ex_1}; the same is true for $\boldsymbol{L}'$ (note that in this case, the component $L'_2$ is a resolution of $G$ and so is compatible with $R$ because of Corollary~\ref{cor:resolution_comp}). As the indices suggest, we have that $L'_2\subset R\times\{t_2\}$ and $L'_1\subset R\times\{t_1\}$, with $t_2>t_1$.
  We have that
  \[
    \tau_{\scriptscriptstyle{\boldsymbol{L}}}
    = \tau_{\scriptscriptstyle{\overline{L_2}}}^+ \tau_{\scriptscriptstyle{\overline{L_1}}}^+,
    \qquad
    \tau_{\scriptscriptstyle{\boldsymbol{L}'}}
    = \tau_{\scriptscriptstyle{\overline{L'_2}}}^+ \tau_{\scriptscriptstyle{\overline{L'_1}}}^+.
  \]
  The ribbon $R$ can be identified with the surface in Figure~\ref{fig:braid_relation} so that $\overline{L_1}\mapsto\bb$, $\overline{L_2}\mapsto\aa$, $\overline{L'_1}\mapsto\aa$, and $\overline{L'_2}\mapsto\aa\bb$. Under this identification, the two factorizations differ by a braid relation, hence
  \[
    \tau_{\scriptscriptstyle{\boldsymbol{L}}}
    = \tau_{\scriptscriptstyle{\boldsymbol{L}'}} \in \mathrm{MCG}(R,\partial R),
  \]
  and $\boldsymbol{L},\boldsymbol{L}'$ are $R$–equivalent.
\end{example}

\begin{figure}[htbp]
    \centering
    \begin{overpic}[scale=1]{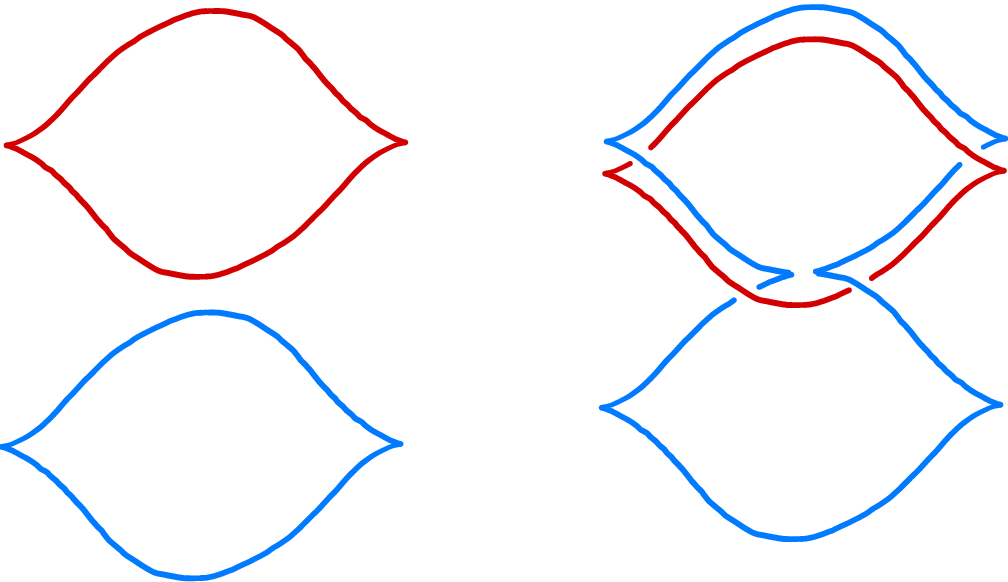}
      \put(34,53){$L_2$}
      \put(34,20){$L_1$}
      \put(41,42.5){$-$}
      \put(41,12.5){$-$}
      \put(92,53){$L'_2$}
      \put(92,30){$L'_1$}
      \put(101,42.5){$-$}
      \put(101,40){$-$}
    \end{overpic}
    \caption{Left: the contact surgery link $\boldsymbol{L}=L_1^-\cup L_2^-$. Right: the contact surgery link $\boldsymbol{L}'=L_1'^-\cup L_2'^-$.}
    \label{fig:ex_R_eq1}
  \end{figure}

\begin{lemma}\label{lem:Req_after_enlarg}
  Let $\boldsymbol{L}$ and $\boldsymbol{L}'$ be two $R_G$–equivalent contact surgery links. Then $\boldsymbol{L}$ and $\boldsymbol{L}'$ are also $R_{G'}$–equivalent for every Legendrian graph $G'$ containing $G$.
\end{lemma}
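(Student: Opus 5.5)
The plan has two parts. First, compatibility with $R_{G'}$ comes from Lemma~\ref{compatibility_extension}. Second, equality of the mapping classes transfers along the homomorphism induced by the inclusion $R_G\hookrightarrow R_{G'}$.

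For the first part, as in the proof of Lemma~\ref{compatibility_extension}, a ribbon surface $R_{G'}$ is obtained from $R_G$ by attaching embedded $0$- and $1$-handles. After a perturbation we may therefore regard $R_G$ as a subsurface of $R_{G'}$, and the Reeb-invariant neighborhood $R_G\times[-\varepsilon,\varepsilon]$ as contained in $R_{G'}\times[-\varepsilon,\varepsilon]$. Lemma~\ref{compatibility_extension} then shows that both $\boldsymbol L$ and $\boldsymbol L'$ are compatible with $R_{G'}$. Its proof also gives more: each component $L_i\subset$ (perturbation of) $R_G\times\{t_i\}$ lies in the extended perturbation of $R_{G'}\times\{t_i\}$, with the same heights $t_i$ and hence the same ordering of components.

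It follows that the curve on $R_{G'}$ determined by $L_i$ is the image $\iota(\overline{L_i})$ of the curve on $R_G$ under the inclusion $\iota\colon R_G\hookrightarrow R_{G'}$. The one point I would check carefully is that this isotopy class is well-defined. I would argue this via Theorem~\ref{thm:uniqueness_ribbon}, or simply note that pulling back through the extended perturbation agrees with pulling back through the original one on $R_G$.

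For the second part, consider the extension-by-identity homomorphism
\[
\iota_*\colon \MCG(R_G,\partial R_G)\longrightarrow \MCG(R_{G'},\partial R_{G'}).
\]
It is well-defined because diffeomorphisms are the identity near $\partial R_G$, and isotopies rel boundary extend by the identity. It sends $\tau_{\aa}^{\pm}$ to $\tau_{\iota(\aa)}^{\pm}$, since a Dehn twist is supported in an annular neighborhood of $\aa$ inside $R_G$. Applying $\iota_*$ to the factorization in Definition~\ref{def:mapping_class_determined} gives $\iota_*(\tau_{\boldsymbol L})=\tau_{\boldsymbol L}^{R_{G'}}$, and likewise for $\boldsymbol L'$. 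Hence
\[
\tau_{\boldsymbol L}=\tau_{\boldsymbol L'}\ \text{in}\ \MCG(R_G,\partial R_G)
\quad\Longrightarrow\quad
\tau^{R_{G'}}_{\boldsymbol L}=\tau^{R_{G'}}_{\boldsymbol L'}\ \text{in}\ \MCG(R_{G'},\partial R_{G'}),
\]
which is $R_{G'}$-equivalence.

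The main obstacle is the bookkeeping of the first part: ensuring that the curves on $R_{G'}$ associated with the components are exactly the pushforwards of those on $R_G$, and that the ordering is preserved. Once that is settled, the group-theoretic step is immediate. Note that $\iota_*$ need not be injective, but only the forward direction is needed.
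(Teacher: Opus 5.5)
Your proposal is correct and follows the paper's own proof. The paper gets compatibility with $R_{G'}$ from Lemma~\ref{compatibility_extension} and then pushes the equality $\tau_{\boldsymbol L}=\tau_{\boldsymbol L'}$ forward along the extension-by-identity homomorphism $\MCG(R_G,\partial R_G)\to\MCG(R_{G'},\partial R_{G'})$. Your extra bookkeeping (same heights $t_i$, hence same ordering, and Dehn twists mapping to Dehn twists along the included curves) only makes explicit what the paper leaves implicit.

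One small point: Theorem~\ref{thm:uniqueness_ribbon} goes the other way (same curve class implies Legendrian isotopic), so it does not show that the curve class is well-defined. Your second justification does suffice: the extended perturbation restricts to the original one on $R_G$.
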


\begin{proof}
  By Lemma~\ref{compatibility_extension}, $\boldsymbol{L}$ and $\boldsymbol{L}'$ are both compatible with $R_{G'}$. Moreover, viewing $R_G$ as a subsurface of $R_{G'}$, any diffeomorphism of $R_G$ fixing $\partial R_G$ extends by the identity on $R_{G'}\setminus R_G$, yielding a homomorphism
  \[
    \mathrm{MCG}(R_G,\partial R_G)
    \longrightarrow
    \mathrm{MCG}(R_{G'},\partial R_{G'}).
  \]
  If $\tau_{\scriptscriptstyle{\boldsymbol{L}}}
  =\tau_{\scriptscriptstyle{\boldsymbol{L}'}}$ in $\mathrm{MCG}(R_G,\partial R_G)$, the same holds in $\mathrm{MCG}(R_{G'},\partial R_{G'})$, so $\boldsymbol{L}$ and $\boldsymbol{L}'$ are $R_{G'}$–equivalent.
\end{proof}

\begin{definition}[{\cite[Definition~6.4]{Avdek13}}]\label{def:ribbon_moves}
  Let $\boldsymbol{L}\subset(S^3,\xist)$ be a contact surgery link and $\boldsymbol{l}$ a contact surgery sublink. Suppose $\boldsymbol{l}$ is $R$–equivalent to another contact surgery link $\boldsymbol{l}'$.  
  A \emph{ribbon move} performed on $\boldsymbol{l}$ consists of replacing $\boldsymbol{l}$ by $\boldsymbol{l}'$ in $\boldsymbol{L}$.
\end{definition}

\begin{remark}
Definition~\ref{def:ribbon_moves} is stated in the form used in~\cite{Avdek13}. Strictly speaking, however, when a ribbon move is performed on a sublink
\[
\boldsymbol l\subset \boldsymbol L,
\]
the \(R\)-equivalence between \(\boldsymbol l\) and \(\boldsymbol l'\) should take place in the complement of the remaining components of \(\boldsymbol L\). In other words, one should require the ribbon surface \(R\), and the corresponding neighborhood
\[
R\times[-\varepsilon,\varepsilon],
\]
to be disjoint from \(\boldsymbol L\setminus \boldsymbol l\). Equivalently, the sublinks \(\boldsymbol l\) and \(\boldsymbol l'\) are regarded as \(R\)-equivalent inside
\[
\bigl(S^3\setminus(\boldsymbol L\setminus \boldsymbol l),
\xist|_{S^3\setminus(\boldsymbol L\setminus \boldsymbol l)}\bigr).
\]

Informally, this condition says that the components of \(\boldsymbol L\) not involved in the move should not interfere with the \(R\)-equivalence. This is also the reason for the shaded regions in the diagrammatic moves shown in Figures~\ref{fig:cancelling_intro}--\ref{fig:chain_intro}: the shaded region is the front projection of the supporting handlebody $R\times [-\varepsilon,\varepsilon]$, which should be disjoint from all components of the contact surgery link except those involved in the move.
\end{remark}

\begin{example}
  Consider the contact surgery links $\boldsymbol{L}:=L_1^-\cup L_2^-\cup K^+$ and $\boldsymbol{L}':=L_1'^-\cup L_2'^-\cup K^+$ in Figure~\ref{fig:ribbon_move_ex}.
By Example~\ref{ex:R-eq}, the sublinks $L_1^-\cup L_2^-$ and $L_1'^-\cup L_2'^-$ are $R$–equivalent, so $\boldsymbol{L}$ and $\boldsymbol{L}'$ differ by a ribbon move.
\end{example}

  \begin{figure}[htbp]
    \centering
    \begin{overpic}[scale=1]{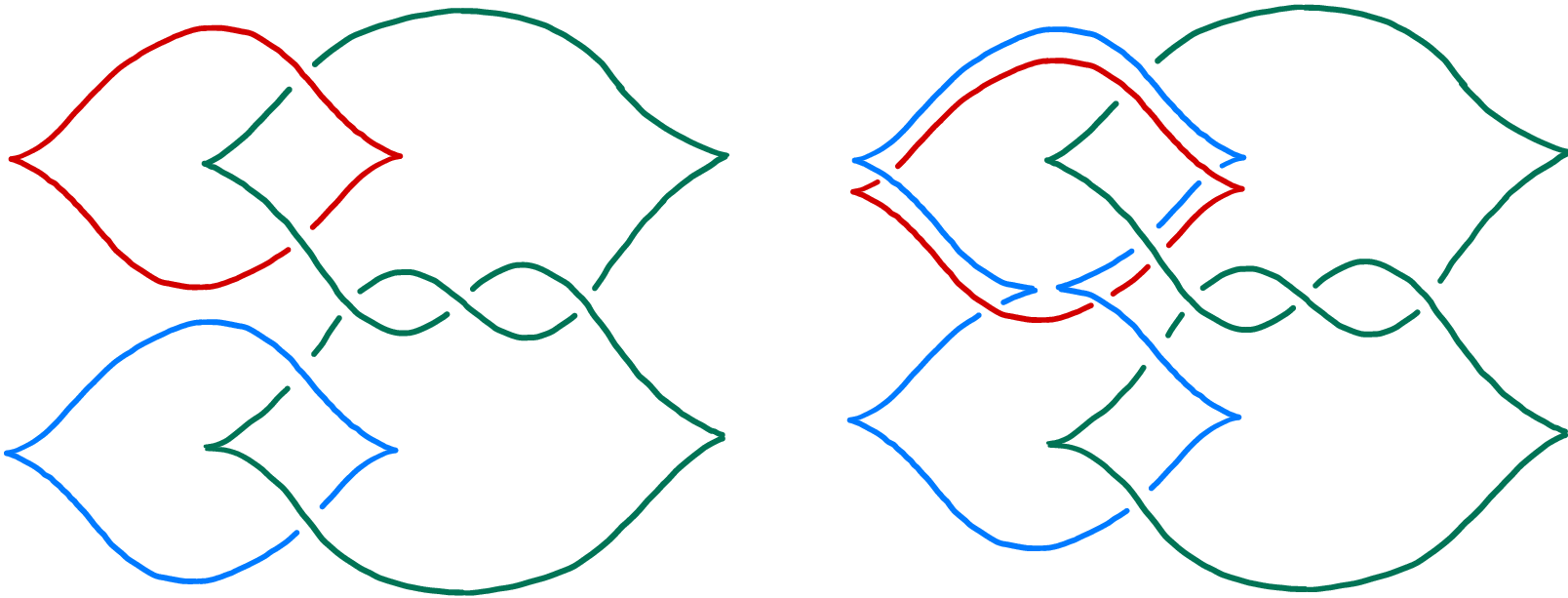}
      \put(26,27.5){$-$}
      \put(26,8.5){$-$}
      \put(47,27.5){$+$}
      \put(80,27.5){$-$}
      \put(80,25.5){$-$}
      \put(101,27.5){$+$}
      \put(2,23){$L_2$}
      \put(2,14){$L_1$}
      \put(54,21){$L'_1$}
      \put(54,15){$L'_2$}
      \put(41,32){$K$}
      \put(95,32){$K$}
    \end{overpic}
    \caption{Left: the contact surgery link $\boldsymbol{L}=L_1^-\cup L_2^-\cup K^+$. Right: the contact surgery link $\boldsymbol{L}'=L_1'^-\cup L_2'^-\cup K^+$.}
    \label{fig:ribbon_move_ex}
  \end{figure}

The point of introducing ribbon moves is the following result.

\begin{proposition}[{\cite[Proposition~6.3]{Avdek13}}]\label{ribbon_moves_prop}
  Suppose that a contact surgery link $\boldsymbol{L}$ in $(S^3,\xist)$ can be transformed into another contact surgery link $\boldsymbol{L}'$ by a sequence of ribbon moves. Then $\boldsymbol{L}$ and $\boldsymbol{L}'$ describe contactomorphic contact manifolds.\qed
\end{proposition}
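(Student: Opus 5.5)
By induction on the number of ribbon moves, it suffices to treat a single ribbon move. So suppose $\boldsymbol L=\boldsymbol l\cup\boldsymbol k$ and $\boldsymbol L'=\boldsymbol l'\cup\boldsymbol k$, where $\boldsymbol l$ and $\boldsymbol l'$ are $R$-equivalent for a ribbon surface $R$. Assume that the neighborhood $N:=R\times[-\varepsilon,\varepsilon]$ is disjoint from $\boldsymbol k$, as in the remark following Definition~\ref{def:ribbon_moves}. The idea is to compare the two surgeries inside the contact handlebody $N$ and to show that the results agree rel boundary. Once that holds, the surgery along $\boldsymbol k$, which takes place in the complement, is unaffected.

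\textbf{Step 1: extend $R$ to a page.} Let $G$ be the Legendrian skeleton of $R$. Using Avdek's observation that in $(\R^3,\xist)$ one can recognize and produce $1$-skeleta of contact cell decompositions, I would enlarge $G$ to a Legendrian graph $G'\supset G$ that is the $1$-skeleton of a contact cell decomposition of $(S^3,\xist)$. For the moment I ignore $\boldsymbol k$ and work in $S^3$ with only the sublinks $\boldsymbol l,\boldsymbol l'$. By Theorem~\ref{thm:contact_cell}, $R_{G'}$ is a page of an open book $(B,\pi)$ supporting $\xist$. By Lemma~\ref{lem:Req_after_enlarg}, $\boldsymbol l$ and $\boldsymbol l'$ remain $R_{G'}$-equivalent. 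Their components lie on perturbations of consecutive slices $R_{G'}\times\{t_i\}$. After Lemma~\ref{lem:perturbation} and a small isotopy of the fibration near the page, each component lies on a distinct page of $(B,\pi)$, in the order of the $t_i$.

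\textbf{Step 2: apply the surgery/Dehn twist correspondence.} Proposition~\ref{prop:Dehn_surgery_Dehn_twist} identifies the surgered manifolds with the abstract open books $(\Sigma,h\tau_{\boldsymbol l})$ and $(\Sigma,h\tau_{\boldsymbol l'})$. These coincide because $\tau_{\boldsymbol l}=\tau_{\boldsymbol l'}$. This already settles the case $\boldsymbol k=\emptyset$.

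\textbf{Step 3: the relative version, which is the main obstacle.} When $\boldsymbol k\neq\emptyset$, the enlarged graph $G'$ generally meets $\boldsymbol k$, so I would instead localize. The surgeries on $\boldsymbol l$ and $\boldsymbol l'$ are supported in $N$. Viewing $N$ as the contact handlebody with the half open book of Example~\ref{half_ob_ribbon}, surgery along components on the pages $R\times\{t_i\}$ amounts to regluing by $\tau_{\boldsymbol l}$ (respectively $\tau_{\boldsymbol l'}$) along a page. Concretely, cut $N$ along $R\times\{t_0\}$ for some $t_0$ just below all $t_i$, and reglue the top piece by the product of Dehn twists. The same bookkeeping as in the proof of Proposition~\ref{prop:Dehn_surgery_Dehn_twist} does this, with the binding treated as the dividing set of $\partial N$. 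Equality of the mapping classes rel $\partial R$ then gives a diffeomorphism between the two surgered handlebodies that is the identity near $\partial N$. It carries one contact structure to a structure agreeing with the other near $\partial N$ and supported by the same half open book. By the uniqueness of supported structures, a Moser/Giroux-type argument that is the half open book analogue of Theorem~\ref{thm:Giroux_correspondence}, the two structures are isotopic rel boundary. Justifying this relative uniqueness is the delicate point. If it proves troublesome, a fallback is to choose the cell decomposition in Step~1 to avoid $\boldsymbol k$, for instance by Legendrian realizing $\boldsymbol k$ on pages away from $R$. I expect that choice to be harder, though, because it would require $\boldsymbol k$ itself to be compatible with the page.

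Finally, gluing the identity on $S^3\setminus N$, contact surgery on $\boldsymbol k$, gives the contactomorphism between the manifolds obtained from $\boldsymbol L$ and $\boldsymbol L'$.
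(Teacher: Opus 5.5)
Your Steps~1--2 are the standard argument, and they correctly handle $\boldsymbol k=\emptyset$: they show that $R$-equivalent links give contactomorphic manifolds. (The paper gives no proof of its own here; it imports the statement from Avdek.)

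The gap is Step~3. The case $\boldsymbol k\neq\emptyset$ is the entire content of a ribbon move beyond $R$-equivalence. You reduce it to the claim that two contact structures on the surgered handlebody are isotopic rel $\partial N$ when they are supported by the same half open book and agree near $\partial N$. That claim is only asserted. It does not follow from Theorem~\ref{thm:Giroux_correspondence}, which concerns closed manifolds and gives no boundary control. The usual interpolation proof of uniqueness of supported structures adds a large multiple of the angular form of the fibration. That term does not vanish near the binding, which here lies on $\partial N$, so it does not directly produce an isotopy fixed near $\partial N$. As written, the sublink case is therefore not proved. Your fallback of making $\boldsymbol k$ compatible with a page is, as you suspect, harder than necessary.

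The missing idea is that you need not work in $S^3$ at all. Since $\boldsymbol l$ and $\boldsymbol k$ are disjoint, surgery on $\boldsymbol L=\boldsymbol l\cup\boldsymbol k$ can be done by first surgering $\boldsymbol k$ and then $\boldsymbol l$. Because $N$ is disjoint from $\boldsymbol k$, the closed contact manifold $(M_{\boldsymbol k},\xi_{\boldsymbol k})$ contains $N$ with its contact structure unchanged. Choose a contact form for $\xi_{\boldsymbol k}$ equal to $\alphast$ on $N$. Then $R$ is still a ribbon surface of $G$ in $M_{\boldsymbol k}$, and $\boldsymbol l,\boldsymbol l'$ are still compatible with it, with the same contact framings.

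Giroux's construction extends any Legendrian graph in any closed contact $3$-manifold to the $1$-skeleton $G'$ of a contact cell decomposition. Theorem~\ref{thm:contact_cell} is stated for arbitrary $(M,\xi)$, so $R\subset R_{G'}$ where $R_{G'}$ is a page of an open book supporting $\xi_{\boldsymbol k}$. By Lemma~\ref{lem:Req_after_enlarg}, $\boldsymbol l$ and $\boldsymbol l'$ remain $R_{G'}$-equivalent. Your Steps~1--2 then run verbatim inside $M_{\boldsymbol k}$ with Proposition~\ref{prop:Dehn_surgery_Dehn_twist}. Surgery on $\boldsymbol l$ and on $\boldsymbol l'$ in $M_{\boldsymbol k}$ yields $(\Sigma,h\tau_{\boldsymbol l})=(\Sigma,h\tau_{\boldsymbol l'})$, and these are exactly the results of surgery on $\boldsymbol L$ and $\boldsymbol L'$.

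This removes any need for a relative uniqueness statement. If you want to keep your local route instead, you must supply a genuine rel-boundary uniqueness argument for supported contact structures on the handlebody. That is real additional work, not a formality.
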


The main result of~\cite{Avdek13} is a converse to Proposition~\ref{ribbon_moves_prop}, which we will use as the starting point for our refinement.

\begin{theorem}[{\cite[Theorem~1.10]{Avdek13}}]\label{Vague_Kirby_Theorem}
  Let $\boldsymbol{L}$ and $\boldsymbol{L}'$ be contact surgery links in $(S^3,\xist)$ that describe contactomorphic contact manifolds. Then there exists a sequence of ribbon moves and Legendrian isotopies transforming $\boldsymbol{L}$ into $\boldsymbol{L}'$.\qed
\end{theorem}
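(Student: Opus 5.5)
The plan is to reduce the problem to the Giroux correspondence (Theorem~\ref{thm:Giroux_correspondence}) applied to a single open book of $(S^3,\xist)$ whose page is a ribbon surface containing both links.

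\textbf{Step 1: a common page.} I would first Legendrian isotope $\boldsymbol L$ and $\boldsymbol L'$ and choose a connected Legendrian graph $G\subset(\R^3,\xist)$ with the following properties:
\begin{itemize}
\item[(i)] every component of both links is a resolution of $G$, or lies on $R_G$ after a perturbation, so that both links are compatible with $R_G$ by Corollary~\ref{cor:resolution_comp} and Lemma~\ref{compatibility_extension};
\item[(ii)] $G$ is the $1$-skeleton of a contact cell decomposition of $(S^3,\xist)$.
\end{itemize}
For (i) I would take a union of the fronts of the components, joined by Legendrian arcs at their crossings and cusps, and separated vertically in $\partial_z$ according to the ordering of the $t_i$. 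For (ii) I would enlarge this graph further, which is harmless because of Lemma~\ref{compatibility_extension}. In $\R^3$ it suffices to add edges until the complement of the graph's front decomposes into discs of a controlled shape.

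Theorem~\ref{thm:contact_cell} then makes $R_G$ the page of an open book supporting $\xist$, with some monodromy $h$. Proposition~\ref{prop:Dehn_surgery_Dehn_twist} shows that the surgered manifolds are supported by $(R_G,h\tau_{\boldsymbol L})$ and by $(R_G,h\tau_{\boldsymbol L'})$.

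\textbf{Step 2: relating the two abstract open books.} Since the two manifolds are contactomorphic, Giroux's correspondence says the two abstract open books become equivalent after finitely many positive stabilizations. Equivalence here means that the monodromies agree up to conjugation by a diffeomorphism $\psi$ of the page. I would realize both phenomena by ribbon moves.
\begin{itemize}
\item \emph{Stabilization.} Add an arc to $G$ with suitable endpoints so that the page gains a $1$-handle. The new page is again compatible, by Lemma~\ref{lem:Req_after_enlarg}. The extra positive twist along the stabilizing curve $\cc$ is then introduced by inserting a cancelling pair $\cc^-\cup\cc^+$, which is trivially an $R$-equivalence, and absorbing one of its components into $h$.
\item \emph{Conjugation.} Write $\psi$ as a product of Dehn twists and treat one twist $\tau_\cc$ at a time. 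Insert a cancelling pair of realizations of $\cc$ at the top and bottom of the surgery stack. Then Legendrian isotope the top component once around the whole open book, through the complementary half open book. It comes back on the bottom page as a realization of $h^{-1}(\cc)$. The identity $\tau_{h(\cc)}=h\tau_\cc h^{-1}$ then converts $h\tau$ into $\tau_\cc^{\pm}\,h\tau\,\tau_\cc^{\mp}$.
\end{itemize}

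\textbf{Step 3: conclusion.} After these moves, both links are compatible with a common ribbon surface and determine the same mapping class. They are therefore $R$-equivalent, and a single ribbon move, together with Legendrian isotopies, connects them.

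I expect the main obstacle to be the bookkeeping in Step 2. First, stabilizations in Giroux's theorem are abstract: one must show that every abstract stabilization of $R_G$ can be realized by enlarging $G$ inside $(\R^3,\xist)$ while keeping the contact-cell property. Second, the isotopy around the open book must avoid the other components, and its result must again be compatible with the enlarged ribbon surface.
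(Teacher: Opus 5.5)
The paper does not prove this statement itself. It quotes it from Avdek, and its outline of Avdek's argument has the same architecture as your plan: a common supporting page produced by Avdek's algorithm, Giroux's correspondence, and positive stabilization realized by a cancelling pair plus a handle slide. Your Step~1 plays the role of Avdek's algorithm. Your conjugation trick works in substance, with two corrections:
\begin{itemize}
\item The cancelling pair must be inserted as an adjacent pair at one end of the stack. One copy at the top and one at the bottom is not a cancelling pair; it replaces $\tau_{\boldsymbol L}$ by a conjugate, so it is not an $R$-equivalence.
\item The equivalence $\psi$ must first be normalized to fix $\partial R$ (e.g.\ by arranging connected binding) before it can be written as a product of Dehn twists.
\end{itemize}

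The genuine gap is the stabilization part of Step~2, and it goes beyond the bookkeeping you anticipate.

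\emph{The mechanism you describe does nothing.} A cancelling pair on the enlarged page $R'=R_{G\cup a}$ is an $R'$-equivalence, so it leaves the abstract open book unchanged. Nothing can be ``absorbed into $h$'' either: the ambient monodromy $h'$ of $R'$ is dictated by the graph, and keeping $h$ extended by the identity would present $S^3\#(S^1\times S^2)$, not $S^3$. What actually happens is this: if the contact-cell property persists, $h'$ is itself a positive stabilization of $h$. Hence $(R',h'\tau_{\boldsymbol L})$ is automatically a positive stabilization of $(R,h\tau_{\boldsymbol L})$, along an arc determined by the stabilizing curve $\cc$ and by $\tau_{\boldsymbol L}$.

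\emph{The stabilizations of the two sides are not independent.} Both links sit on the same page with the same ambient monodromy. So every enlargement of $G$ stabilizes both open books at once, along arcs determined by the same curve $\cc$ (twisted by $\tau_{\boldsymbol L}$, resp.\ $\tau_{\boldsymbol L'}$). Giroux's correspondence only gives
\[
S_{\mathbf a}(R,h\tau_{\boldsymbol L})\cong S_{\mathbf b}(R,h\tau_{\boldsymbol L'})
\]
for two unrelated sequences of arcs $\mathbf a,\mathbf b$. Realizing $\mathbf a$ for $\boldsymbol L$ imposes unwanted stabilizations on $\boldsymbol L'$, and vice versa. After that, nothing guarantees the two sides are still equivalent, and invoking Giroux again only regresses. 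So even granting the realization lemma you flag, Step~3 does not follow.

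You need an additional argument that synchronizes the stabilizations of both sides on one common page. One possible route: show that, modulo moves you can realize (your conjugation trick, cancelling pairs, handle slides), a stabilization along an arbitrary arc can be traded for one along a fixed arc coming from a fixed Legendrian arc added to $G$. This would use $S_\beta(R,\varphi)\cong S_{\psi(\beta)}(R,\psi\varphi\psi^{-1})$ together with the mapping class group acting transitively on arcs of a given type. That synchronization, plus actually realizing the needed arcs by Legendrian arcs that preserve the contact-cell property, is where the real work lies. Both are missing from your proposal.
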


\section{Elementary \texorpdfstring{$R$}{R}-equivalences}\label{sec:elementary_R_equivalences}

We now isolate those $R$–equivalences whose mapping class factorizations differ by a \emph{single} relation in the presentation of the mapping class group (Theorem~\ref{thm:Gervais}). These will correspond to the moves appearing in our contact version of Kirby's theorem.

\begin{definition}
  Let $\boldsymbol{L}$ and $\boldsymbol{L}'$ be two $R$–equivalent contact surgery links, such that the factorizations of the mapping classes $\tau_{\scriptscriptstyle{\boldsymbol{L}}}$ and $\tau_{\scriptscriptstyle{\boldsymbol{L}'}}$ (as in Definition~\ref{def:mapping_class_determined}) differ by one of the following:
  \begin{itemize}
    \item insertion or deletion of a pair of cancelling Dehn twists;
    \item commuting two consecutive Dehn twists along disjoint curves;
    \item a braid relation;
    \item a lantern relation; or
    \item a chain relation.
  \end{itemize}
  Then we say that $\boldsymbol{L}$ and $\boldsymbol{L}'$ differ by an \emph{elementary $R$–equivalence}.
If two contact surgery links $\boldsymbol{L}$ and $\boldsymbol{L}'$ differ by a ribbon move performed on sublinks $\boldsymbol{l}\subset\boldsymbol{L}$ and $\boldsymbol{l}'\subset\boldsymbol{L}'$ which are elementary $R$–equivalent, we say that $\boldsymbol{L}$ and $\boldsymbol{L}'$ differ by an \emph{elementary ribbon move}.
\end{definition}

For instance, the two links in Example~\ref{ex:R-eq} differ by an elementary $R$–equivalence corresponding to a braid relation.
We continue by presenting several examples of elementary \(R\)-equivalences. These examples, in the more general context where other components might also be involved, will constitute the moves used in Theorem~\ref{thm:main}.

\subsection{Addition or deletion of cancelling Dehn twists}\label{cancellationEX}
  Consider two Legendrian knots that are vertical push–offs of each other in the $\partial_z$-direction, decorated with opposite surgery coefficients, as in Figure~\ref{fig:cancelling_intro}. Denote this contact surgery link by $\boldsymbol{L}$.
  It is easy to see that $\boldsymbol{L}$ is compatible with the Legendrian graph $G$ in Figure~\ref{fig:cancellationR} and any of its ribbon surfaces. The Legendrian graph $G$ is just a copy of either of the two Legendrian knots in Figure~\ref{fig:cancelling_intro}.

  \begin{figure}[htbp]
    \centering
    \begin{overpic}[scale=1]{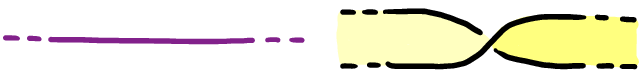}
      \put(30,8){$G$}
    \end{overpic}
    \caption{Left: a Legendrian graph $G$ which is a copy of either component of Figure~\ref{fig:cancelling_intro}. Right: a ribbon $R$ of $G$.}
    \label{fig:cancellationR}
  \end{figure}

  The mapping class associated with $\boldsymbol{L}$ is the product of two cancelling Dehn twists, hence trivial in $\mathrm{MCG}(R,\partial R)$ (see Example~\ref{ex:cancellation_DT}). The empty contact surgery link is also compatible with $R$ and also determines the identity in $\mathrm{MCG}(R,\partial R)$. Thus $\boldsymbol{L}$ and the empty diagram differ by an elementary $R$–equivalence, and we have recovered the \emph{standard cancelling pair} as in Figure~\ref{fig:cancelling_intro} from the introduction. The interpretation here via mapping class factorizations is due to Avdek~\cite{Avdek13}.

\subsection{Braid relation}\label{ex:braidLeg}
  Let $\boldsymbol{L}=L_1^-\cup L_2^-$ be a contact surgery link. Up to Legendrian isotopy, we may assume that there is a ball in which $\boldsymbol{L}$ looks as on the left of Figure~\ref{fig:braidLeg1}. Let $\boldsymbol{L}'=L_1'^-\cup L_2'^-$ be as on the right of Figure~\ref{fig:braidLeg1}. Both links are compatible with the ribbon $R$ shown in Figure~\ref{fig:braidLegR}.
We have
  \[
    \tau_{\scriptscriptstyle{\boldsymbol{L}}}
    = \tau_{\scriptscriptstyle{\overline{L_2}}}^+ \tau_{\scriptscriptstyle{\overline{L_1}}}^+,
    \qquad
    \tau_{\scriptscriptstyle{\boldsymbol{L}'}}
    = \tau_{\scriptscriptstyle{\overline{L'_2}}}^+ \tau_{\scriptscriptstyle{\overline{L'_1}}}^+.
  \]
  Identifying $R$ with the surface in Figure~\ref{fig:braid_relation} so that $\overline{L_2}$ and $\overline{L'_1}$ map to $\aa$, $\overline{L_1}$ to $\bb$, and $\overline{L'_2}$ to $\aa\bb$, we see that the two factorizations differ by a braid relation (see Example~\ref{ex:braid_relations}). Hence $\boldsymbol{L}$ and $\boldsymbol{L}'$ differ by an elementary $R$–equivalence.
A similar argument shows that the two contact surgery links on the left and on the right of Figure~\ref{fig:braidLeg2} also differ by an elementary $R$-equivalence.

The two $R$-equivalences described in Section~\ref{ex:braidLeg} and shown in Figures~\ref{fig:braidLeg1} and~\ref{fig:braidLeg2} are the \emph{standard contact handle slides} from the introduction. This way of presenting them again comes from~\cite{Avdek13}.

 \begin{figure}[htbp]
    \centering
    \begin{overpic}[scale=1]{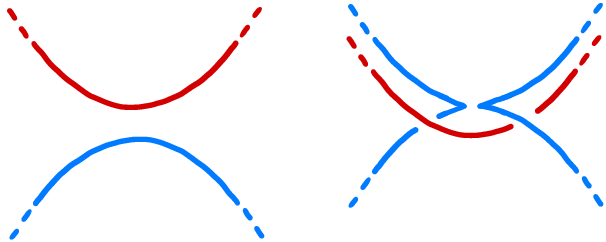}
      \put(35,25){$-$}
      \put(35,12){$-$}
      \put(10,30){$L_2$}
      \put(10,5){$L_1$}
      \put(95,12){$-$}
      \put(95,25){$-$}
      \put(55,20){$L'_1$}
      \put(65,32){$L'_2$}
    \end{overpic}
    \caption{Left: the contact surgery link $\boldsymbol{L}=L_1^-\cup L_2^-$. Right: the contact surgery link $\boldsymbol{L}'=L_1'^-\cup L_2'^-$.}
    \label{fig:braidLeg1}
  \end{figure}

  \begin{figure}[htbp]
    \centering
    \begin{overpic}[scale=1]{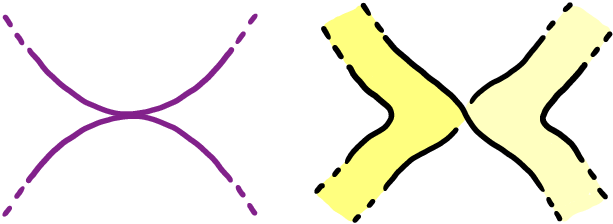}
      \put(18,5){$G$}
      \put(73,5){$R$}
    \end{overpic}
    \caption{Left: the Legendrian graph $G$ underlying the braid relation. Right: a ribbon surface $R$ of $G$.}
    \label{fig:braidLegR}
  \end{figure}
  
  \begin{figure}[htbp]
    \centering
    \begin{overpic}[scale=1]{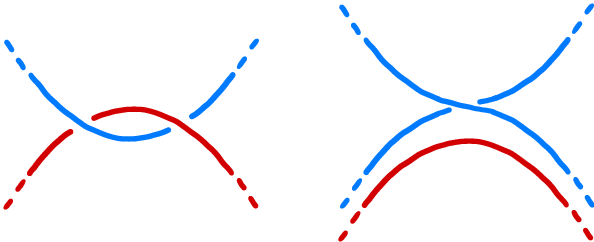}
      \put(38,23){$-$}
      \put(38,14){$-$}
      \put(85,6){$-$}
      \put(92,25){$-$}
    \end{overpic}
    \caption{Another pair of contact surgery links differing by an elementary $R$–equivalence corresponding to a braid relation.}
    \label{fig:braidLeg2}
  \end{figure}
  
\subsection{Lantern relation}\label{ex:lanternLeg}
  Let $\boldsymbol{L}=L_1^-\cup L_2^-\cup L_3^-$ be a contact surgery link. Suppose that there are two balls in which $\boldsymbol{L}$ looks as in the left column of Figure~\ref{fig:ex_lant_Leg1}. The arrows in the figure indicate that if we orient $L_2$ and $L_3$ as in the top ball, we want these orientations to match those drawn in the bottom ball.

  \begin{figure}[htbp]
    \centering
    \begin{overpic}[scale=1]{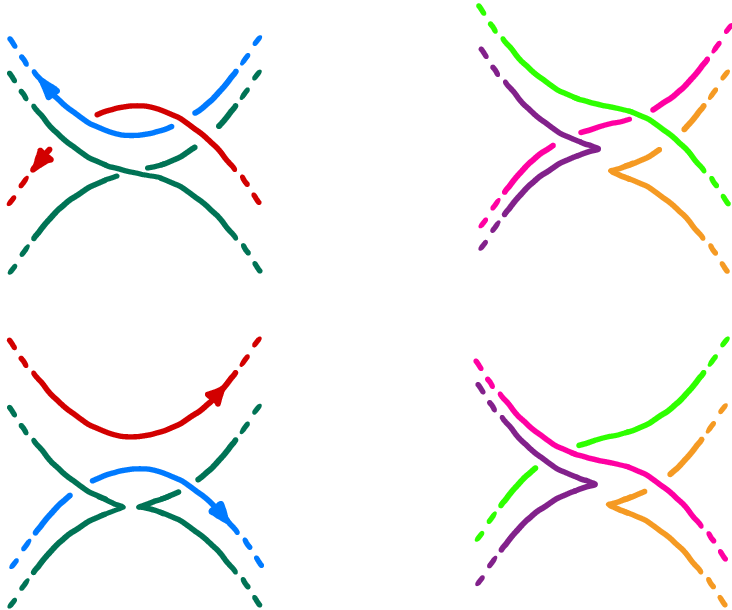}
      \put(23,5){$L_1$}
      \put(23,50){$L_1$}
      \put(32,12){$L_2$}
      \put(25,75){$L_2$}
      \put(32,61){$L_3$}
      \put(25,33){$L_3$}
      \put(-5,78){$-$}
      \put(-5,72){$-$}
      \put(-5,54){$-$}
      \put(59,81){$-$}
      \put(59,76){$-$}
      \put(59,51){$-$}
      \put(59,47){$-$}
      \put(73,74){$L_4'$}
      \put(86,73){$L_3'$}
      \put(71,50){$L_2'$}
      \put(84,50){$L_1'$}
    \end{overpic}
    \caption{Left: the contact surgery link $\boldsymbol{L}=L_1^-\cup L_2^-\cup L_3^-$. Right: the contact surgery link $\boldsymbol{L}'=L_1'^-\cup L_2'^-\cup L_3'^-\cup L_4'^-$ produced by a lantern move.}
    \label{fig:ex_lant_Leg1}
  \end{figure}

  Let $\boldsymbol{L}' = L_1'^-\cup L_2'^-\cup L_3'^-\cup L_4'^-$ be as in the right column of Figure~\ref{fig:ex_lant_Leg1}. As usual, both links are compatible with the ribbon surface $R$ depicted in Figure~\ref{fig:ex_lant_Leg2}, for instance by using Corollary~\ref{cor:resolution_comp}.
    Their associated mapping classes are
  \[
    \tau_{\scriptscriptstyle{\boldsymbol{L}}}
    = \tau_{\scriptscriptstyle{\overline{L_3}}}^+
      \tau_{\scriptscriptstyle{\overline{L_2}}}^+
      \tau_{\scriptscriptstyle{\overline{L_1}}}^+,
    \qquad
    \tau_{\scriptscriptstyle{\boldsymbol{L}'}}
    = \tau_{\scriptscriptstyle{\overline{L_4'}}}^+
      \tau_{\scriptscriptstyle{\overline{L_3'}}}^+
      \tau_{\scriptscriptstyle{\overline{L_2'}}}^+
      \tau_{\scriptscriptstyle{\overline{L_1'}}}^+.
  \]
  Identifying $R$ with the surface in Figure~\ref{fig:lantern_relation} so that
  \[
    \overline{L_1}\mapsto \aa\bb,\quad
    \overline{L_2}\mapsto \bb,\quad
    \overline{L_3}\mapsto \aa, \quad\textrm{and}\quad
    \overline{L_1'},\overline{L_2'},\overline{L_3'},\overline{L_4'}
    \mapsto \dd_1,\dd_2,\dd_3,\dd_4,
  \]
  we see that the two factorizations differ by a lantern relation (see Example~\ref{ex:lantern_relations}). Hence $\boldsymbol{L}$ and $\boldsymbol{L}'$ differ by an elementary $R$–equivalence.
We call this $R$-equivalence a \emph{lantern move}. Similar $R$-equivalences, also called lantern moves, are collected in Section~\ref{sec:moves} and the standard lantern move (Figure~\ref{fig:lantern_intro}) is a special case.

\begin{figure}[htbp]
    \centering
    \begin{overpic}[scale=1]{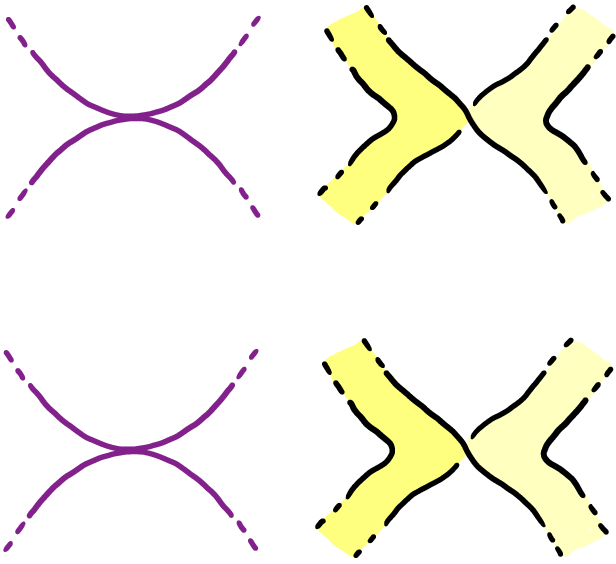}
    \end{overpic}
    \caption{A ribbon surface $R$ compatible with the contact surgery links appearing in the lantern move.}
    \label{fig:ex_lant_Leg2}
  \end{figure}

\subsection{Chain relation}\label{ex:chainLeg}
  We consider the contact surgery links
  \[
    \boldsymbol{L}= \bigcup_{i=1}^{12} L_i^-,
    \qquad
    \boldsymbol{L}'=L_1'^-\cup L_2'^-
  \]
  whose local pictures are as in Figure~\ref{fig:ex_chain_Leg1}. It is straightforward to check that $\boldsymbol{L}$ and $\boldsymbol{L}'$ are compatible with the ribbon surface $R$ in Figure~\ref{fig:ex_chain_R}. (For $\boldsymbol{L}'$, note that $L_1'$ and $L_2'$ are the Legendrian realizations of the two boundary components of $R$ given by the algorithm of~\cite{StenhedeAlgorithm}.)
 The associated mapping classes are
  \[
    \tau_{\scriptscriptstyle{\boldsymbol{L}}}
    = \prod_{i=1}^{12}\tau_{\scriptscriptstyle{\overline{L_i}}}^+,
    \qquad
    \tau_{\scriptscriptstyle{\boldsymbol{L}'}}
    = \tau_{\scriptscriptstyle{\overline{L_2'}}}^+\tau_{\scriptscriptstyle{\overline{L_1'}}}^+.
  \]
  Identifying $R$ with the surface in Figure~\ref{fig:chain_relation} so that
  \[
    \overline{L_1},\overline{L_4},\overline{L_7},\overline{L_{10}}\mapsto\cc,\quad
    \overline{L_2},\overline{L_5},\overline{L_8},\overline{L_{11}}\mapsto\bb,\quad
    \overline{L_3},\overline{L_6},\overline{L_9},\overline{L_{12}}\mapsto\aa,\quad\textrm{and}\quad
    \overline{L_1'},\overline{L_2'}\mapsto \dd_1,\dd_2,
  \]
  we see that the two factorizations differ by a chain relation (see Example~\ref{ex:chain_relations}). Hence $\boldsymbol{L}$ and $\boldsymbol{L}'$ differ by an elementary $R$–equivalence.
We call this $R$-equivalence a \emph{chain move}. Similar $R$-equivalences, also called chain moves, are collected in Section~\ref{sec:moves} and the standard chain move (Figure~\ref{fig:chain_intro}) is a special case.

 \begin{figure}[htbp]
    \centering
    \begin{overpic}[scale=1]{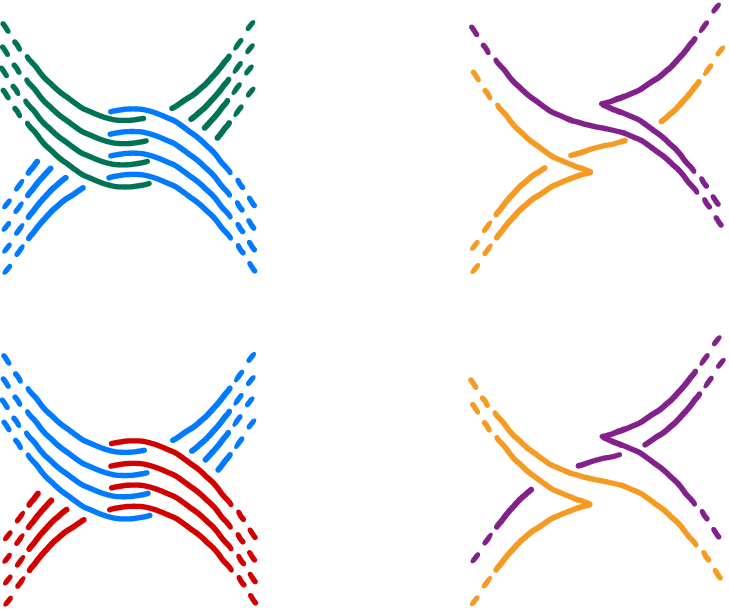}
      \put(-6,79){$-$}
      \put(-6,76){$-$}
      \put(-6,73){$-$}
      \put(-6,70){$-$}
      \put(36,70){$\rightarrow L_1$}
      \put(-6,33){$-$}
      \put(-6,30){$-$}
      \put(-6,27){$-$}
      \put(-6,24){$-$}
      \put(36,24){$\rightarrow L_2$}
      \put(-6,8){$-$}
      \put(-6,5){$-$}
      \put(-6,2){$-$}
      \put(-6,-1){$-$}
      \put(36,-1){$\rightarrow L_3$}
      \put(59,79){$-$}
      \put(59,72){$-$}
      \put(72,72){$L_2'$}
      \put(75,50){$L_1'$}
    \end{overpic}
    \caption{Left: the contact surgery link $\boldsymbol{L}=\bigcup_{i=1}^{12}L_i^-$. Right: the contact surgery link $\boldsymbol{L}'=L_1'^-\cup L_2'^-$.}
    \label{fig:ex_chain_Leg1}
  \end{figure}

  \begin{figure}[htbp]
    \centering
    \begin{overpic}[scale=1]{ex_lant_Leg2}
    \end{overpic}
    \caption{A ribbon surface $R$ compatible with the contact surgery links appearing in the chain move.}
    \label{fig:ex_chain_R}
  \end{figure}

\begin{remark}\label{rem:lantern_chain_alternative}
There is also a more direct way to see that, for both the standard lantern move and the standard chain move, the two contact surgery links involved represent contactomorphic contact manifolds. We briefly outline the argument.

One first identifies the underlying surgered \(3\)-manifolds by smooth Kirby calculus. Equivalently, this identification can be checked using Regina~\cite{regina} together with SnapPy~\cite{SnapPy}. In this way, both sides of the standard lantern move are seen to describe the Seifert fibered space
\[
M_1 = SFS[S^2:(2,1)(3,2)(-3,1)],
\]
while both sides of the standard chain move describe the Seifert fibered space
\[
M_2 = SFS[S^2:(2,1)(5,1)(-5,4)],
\]
where we use Regina's notation. Since all contact surgery coefficients in these local diagrams are equal to \((-1)\), the resulting contact structures are Stein fillable, and hence tight.

Using~\cite{Lisac_StipsiczIII,Lisca_Matic04}, one checks that both \(M_1\) and \(M_2\) are \(L\)-spaces. The tight contact structures on these Seifert fibered spaces were classified by Ghiggini~\cite{Ghiggini08}. For \(M_2\), there is a unique tight contact structure. It follows immediately that the two contact structures obtained from the two sides of the standard chain move are contactomorphic.

For the standard lantern move, the Seifert fibered space \(M_1\) carries exactly two tight contact structures. To distinguish between them, one can compute the Euler classes of the contact structures arising from the two surgery diagrams. These Euler classes agree, and Ghiggini's classification shows that the Euler class distinguishes the two tight contact structures on \(M_1\). Hence the two contact structures obtained from the two sides of the standard lantern move are contactomorphic as well.
\end{remark}

\subsection{Commutativity}\label{ex:commutativity}
  Let $G$ be a Legendrian graph consisting of a chain $K_1,\dots,K_{2n+1}$ of Legendrian knots: each $K_j$ intersects $K_{j-1}$ and $K_{j+1}$ once, and is disjoint from all other components (with obvious exceptions at the ends of the chain). The Legendrian skeleton of the ribbon $R$ in Section~\ref{ex:chainLeg} is such a graph with $n=1$.

  Let $R$ be the ribbon of $G$; it is a genus $n$ surface with two boundary components $\dd_1$ and $\dd_2$. Let
  \[
    \boldsymbol{L}
    := \dd_1(-\varepsilon)^- \cup \dd_2(\varepsilon)^-,
    \qquad
    \boldsymbol{L}'
    := \dd_1(\varepsilon)^- \cup \dd_2(-\varepsilon)^-,
  \]
  be the contact surgery links obtained by Legendrian realizing $\dd_1,\dd_2$ at different heights (using the algorithm of~\cite{StenhedeAlgorithm}). The two contact surgery links $\boldsymbol{L}$ and $\boldsymbol{L}'$ differ by an elementary $R$-equivalence corresponding to exchanging two commuting Dehn twists.

  We show that $L$ and $L'$ are, in fact, Legendrian isotopic. For concreteness, we assume that all intersections between $K_j$ and $K_{j+1}$ look like the left of Figure~\ref{fig:comm1}: the front projection of $K_{j+1}$ lies above that of $K_j$, and the local orientations are coherent as indicated. The general case is analogous (moreover, it is also true that we can Legendrian isotope $G$ to achieve these properties).

  On the right of Figure~\ref{fig:comm1}, we see how $\boldsymbol{L}$ and $\boldsymbol{L}'$ look like in this case. We now describe a Legendrian isotopy between them.
    Around the intersection of $K_1$ and $K_2$, the isotopy is as in Figure~\ref{fig:comm4}: the intersection point $p$ between the two surgery components travels once around $K_1$ and reappears on the other side. Around the intersection of $K_{2n}$ and $K_{2n+1}$ the isotopy is as in Figure~\ref{fig:comm2}: the point $p$ travels around $K_{2n+1}$. Around an intermediate pair of intersections, say between $K_{2k}$ and $K_{2k+1}$ and between $K_{2k+1}$ and $K_{2k+2}$, the isotopy is as in Figure~\ref{fig:comm3}: the two intersection points $p$ and $q$ slide along $K_{2k+1}$ from one ball to the other. This step is vacuous when $n=1$. Concatenating these local isotopies shows that $L$ and $L'$ are Legendrian isotopic.

\begin{figure}[htbp]
    \centering
    \begin{overpic}[scale=1]{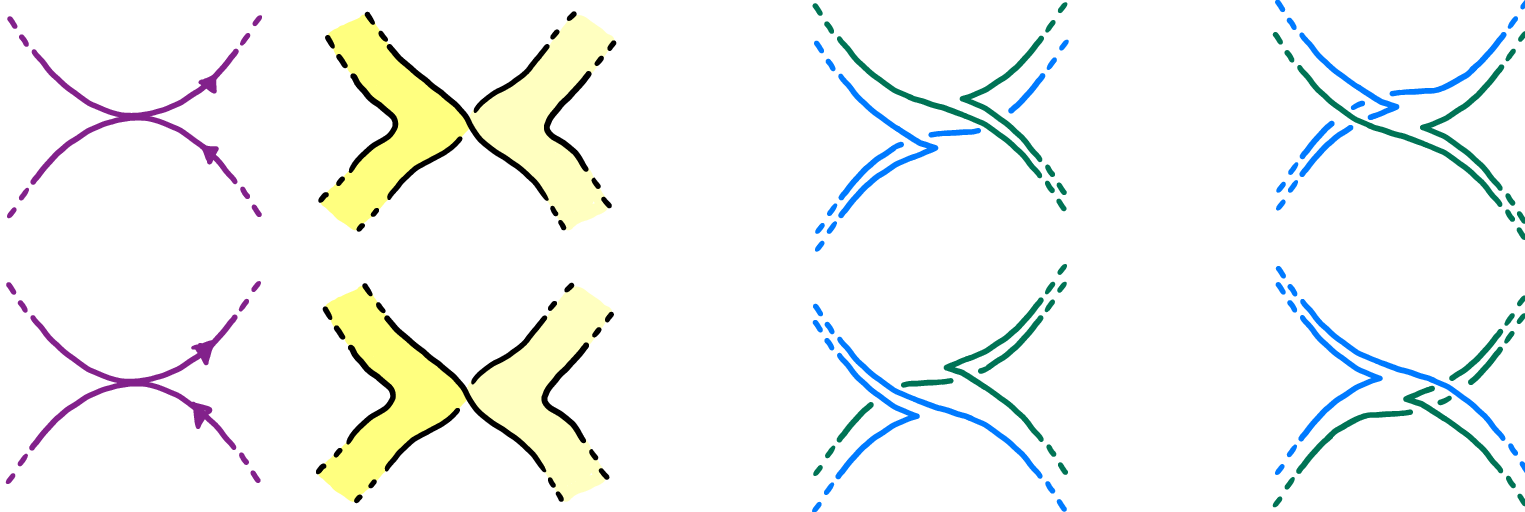}
      \put(3,30){$K_{2k+1}$}
      \put(10,20){$K_{2k}$}
      \put(57,30){$\dd_2(\varepsilon)$}
      \put(57,19){$\dd_1(-\varepsilon)$}
      \put(90,30){$\dd_1(\varepsilon)$}
      \put(89,19){$\dd_2(-\varepsilon)$}
      \put(50,32){$-$}
      \put(50,30){$-$}
      \put(80,32.5){$-$}
      \put(80,30.5){$-$}
      \put(3,12){$K_{2k}$}
      \put(10,0){$K_{2k-1}$}
    \end{overpic}
    \caption{Left: local model of an intersection between successive knots in the chain. The ribbon surface $R$ is also shown here. Right: local pictures of $\boldsymbol{L}$ (first column) and $\boldsymbol{L}'$ (second column) near a vertex; top row for intersections between $K_{2k}$ and $K_{2k+1}$, bottom row for intersections between $K_{2k-1}$ and $K_{2k}$ for $k=1,\dots,n$.}
    \label{fig:comm1}
  \end{figure}

  \begin{figure}[htbp]
    \centering
    \begin{overpic}[scale=1]{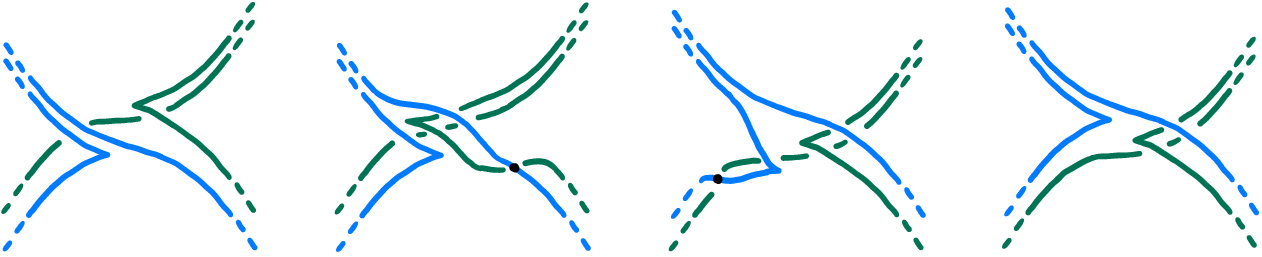}
      \put(40,4){$p$}
      \put(57,3){$p$}
    \end{overpic}
    \caption{Legendrian isotopy of $L$ near the intersection between $K_1$ and $K_2$: the point $p$ travels around $K_1$.}
    \label{fig:comm4}
  \end{figure}

  \begin{figure}[htbp]
    \centering
    \begin{overpic}[scale=1]{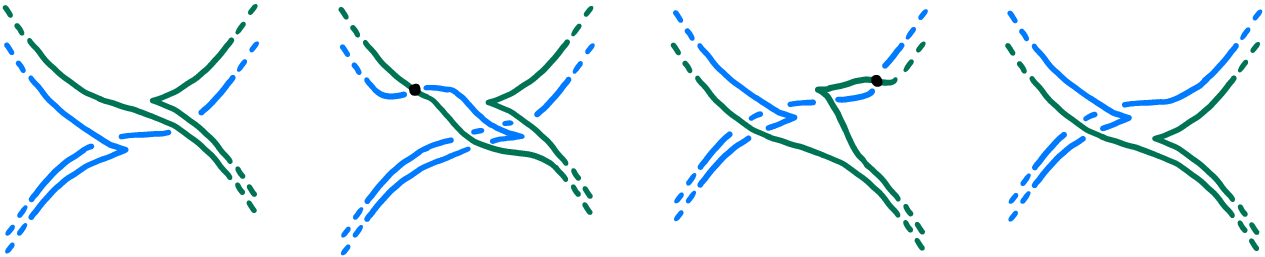}
      \put(33,15){$p$}
      \put(68,16){$p$}
    \end{overpic}
    \caption{Legendrian isotopy near the intersection between $K_{2n}$ and $K_{2n+1}$.}
    \label{fig:comm2}
  \end{figure}

  \begin{figure}[htbp]
    \centering
    \begin{overpic}[scale=1]{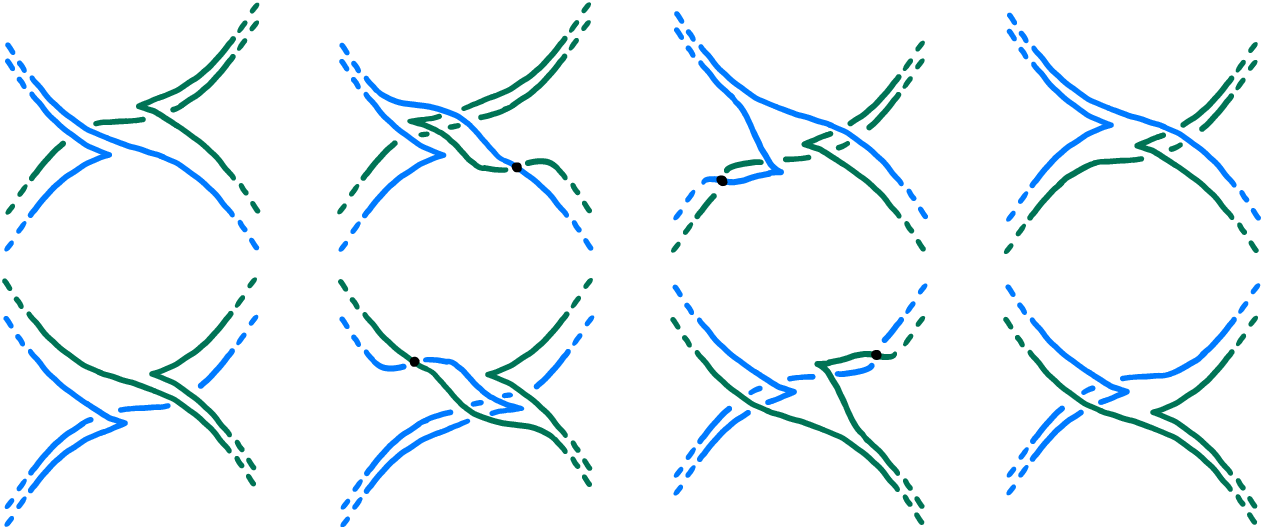}
      \put(40,26){$p$}
      \put(33,15){$q$}
      \put(57,25){$q$}
      \put(68,16){$p$}
    \end{overpic}
    \caption{Legendrian isotopy near consecutive intersections between $K_{2k},K_{2k+1}$ and $K_{2k+1},K_{2k+2}$: the intersection points $p$ and $q$ slide along $K_{2k+1}$.}
    \label{fig:comm3}
  \end{figure}

\section{Uniqueness of elementary \texorpdfstring{$R$}{R}-equivalences}\label{sec:uniqueness_R_eq}

In the previous section, we presented examples of elementary \(R\)-equivalences. We now prove a converse statement: any elementary \(R\)-equivalence is, up to Legendrian isotopy, one of the models described above.
We begin with a uniqueness statement for Legendrian links carried by a fixed ribbon surface. This is the link version of Theorem~\ref{thm:uniqueness_ribbon}.

\begin{lemma}\label{lem:uniq_Leg_link}
Let
\[
L=\bigcup_{i=1}^n L_i
\qquad\text{and}\qquad
L'=\bigcup_{i=1}^n L'_i
\]
be Legendrian links, both compatible with the same ribbon surface \(R\). Assume that the isotopy class of \(\overline{L_i}\) equals the isotopy class of \(\overline{L'_i}\) in \(R\) for each \(i\). Then \(L\) and \(L'\) are Legendrian isotopic by an isotopy supported in any prescribed neighborhood of the form $R\times[-\varepsilon,\varepsilon]$
containing both \(L\) and \(L'\).
\end{lemma}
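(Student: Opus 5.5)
The plan is to reduce the lemma to the knot version, Theorem~\ref{thm:uniqueness_ribbon}, one component at a time, using the heights \(t_i\) to keep the components apart. By compatibility, \(L_i\) lies in a perturbation of \(R\times\{t_i\}\) and \(L'_i\) in a perturbation of \(R\times\{t'_i\}\), with both height sequences strictly increasing. All these levels lie in the prescribed neighborhood \(R\times[-\varepsilon,\varepsilon]\). By Lemma~\ref{lem:perturbation}, I may first shrink each perturbation into a thin slab \(R\times(t_i-\delta,t_i+\delta)\), and similarly for the \(L'_i\). I choose \(\delta>0\) so small that the slabs for different indices are pairwise disjoint. This is possible because the \(t_i\) are distinct, and likewise the \(t'_i\).

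Next I align the heights. Since \(\partial_z\) is a contact vector field, flowing a single component vertically by a Reeb translation is a Legendrian isotopy. I move the components in order: \(L_n\) first to height \(s_n\), then \(L_{n-1}\) to \(s_{n-1}\), and so on. Here the common target heights \(s_1<\dots<s_n\) in \((-\varepsilon,\varepsilon)\) are chosen in the same order as both the \(t_i\) and the \(t'_i\). I can also use a height-dependent vertical translation \((x,z)\mapsto(x,f(z))\). The key observation is that a monotone reparametrization of the \(z\)-coordinate in \(R\times[-\varepsilon,\varepsilon]\) acts by translating each slab rigidly, because the contact form \(dz+\beta\) is \(z\)-invariant. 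Combining these, an isotopy through increasing functions \(f_s\) with \(f_s(t_i)\) interpolating to \(s_i\) gives a Legendrian isotopy of the whole link. Each \(f_s\) is chosen to have slope \(1\) near each \(t_i\), so it is a contactomorphism near every component. Monotonicity keeps the slabs disjoint and ordered throughout, so no crossing changes between components occur. Doing the same for \(L'\), I may assume \(t_i=t'_i\) for all \(i\).

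Now I treat each index separately inside its own slab \(R\times(t_i-\delta,t_i+\delta)\). This slab is a vertically invariant neighborhood of \(R\times\{t_i\}\) with \(\partial_z\) Reeb and transverse boundary. Both \(L_i\) and \(L'_i\) realize the isotopy class \(\overline{L_i}=\overline{L'_i}\) there. Theorem~\ref{thm:uniqueness_ribbon}, applied with \(I=(t_i-\delta,t_i+\delta)\), gives a Legendrian isotopy between them supported in that slab. Since the slabs are disjoint, these isotopies for different \(i\) can be performed simultaneously and do not interact. Their union is therefore a Legendrian isotopy of links, supported in \(R\times[-\varepsilon,\varepsilon]\).

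The main obstacle is the support condition. One must confirm that the isotopy produced by Theorem~\ref{thm:uniqueness_ribbon} really stays within the chosen thin slab and not merely within some neighborhood of \(R\times\{t_i\}\). My first approach is to apply that theorem to the slab itself as the ambient \(\Sigma\times I\), which its hypotheses allow. A second concern is the thickness of the perturbations that Lemma~\ref{lem:perturbation} produces: after shrinking, each component must sit genuinely inside its own slab.
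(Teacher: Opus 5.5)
Your proposal is correct and follows essentially the same route as the paper. In both arguments each pair $L_i,L'_i$ is placed in its own slab of $R\times[-\varepsilon,\varepsilon]$, Theorem~\ref{thm:uniqueness_ribbon} is applied inside each slab, and the resulting isotopies are run simultaneously because the slabs are disjoint. The only difference is that you spell out how to separate the components into slabs, by shrinking via Lemma~\ref{lem:perturbation} and then using a monotone height reparametrization that is a unit-slope translation near each level; the paper simply asserts this step as holding ``up to a Legendrian isotopy supported in $R\times[-\varepsilon,\varepsilon]$''.
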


\begin{proof}
Choose a neighborhood $
R\times[-\varepsilon,\varepsilon]$
containing both links. Subdivide the interval \([-\varepsilon,\varepsilon]\) into subintervals
\[
[t_{i-1},t_i],
\qquad i=1,\ldots,n,
\]
with \(t_0=-\varepsilon\) and \(t_n=\varepsilon\). Up to a Legendrian isotopy supported in \(R\times[-\varepsilon,\varepsilon]\), we may assume that each slab
\[
R\times[t_{i-1},t_i]
\]
contains precisely the two components \(L_i\) and \(L'_i\), and no other components of \(L\cup L'\).

By assumption, \(\overline{L_i}\) and \(\overline{L'_i}\) represent the same isotopy class of simple closed curve on \(R\). Hence \(L_i\) and \(L'_i\) are Legendrian realizations of the same isotopy class inside the vertically invariant contact manifold
\[
R\times[t_{i-1},t_i].
\]
By Theorem~\ref{thm:uniqueness_ribbon}, \(L_i\) and \(L'_i\) are Legendrian isotopic inside this slab. Since the slabs are disjoint, these isotopies can be performed independently for all \(i\). Combining them gives a Legendrian isotopy from \(L\) to \(L'\), supported in the prescribed neighborhood \(R\times[-\varepsilon,\varepsilon]\).
\end{proof}

For the next proposition, we use the following observation: the braid, the lantern, and the chain relations in the mapping class group described in Section~\ref{sec:MCG} have an underlying graph from which one can recover all the curves involved in the relation by resolving this graph in different ways. Using the notation of Section~\ref{sec:MCG}, this graph is $\aa\cup\bb$. We call it the \textit{graph of the relation}.

\begin{proposition}\label{uniq_cont_Kirby}
  Let $\boldsymbol{L}$ and $\boldsymbol{L}'$ be contact surgery links which differ by an elementary $R$–equivalence. Then, up to Legendrian isotopy, $\boldsymbol{L}$ and $\boldsymbol{L}'$ differ by one of the following moves (defined in Section~\ref{sec:elementary_R_equivalences}):
  \begin{itemize}
    \item insertion or removal of a cancelling pair;
    \item a standard contact handle slide;
    \item a lantern move;
    \item a chain move.
  \end{itemize}
\end{proposition}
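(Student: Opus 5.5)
The plan is to split according to which relation of Theorem~\ref{thm:Gervais} separates the two factorizations $\tau_{\boldsymbol L}$ and $\tau_{\boldsymbol L'}$. The two factorizations agree except in a consecutive block of Dehn twists, and the remaining twists occur in the same order with the same curves. So I first isolate the sublinks $\boldsymbol l\subset\boldsymbol L$ and $\boldsymbol l'\subset\boldsymbol L'$ that realize the differing block. All other components $L_i$, $L'_i$ have the same height order and represent the same isotopy classes on $R$. Lemma~\ref{lem:uniq_Leg_link} then lets me Legendrian isotope $\boldsymbol L'$ so that these common components coincide with those of $\boldsymbol L$. This isotopy takes place inside $R\times[-\varepsilon,\varepsilon]$, and I choose the height subdivision so that the slab $R\times[t_a,t_b]$ containing the differing block contains no other component. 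What remains is to identify, up to Legendrian isotopy inside that slab, the pair $(\boldsymbol l,\boldsymbol l')$ with one of the model pairs of Section~\ref{sec:elementary_R_equivalences}. The model pairs are supported near a ribbon of the graph of the relation, not near all of $R$, so there is still work to do.

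\textbf{Case by case.}
\begin{itemize}
\item \emph{Commutativity.} The two components live on $R\times\{t\}$ and $R\times\{t'\}$ and represent disjoint curves $\aa,\bb$. Swapping their heights does not change the Legendrian isotopy class: I realize $\aa$ and $\bb$ simultaneously on one perturbed slice as a disjoint Legendrian link, using Theorem~\ref{thm:Legendrian_realization} for the nonisolating graph $\aa\cup\bb$ together with Lemma~\ref{lem:inverse_Leg_real}. Then Theorem~\ref{thm:uniqueness_ribbon}, applied slice by slice, shows both orderings are isotopic to this one. So this case requires no move.
\item \emph{Cancelling twists.} Two adjacent components $\aa(t)^{\pm}$ and $\aa(t')^{\mp}$ are Legendrian realizations of the same curve at nearby heights. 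After isotopy one is the $\partial_z$-translate of the other. The annulus between them lies in the slab and so misses all other components. This is exactly a standard cancelling pair.
\item \emph{Braid, lantern and chain.} Let $\Gamma=\aa\cup\bb$, respectively $\aa\cup\bb\cup\cc$, be the graph of the relation on $R$. It is nonisolating, because each component of its complement in $R$ meets $\partial R=\Gamma_R$; this holds since all curves involved are nonseparating, or because $\Gamma$ carries homologically essential curves. Theorem~\ref{thm:Legendrian_realization} therefore realizes $\Gamma$ as a Legendrian graph $G$ on a perturbed slice. After the move $R$ of Remark~\ref{rmk:modi_valency_4}, Lemma~\ref{lem:resolution_comp} shows that all curves in the relation ($\aa$, $\bb$, $\aa\bb$, the $\dd_i$, and so on) are resolutions of $G$ lying on perturbations of $R_G\subset R$. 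Lemma~\ref{lem:uniq_Leg_link}, applied inside a thin neighborhood of a ribbon $R_G$, then moves $\boldsymbol l$ and $\boldsymbol l'$ into $R_G\times[t_a,t_b]$.
\end{itemize}
After this step it suffices to show that $G$ is Legendrian isotopic to the graph used in the model of Section~\ref{ex:braidLeg}, \ref{ex:lanternLeg} or \ref{ex:chainLeg}. Once that holds, the ribbons match and Lemma~\ref{lem:uniq_Leg_link} carries the model pair onto $(\boldsymbol l,\boldsymbol l')$.

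\textbf{Main obstacle.} The hard step is this last normalization of $G$. Abstractly $G$ is fixed (two circles meeting once, twice, or a three-chain), but its Legendrian embedding in $(\R^3,\xist)$ is arbitrary: each edge may be knotted, stabilized or linked with the others. My idea is that this should not matter up to the moves allowed in the statement. Knotting or stabilizations of the edges are carried along identically by $\boldsymbol l$ and $\boldsymbol l'$ outside small balls around the vertices, which is exactly the non-local part that the general lantern and chain moves of Section~\ref{sec:moves} permit (``matching colours outside the region''). So I would formulate the braid, lantern and chain moves as moves determined by the local picture at the vertices, with arbitrary parallel Legendrian strands outside.

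The remaining work is at the vertices. Using the Reidemeister moves for graphs (Theorem~\ref{thm:Reidemeister_graph}), each valency-four vertex can be put in the form of Figure~\ref{fig:resolutions}, with a prescribed over/under order and coherent orientations as in Figure~\ref{fig:comm1}. The relative orientations of $\aa$ and $\bb$ at the two lantern vertices are forced by the algebraic intersection being zero; this is the arrow condition in Figure~\ref{fig:ex_lant_Leg1}. For the braid relation there are two sign or orientation possibilities at the single vertex. These produce the two standard contact handle slides, Figures~\ref{fig:braidLeg1} and~\ref{fig:braidLeg2}, which is why two are needed.

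I expect the real difficulty to be checking that the vertex normalization exhausts all cases for the lantern and chain graphs, with only the standard local pictures arising up to isotopy of $G$ and the ribbon fixed.
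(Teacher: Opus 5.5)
\textbf{The gap: the commutativity case.} You assert that $\aa\cup\bb$ is nonisolating and cite Lemma~\ref{lem:inverse_Leg_real}. That lemma only says that $\aa$ and $\bb$ are each homologically nontrivial. Two disjoint, individually nonseparating curves can still cobound a subsurface $\Sigma\subset R$ with $\Sigma\cap\partial R=\emptyset$. For example, on a genus-two $R$ with one boundary component, two homologous nonseparating curves cut off a genus-one surface with two boundary circles.

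In that situation $\aa$ and $\bb$ cannot both be Legendrian on a single perturbation $R'$ of $R$. Since the dividing set of $R'$ is $\partial R'$, the $2$-form $\mathcal L_v\omega$ is nowhere zero, hence of constant sign, on $\Sigma$. Yet Stokes gives $\int_\Sigma\mathcal L_v\omega=\int_{\partial\Sigma}\iota_v\omega=0$, because $v$ is tangent to the Legendrian curves forming $\partial\Sigma$. So your single-slice argument fails exactly here. Since commutativity is not among the moves in the statement, you still owe a Legendrian isotopy that swaps the heights.

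The paper handles this subcase separately:
\begin{itemize}
\item It fills $\Sigma$ by a chain $\cc_1,\dots,\cc_{2n+1}$, which is a nonisolating graph, and Legendrian realizes it.
\item It identifies $\aa$ and $\bb$ with the two boundary curves $\dd_1,\dd_2$ of the ribbon of this chain.
\item It writes down the explicit front-projection isotopy of Section~\ref{ex:commutativity} from $\dd_1(-\varepsilon)\cup\dd_2(\varepsilon)$ to $\dd_1(\varepsilon)\cup\dd_2(-\varepsilon)$. In this isotopy the crossing point travels around $K_1$ and $K_{2n+1}$ and slides along the intermediate knots.
\end{itemize}
An argument of this kind, which genuinely moves one knot past the other rather than staying inside a product slab, is missing from your proposal.

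\textbf{The remaining cases match the paper.} Your treatment of the cancelling, braid, lantern and chain cases follows the paper's proof. You realize the graph of the relation on a perturbation of $R$, normalize its vertices via Remark~\ref{rmk:modi_valency_4}, build model links from $G$, and identify them with $\boldsymbol L,\boldsymbol L'$ by Lemma~\ref{lem:uniq_Leg_link}.

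The normalization of $G$ that you flag as the main obstacle is resolved in the paper exactly as you suggest at the end. The paper never isotopes $G$ to a fixed model graph. Instead, the handle slides, lantern moves and chain moves are the families of Sections~\ref{sec:elementary_R_equivalences} and~\ref{sec:moves}. These are determined by the local pictures at the vertices, with arbitrary parallel strands elsewhere, and the front of $G$ near its vertices selects the member of the family.

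\textbf{A small correction.} In the chain case, the boundary curves $\dd_1,\dd_2$ are not resolutions of the three-chain. Every full resolution is a single curve meeting $\aa$ algebraically once, whereas $\dd_i\cdot\aa=0$. Realize $\dd_1,\dd_2$ directly on $R_G$, where they are nonisolating, before applying Lemma~\ref{lem:uniq_Leg_link}.
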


\begin{proof}
  By definition, the mapping class factorizations $\tau_{\scriptscriptstyle{\boldsymbol{L}}}$ and $\tau_{\scriptscriptstyle{\boldsymbol{L}'}}$ differ by a single relation from Theorem~\ref{thm:Gervais}. Up to restricting to sublinks of $\boldsymbol{L}$ and $\boldsymbol{L}'$ by discarding components that are unchanged by the $R$-equivalence, we may assume that every Dehn twist appearing in the factorizations is involved in the relation.

  \medskip\noindent
  \emph{Case 1: cancelling pair.}  
  Suppose the factorizations differ by inserting or deleting a pair of cancelling Dehn twists. Up to swapping $\boldsymbol{L}$ and $\boldsymbol{L}'$, we can assume that $\boldsymbol{L}=L_1^{\pm}\cup L_2^{\mp}$ and $\boldsymbol{L}'$ is the empty diagram.
Let $\widetilde{L}_2$ be the Legendrian knot obtained from $L_1$ by a small vertical translation. The contact surgery link
  \[
    \widetilde{\boldsymbol{L}} := L_1^{\pm}\cup \widetilde{L}_2^{\mp}
  \]
  is a cancelling pair as in Example~\ref{cancellationEX} and is compatible with $R$ (if $L_1$ lies in a perturbation of $R\times\{t_1\}$, then $\widetilde{L}_2$ lies in a perturbation of $R\times\{t_1+\varepsilon\}$). By Theorem~\ref{lem:uniq_Leg_link}, $\boldsymbol{L}$ is Legendrian isotopic to $\widetilde{\boldsymbol{L}}$, so this elementary $R$–equivalence is realized by insertion or removal of a standard cancelling pair.

  \medskip\noindent
  \emph{Case 2: braid, lantern, or chain relation.}  
  In these cases, all curves occurring in the relation are homologically nontrivial in $R$ (by Lemma~\ref{lem:inverse_Leg_real}). Since $R$ is convex with dividing set $\partial R$, a homologically nontrivial curve is nonisolating, so the graph $\overline{G}$ of the relation is nonisolating.  
  By the Legendrian realization principle (Theorem~\ref{thm:Legendrian_realization}), we can Legendrian realize $\overline{G}$ to a Legendrian graph $G$ embedded in a perturbation of $R$. Using Remark~\ref{rmk:modi_valency_4}, we may further assume that all vertices of $G$ have the standard local front from Figure~\ref{fig:resolutions} (left).

  We now compare this with Sections~\ref{ex:braidLeg},~\ref{ex:lanternLeg}, and~\ref{ex:chainLeg}. In each of those cases, the Legendrian skeleton of the ribbon is precisely the graph of the relation, its vertices have the standard form, and every surgery component is a resolution of the Legendrian skeleton, possibly translated slightly in the $\partial_z$-direction. Thus, for each type of relation, we can construct from $G$ contact surgery links $\widetilde{\boldsymbol{L}}$ and $\widetilde{\boldsymbol{L}}'$ which
  \begin{itemize}
    \item differ by the corresponding move (handle slide, lantern, or chain move),
    \item are compatible with $R$ (since each component is a resolution of $G$, by Lemma~\ref{lem:resolution_comp}), and
    \item induce the same mapping class factorizations as $\boldsymbol{L}$ and $\boldsymbol{L}'$.
  \end{itemize}

  More concretely, the choice of which explicit pair $(\widetilde{\boldsymbol{L}},\widetilde{\boldsymbol{L}}')$ to use among the models in the examples is dictated by the front projection of $G$ around the vertices (orientation and relative position of the pieces of $G$ corresponding to $\aa$ and $\bb$).  
  Since the factorization of $\tau_{\scriptscriptstyle{\boldsymbol{L}}}$ corresponds to that of $\tau_{\scriptscriptstyle{\widetilde{\boldsymbol{L}}}}$, the components of $\boldsymbol{L}$ and $\widetilde{\boldsymbol{L}}$ define the same curves on $R$, and the same for $\boldsymbol{L}'$ and $\widetilde{\boldsymbol{L}}'$. By Theorem~\ref{lem:uniq_Leg_link}, $\boldsymbol{L}$ is Legendrian isotopic to $\widetilde{\boldsymbol{L}}$ and $\boldsymbol{L}'$ to $\widetilde{\boldsymbol{L}}'$. Therefore, the elementary $R$–equivalence is realized, up to Legendrian isotopy, by one of the models described above.

  \medskip\noindent
  \emph{Case 3: commuting Dehn twists.}  
  Suppose $\boldsymbol{L}=L_1^-\cup L_2^-$ and $\boldsymbol{L}'=L_1'^-\cup L_2'^-$ correspond to two factorizations differing by commuting two Dehn twists along disjoint simple closed curves $\aa$ and $\bb$ on $R$. The two components $L_1,L_2$ (and $L_1',L_2'$) represent $\aa,\bb$ respectively.
  If $\aa\cup\bb$ is nonisolating, we can Legendrian realize it as $\aa(0)\cup\bb(0)$, and then consider the links
  \[
    \widetilde{\boldsymbol{L}} = \aa(-\varepsilon)^-\cup \bb(\varepsilon)^-,
    \qquad
    \widetilde{\boldsymbol{L}}' = \aa(\varepsilon)^-\cup \bb(-\varepsilon)^-.
  \]
  The contact surgery link $\boldsymbol{\widetilde{L}}$ is Legendrian isotopic to $\boldsymbol{\widetilde{L}}'$ by translating $\aa(-\varepsilon)$ vertically up and $\bb(\varepsilon)$ vertically down by an amount $2\varepsilon$. These two Legendrian knots do not intersect during the isotopy, because we are assuming that $\aa(0)$ and $\bb(0)$ both are embedded simultaneously in a perturbation of $R$ since $\aa\cup\bb$ is Legendrian realizable. By Theorem~\ref{lem:uniq_Leg_link}, $\boldsymbol{L}$ is Legendrian isotopic to $\widetilde{\boldsymbol{L}}$ and $\boldsymbol{L}'$ to $\widetilde{\boldsymbol{L}}'$, concluding the proof in this subcase as well.

  If $\aa\cup\bb$ is isolating, then it bounds a subsurface $\Sigma\subset R$ of genus $n\ge 1$. As in Figure~\ref{fig:surface_comm}, choose a chain $\widetilde{G}$ of $2n+1$ simple closed curves $\cc_1,\dots,\cc_{2n+1}$ in $\Sigma$ such that $\cc_j$ intersects $\cc_{j-1}$ and $\cc_{j+1}$ once and no other component, and such that $\Sigma$ is a regular neighborhood of the union $G:=\bigcup_j \cc_j$.
  The graph $\widetilde{G}$ is nonisolating, and so we can Legendrian realize it and get a Legendrian graph $G$ embedded in a perturbation of $R$. As before, we can think of $G$ as the graph in Section~\ref{ex:commutativity}, where the Legendrian realization of the curve $\cc_j$ corresponds to the Legendrian knot $K_j$. Reasoning as above for the other relations, we see that the two Legendrian links coming from Section~\ref{ex:commutativity} are compatible with $R$ and differ by a Legendrian isotopy. Moreover, again by Theorem~\ref{lem:uniq_Leg_link}, these two Legendrian links are Legendrian isotopic to $\boldsymbol{L}$ and $\boldsymbol{L}'$. This completes the proof.
\end{proof}

\begin{figure}[htbp]
    \centering
    \begin{overpic}[scale=1]{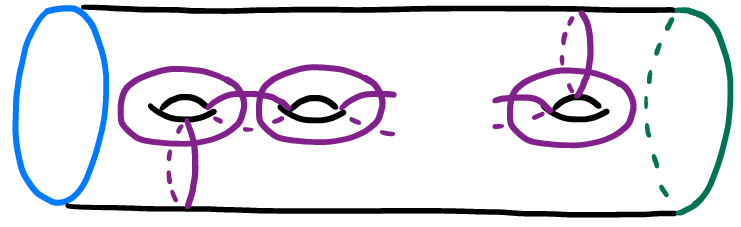}
      \put(-2,10){$\aa$}
      \put(57,6){$\Sigma$}
      \put(100,20){$\bb$}
      \put(55,14){$\boldsymbol\cdot$}
      \put(59,14){$\boldsymbol\cdot$}
      \put(63,14){$\boldsymbol\cdot$}
    \end{overpic}
    \caption{The surface $\Sigma$ bounded by $\aa\cup\bb$ and the chain $\widetilde{G}$ (in purple) whose regular neighborhood is $\Sigma$.}
    \label{fig:surface_comm}
  \end{figure}
  
\section{Proof of the contact Kirby theorem}\label{sec:main_theorem_body}

In this section, we prove Theorem~\ref{thm:main}. The main ingredients have already been established: Avdek's theorem reduces the problem to ribbon moves, and the preceding sections identify the elementary \(R\)-equivalences that are needed. We now explain how these ribbon moves translate into the diagrammatic moves appearing in the statement of the theorem.

First, we fix the terminology. Suppose that a ribbon move replaces a sublink \(\boldsymbol l\) of a contact surgery link by an \(R\)-equivalent sublink \(\boldsymbol l'\). We name the corresponding ribbon move according to the diagrammatic form of the elementary \(R\)-equivalence involved.

\begin{itemize}
    \item If the two sublinks differ by insertion or removal of a standard cancelling pair, then we call the corresponding ribbon move an \emph{insertion or removal of a standard cancelling pair}.

    \item If the two sublinks differ by one of the standard contact handle slides, then we call the corresponding ribbon move a \emph{standard contact handle slide}.

    \item If the two sublinks differ by a lantern move, then we call the corresponding ribbon move a \emph{lantern move}. If they differ by the standard lantern move, then we call the corresponding ribbon move the \emph{standard lantern move}.

    \item If the two sublinks differ by a chain move, then we call the corresponding ribbon move a \emph{chain move}. If they differ by the standard chain move, then we call the corresponding ribbon move the \emph{standard chain move}.
\end{itemize}

Here ``differ by'' means that the two sublinks agree with the two sides of the corresponding model from Section~\ref{sec:elementary_R_equivalences}. The figures in that section are drawn without shaded regions because there we only consider the sublinks involved in the elementary \(R\)-equivalence. When these models are used as ribbon moves inside a larger contact surgery link, one must also keep track of the supporting neighborhood. In the diagrammatic versions of the moves, such as those shown in Figures~\ref{fig:cancelling_intro}--\ref{fig:chain_intro}, this neighborhood is indicated by the shaded region. More precisely, the shaded region is the front projection of the handlebody $R\times[-\varepsilon,\varepsilon]$, and it is understood to be disjoint from all components of the ambient contact surgery link not involved in the move.

Finally, there is one minor notational point. Theorem~\ref{thm:main} is stated in terms of front projections, whereas in the preceding sections we have often suppressed the distinction between Legendrian links and their front projections. This causes no ambiguity. By definition, every ribbon move above is defined for contact surgery links, but it is represented by an explicit and well-defined move in front projection, and the supporting ribbon neighborhood is disjoint from all components not involved in the move. Therefore, to prove Theorem~\ref{thm:main}, it is enough to prove the following link-level statement: any two contact surgery links in \((S^3,\xist)\) representing contactomorphic contact manifolds are related by a sequence of Legendrian isotopies, insertions or removals of standard cancelling pairs, standard contact handle slides, standard lantern moves, and standard chain moves. Taking front projections of such a sequence gives precisely the diagrammatic sequence of moves appearing in Theorem~\ref{thm:main}.

\begin{proof}[Proof of Theorem~\ref{thm:main}]
  We already know that if two contact surgery links differ by Legendrian isotopies, insertions or removals of standard cancelling pairs, standard contact handle slides, standard lantern moves, and standard chain moves, then they describe contactomorphic contact manifolds: each move is either a ribbon move (Proposition~\ref{ribbon_moves_prop}) or a Legendrian isotopy.

  For the converse, let $\boldsymbol{L}$ and $\boldsymbol{L}'$ describe contactomorphic contact $3$-manifolds. By Theorem~\ref{Vague_Kirby_Theorem}, there is a sequence of ribbon moves and Legendrian isotopies taking $\boldsymbol{L}$ to $\boldsymbol{L}'$. Thus, it suffices to show the result for the case where $\boldsymbol{L}$ and $\boldsymbol{L}'$ differ by a single ribbon move. After restricting to sublinks, we can further assume that $\boldsymbol{L}$ and $\boldsymbol{L}'$ are $R$-equivalent.
  Let
  \[
    \tau_{\scriptscriptstyle{\boldsymbol{L}}}
    = \tau_{\scriptscriptstyle{\overline{L_n}}}^{-\delta_n}\cdots\tau_{\scriptscriptstyle{\overline{L_1}}}^{-\delta_1},
    \qquad
    \tau_{\scriptscriptstyle{\boldsymbol{L}'}}
    = \tau_{\scriptscriptstyle{\overline{L'_m}}}^{-\delta'_m}\cdots\tau_{\scriptscriptstyle{\overline{L'_1}}}^{-\delta'_1},
  \]
  be the mapping classes in $\mathrm{MCG}(R,\partial R)$ determined by $\boldsymbol{L}$ and $\boldsymbol{L}'$. By Lemma~\ref{lem:inverse_Leg_real}, every curve $\overline{L_i}$ and $\overline{L'_j}$ is homologically nontrivial in $R$.

  We would like to use the mapping class group presentation in Theorem~\ref{thm:Gervais}, which requires genus at least $2$ and nonseparating curves. To arrange this, let $G$ be the Legendrian skeleton of $R$ and enlarge it in two steps:
  \begin{itemize}
    \item First, attach to $G$ the purple Legendrian graph in Figure~\ref{fig:special_chain_lantern} along a Legendrian arc. The ribbon of this purple graph has genus $2$ (this can be checked by constructing its ribbon explicitly), and the resulting ribbon $\widehat{R}$ is the boundary connected sum of $R$ with this genus $2$ surface. Hence $\widehat{R}$ has genus at least $2$.
    \item Second, following~\cite[Lemma~4.11]{Avdek13}, add further Legendrian segments to the enlarged graph to obtain a graph $\widetilde{G}$ whose ribbon $\widetilde{R}$ has connected boundary (and still genus at least $2$, as attaching segments corresponds to adding $1$–handles to the ribbon).
  \end{itemize}

  \begin{figure}[htbp]
    \centering
    \begin{overpic}[scale=1]{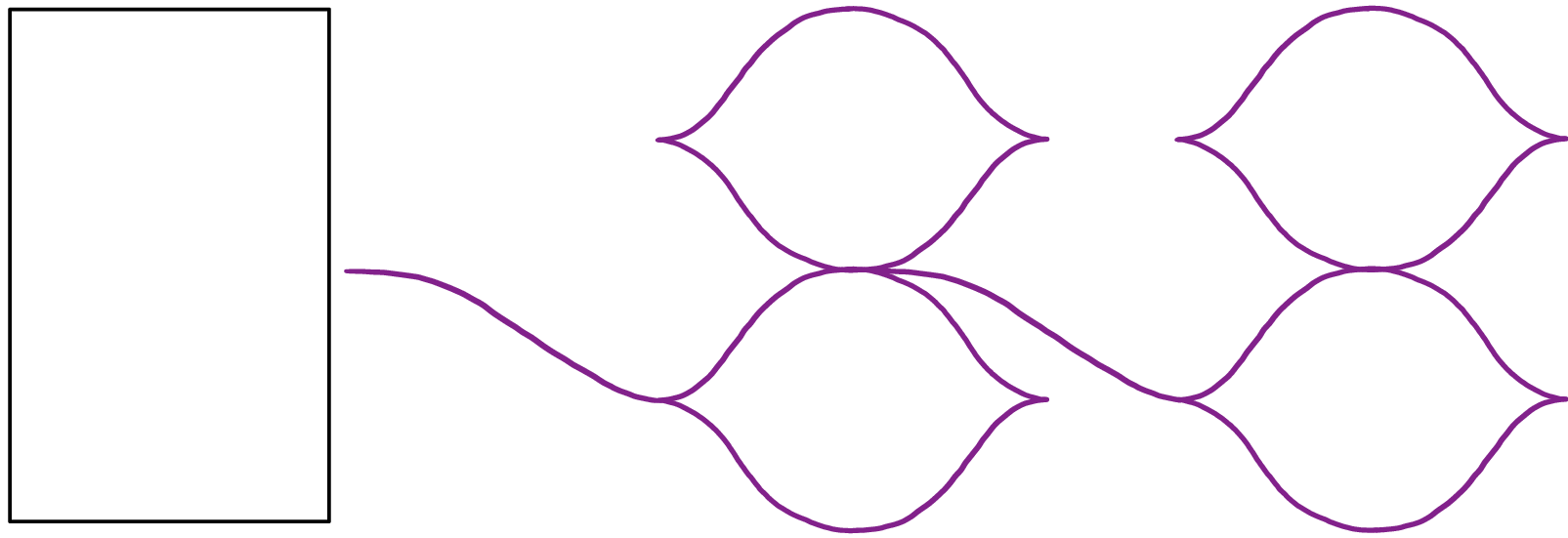}
      \put(9,17){$G$}
    \end{overpic}
    \caption{The box labelled $G$ on the left symbolizes the Legendrian graph $G$. We are attaching to $G$ (using a Legendrian arc) the purple Legendrian graph on the right of this figure.}\label{fig:special_chain_lantern}
  \end{figure}

  Since $\widetilde{G}$ contains $G$, Lemma~\ref{lem:Req_after_enlarg} implies that $\boldsymbol{L}$ and $\boldsymbol{L}'$ are also $\widetilde{R}$–equivalent. We will still denote the corresponding mapping classes by $\tau_{\scriptscriptstyle{\boldsymbol{L}}}$ and $\tau_{\scriptscriptstyle{\boldsymbol{L}'}}$, now viewed as elements in $\mathrm{MCG}(\widetilde{R},\partial\widetilde{R})$.
  The advantage of passing to $\widetilde{R}$ is that its genus is at least $2$ and its boundary is connected. Since each curve $\overline{L_i}$ and $\overline{L'_j}$ is homologically nontrivial, it is nonseparating on $\widetilde{R}$. Thus, all Dehn twists in the factorizations of $\tau_{\scriptscriptstyle{\boldsymbol{L}}}$ and $\tau_{\scriptscriptstyle{\boldsymbol{L}'}}$ are along nonseparating curves, and we are in the hypotheses of Theorem~\ref{thm:Gervais}.

  Next, consider a sequence of relations $r_1,\dots,r_k$ from Theorem~\ref{thm:Gervais} that transforms $\tau_{\scriptscriptstyle{\boldsymbol{L}}}$ into $\tau_{\scriptscriptstyle{\boldsymbol{L}'}}$. After each relation $r_i$, the intermediate factorization still involves only Dehn twists along nonseparating curves. 
  By the Legendrian realization principle and Proposition~\ref{prop:Dehn_surgery_Dehn_twist}, each intermediate factorization is realized by some contact surgery link compatible with $\widetilde{R}$. Thus, we obtain a sequence of contact surgery links
  \[
    \boldsymbol{L}=\boldsymbol{L}^0,\boldsymbol{L}^1,\dots,\boldsymbol{L}^k=\boldsymbol{L}'
  \]
  such that
  \begin{itemize}
    \item each $\boldsymbol{L}^i$ is compatible with $\widetilde{R}$;
    \item the mapping class of $\boldsymbol{L}^i$ is the one obtained from $\tau_{\scriptscriptstyle{\boldsymbol{L}}}$ after the relations $r_1,\dots,r_i$; and
    \item consecutive links $\boldsymbol{L}^i,\boldsymbol{L}^{i+1}$ differ by an elementary $R$–equivalence.
  \end{itemize}
    By Proposition~\ref{uniq_cont_Kirby}, up to Legendrian isotopy, each elementary $R$–equivalence is realized by inserting or deleting a cancelling pair, a contact handle slide, a lantern move, or a chain move.  

  Finally, to ensure that all lantern and chain moves are the \emph{standard} ones, we modify the initial enlargement of $G$ slightly. Along with the genus–$2$ piece in Figure~\ref{fig:special_chain_lantern}, we also attach two Legendrian graphs $G_{\mathrm{chain}}$ and $G_{\mathrm{lantern}}$ whose ribbons support the specific chain and lantern relations underlying the standard chain and lantern moves, see Figure~\ref{fig:genus2}. In this way, we can arrange that every lantern or chain relation in the sequence $r_1,\dots,r_k$ is supported in one of these subsurfaces, and therefore corresponds to the standard chain or lantern move on the surgery diagram.
  This concludes the proof.
\end{proof}

\begin{figure}[htbp]
    \centering
    \begin{overpic}[scale=1]{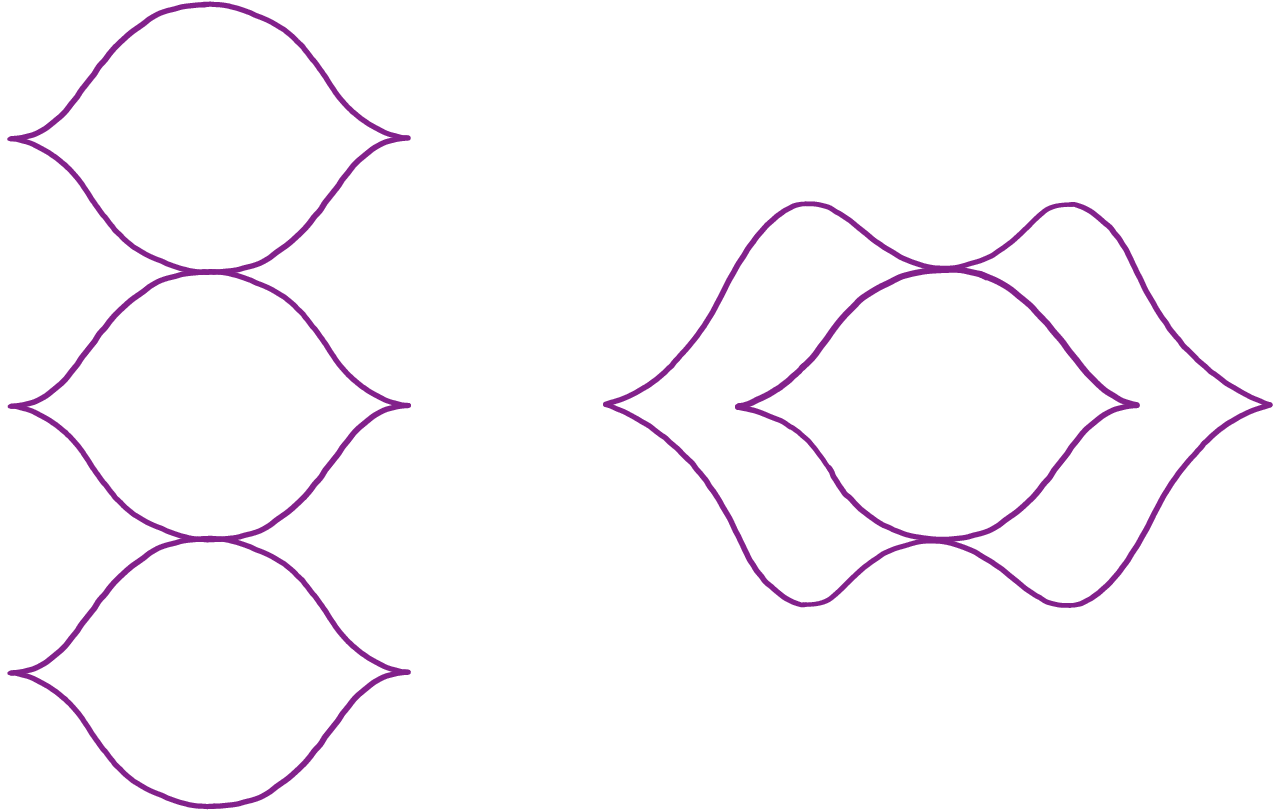}
      \put(30,45){$G_{\mathrm{chain}}$}
      \put(65,50){$G_{\mathrm{lantern}}$}
    \end{overpic}
    \caption{Legendrian graphs $G_{\mathrm{chain}}$ and $G_{\mathrm{lantern}}$ whose ribbons support the standard chain and lantern relations.}
    \label{fig:genus2}
\end{figure}


\section{A toolbox of contact Kirby moves}\label{sec:moves}

In this section, we collect contact Kirby moves from the literature, their diagrammatic representatives, and the new moves introduced in this article. The section is intended as a ready-to-use toolbox for readers who want to manipulate contact surgery diagrams without having to reconstruct the moves from the proofs in the main body of the paper. Some of these moves are most naturally formulated three-dimensionally, at the level of contact surgery links, rather than as moves on front projections. In this section, we use the term \emph{contact Kirby move} in a broader sense: it means an operation on a contact surgery link, or on a contact surgery diagram, which does not change the contactomorphism type of the represented contact manifold. 
As in the introduction, the shaded region indicates the front projection of the supporting surface or handlebody on which the move is performed. The corresponding supporting region is assumed to be disjoint from all components of the contact surgery link except those explicitly involved in the move. All components not involved in the move are left unchanged.

\subsection{Legendrian isotopy and Legendrian Reidemeister moves}
The most basic operation is Legendrian isotopy of the contact surgery link. In the front projection, two Legendrian links in \((S^3,\xist)\) are Legendrian isotopic if and only if their front projections are related by planar isotopies and the finitely many Legendrian Reidemeister moves, shown in Figure~\ref{fig:Reidemeister_Leg_link}; see Theorem~\ref{thm:Reidemeister_Legendrian}. This was first proved in~\cite{Legendrian_Reidemeister}. Since contact surgery on Legendrian-isotopic contact surgery links yields contactomorphic contact manifolds, these moves may be regarded as the most elementary contact Kirby moves. 

\subsection{Cancelling pairs}
The cancellation lemma is one of the classical moves in contact surgery. In its intrinsic form, it says that if a contact surgery link contains two components consisting of a Legendrian knot and a Legendrian push-off, equipped with opposite contact surgery coefficients, then this pair may be inserted or removed without changing the contactomorphism type of the surgered contact manifold, provided that the annulus between the two components is disjoint from the rest of the surgery link. This move first appeared in~\cite{Ding_Geiges_fillable}; see also~\cite{Ding_Geiges_Stipsicz}. Other proofs and interpretations were later given in~\cite{Avdek13,Casals_Etnyre_Kegel}. The cancelling pair used in Theorem~\ref{thm:main} is a particular diagrammatic representative of this move: the two components differ by translation in the \(\partial_z\)-direction in front projection. We call this a \emph{standard cancelling pair}; see Figure~\ref{fig:cancelling_intro}.

\subsection{Contact handle slides}\label{sec:ctchadleslidegeneral}

Contact handle slides are contact analogues of the smooth handle slide. They were first discussed by Ding and Geiges~\cite{Ding_Geiges_slides}, and further proofs and variants appear in~\cite{Avdek13,Casals_Etnyre_Kegel}. A contact handle slide is the following operation. Let \(L\) be a Legendrian component of a contact surgery link with surgery coefficient \((\pm1)\), and let \(K\) be a Legendrian knot in the complement of~\(L\). After contact surgery on $L$, the knot $K$ can be viewed as a Legendrian knot in the contact manifold obtained by contact $(\pm1)$-surgery on $L$. In the surgered manifold, $K$ is Legendrian isotopic to a band connected sum of $K$ with the Legendrian $(\pm1,1)$-cable of $L$. Thus, if $K$ is equipped with a contact surgery coefficient, then that coefficient is preserved under this isotopy. If \(K\) is unframed, the same operation gives a move for Legendrian knots in the surgered contact manifold. This latter version is used in Theorem~\ref{prop:Leg_links_Kirby_thm}. The two handle slides appearing in Theorem~\ref{thm:main} are the \emph{standard contact handle slides}. They are the two specific front-projection representatives shown in Figure~\ref{fig:braid_intro}. Figures~\ref{fig:contact_handle_slides1} and~\ref{fig:contact_handle_slides2} show further diagrammatic incarnations of contact handle slides.

\begin{figure}[htbp]
    \centering
    \begin{overpic}[scale=1]{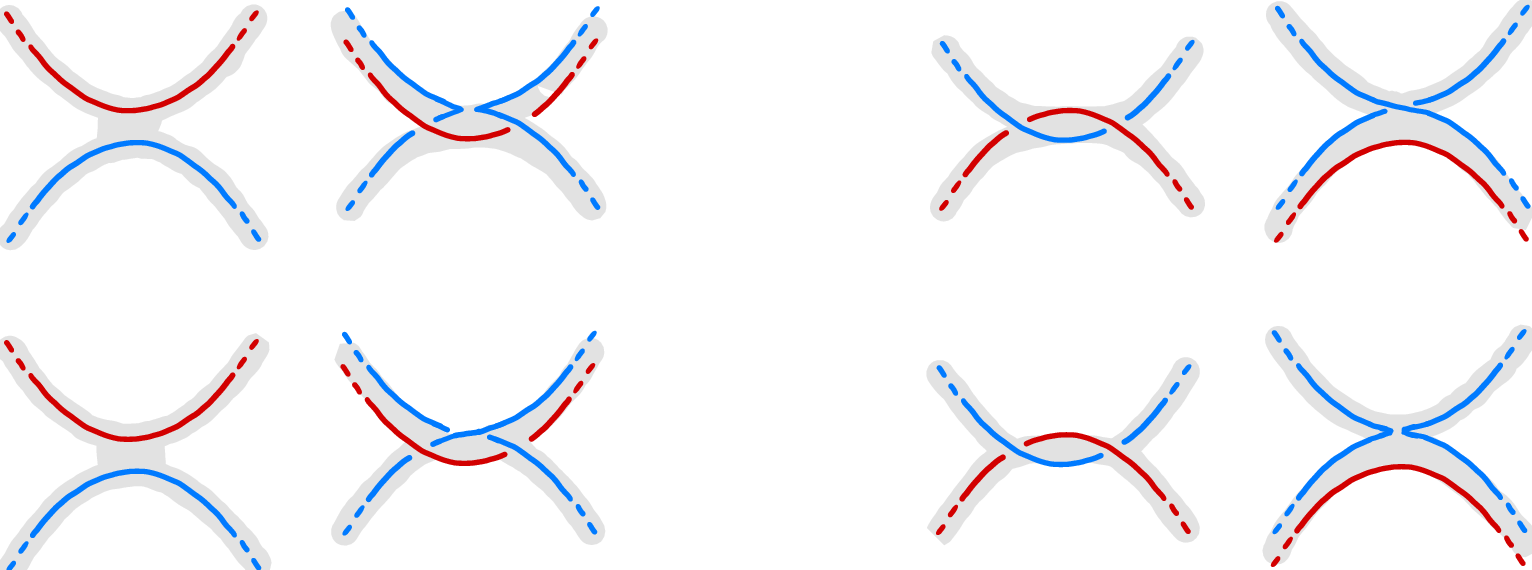}
    \put(0,32){$-$}
    \put(20,32){$-$}
    \put(64,25){$-$}
    \put(88,25){$-$}
    \put(0,11){$+$}
    \put(20,11){$+$}
    \put(64,4){$+$}
    \put(88,4){$+$}
    \end{overpic}
    \caption{Contact handle slides. The blue component may be unframed; if it is a surgery component, its surgery coefficient is preserved.}\label{fig:contact_handle_slides1}
\end{figure}

\begin{figure}[htbp]
    \centering
    \begin{overpic}[scale=1]{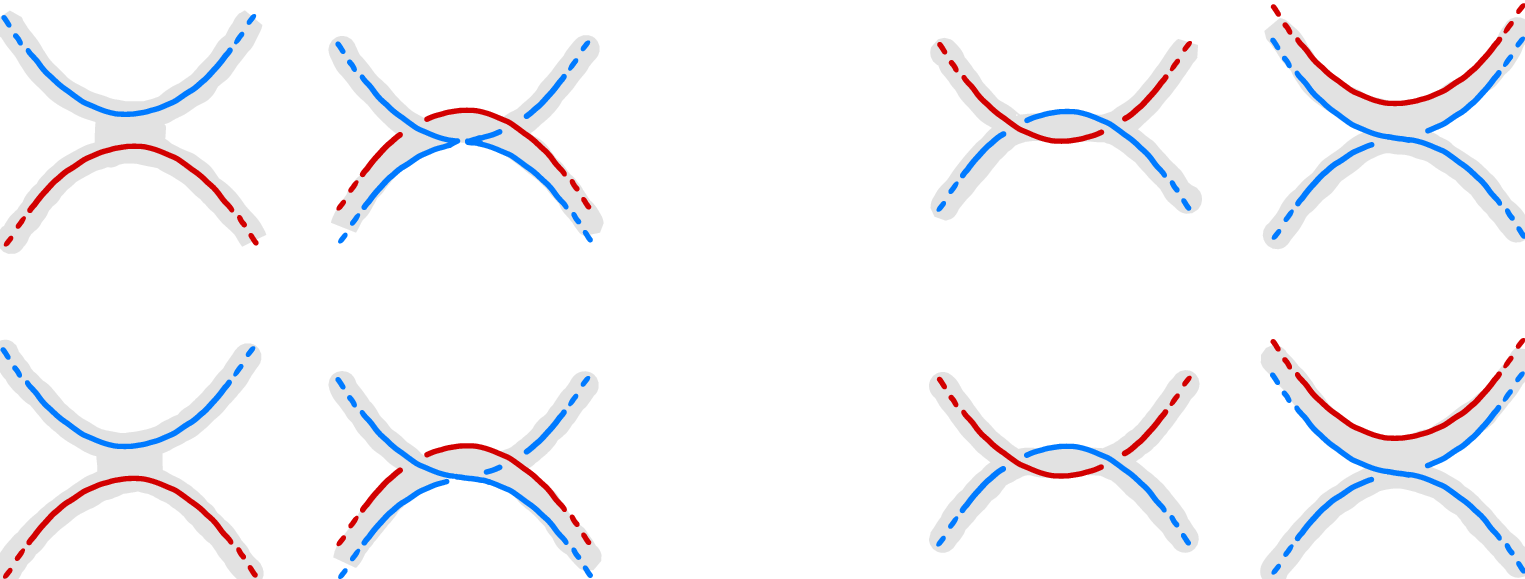}
    \put(0,25){$-$}
    \put(20,25){$-$}
    \put(64,33){$-$}
    \put(88,33){$-$}
    \put(0,4){$+$}
    \put(20,4){$+$}
    \put(64,11){$+$}
    \put(87,12){$+$}
    \end{overpic}
    \caption{Further contact handle slides. Again, the blue component may be unframed; if it is a surgery component, its surgery coefficient is preserved.}\label{fig:contact_handle_slides2}
\end{figure}

\subsection{Lantern destabilizations}

Lantern destabilizations were introduced by Lisca and Stipsicz~\cite{Lisca_Stipsicz_lantern}. They form a family of contact Kirby moves, indexed by the number of additional strands passing through the picture. Their proof proceeds by translating from the relevant surgery links to compatible abstract open books and observing that the two open books differ by two positive stabilizations together with a lantern relation. We refer to Proposition~2.4 of~\cite{Lisca_Stipsicz_lantern} and Lemma~2.9 of~\cite{Etnyre_Kegel_Onaran} for details. A diagrammatic version of the family is shown in Figure~\ref{fig:lantern_destabilization_general}. In Proposition~\ref{prop:relations_known_moves} below, we give an explicit expression of the lantern destabilization move in terms of the other contact Kirby moves.

\begin{figure}[htbp]
    \centering
    \begin{overpic}[scale=1]{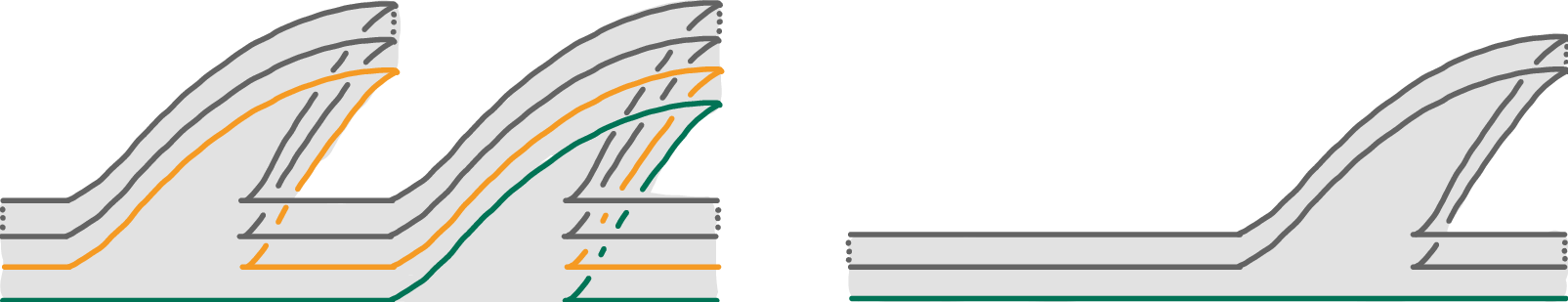}
    \put(46,-0.4){$+$}
    \put(46,1.9){$-$}
    \put(46,5){$\}\pm$}
    \put(46,17.5){$\}k$}
    \put(100,-0.4){$+$}
    \put(100,3){$\}\pm$}
    \put(100,15.5){$\}k$}
    \end{overpic}
    \caption{The lantern destabilizations with $k\in\Z_{\geq 0}$ strands, shown in grey.}\label{fig:lantern_destabilization_general}
\end{figure}

\subsection{Lantern and chain moves}

The lantern and chain moves considered in this article arise from the corresponding lantern and chain relations in the mapping class group. They are constructed in Section~\ref{sec:elementary_R_equivalences} by translating these mapping class group relations into contact surgery diagrams using compatible ribbon surfaces and open books. Special cases of the lantern moves already occur in~\cite{Avdek13}, and the standard chain move appears there as well. The standard lantern and chain moves from Figures~\ref{fig:lantern_intro} and~\ref{fig:chain_intro} are particular members of the larger families shown in Figures~\ref{fig:lantern_moves_app1}--\ref{fig:chain_moves_app4}. In these figures, the left column and the right column represent the two sides of the move, and the arrows on the red and blue strands indicate how the strands leaving one ball reconnect to the strands entering the other ball. Since all parallel strands are understood to remain vertical translates of one another outside the depicted balls, these arrows also determine the reconnection data for the remaining strands.

For completeness, we also mention that contact handle slides, lantern moves, and chain moves admit positive analogues. These are obtained by considering the inverses of the braid, lantern and chain relations and constructing the corresponding contact Kirby moves in front projections as in Section~\ref{sec:elementary_R_equivalences}. We do not depict these moves separately, since they can be expressed explicitly in terms of the negative versions together with insertions and removals of cancelling pairs.

\begin{figure}[htbp]
    \centering
    \begin{overpic}[scale=1]{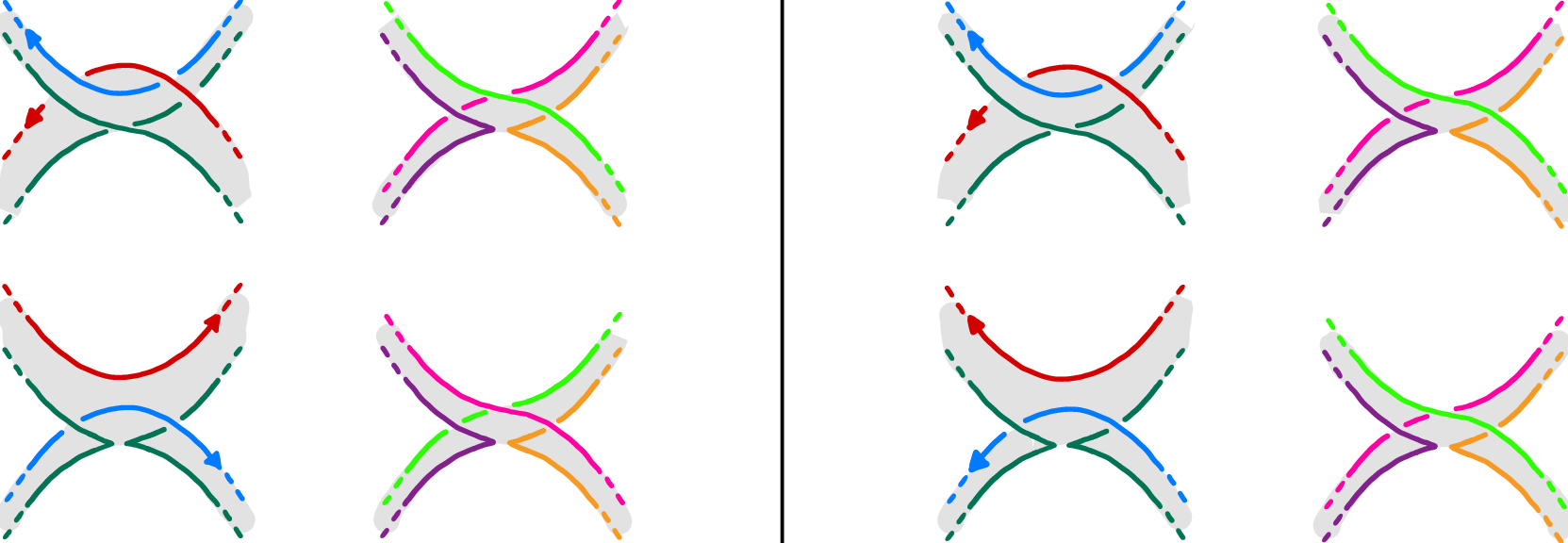}
    \end{overpic}
    \caption{Two examples of lantern moves. All components are decorated with a $-$ sign.}\label{fig:lantern_moves_app1}
\end{figure}

\begin{figure}[htbp]
    \centering
    \begin{overpic}[scale=1]{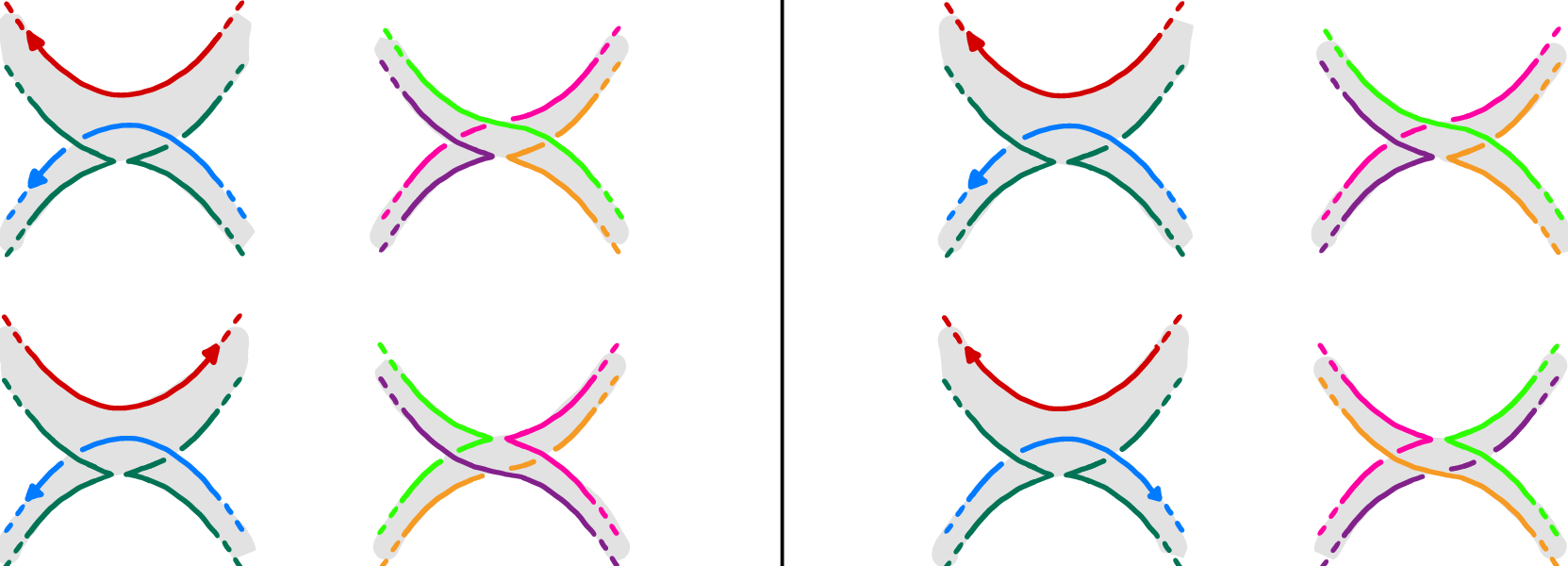}
    \end{overpic}
    \caption{Two further examples of lantern moves. All components are decorated with a $-$ sign.}\label{fig:lantern_moves_app2}
\end{figure}

\begin{figure}[htbp]
    \centering
    \begin{overpic}[scale=1]{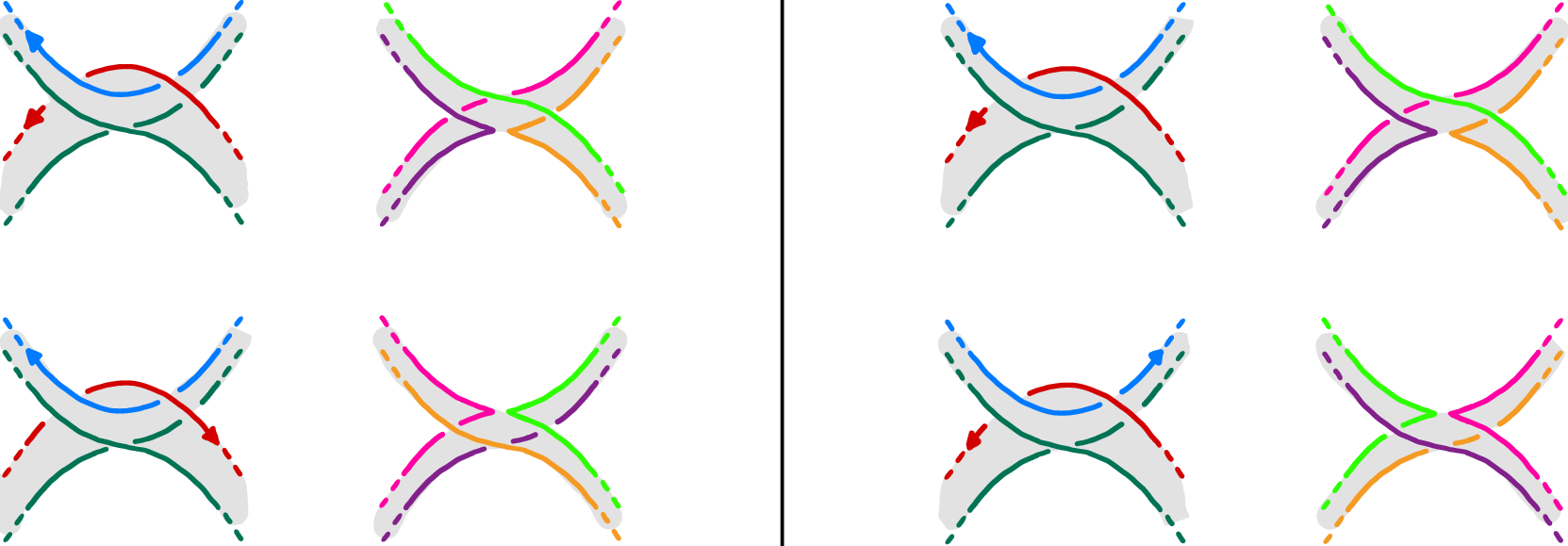}
    \end{overpic}
    \caption{Two more examples of lantern moves arising from the lantern relation. All components are decorated with a $-$ sign.}\label{fig:lantern_moves_app3}
\end{figure}

\begin{figure}[htbp]
    \centering
    \begin{overpic}[scale=0.85]{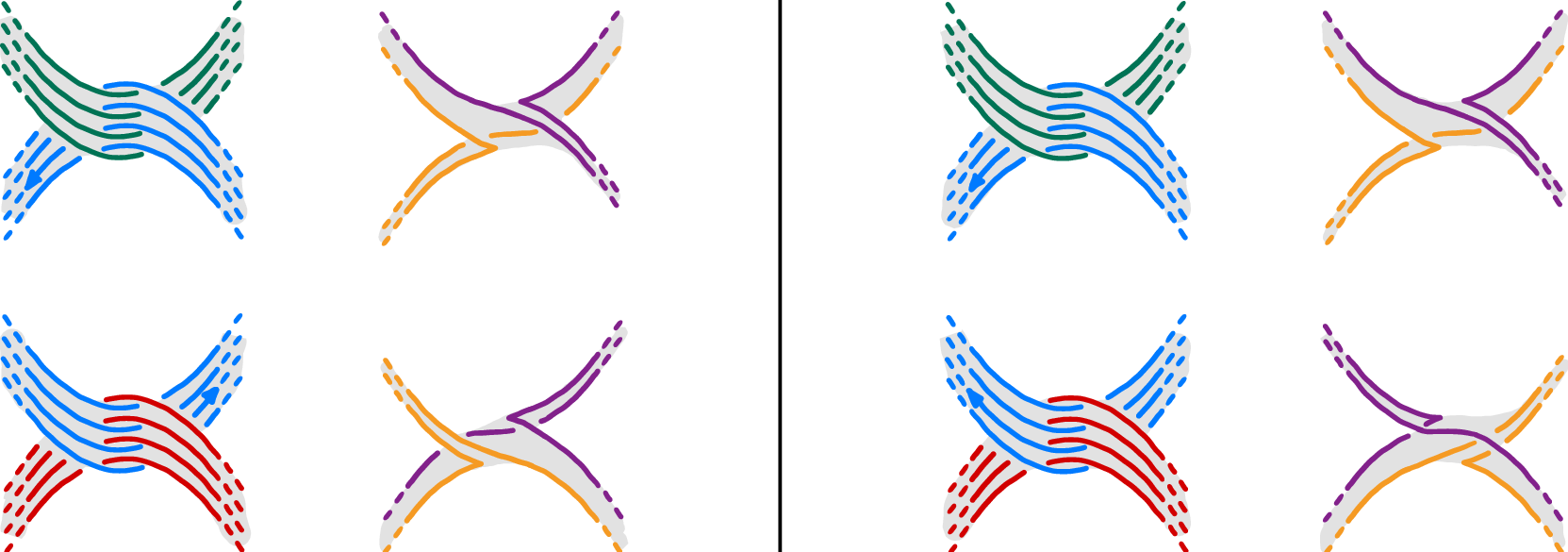}
    \end{overpic}
    \caption{Two examples of chain moves. All components are decorated with a $-$ sign.}\label{fig:chain_moves_app1}
\end{figure}

\begin{figure}[htbp]
    \centering
    \begin{overpic}[scale=0.85]{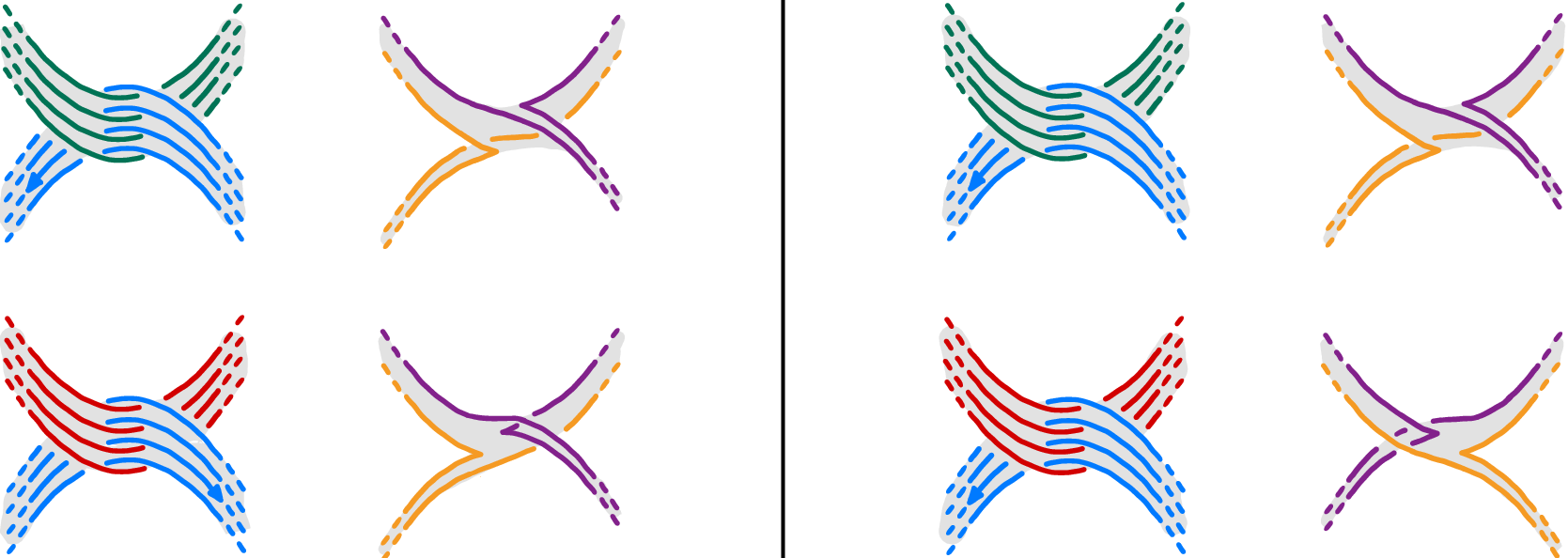}
    \end{overpic}
    \caption{Two further examples of chain moves. All components are decorated with a $-$ sign.}\label{fig:chain_moves_app2}
\end{figure}

\begin{figure}[htbp]
    \centering
    \begin{overpic}[scale=0.85]{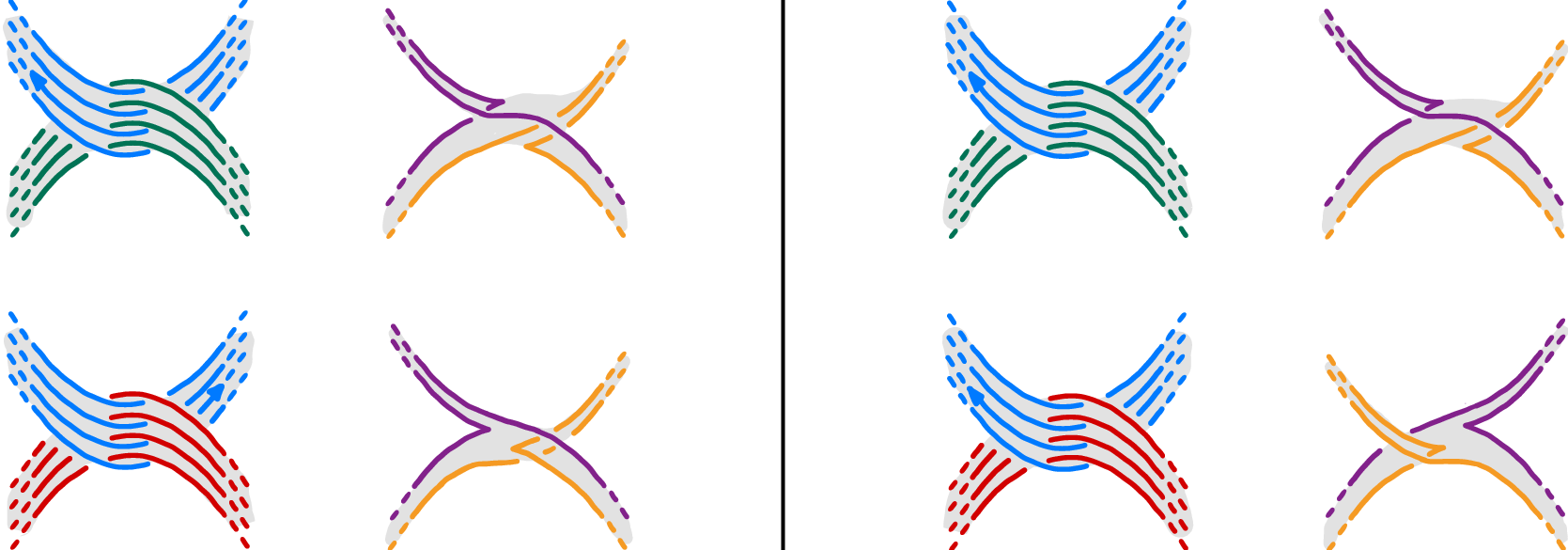}
    \end{overpic}
    \caption{Two more examples of chain moves. All components are decorated with a $-$ sign.}\label{fig:chain_moves_app3}
\end{figure}

\begin{figure}[htbp]
    \centering
    \begin{overpic}[scale=0.85]{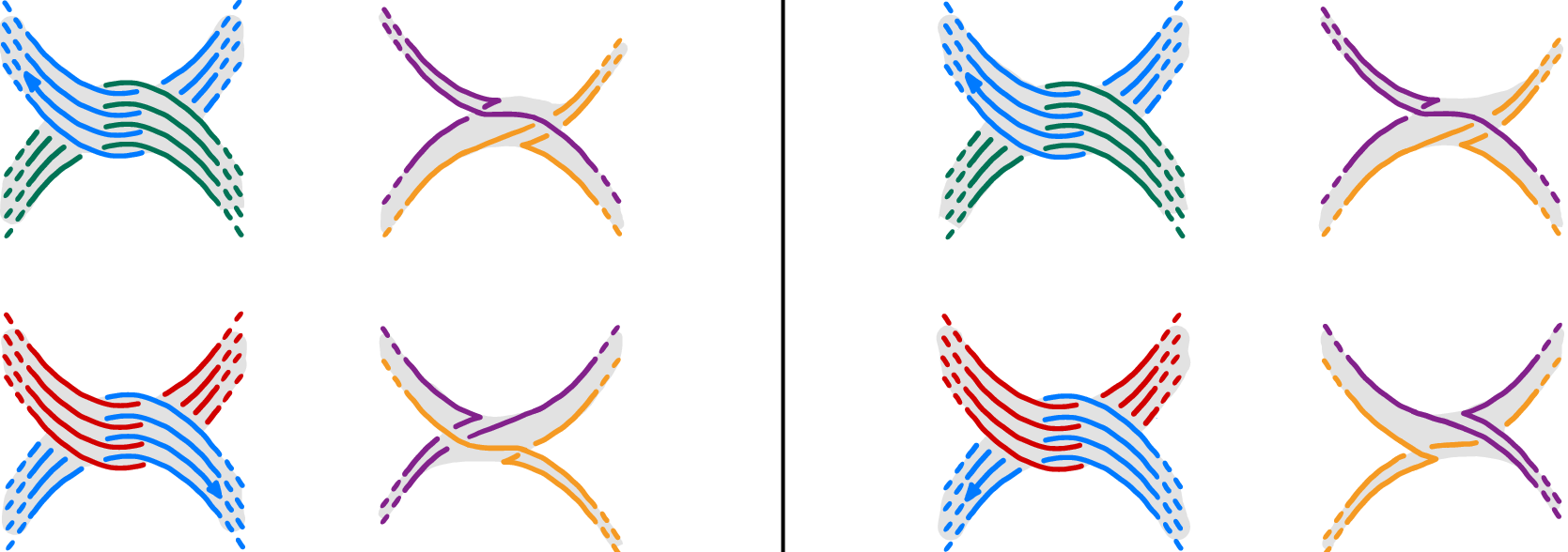}
    \end{overpic}
    \caption{Additional examples of chain moves; again all components have coefficient $-$.}\label{fig:chain_moves_app4}
\end{figure}

\subsection{Other moves in the literature}

There are other contact Kirby moves in the literature. Examples include the contact annulus twist of~\cite{Casals_Etnyre_Kegel} and various contact Rolfsen twists~\cite{Ding_Geiges_slides,Kegel_Legendrian_complement,Kegel_thesis,Kegel_Onaran}. These are useful operations in their own right. However, for the purposes of this paper, we do not include them as basic moves in the toolbox below since the contact annulus twist can be expressed as an explicit sequence of contact handle slides followed by a cancellation~\cite{Casals_Etnyre_Kegel}, while contact Rolfsen twists can be explicitly expressed in terms of contact handle slides and lantern destabilizations~\cite{Kegel_Onaran}.


\section{Homotopical invariants via contact surgery diagrams}
\label{sec:diagrammatic_d3}

In this section, we discuss how to define homotopical invariants of contact structures by verifying their invariance under the contact Kirby moves.

\subsection{The $d_3$-invariant}
We prove Proposition~\ref{prop:d3-invariance} by showing that the surgery formula for Gompf's \(d_3\)-invariant is unchanged under the contact Kirby moves, and can therefore be used as a diagrammatic definition.

We first recall the geometric origin of the $d_3$-invariant. Let \((M,\xi)\) be a contact \(3\)-manifold such that its first Chern class \(c_1(\xi)\in H^2(M;\mathbb Z)\) is torsion. Let \((X,J)\) be a compact almost complex \(4\)-manifold with \(\partial X=M\), such that the field of complex tangencies along \(\partial X\) is \(\xi\) (for the existence of such an $X$ we refer, for example, to~\cite{Gompf}). With our normalization taken into account\footnote{Recall from the conventions in the introduction that our normalization of the $d_3$-invariant differs from Gompf's original definition by a summand of $1/2$.}, Gompf defines
\[
d_3(\xi)
=
\frac14\left(c_1(J)^2-3\sigma(X)-2\chi(X)\right)+\frac{1}{2},
\]
where \(\sigma(X)\) is the signature of the intersection form of \(X\), \(\chi(X)\) is its Euler characteristic, and \(c_1(J)^2\) is interpreted as an absolute-relative pairing. Gompf proved that, under the condition that the first Chern class is torsion, this rational number is independent of the choice of \((X,J)\) and is an invariant of the underlying tangential $2$-plane field of \(\xi\).

Building on Gompf's work, Ding--Geiges--Stipsicz~\cite{Ding_Geiges_Stipsicz} give a formula for \(d_3(\xi)\) in terms of a contact surgery presentation of \((M,\xi)\). We describe the formula in our shifted normalization in the following lemma and include a proof sketch for completeness.

\begin{proposition}[Ding--Geiges--Stipsicz~\cite{Ding_Geiges_Stipsicz}]\label{lem:d3_surgery_formula}
    Let \((M,\xi)\) be a contact \(3\)-manifold with \(c_1(\xi)\) torsion and let \(\boldsymbol L\) be an oriented contact surgery link for $(M,\xi)$ with components $L_1,\ldots,L_n$. We write \(q=q(\boldsymbol L)\) for the number of components of \(\boldsymbol L\) with a \((+1)\) contact surgery coefficient, $\boldsymbol r
=
(\rot(L_1),\ldots,\rot(L_n))^T$ 
for the vector of rotation numbers of the components of \(\boldsymbol L\), and we denote by \(Q=Q_{\boldsymbol L}\) the linking matrix of $\boldsymbol{L}$. 
Then $Q\boldsymbol x=\boldsymbol r$ has a rational solution $\boldsymbol{x}$. For any such solution we define $c_{\boldsymbol L}^2:=\boldsymbol x^TQ\boldsymbol x$. The surgery formula for \(d_3\) is then
\[
d_3(\xi)
=
\frac14\left(c_{\boldsymbol L}^2-3\sigma(Q_{\boldsymbol L})-2n\right)
+q(\boldsymbol L).
\]
\end{proposition}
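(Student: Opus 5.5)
The plan is to run Gompf's definition on an explicit almost complex $4$-manifold built from the surgery diagram, after reducing contact $(+1)$-surgeries to $(-1)$-surgeries. First I treat the case $q(\boldsymbol L)=0$. Then every component carries contact $(-1)$-surgery, and $(M,\xi)$ is the convex boundary of the Stein domain $X$ obtained from $D^4$ by attaching Weinstein $2$-handles along $L_1,\dots,L_n$ with framings $\tb(L_i)-1$ (Eliashberg, Gompf). For this $X$ I will record the following invariants:
\begin{itemize}
\item $H_2(X)\cong\Z^n$ with intersection form $Q_{\boldsymbol L}$, so $\sigma(X)=\sigma(Q_{\boldsymbol L})$ and $\chi(X)=1+n$;
\item by Gompf's theorem, $c_1(J)$ evaluates on the class of the $i$-th handle as $\rot(L_i)$.
\end{itemize}
Because $c_1(\xi)=c_1(J)|_M$ is torsion, the class $c_1(J)$ lifts rationally to $H_2(X,\partial X;\mathbb{Q})\cong H_2(X;\mathbb{Q})$. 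Writing the lift as $\boldsymbol x$ in the handle basis, the lifting condition reads $Q\boldsymbol x=\boldsymbol r$, which shows a rational solution exists. The absolute-relative square is then $c_1(J)^2=\boldsymbol x^TQ\boldsymbol x$, and it does not depend on the chosen solution, since any two solutions differ by an element of $\ker Q$. Substituting into Gompf's formula gives
\[
d_3(\xi)=\tfrac14\bigl(c_{\boldsymbol L}^2-3\sigma(Q)-2n-2\bigr)+\tfrac12 ,
\]
which is the stated formula with $q=0$.

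For $q>0$, I follow Ding--Geiges--Stipsicz and replace each $(+1)$-surgery on $L_i$ by $(-1)$-surgeries. The most direct route is to pass to an open book containing the link and use Proposition~\ref{prop:Dehn_surgery_Dehn_twist}, writing the negative Dehn twists there via stabilizations. Alternatively, I can quote the DGS computation directly: contact $(+1)$-surgery on $L$ is contact $(-1)$-surgery on the pushoff $L'$ after adding a stabilized unknot configuration. Either route yields an almost complex manifold $X$ that is no longer Stein; for example, one can take the $4$-manifold obtained by attaching $2$-handles with the smooth framings $\tb\pm1$, with an almost complex structure defined away from finitely many points, as in Gompf's treatment via $\mathbb{CP}^2$ blowups. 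Each $(+1)$-component then contributes a correction.

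The cleanest bookkeeping I know is Gompf's and DGS's: take $X$ to be the handlebody with framings $\tb(L_i)\pm1$, so again $\sigma=\sigma(Q)$ and $\chi=1+n$, and compute the rotation-number evaluation of $c_1$ through a local model for a single $(+1)$ handle. In that model, $(+1)$-surgery on $L$ equals $(-1)$-surgery on $L$ after cancelling against $(+1)$-surgery on its pushoff. Alternatively, and more concretely, one rewrites the $(+1)$ handle as the $(-1)$ handle of a stabilization together with an extra $\mathbb{CP}^2$-type summand. In the local model, the change in $c_1^2-3\sigma-2\chi$ is $+4$ per $(+1)$ component, and this produces exactly the term $+q(\boldsymbol L)$.

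The main obstacle is exactly this local computation: checking that for a $(+1)$-component the almost complex structure extends over the handle with $c_1$ still evaluating to $\rot(L_i)$, and with an additional contribution of $1$ to $d_3$. My first attempt would be the DGS trick of realizing $(+1)$-surgery as $(-1)$-surgery on $L$ together with the connected-sum/cancellation picture involving the Legendrian unknot with $\tb=-1$. I would then compute $d_3$ of the two descriptions and compare, using additivity of the quantities under adding a cancelling pair: this adds a $2\times2$ block of signature $0$, two components, and one $(+1)$, so it changes the formula by $-1+1=0$. This consistency check pins down the constant $+1$ per $(+1)$-component.
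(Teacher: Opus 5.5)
Your treatment of the case $q(\boldsymbol L)=0$ is correct and is the paper's argument: Stein trace, $\chi(X)=n+1$, intersection form $Q_{\boldsymbol L}$, $c_1(J)$ given by $\boldsymbol r$, a rational relative lift in $H^2(X,\partial X;\mathbb Q)\cong H_2(X;\mathbb Q)$ encoded by $Q\boldsymbol x=\boldsymbol r$, independence via $\ker Q$, and $-\tfrac24+\tfrac12=0$.

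The case $q>0$, however, is not actually proved, and several of the routes you list cannot work.
\begin{itemize}
\item A contact $(+1)$-surgery contributes a \emph{negative} Dehn twist to the monodromy (Definition~\ref{def:mapping_class_determined}), whereas positive stabilizations only add positive twists.
\item More generally, $(+1)$-surgeries cannot be traded for $(-1)$-surgeries at all. All-$(-1)$ diagrams give Stein fillable manifolds, and overtwisted ones are not fillable.
\item A cancelling pair $L^-\cup L'^+$ is just the empty surgery and itself contains a $(+1)$-component. So it gives no local model eliminating $L^+$.
\item An almost complex structure on the trace with framings $\tb(L_i)\pm1$ that is defined only away from finitely many points cannot be inserted into Gompf's formula. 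The singular points are exactly where the correction lives, and you never compute their contribution. Hence ``$+4$ per $(+1)$-component'' is asserted, not derived.
\end{itemize}
Finally, the cancelling-pair check is circular. It only shows that \emph{if} $d_3(\xi)=\tfrac14(c_{\boldsymbol L}^2-3\sigma(Q_{\boldsymbol L})-2n)+\lambda\, q(\boldsymbol L)$ for some universal constant $\lambda$, then $\lambda=1$. The existence of such a universal per-component correction is the whole content of the $q>0$ case. It must be independent of the knot type, its rotation number and the rest of the link, and one must also know that $c_1$ is still described by $\boldsymbol r$.

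The missing ingredient, which the paper takes from Ding--Geiges--Stipsicz, is a genuine almost complex filling. Take the $q$-fold sum $X\#_q\mathbb{CP}^2$ of the smooth trace $X$ (framings $\tb(L_i)\pm1$). It carries an almost complex structure inducing $\xi$ on $M$, with $c_1$ evaluating to $\rot(L_i)$ on $[\widehat\Sigma_i]$ and to $\pm3$ on the generator of each $\mathbb{CP}^2$ summand, as for the standard complex structure. Each summand raises $\sigma$ and $\chi$ by $1$ and $c_1^2$ by $9$. It therefore contributes $\tfrac14(9-3-2)=1$ to $d_3$, which is exactly the term $q(\boldsymbol L)$. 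Once this is in place, your cancelling-pair computation becomes a consistency check rather than a proof; it is the same computation the paper later uses for the invariance of $d_3^{\mathrm{surg}}$.
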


\begin{proof}[Proof sketch]
First, we give the argument in the case that all contact surgery coefficients are $(-1)$, i.e.\ $q(\boldsymbol{L})=0$. This case is already contained in~\cite{Gompf}. Let \(X=X_{\boldsymbol L}\) denote the smooth surgery trace of \(\boldsymbol L\), i.e.\ the compact $4$-manifold obtained from \(D^4\) by attaching \(2\)-handles along the components \(L_i\) with smooth framings \(\tb(L_i)\pm1\), where the sign is the corresponding contact surgery coefficient. Since all contact surgery coefficients are $(-1)$, the surgery trace carries a preferred Stein structure and is, in particular, an almost complex manifold. From the handle decomposition, we compute \(\chi(X)=n+1\), and the homology \(H_2(X;\mathbb Z)\cong\Z^n\) is generated by the classes \([\widehat{\Sigma}_i]\), where \(\widehat{\Sigma}_i\) is obtained by capping a Seifert surface \(\Sigma_i\) in $D^4$ for \(L_i\) with the core of the corresponding \(2\)-handle.
With respect to this basis, the intersection form of $X$ is represented by the linking matrix \(Q=Q_{\boldsymbol L}\), whose entries are
\[
[\widehat{\Sigma}_i]\cdot[\widehat{\Sigma}_j]=
   \begin{cases}
      \lk(L_i,L_j), & \text{if } i\neq j,\\
      \tb(L_i)\pm1, & \text{if } i=j.
   \end{cases}
\]
The rotation vector determines a cohomology class \(c_{\boldsymbol L}\in H^2(X;\mathbb Z)\cong \operatorname{Hom}(H_2(X;\mathbb Z),\mathbb Z)\) by
\[
\langle c_{\boldsymbol L},[\widehat{\Sigma}_i]\rangle=\rot(L_i).
\]
Under the restriction map \(H^2(X;\mathbb Z)\to H^2(M;\mathbb Z)\), the class \(c_{\boldsymbol L}\) maps to \(c_1(\xi)\), see for example~\cite{Ding_Geiges_Stipsicz}. 
It remains to explain how to interpret \(c_{\boldsymbol L}^2\). Since \(X\) has boundary, the square of \(c_{\boldsymbol L}\) is not obtained by simply pairing \(c_{\boldsymbol L}\smile c_{\boldsymbol L}\) with a fundamental class of \(X\). Instead, one needs an absolute-relative pairing. More precisely, one needs a class $\widetilde c_{\boldsymbol L}\in H^2(X,\partial X;\mathbb Q)$ whose image under the natural map \(H^2(X,\partial X;\mathbb Q)\to H^2(X;\mathbb Q)\) is \(c_{\boldsymbol L}\otimes 1\). Then one sets
\[
c_{\boldsymbol L}^2
=
\left\langle c_{\boldsymbol L}\smile \widetilde c_{\boldsymbol L},[X,\partial X]\right\rangle.
\]
The obstruction to the existence of such a rational relative lift is precisely the restriction of \(c_{\boldsymbol L}\) to the boundary. In our situation, this restriction is \(c_1(\xi)\). Thus the assumption that \(c_1(\xi)\) is torsion implies that \(c_{\boldsymbol L}\) vanishes on the boundary after tensoring with \(\mathbb Q\), and hence admits a rational relative lift.

Let us now translate this into the coordinates coming from the surgery diagram. We identify \(H_2(X;\mathbb Q)\) with \(\mathbb Q^n\) using the basis \([\widehat{\Sigma}_1],\ldots,[\widehat{\Sigma}_n]\). Under Poincaré--Lefschetz duality, a rational relative cohomology class may be represented by a vector \(\boldsymbol x\in\mathbb Q^n\). In these coordinates, the map \(H^2(X,\partial X;\mathbb Q)\to H^2(X;\mathbb Q)\) is represented by the linking matrix \(Q\). Therefore the condition that \(\boldsymbol x\) represents a rational relative lift of \(c_{\boldsymbol L}\) is exactly $Q\boldsymbol x=\boldsymbol r$.
For any rational solution $\boldsymbol{x}$ of this equation, $c_{\boldsymbol L}^2
:=
\boldsymbol x^TQ\boldsymbol x
=
\boldsymbol x^T\boldsymbol r
$ is independent of the chosen solution. Indeed, if \(\boldsymbol x'\) is another solution, then \(\boldsymbol y:=\boldsymbol x-\boldsymbol x'\) lies in \(\ker Q\). Hence
\[
\boldsymbol r^T\boldsymbol y
=
(Q\boldsymbol x)^T\boldsymbol y
=
\boldsymbol x^TQ\boldsymbol y
=
0.
\]
Thus \(\boldsymbol x^T\boldsymbol r=(\boldsymbol x')^T\boldsymbol r\), and so \(c_{\boldsymbol L}^2\) is well defined and computes $c_1(J)^2$. Substituting these diagrammatic quantities into the definition of $d_3$ gives the claimed surgery formula.

If $\boldsymbol{L}$ contains $q$ components with contact surgery coefficient $(+1)$, then the surgery trace $X$ is not an almost complex filling of $(M,\xi)$. As explained in~\cite{Ding_Geiges_Stipsicz}, one can replace $X$ by the $q$-fold blow-up $X\#_q\mathbb C P^2$ of $X$ which carries an almost complex structure inducing $\xi$ on $M$. Then the corresponding computation as above yields the claimed formula with the correction term $q(\boldsymbol{L})$. For details, we refer to~\cite{Ding_Geiges_Stipsicz}.
\end{proof}

In what follows, we take the right-hand side of the surgery formula from Proposition~\ref{lem:d3_surgery_formula} as a diagrammatic definition and write
\[
d_3^{\mathrm{surg}}(\boldsymbol L)
:=
\frac14\left(c_{\boldsymbol L}^2-3\sigma(Q_{\boldsymbol L})-2n\right)
+q(\boldsymbol L).
\]
We now prove directly that this diagrammatic quantity is invariant under the moves of Theorem~\ref{thm:main}.

First, we record a simple linear-algebra observation, which will be used repeatedly. It shows, in particular, that the formula for \(d_3^{\mathrm{surg}}\) is invariant under integral changes of basis. Taking \(P\) to be diagonal with diagonal entries \(\pm1\) also shows that the expression is independent of the orientations chosen on the components \(L_i\).

\begin{lemma}\label{lem:d3-basis}
Let \(P\in GL(n;\mathbb Z)\), and set \(Q'=P^TQP\) and \(\boldsymbol r'=P^T\boldsymbol r\). Then the quantity \(\boldsymbol x^TQ\boldsymbol x\), computed from any rational solution of \(Q\boldsymbol x=\boldsymbol r\), agrees with the corresponding quantity \((\boldsymbol x')^TQ'\boldsymbol x'\), computed from any rational solution of \(Q'\boldsymbol x'=\boldsymbol r'\). In particular, the value of \(d_3^{\mathrm{surg}}\) is unchanged under such a change of basis.
\end{lemma}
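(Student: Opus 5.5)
The plan is a direct substitution. The only subtlety is that \(Q\) may be singular, so solutions of \(Q\boldsymbol x=\boldsymbol r\) need not be unique. I will therefore first recall, as in the proof sketch of Proposition~\ref{lem:d3_surgery_formula}, that for any rational solution \(\boldsymbol x\) one has \(\boldsymbol x^TQ\boldsymbol x=\boldsymbol x^T\boldsymbol r\). I will also recall that this value does not depend on the choice of solution: the difference of two solutions lies in \(\ker Q\), and \(\boldsymbol r\) is orthogonal to \(\ker Q\) because \(\boldsymbol r\in\operatorname{im}Q\) and \(Q\) is symmetric. Hence it suffices to exhibit, for one solution \(\boldsymbol x\) of the original system, one solution \(\boldsymbol x'\) of the new system with equal square.

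Given a rational solution \(\boldsymbol x\) of \(Q\boldsymbol x=\boldsymbol r\), I set \(\boldsymbol x':=P^{-1}\boldsymbol x\). This vector is rational because \(P\in GL(n;\mathbb Z)\) has an integral inverse. Then
\[
Q'\boldsymbol x'=P^TQPP^{-1}\boldsymbol x=P^TQ\boldsymbol x=P^T\boldsymbol r=\boldsymbol r',
\]
so \(\boldsymbol x'\) solves the new system. In particular, the new system is solvable whenever the old one is, and the converse follows by symmetry using \(P^{-1}\). I then compute
\[
(\boldsymbol x')^TQ'\boldsymbol x'=\boldsymbol x^T(P^{-1})^TP^TQPP^{-1}\boldsymbol x=\boldsymbol x^TQ\boldsymbol x.
\]
By the independence recalled above, every solution of the new system gives this same value.

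For the statement about \(d_3^{\mathrm{surg}}\), the remaining terms must also be unchanged. The signature satisfies \(\sigma(P^TQP)=\sigma(Q)\) by Sylvester's law of inertia, since \(P\) is invertible over \(\mathbb Q\). The integers \(n\) and \(q(\boldsymbol L)\) are untouched, as they are not part of the linear-algebra data. Hence \(d_3^{\mathrm{surg}}\) is unchanged.

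For the orientation remark, reversing the orientation of \(L_i\) flips the signs of \(\rot(L_i)\) and of the off-diagonal entries \(\lk(L_i,L_j)\), while leaving the diagonal entry \(\tb(L_i)\pm1\) fixed. This is exactly the substitution \(Q\mapsto P^TQP\), \(\boldsymbol r\mapsto P^T\boldsymbol r\) with \(P\) diagonal, having entry \(-1\) in position \(i\) and \(1\) elsewhere, so the lemma applies. There is no real obstacle in this argument; the only point needing care is the well-definedness when \(Q\) is singular, which is handled by the orthogonality argument in the first paragraph.
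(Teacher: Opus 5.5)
Your proposal is correct and follows essentially the same route as the paper: substitute $\boldsymbol x'=P^{-1}\boldsymbol x$, verify $Q'\boldsymbol x'=\boldsymbol r'$ and $(\boldsymbol x')^TQ'\boldsymbol x'=\boldsymbol x^TQ\boldsymbol x$, then use independence of the choice of solution and invariance of the signature under congruence. You also spell out the independence argument and the orientation-reversal case with a diagonal $\pm1$ matrix, which the paper handles in the proof of Proposition~\ref{lem:d3_surgery_formula} and in the remark preceding the lemma, respectively.
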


\begin{proof}
If \(Q\boldsymbol x=\boldsymbol r\), then \(\boldsymbol x'=P^{-1}\boldsymbol x\) satisfies
\[
Q'\boldsymbol x'
=
P^TQPP^{-1}\boldsymbol x
=
P^TQ\boldsymbol x
=
P^T\boldsymbol r
=
\boldsymbol r'.
\]
Moreover, $(\boldsymbol x')^TQ'\boldsymbol x'=\boldsymbol x^TQ\boldsymbol x$.
Since both quantities are independent of the chosen solutions, this proves the first claim. The signature of \(Q\) is unchanged under congruence by \(P\), and the numbers \(n\) and \(q\) are unchanged as well. Hence \(d_3^{\mathrm{surg}}\) is unchanged.
\end{proof}

We now record how the linking matrix and the rotation vector change under the standard contact Kirby moves from Theorem~\ref{thm:main}.

\subsubsection*{Legendrian isotopy}

Legendrian isotopy does not change any of the data entering the formula.

\subsubsection*{Standard contact handle slides}

A standard contact handle slide corresponds, at the level of the surgery trace, to an integral basis change. More precisely, it is represented by an elementary matrix \(P\in GL(n;\mathbb Z)\), with \(1\)'s on the diagonal and one additional off-diagonal entry equal to \(\pm1\), recording the slide. Thus
\[
Q'=P^TQP,
\qquad
\boldsymbol r'=P^T\boldsymbol r.
\]
The number of components and the number of contact \((+1)\)-surgeries are unchanged. Hence the invariance of \(d_3^{\mathrm{surg}}\) under standard contact handle slides follows from Lemma~\ref{lem:d3-basis}.

We now turn to the remaining moves. Write $\boldsymbol L=\boldsymbol L_{\mathrm{loc}}\cup \boldsymbol L_{\mathrm{old}}$, where \(\boldsymbol L_{\mathrm{loc}}\) consists of the components involved in the move, and \(\boldsymbol L_{\mathrm{old}}\) denotes the remaining components. With respect to this decomposition, we write
\[
Q=
\begin{pmatrix}
A & B^T \\
B & C
\end{pmatrix},
\qquad
\boldsymbol r=\binom{\boldsymbol a}{\boldsymbol b}.
\]
Here \(A\) is the linking matrix of the components of $\boldsymbol L_{\mathrm{loc}}$, \(C\) is the linking matrix of the unchanged components $\boldsymbol L_{\mathrm{old}}$, and \(B\) records the linking between these two sets of components. Similarly, \(\boldsymbol a\) and \(\boldsymbol b\) are the corresponding parts of the rotation vector.
After performing the move, the data \((A,\boldsymbol a,B)\) is replaced by \((A',\boldsymbol a',B')\), while \(C\) and \(\boldsymbol b\) remain unchanged. 

For each generating move, we now specify the matrices \(A,A'\), the vectors \(\boldsymbol a,\boldsymbol a'\), and the off-diagonal blocks \(B,B'\). The orientation of the components will be prescribed when discussing the move. Changing these orientations changes the matrices and rotation vectors by an integral change of basis, and therefore does not affect \(d_3^{\mathrm{surg}}\), by Lemma~\ref{lem:d3-basis}.

\subsubsection*{Standard cancelling pair}
We write $\boldsymbol L=\boldsymbol L_{\mathrm{old}}\cup\{L_1^+,L_2^-\}$, where \(L_1\) and \(L_2\) form a standard cancelling pair as in
Figure~\ref{fig:cancelling_intro}. Since the pair is standard, \(L_2\) is obtained from \(L_1\) by translation in the \(\partial_z\)-direction. If we orient both components in the same direction, then we can write $t=\tb(L_1)=\tb(L_2)$ and $r=\rot(L_1)=\rot(L_2)$ and the local data is
\[
A=
\begin{pmatrix}
t+1 & t\\
t & t-1
\end{pmatrix},
\qquad
\boldsymbol a=
\binom{r}{r},
\qquad
B=
\begin{pmatrix}
\boldsymbol\ell & \boldsymbol\ell
\end{pmatrix},
\]
where \(\boldsymbol\ell\) is the column vector whose entries are the linking
numbers of \(L_1\), equivalently of \(L_2\), with the components of
\(\boldsymbol L_{\mathrm{old}}\). Removing the cancelling pair deletes the
local data \((A,\boldsymbol a,B)\).
  
\subsubsection*{Standard lantern move} We choose the ordering and orientation of the standard lantern move as shown in Figure~\ref{fig:d3_lantern_move} and write
\[
\boldsymbol L
=
\boldsymbol L_{\mathrm{old}}
\cup
\{L_1^-,L_2^-,L_3^-\},
\qquad
\boldsymbol L'
=
\boldsymbol L_{\mathrm{old}}
\cup
\{L_1'^-,L_2'^-,L_3'^-,L_4'^-\}.
\]
Then the local matrices and rotation vectors are
\[
A=
\begin{pmatrix}
-5&0&0\\
0&-2&1\\
0&1&-2
\end{pmatrix},
\qquad
\boldsymbol a=
\begin{pmatrix}
-1\\
0\\
0
\end{pmatrix},
\qquad
A'=
\begin{pmatrix}
-3&0&-1&1\\
0&-3&1&-1\\
-1&1&-2&1\\
1&-1&1&-2
\end{pmatrix},
\qquad
\boldsymbol a'=
\begin{pmatrix}
1\\
-1\\
0\\
0
\end{pmatrix}.
\]
It remains to describe the off-diagonal blocks. These record the linking
of the components involved in the move with the components of
\(\boldsymbol L_{\mathrm{old}}\). Let $\boldsymbol w_2^{\mathrm{left}}, \boldsymbol w_2^{\mathrm{right}}, \boldsymbol w_3^{\mathrm{left}}, \boldsymbol w_3^{\mathrm{right}}$
be the column vectors whose entries are the signed crossing contributions
between all the components of \(\boldsymbol L_{\mathrm{old}}\) and, respectively,
the left and right parts of \(L_2\) and \(L_3\), as indicated in
Figure~\ref{fig:d3_lantern_move}. With this notation, we obtain
\begin{align*}
    B=&
\frac12
\begin{pmatrix}
(\boldsymbol w_2^{\mathrm{right}}-\boldsymbol w_2^{\mathrm{left}})
+
(\boldsymbol w_3^{\mathrm{right}}-\boldsymbol w_3^{\mathrm{left}})
&
\boldsymbol w_2^{\mathrm{left}}+\boldsymbol w_2^{\mathrm{right}}
&
\boldsymbol w_3^{\mathrm{left}}+\boldsymbol w_3^{\mathrm{right}}
\end{pmatrix},\\
B'=&
\frac12
\begin{pmatrix}
\boldsymbol w_2^{\mathrm{left}}-\boldsymbol w_3^{\mathrm{right}}
&
\boldsymbol w_2^{\mathrm{right}}-\boldsymbol w_3^{\mathrm{left}}
&
\boldsymbol w_2^{\mathrm{left}}+\boldsymbol w_3^{\mathrm{left}}
&
\boldsymbol w_2^{\mathrm{right}}+\boldsymbol w_3^{\mathrm{right}}
\end{pmatrix}.
\end{align*}

\begin{figure}[htbp]
    \centering
    \begin{overpic}[scale=1]{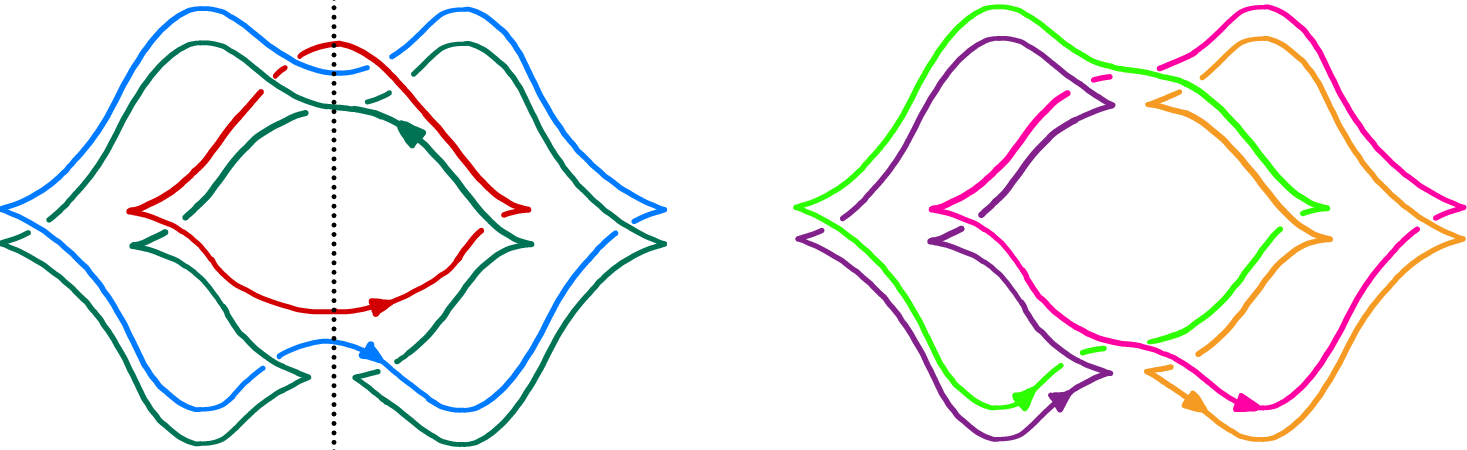}
    \put(5,5){$L_1$}
    \put(4,25){$L_2$}
    \put(16,13){$L_3$}

    \put(93,5){$L'_1$}
    \put(58,5){$L'_2$}
    \put(58,25){$L'_4$}
    \put(93,25){$L'_3$}

    \put(-3,15.5){$-$}
    \put(-3,13.5){$-$}
    \put(6,15.5){$-$}
    \put(60,15.5){$-$}
    \put(60,13.5){$-$}
    \put(91,15.5){$-$}
    \put(91,13.5){$-$}
    \put(48,14.5){$\leftrightarrow$}
    
    \end{overpic}
    \caption{The oriented standard lantern move. The labels and orientations determine the matrices \(A,A'\) and the rotation vectors \(\boldsymbol a,\boldsymbol a'\). The left and right parts of \(L_2\) and \(L_3\) are the parts of these knots on the left and on the right of the dotted line.}\label{fig:d3_lantern_move}
  \end{figure}

\subsubsection*{Standard chain move}
Finally, we consider the standard chain move along the oriented link illustrated in Figure~\ref{fig:d3_chain_move}.
On the left side, the part of the diagram involved in the move consists of twelve Legendrian unknots. With respect to the ordering
\[
(L_1^-,L_4^-,L_7^-,L_{10}^-\mid L_2^-,L_5^-,L_8^-,L_{11}^-\mid L_3^-,L_6^-,L_9^-,L_{12}^-),
\]
and with the orientations shown in the figure, the local linking matrix and rotation vector are
\[
\setlength{\arraycolsep}{3pt}
\renewcommand{\arraystretch}{1.1}
A=
\left(
\begin{array}{cccc|cccc|cccc}
-2&-1&-1&-1&-1&-1&-1&-1&0&0&0&0\\
-1&-2&-1&-1&0&-1&-1&-1&0&0&0&0\\
-1&-1&-2&-1&0&0&-1&-1&0&0&0&0\\
-1&-1&-1&-2&0&0&0&-1&0&0&0&0\\ \hline
-1&0&0&0&-2&-1&-1&-1&-1&-1&-1&-1\\
-1&-1&0&0&-1&-2&-1&-1&0&-1&-1&-1\\
-1&-1&-1&0&-1&-1&-2&-1&0&0&-1&-1\\
-1&-1&-1&-1&-1&-1&-1&-2&0&0&0&-1\\ \hline
0&0&0&0&-1&0&0&0&-2&-1&-1&-1\\
0&0&0&0&-1&-1&0&0&-1&-2&-1&-1\\
0&0&0&0&-1&-1&-1&0&-1&-1&-2&-1\\
0&0&0&0&-1&-1&-1&-1&-1&-1&-1&-2
\end{array}
\right),
\qquad
\boldsymbol a=\boldsymbol 0.
\]
After the chain move, these twelve components are replaced by two components. The corresponding local matrix and rotation vector are
\[
A'=
\begin{pmatrix}
-3 & -2\\
-2 & -3
\end{pmatrix},
\qquad
\boldsymbol a'=
\begin{pmatrix}
-1\\
1
\end{pmatrix}.
\]
It remains to describe the off-diagonal blocks, which record the linking with the components of \(\boldsymbol L_{\mathrm{old}}\). Let
$\boldsymbol\ell_1,\boldsymbol\ell_2,\boldsymbol\ell_3$ be the column vectors recording these linkings for the three congruence classes of components, as indicated in Figure~\ref{fig:d3_chain_move}. Thus the linking vector of \(L_i\) depends only on \(i\) modulo \(3\), and
\[
B=
\left(
\begin{array}{cccc|cccc|cccc}
\boldsymbol\ell_1 & \boldsymbol\ell_1 & \boldsymbol\ell_1 & \boldsymbol\ell_1 &
\boldsymbol\ell_2 & \boldsymbol\ell_2 & \boldsymbol\ell_2 & \boldsymbol\ell_2 &
\boldsymbol\ell_3 & \boldsymbol\ell_3 & \boldsymbol\ell_3 & \boldsymbol\ell_3
\end{array}
\right).
\]
After the move, the two new components have linking vectors
\[
B'=
\left(
\begin{array}{cc}
\boldsymbol\ell_1+\boldsymbol\ell_3 &
\boldsymbol\ell_1+\boldsymbol\ell_3
\end{array}
\right).
\]

\begin{figure}[htbp]
    \centering
    \begin{overpic}[scale=1]{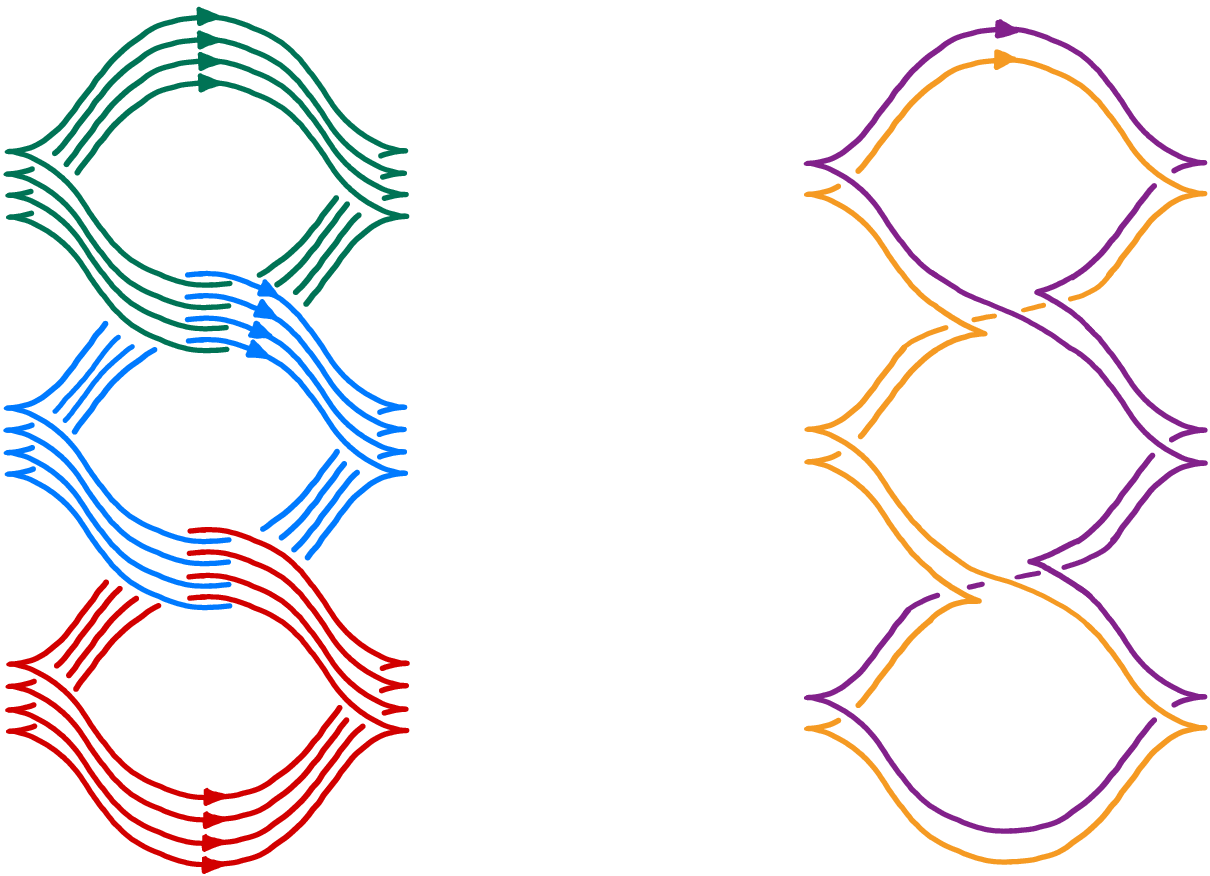}
    \put(34.5,53.7){$\rightarrow L_1$}
    \put(34.5,32.3){$\rightarrow L_2$}
    \put(34.5,11){$\rightarrow L_3$}
    \put(75,32){$L'_1$}
    \put(94,42){$L'_2$}
    
    \put(-3,59){$-$}
    \put(-3,57.1){$-$}
    \put(-3,55.5){$-$}
    \put(-3,53.5){$-$}
    \put(-3,37.8){$-$}
    \put(-3,35.8){$-$}
    \put(-3,34.2){$-$}
    \put(-3,32.2){$-$}
    \put(-3,16.5){$-$}
    \put(-3,14.6){$-$}
    \put(-3,13){$-$}
    \put(-3,11){$-$}
    \put(48,35){$\leftrightarrow$}
    \put(63,58){$-$}
    \put(63,55.5){$-$}
    \end{overpic}
    \caption{The oriented standard chain move. Only the components \(L_1\), \(L_2\), and \(L_3\) are labelled explicitly. The remaining components are labelled by moving upwards through the diagram: \(L_4\) is the second green unknot, \(L_5\) is the second blue unknot, and so on, ending with \(L_{12}\), the highest red unknot.}\label{fig:d3_chain_move}
\end{figure}

In the following lemma, we use Schur complements to isolate the effect of these moves on the \(d_3\)-formula.

\begin{lemma}\label{lem:Schur_local}
Let
\[
Q=\begin{pmatrix} A & B^T \\ B & C \end{pmatrix},
\qquad
\boldsymbol r=\binom{\boldsymbol a}{\boldsymbol b},
\qquad
Q'=\begin{pmatrix} A' & (B')^T \\ B' & C \end{pmatrix},
\qquad
\boldsymbol r'=\binom{\boldsymbol a'}{\boldsymbol b},
\]
where \(A\) and \(A'\) are symmetric and invertible\footnote{Here, we use the convention that the empty matrix is invertible and has signature zero. With this convention, Lemma~\ref{lem:Schur_local} also applies when one of the two parts involved in the move is empty.} such that
\[
B A^{-1} B^T = B'(A')^{-1}(B')^T
\qquad\text{and}\qquad
B A^{-1}\boldsymbol a = B'(A')^{-1}\boldsymbol a'.
\]
Then \(Q\boldsymbol x=\boldsymbol r\) admits a rational solution if and only if \(Q'\boldsymbol x'=\boldsymbol r'\) does. In that case, we have
\begin{align*}
    (c')^2-c^2 &= (\boldsymbol a')^T(A')^{-1}\boldsymbol a' - \boldsymbol a^TA^{-1}\boldsymbol a, \textrm{ and }\\
\sigma(Q')-\sigma(Q) &=\sigma(A')-\sigma(A).
\end{align*}
In particular, it follows that if
\[
(\boldsymbol a')^T(A')^{-1}\boldsymbol a'
-
\boldsymbol a^TA^{-1}\boldsymbol a
=
3\bigl(\sigma(A')-\sigma(A)\bigr)
+
2\bigl(\operatorname{dim} A'-\operatorname{dim} A\bigr)
-
4(q'-q),
\]
then the corresponding values of the diagrammatic \(d_3\)-formula agree, i.e.\
\[
d_3^{\mathrm{surg}}(Q',\boldsymbol r',q')
=
d_3^{\mathrm{surg}}(Q,\boldsymbol r,q).
\]
Here \(d_3^{\mathrm{surg}}(Q,\boldsymbol r,q)\) denotes the right-hand side of the surgery formula written in terms of the matrix \(Q\), the rotation vector \(\boldsymbol r\), and the number \(q\) of contact \((+1)\)-surgeries.
\end{lemma}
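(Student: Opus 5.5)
We reduce everything to the Schur complement of the invertible block $A$ (resp.\ $A'$). Define
\[
S:=C-BA^{-1}B^T=C-B'(A')^{-1}(B')^T,\qquad \boldsymbol s:=\boldsymbol b-BA^{-1}\boldsymbol a=\boldsymbol b-B'(A')^{-1}\boldsymbol a',
\]
where both equalities follow from the two hypotheses. The block factorization
\[
Q=\begin{pmatrix} I & 0\\ BA^{-1} & I\end{pmatrix}\begin{pmatrix} A & 0\\ 0 & S\end{pmatrix}\begin{pmatrix} I & A^{-1}B^T\\ 0 & I\end{pmatrix}
\]
exhibits $Q$ as congruent over $\mathbb Q$ to $A\oplus S$. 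Sylvester's law of inertia then gives $\sigma(Q)=\sigma(A)+\sigma(S)$. In the same way $\sigma(Q')=\sigma(A')+\sigma(S)$, and subtracting yields the signature identity.

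For solvability, we write $\boldsymbol x=\binom{\boldsymbol u}{\boldsymbol v}$. The system $Q\boldsymbol x=\boldsymbol r$ reads $A\boldsymbol u+B^T\boldsymbol v=\boldsymbol a$ and $B\boldsymbol u+C\boldsymbol v=\boldsymbol b$. Since $A$ is invertible, the first equation is equivalent to $\boldsymbol u=A^{-1}(\boldsymbol a-B^T\boldsymbol v)$. Substituting into the second gives $S\boldsymbol v=\boldsymbol s$. So rational solutions of $Q\boldsymbol x=\boldsymbol r$ correspond bijectively to rational solutions $\boldsymbol v$ of $S\boldsymbol v=\boldsymbol s$, and likewise for $Q'\boldsymbol x'=\boldsymbol r'$ with the same $S$ and $\boldsymbol s$. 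Hence one system is solvable if and only if the other is.

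Next we compute $c^2$ using the identity $c^2=\boldsymbol x^T\boldsymbol r$ from the proof sketch of Proposition~\ref{lem:d3_surgery_formula}, which holds for any solution:
\[
c^2=\boldsymbol u^T\boldsymbol a+\boldsymbol v^T\boldsymbol b=\boldsymbol a^TA^{-1}\boldsymbol a-\boldsymbol v^TBA^{-1}\boldsymbol a+\boldsymbol v^T\boldsymbol b=\boldsymbol a^TA^{-1}\boldsymbol a+\boldsymbol v^T\boldsymbol s,
\]
where we used the symmetry of $A^{-1}$. Take the same $\boldsymbol v$ for the primed system; this is allowed because $c^2$ is independent of the chosen solution. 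Then $(c')^2=(\boldsymbol a')^T(A')^{-1}\boldsymbol a'+\boldsymbol v^T\boldsymbol s$, and subtracting gives the formula for $(c')^2-c^2$.

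For the final claim, substitute into the definition $d_3^{\mathrm{surg}}=\tfrac14(c^2-3\sigma-2n)+q$, with $n=\dim A+\dim C$ and $n'=\dim A'+\dim C$. The difference is
\[
\tfrac14\Bigl[\Delta c^2-3\bigl(\sigma(A')-\sigma(A)\bigr)-2\bigl(\dim A'-\dim A\bigr)\Bigr]+(q'-q),
\]
which vanishes exactly under the displayed hypothesis.

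When one local block is empty, the empty-matrix convention makes $S=C$ and $\boldsymbol s=\boldsymbol b$ with zero local contributions, so the argument goes through unchanged. No step is a real obstacle. The points needing care are that the congruence to $A\oplus S$ is over $\mathbb Q$ rather than $\mathbb Z$, which suffices for signatures, and that the parametrization of solutions passes through the common pair $(S,\boldsymbol s)$.
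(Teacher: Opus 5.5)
Your proof is correct and takes essentially the same route as the paper's: you eliminate the first block so that both systems reduce to the common Schur-complement equation $S\boldsymbol v=\boldsymbol s$, compute $c^2=\boldsymbol x^T\boldsymbol r=\boldsymbol a^TA^{-1}\boldsymbol a+\boldsymbol v^T\boldsymbol s$, and read off the signature identity from the congruence $Q\sim A\oplus S$. The only difference is cosmetic: you write out the block factorization that realizes this congruence, which the paper simply asserts.
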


\begin{proof}
We write \(\boldsymbol x=\binom{\boldsymbol z}{\boldsymbol y}\). Solving the first block equation and substituting into the second shows that \(Q\boldsymbol x=\boldsymbol r\) has a solution $\boldsymbol{x}$ if and only if the equation
\[
(C-BA^{-1}B^T)\boldsymbol y
=
\boldsymbol b-BA^{-1}\boldsymbol a
\]
has a solution $\boldsymbol{y}$; and in that case $\boldsymbol{z}$ is given by $\boldsymbol z=A^{-1}(\boldsymbol a-B^T\boldsymbol y)$.
The same computation shows that \(Q'\boldsymbol x'=\boldsymbol r'\) has a solution if and only if 
\[
(C-B'(A')^{-1}(B')^T)\boldsymbol y
=
\boldsymbol b-B'(A')^{-1}\boldsymbol a',
\]
has a solution. By assumption, these two equations agree, so the solvability of the two systems agrees.
If this is the case, we compute
\[
\boldsymbol r^T\boldsymbol x
=
\boldsymbol a^TA^{-1}\boldsymbol a
+
(\boldsymbol b-BA^{-1}\boldsymbol a)^T\boldsymbol y,
\]
and the same formula holds for \((Q',\boldsymbol r')\). Hence
\[
(\boldsymbol r')^T\boldsymbol x' - \boldsymbol r^T\boldsymbol x
=
(\boldsymbol a')^T(A')^{-1}\boldsymbol a'
-
\boldsymbol a^TA^{-1}\boldsymbol a,
\]
which proves the claimed formula for \((c')^2-c^2\).
Finally, since \(A\) is invertible, \(Q\) is congruent over \(\mathbb R\) to
\[
A\oplus(C-BA^{-1}B^T),
\]
and similarly \(Q'\) is congruent over \(\mathbb R\) to
\[
A'\oplus(C-B'(A')^{-1}(B')^T).
\]
The Schur complements agree by assumption, and therefore
\[
\sigma(Q')-\sigma(Q)=\sigma(A')-\sigma(A).
\]
The last claim follows by substituting these identities into the definition of \(d_3^{\mathrm{surg}}\).
\end{proof}

We now use the lemma to prove that \(d_3^{\mathrm{surg}}\) is independent of the contact surgery link representing the contact manifold.

\begin{proof}[Proof of Proposition~\ref{prop:d3-invariance}]
By Theorem~\ref{thm:main}, it suffices to check invariance under the standard contact Kirby moves.
Legendrian isotopy does not change any of the data entering the formula, and hence does not change \(d_3^{\mathrm{surg}}\).
For a standard contact handle slide, the data changes by an integral change of basis, i.e.\
\[
Q'=P^TQP,
\qquad
\boldsymbol r'=P^T\boldsymbol r.
\]
By Lemma~\ref{lem:d3-basis}, the term \(c^2\) is unchanged. The signature, the number of components, and the number of contact \((+1)\)-surgeries are also unchanged. Hence \(d_3^{\mathrm{surg}}\) is unchanged.

It remains to consider insertions or removals of standard cancelling pairs, the standard lantern move, and the standard chain move. For these moves, we use the local data recorded above. In each case, a direct computation gives
\begin{align*}
    &BA^{-1}B^T=B'(A')^{-1}(B')^T,
\qquad
BA^{-1}\boldsymbol a=B'(A')^{-1}\boldsymbol a',\\
&(\boldsymbol a')^T(A')^{-1}\boldsymbol a'
-
\boldsymbol a^TA^{-1}\boldsymbol a
=
3\bigl(\sigma(A')-\sigma(A)\bigr)
+
2\bigl(\operatorname{dim} A'-\operatorname{dim} A\bigr)
-
4(q'-q),
\end{align*}
where \(q'-q\) denotes the change in the number of contact \((+1)\)-surgery components. Therefore Lemma~\ref{lem:Schur_local} applies and gives $d_3^{\mathrm{surg}}(Q',\boldsymbol r',q')=d_3^{\mathrm{surg}}(Q,\boldsymbol r,q)$.
\end{proof}

\begin{remark}
    On rational homology spheres, the homotopy class of an oriented tangential \(2\)-plane field is determined by its \(d_3\)-invariant together with Gompf's \(\Gamma\)-invariant, whose mod \(2\) reduction agrees with the first Chern class. Moreover, there exist explicit formulas computing both the \(\Gamma\)-invariant and the first Chern class directly from contact surgery diagrams~\cite{Gompf,Ding_Geiges_Stipsicz,Etnyre_Kegel_Onaran}. It is also possible to verify the invariance of these quantities under contact Kirby moves directly.
\end{remark}

\subsection{A modulo \(8\) invariant of contact structures} 
\label{sec:mod8inv}

The computations used to prove the invariance of \(d_3^{\mathrm{surg}}\) contain a little more information. Indeed, although the individual quantities \(\sigma(Q_{\boldsymbol L})\) and \(c_{\boldsymbol L}^2\) depend on the chosen contact surgery presentation, their difference is well defined modulo \(8\).

Still using the notation from Section~\ref{sec:diagrammatic_d3}, let \(\boldsymbol L\) be a contact surgery link representing \((M,\xi)\) with torsion first Chern class \(c_1(\xi)\). Define
\[
\delta(\boldsymbol L)=c_{\boldsymbol L}^2-\sigma(\boldsymbol L),
\]
where \(\sigma(\boldsymbol L)=\sigma(Q_{\boldsymbol L})\) and \(c_{\boldsymbol L}^2=\boldsymbol x^TQ_{\boldsymbol L}\boldsymbol x\) for any rational solution \(\boldsymbol x\) of \(Q_{\boldsymbol L}\boldsymbol x=\boldsymbol r_{\boldsymbol L}\).

\begin{proposition}\label{prop:delta_mod8}
Let \(\boldsymbol L\) and \(\boldsymbol L'\) be two contact surgery links representing the same contact structure \((M,\xi)\) with \(c_1(\xi)\) torsion. Then
\[
\delta(\boldsymbol L)-\delta(\boldsymbol L')\equiv 0 \pmod 8.
\]
Thus \(\delta(\xi):=\delta(\boldsymbol L)\in \mathbb Q/8\mathbb Z\) is an invariant of the contact structure \(\xi\).
\end{proposition}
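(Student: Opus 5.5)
By Theorem~\ref{thm:main}, any two contact surgery links representing \((M,\xi)\) are related by a finite sequence of standard moves. It therefore suffices to show that \(\delta\) changes by a multiple of \(8\) under each single move: Legendrian isotopy, a standard contact handle slide, insertion or removal of a standard cancelling pair, the standard lantern move, and the standard chain move. I will reuse the bookkeeping from the proof of Proposition~\ref{prop:d3-invariance} rather than redo any geometry.

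\emph{Legendrian isotopy and handle slides.} Legendrian isotopy leaves \(Q_{\boldsymbol L}\) and \(\boldsymbol r_{\boldsymbol L}\) untouched. A handle slide acts by \(Q'=P^TQP\) and \(\boldsymbol r'=P^T\boldsymbol r\) with \(P\in GL(n;\mathbb Z)\). By Lemma~\ref{lem:d3-basis}, \(c^2\) is unchanged, and the signature is unchanged under congruence. So \(\delta\) is exactly invariant under these moves.

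\emph{Local moves.} For the cancelling pair, lantern and chain moves, the proof of Proposition~\ref{prop:d3-invariance} verified the hypotheses of Lemma~\ref{lem:Schur_local} together with the identity
\[
(c')^2-c^2=3\bigl(\sigma(A')-\sigma(A)\bigr)+2\bigl(\dim A'-\dim A\bigr)-4(q'-q),
\]
and the lemma also gives \(\sigma(Q')-\sigma(Q)=\sigma(A')-\sigma(A)\). Write \(\Delta\) for the change under the move. Subtracting the two identities yields
\[
\delta(\boldsymbol L')-\delta(\boldsymbol L)=2\bigl(\Delta\sigma(A)+\Delta\dim A\bigr)-4\,\Delta q .
\]
Hence it suffices to check the purely local congruence
\[
\Delta\sigma(A)+\Delta\dim A-2\,\Delta q\equiv 0 \pmod 4
\]
for each of the three moves.

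\emph{Checking each move.} I check the congruence in turn.
\begin{itemize}
\item For the cancelling pair, \(\det A=(t^2-1)-t^2=-1<0\). So \(A\) is indefinite of signature \(0\), and \(\Delta\dim A=\pm2\), \(\Delta q=\pm1\) with the same sign. The sum is \(0\).
\item For the lantern move, \(A\) should be negative definite (signature \(-3\)) and \(A'\) negative definite (signature \(-4\)). Then \(\Delta\sigma=-1\), \(\Delta\dim=1\), \(\Delta q=0\), and the sum is \(0\).
\item For the chain move, \(A'\) has \(\det=5>0\) and negative trace, so \(\sigma(A')=-2\). If \(A\) is negative definite then \(\sigma(A)=-12\). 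This gives \(\Delta\sigma=10\), \(\Delta\dim=-10\), \(\Delta q=0\), and the sum is \(0\).
\end{itemize}

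\emph{Main obstacle.} The remaining work is to justify the negative-definiteness claims, especially for the \(12\times12\) matrix \(A\) of the chain move. I would first try Sylvester's criterion, or reduce \(A\) by the block structure visible in its \(3\times3\) arrangement of \(4\times4\) blocks. Alternatively, I can sidestep the computation: in each case the local identity already proved for \(d_3\) can be solved for \(\Delta\sigma(A)\), since \(\boldsymbol a^TA^{-1}\boldsymbol a\) is explicit. For the chain move, \(\boldsymbol a=0\), and \(\boldsymbol a'^T(A')^{-1}\boldsymbol a'\) determines \(3\Delta\sigma-20=-2\cdot 10+3\Delta\sigma\), which is consistent only with \(\Delta\sigma=10\) once \(\sigma(A')=-2\) is known. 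I would do the same for the lantern move. This fixes \(\Delta\sigma\) in each case and completes the mod \(8\) check.
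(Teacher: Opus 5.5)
Your overall route is the paper's: reduce to the standard moves via Theorem~\ref{thm:main}, handle isotopies and handle slides by integral congruence, and treat the local moves with the data behind Lemma~\ref{lem:Schur_local}. Your reduction \(\delta(\boldsymbol L')-\delta(\boldsymbol L)=2(\Delta\sigma+\Delta\dim A)-4\Delta q\) is correct, and so are your cancelling-pair and lantern checks. The genuine error is in the chain move. The \(12\times 12\) matrix \(A\) is \emph{not} negative definite, so the step you single out as the main obstacle cannot be carried out. In fact \(\sigma(A)=-8\), with two positive and ten negative eigenvalues, so \(\Delta\sigma=\sigma(A')-\sigma(A)=-2-(-8)=6\), not \(10\). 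Your own sidestep detects this once it is actually executed. Since \((A')^{-1}\boldsymbol a'=(1,-1)^T\), we get \((\boldsymbol a')^T(A')^{-1}\boldsymbol a'=-2\). With \(\boldsymbol a=\boldsymbol 0\), \(\Delta\dim A=-10\) and \(\Delta q=0\), the \(d_3\) identity reads \(-2=3\Delta\sigma-20\), which forces \(\Delta\sigma=6\). The equation you wrote, \(3\Delta\sigma-20=-2\cdot 10+3\Delta\sigma\), is a tautology that never uses the value \(-2\). You can also see \(\sigma(A)=-8\) directly. Change basis by \(U^{-1}\) within each group of four components, where \(U\) is the upper triangular all-ones matrix. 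Then all three diagonal blocks of \(-A\) become the positive definite \(A_4\) Cartan matrix. After eliminating the two outer blocks, the \(4\times 4\) Schur complement on the middle block has signature \(0\), so \(\sigma(-A)=4+4+0=8\).

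Consequently, your claim that \(\delta\) is unchanged by the chain move is false. It changes by \(\Delta c^2-\Delta\sigma=-2-6=-8\), which is exactly the paper's computation \((-2)-6=-8\); the same \(\Delta\sigma=6\) appears in the change vector \(C\) in the proof of Theorem~\ref{thm:indep}. The proposition still follows from your reduction, because with the correct value \(\Delta\sigma+\Delta\dim A-2\Delta q=6-10-0=-4\equiv 0\pmod 4\). But the verification must use \(\Delta\sigma=6\). The cleanest repair is to make your sidestep the main argument. Eliminating \(\Delta\sigma\) with the \(d_3\) identity gives
\[
\delta(\boldsymbol L')-\delta(\boldsymbol L)=\tfrac13\bigl(2\Delta c^2+2\Delta\dim A-4\Delta q\bigr).
\]
This involves only explicitly computable quantities and yields \(0\), \(0\), \(-8\) for the cancelling pair, lantern and chain moves, with no definiteness question at all. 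Equivalently, since all local matrices are nondegenerate, \(\sigma+\dim=2n_+\), so your congruence is just \(\Delta n_+\equiv\Delta q\pmod 2\). For the chain move this reads \(\Delta n_+=0-2=-2\equiv 0=\Delta q\).
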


\begin{proof}
By Theorem~\ref{thm:main}, any two contact surgery presentations of the same contact structure are related by the standard contact Kirby moves. Planar isotopies and Legendrian Reidemeister moves do not change any of the data entering the definition of \(\delta\). Standard contact handle slides preserve both \(\sigma\) and \(c^2\), since they act by integral changes of basis.

It remains to consider insertions or removals of standard cancelling pairs, the standard lantern move, and the standard chain move. Using the local matrices and rotation vectors from above for insertion of a standard cancelling pair, the standard lantern move, and the standard chain move, with the chosen
directions of the moves, we can compute the changes of \(\delta=c^2-\sigma\) as 
\begin{itemize}
    \item $0-0=0$, for the cancelling pair,
    \item $(-1)-(-1)=0$, for the standard lantern move, and
    \item $(-2)-6=-8$, for the standard chain move.   
\end{itemize}
Reversing a move changes the sign of the corresponding change, so the class of \(\delta\) in \(\mathbb Q/8\mathbb Z\) is unchanged in all cases. This proves the claim.
\end{proof}

We note that the invariance of \(\delta\in \mathbb Q/8\mathbb Z\) also has a four-dimensional interpretation. This is not needed for the proof of Proposition~\ref{prop:delta_mod8}, but it explains why a modulo \(8\) congruence is natural.

An oriented contact structure \(\xi\) on \(M\) determines a \(\operatorname{Spin}^c\) structure \(\mathfrak s_\xi\), with \(c_1(\mathfrak s_\xi)=c_1(\xi)\). On the other hand, the contact surgery diagram \(\boldsymbol L\) determines the class \(c_{\boldsymbol L}\in H^2(X_{\boldsymbol L};\mathbb Z)\) represented by the rotation vector. This class is characteristic. Indeed, $\rot(L_i)\equiv \tb(L_i)\pm1 \pmod 2$, and \(\tb(L_i)\pm1\) is precisely the smooth framing of the \(2\)-handle attached along \(L_i\), hence the self-intersection of \([\widehat\Sigma_i]\).

It follows that \(c_{\boldsymbol L}\) is the first Chern class of some \(\operatorname{Spin}^c\) structure on \(X_{\boldsymbol L}\). Since \(X_{\boldsymbol L}\) is obtained from \(D^4\) by attaching only \(2\)-handles, the group \(H^2(X_{\boldsymbol L};\mathbb Z)\) has no \(2\)-torsion. Hence this \(\operatorname{Spin}^c\) structure is uniquely determined by its first Chern class. We denote it by \(\mathfrak t_{\boldsymbol L}\), so that \(c_1(\mathfrak t_{\boldsymbol L})=c_{\boldsymbol L}\). Its restriction to \(M\) is the \(\operatorname{Spin}^c\) structure determined by the contact structure \(\xi\), namely \(\mathfrak s_\xi\).

Thus Proposition~\ref{prop:delta_mod8} can also be viewed as a special case of the following standard fact.

\begin{proposition}\label{lem:spinc_mod8}
Let \((M,\mathfrak s)\) be a closed oriented \(\operatorname{Spin}^c\) \(3\)-manifold with \(c_1(\mathfrak s)\) torsion. Then the value
\[
\sigma(X)-c_1(\mathfrak t)^2 \pmod 8
\]
is independent of the choice of compact, oriented \(\operatorname{Spin}^c\) \(4\)-manifold \((X,\mathfrak t)\) such that \(\partial X=M\) and \(\mathfrak t|_M=\mathfrak s\). Here \(c_1(\mathfrak t)^2\) denotes the rational square defined by choosing a rational relative lift of \(c_1(\mathfrak t)\), which exists because \(c_1(\mathfrak s)\) is torsion.
\end{proposition}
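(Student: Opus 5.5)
The plan is the standard gluing argument: take two fillings, glue them along $M$ into a closed $\operatorname{Spin}^c$ $4$-manifold, and apply the closed-manifold congruence $c_1^2\equiv\sigma \pmod 8$. Concretely, let $(X_1,\mathfrak t_1)$ and $(X_2,\mathfrak t_2)$ be two compact oriented $\operatorname{Spin}^c$ $4$-manifolds with $\partial X_i=M$ and $\mathfrak t_i|_M=\mathfrak s$. Form the closed oriented $4$-manifold
\[
Z=X_1\cup_M(-X_2).
\]
Since the two $\operatorname{Spin}^c$ structures agree on $M$, they glue to a $\operatorname{Spin}^c$ structure $\mathfrak u$ on $Z$. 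The gluing may not be unique (the choices differ by the image of $H^1(M;\Z)$ under the coboundary), but any choice will do. Then $c_1(\mathfrak u)$ is a characteristic class of $Z$, so Wu's formula on the closed manifold gives $c_1(\mathfrak u)^2\equiv\sigma(Z)\pmod 8$.

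Next we need additivity of the two terms. For the signature, Novikov additivity (gluing along an entire closed boundary component) gives
\[
\sigma(Z)=\sigma(X_1)-\sigma(X_2).
\]
For the square of the Chern class, the claim to establish is
\[
c_1(\mathfrak u)^2=c_1(\mathfrak t_1)^2-c_1(\mathfrak t_2)^2
\]
in $\mathbb Q$, where the right-hand side uses the rational relative lifts. To prove it, use the Mayer--Vietoris sequence with rational coefficients. Since $c_1(\mathfrak s)$ is torsion, it vanishes in $H^2(M;\mathbb Q)$. Choose rational relative lifts $\widetilde c_i\in H^2(X_i,\partial X_i;\mathbb Q)$. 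These glue to a class $\widetilde c\in H^2(Z;\mathbb Q)$ restricting to $c_1(\mathfrak t_i)\otimes 1$ on each piece.

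Then $c_1(\mathfrak u)\otimes 1-\widetilde c$ restricts to zero on both $X_1$ and $X_2$. Hence it lies in the image of $\delta\colon H^1(M;\mathbb Q)\to H^2(Z;\mathbb Q)$, so it is supported near $M$. Two facts then finish this step:
\begin{itemize}
\item a class in the image of $\delta$ has zero square and zero pairing with any class restricting to zero on $M$;
\item $\widetilde c$ comes from $H^2(Z,M;\mathbb Q)$.
\end{itemize}
Together they give $c_1(\mathfrak u)^2=\widetilde c^{\,2}=\langle c_1(\mathfrak t_1)\smile\widetilde c_1,[X_1,\partial X_1]\rangle-\langle c_1(\mathfrak t_2)\smile\widetilde c_2,[X_2,\partial X_2]\rangle$, with the sign from the reversed orientation of $X_2$.

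Combining the three displays, $\bigl(\sigma(X_1)-c_1(\mathfrak t_1)^2\bigr)-\bigl(\sigma(X_2)-c_1(\mathfrak t_2)^2\bigr)=\sigma(Z)-c_1(\mathfrak u)^2\equiv 0\pmod 8$, which is the claim. As a side remark, the same argument shows the rational square is independent of the chosen lift: two lifts differ by the image of $H^1(M;\mathbb Q)$, and such a class pairs trivially with $c_1(\mathfrak t)$ because that class restricts to a torsion class on $M$.

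I expect the main obstacle to be the additivity of $c_1^2$. The careful point is the cup-product vanishing for classes coming from $\delta(H^1(M))$: one must show that $\delta(\alpha)\smile\beta$ evaluates to $\langle\alpha\smile\beta|_M,[M]\rangle$ up to sign. That pairing vanishes rationally when $\beta|_M$ is torsion, and $\delta(\alpha)^2=0$ because $\delta(\alpha)|_M=0$. The remaining ingredients, the closed congruence $c_1^2\equiv\sigma\pmod 8$ and Novikov additivity, are standard results I would quote.
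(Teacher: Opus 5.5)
Your proposal is correct and follows the same route as the paper: glue the two fillings into the closed manifold $X_1\cup_M(-X_2)$, use additivity of $\sigma$ and of the rational square $c_1^2$ under this gluing, and apply the congruence $k^2\equiv\sigma\pmod 8$ to the characteristic class $c_1(\mathfrak u)$ of the unimodular intersection form. The paper only asserts the additivity of $c_1^2$, whereas your Mayer--Vietoris argument supplies that step correctly: the correction term lies in $\delta(H^1(M;\mathbb Q))$ and pairs trivially with $\widetilde c$ and with itself, since both restrict to zero on $M$.
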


\begin{proof}
Let \((X,\mathfrak t)\) and \((X',\mathfrak t')\) be two such \(\operatorname{Spin}^c\) fillings of \((M,\mathfrak s)\). Form the closed, oriented \(4\)-manifold \(W=X\cup_M(-X')\). Since \(\mathfrak t|_M=\mathfrak s=\mathfrak t'|_M\), the two \(\operatorname{Spin}^c\) structures glue to a \(\operatorname{Spin}^c\) structure \(\mathfrak t_W\) on \(W\).
By additivity of the signature and of the square of the first Chern class under gluing, we have
\[
\sigma(W)-c_1(\mathfrak t_W)^2
=
\bigl(\sigma(X)-c_1(\mathfrak t)^2\bigr)
-
\bigl(\sigma(X')-c_1(\mathfrak t')^2\bigr),
\]
where the minus sign comes from the reversed orientation on \(-X'\).

For any \(\operatorname{Spin}^c\) structure on a closed oriented \(4\)-manifold, the first Chern class is characteristic~\cite[Proposition~2.4.16]{Gompf_Stipsicz}. Hence \(c_1(\mathfrak t_W)\) is a characteristic element for the unimodular intersection form of \(W\). By~\cite[Lemma~1.2.20]{Gompf_Stipsicz}, every characteristic vector \(k\) of a unimodular intersection form satisfies
\[
k^2\equiv \sigma \pmod 8.
\]
Applying this to \(k=c_1(\mathfrak t_W)\), we obtain
\[
c_1(\mathfrak t_W)^2\equiv \sigma(W)\pmod 8.
\]
Therefore \(\sigma(W)-c_1(\mathfrak t_W)^2\equiv 0\pmod 8\). Using the additivity formula above, we conclude that
\[
\sigma(X)-c_1(\mathfrak t)^2
\equiv
\sigma(X')-c_1(\mathfrak t')^2
\pmod 8.
\]
This proves the desired independence.
\end{proof}

As a direct corollary, we obtain another homotopical invariant of contact structures. 

\begin{proposition} Let $(M,\xi)$ be a contact manifold with torsion first Chern class. For an almost complex filling \((X,J)\) of \((M,\xi)\), set $\varepsilon_X(M,\xi):=-b_2^+(X)$.
\begin{enumerate}
    \item The parity of $\varepsilon(M,\xi)$ is independent of the choice of $(X,J)$ and thus an invariant of $(M,\xi)$.
    \item For any almost complex filling of $(M,\xi)$, we have
    \begin{equation*}
        d_3(M,\xi)=\frac{1}{4}\delta(M,\xi)+\varepsilon(M,\xi).
    \end{equation*}
    \item If $M$ is an integral homology sphere, then $\delta(M,\xi)$ is an integer divisible by four for any almost complex filling of $(M,\xi)$.
\end{enumerate}
\end{proposition}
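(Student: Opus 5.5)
The plan is to work directly with Gompf's definition of $d_3$ in our normalization, together with the mod $8$ invariance of $\delta=c_1^2-\sigma$ from Proposition~\ref{lem:spinc_mod8}. Let $(X,J)$ be any almost complex filling of $(M,\xi)$. The first step is to rewrite Gompf's formula. Since $X$ is a connected compact $4$-manifold with nonempty boundary, $b_4=0$ and $\chi(X)=1-b_1+b_2-b_3$. Also $\sigma=b_2^+-b_2^-$ and $b_2=b_2^++b_2^-+b_2^0$. Substituting gives
\[
d_3(\xi)=\tfrac14\bigl(c_1(J)^2-\sigma(X)\bigr)-\tfrac12\bigl(\sigma(X)+\chi(X)\bigr)+\tfrac12
=\tfrac14\delta_X-\tfrac12\bigl(2b_2^+ +b_2^0+1-b_1-b_3\bigr)+\tfrac12 .
\]
Here $\delta_X:=c_1(J)^2-\sigma(X)$.

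The second step is to simplify the correction term. We have $b_2^0=\dim\ker\bigl(H_2(X)\to H_2(X,M)\bigr)$ rationally, because it is the image of $H_2(M)$. The long exact sequence of the pair $(X,M)$, together with Lefschetz duality $b_3(X)=b_1(X,M)$ and $b_1(X)=b_3(X,M)$, should show that $b_2^0-b_1-b_3+1$ equals $b_1(M)$ up to sign conventions. I will check this via an Euler characteristic count on the exact sequence. For the intended applications, where $c_1$ is torsion and $M$ has to carry the formula in the stated form, I expect the intended statement to require that this term vanish, or else to absorb it. I would first verify the bookkeeping on the surgery trace $X_{\boldsymbol L}$, where $b_1=b_3=0$ and $b_2^0=b_1(M)$. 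Comparing with the diagrammatic formula, which reads $\tfrac14(c^2-\sigma)-\tfrac12(\sigma+n)+q$, pins down the normalization. The goal is then $d_3=\tfrac14\delta+\varepsilon$ with $\varepsilon=-b_2^+$, after correcting for $b_2^0$ and the $q$ blow-ups as needed. In the blow-up description $X\#_q\overline{\mathbb{CP}}^2$ used in Proposition~\ref{lem:d3_surgery_formula}, the signs need care.

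The third step gives the three claims. Claim (2) is the identity just derived, with $\delta_X$ independent of $X$ modulo $8$ by Proposition~\ref{lem:spinc_mod8}; here $\delta$ is understood with the representative coming from $X$. Claim (1) follows because $d_3$ does not depend on $X$, while $\tfrac14\delta_X$ is well defined modulo $2$. Hence $\varepsilon_X=d_3-\tfrac14\delta_X$ is well defined modulo $2$, which is exactly the parity statement. For claim (3), if $M$ is an integral homology sphere then $d_3\in\Z$ by our normalization, and $\varepsilon_X\in\Z$, so $\tfrac14\delta_X\in\Z$, that is, $4\mid\delta_X$. Integrality of $\delta_X$ itself holds because the intersection form is unimodular on a homology sphere boundary.

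The main obstacle is the Betti-number bookkeeping in the second step. Reconciling $\chi(X)$, the $+\tfrac12$ shift and $b_2^0$ so that exactly $-b_2^+$ remains needs a careful exact-sequence argument. If a residual term such as $b_1(M)$ survives, I would restrict to the case in which it is constant, or fold it into $\varepsilon$.
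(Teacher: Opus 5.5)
Your route is the paper's route. You rewrite Gompf's formula as $d_3(\xi)=\tfrac14\delta_X-\tfrac12\bigl(\sigma(X)+\chi(X)-1\bigr)$ and identify the last term with $\varepsilon_X=-b_2^+(X)$. Then (1) follows from $\delta\in\mathbb Q/8\Z$ and (3) from $d_3\in\Z$. The paper handles the step you flag in one line, by asserting $b_2^+(X)=\tfrac12\bigl(\sigma(X)+\chi(X)-1\bigr)$.

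Your proposal leaves exactly this step open; step 3 then calls (2) ``the identity just derived'', which it is not. Your guess $b_2^0-b_1-b_3+1=b_1(M)$ is also wrong. The $+1$ from $\chi$ already cancels the $+\tfrac12$ shift, and a $b_1(X)$-term is missing. Run the exact sequence over $\mathbb Q$:
\begin{itemize}
\item $H_4(X,M)\to H_3(M)$ is onto, so $H_3(X)\to H_3(X,M)$ is injective.
\item With $b_3(X,M)=b_1(X)$, the image of $H_3(X,M)\to H_2(M)$ has dimension $b_1(X)-b_3(X)$.
\item Hence $b_2^0=\dim\operatorname{im}\bigl(H_2(M)\to H_2(X)\bigr)=b_1(M)-b_1(X)+b_3(X)$.
\end{itemize}
This gives $\sigma(X)+\chi(X)-1=2b_2^+(X)+b_1(M)-2b_1(X)$, that is,
\[
d_3(\xi)=\tfrac14\delta_X-b_2^+(X)+b_1(X)-\tfrac12\,b_1(M).
\]

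So the gap cannot be closed for the statement as written. Claim (2), and the paper's one-line identity, hold precisely when $b_1(M)=2b_1(X)$. Your own proposed check on $X_{\boldsymbol L}$ already shows a surviving $-\tfrac12 b_1(M)$.

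Here are concrete counterexamples.
\begin{itemize}
\item $(S^1\times S^2,\xi_{\mathrm{std}})$ has $d_3=\tfrac12$. Its Stein filling $S^1\times D^3$ gives $\tfrac14\delta_X+\varepsilon_X=0$.
\item For the same manifold, the blown-up trace $(S^2\times D^2)\#\mathbb{CP}^2$ of contact $(+1)$-surgery on the $\tb=-1$ unknot gives $\tfrac14\cdot 8-1=1$. The two fillings have $b_2^+=0$ and $b_2^+=1$, so (1) fails as well.
\item Even on $S^3$, $\mathbb{CP}^2\setminus B^4$ and the punctured Hopf surface $(S^1\times S^3)\setminus B^4$ both induce plane fields with $d_3=1$. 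Both are therefore almost complex fillings of the overtwisted structure with $d_3=1$, yet they have $b_2^+=1$ and $b_2^+=0$ respectively.
\end{itemize}

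Your fallback is the right repair, but it changes the statement. One option is to restrict to $b_1(M)=b_1(X)=0$. This covers the fillings $X_{\boldsymbol L}\#_q\mathbb{CP}^2$ coming from surgery diagrams; note it is $\mathbb{CP}^2$, not $\overline{\mathbb{CP}}^2$, that produces the $+q$. The other option is to keep all fillings, replace (2) by the display above, and replace (1) by invariance of the parity of $b_2^+(X)+b_1(X)$. With either correction your step 3 goes through essentially verbatim.

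Claim (3) is unaffected. For an integral homology sphere, $Q_X$ is unimodular and $c_1(J)$ is characteristic. Van der Blij's lemma then gives even $8\mid\delta_X$, without using (2).
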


\begin{proof}
    For any almost complex filling $(X,J)$ of $(M,\xi)$, we have
    \[
    \varepsilon(M,\xi)=-b_2^+(X)=-\frac{1}{2}\big(\sigma(X)+\chi(X)-1\big).
    \]
    Thus adding $\frac{1}{4}\delta(M,\xi)$ and $\varepsilon(M,\xi)$ yields the $d_3$-invariant, as claimed in (2). Statement (1) follows because $\delta(M,\xi)\in\mathbb Q/8\Z$ is an invariant of $(M,\xi)$, and (3) follows from the observation that $d_3$ takes integral values on integral homology spheres.
\end{proof}


\section{Relations between contact Kirby moves}\label{sec:proof_rel_kirbymoves}

In this section, we study the relation between the various contact Kirby moves from Section~\ref{sec:moves} and prove the independence of the standard contact Kirby moves.

\begin{proof}[Proof of Theorem~\ref{thm:indep}] For each of the standard contact Kirby moves, consider the change vector
\[
\Delta(\boldsymbol L,\boldsymbol L')
=
\bigl(
n'-n,\,
\sigma(\boldsymbol L')-\sigma(\boldsymbol L),\,
q'(\boldsymbol L')-q(\boldsymbol L),\,
c_{\boldsymbol L'}^2-c_{\boldsymbol L}^2
\bigr)
\in\mathbb Q^4,
\]
where \(n\) denotes the number of components, \(q(\boldsymbol L)\) denotes the number of contact \((+1)\)-surgery components, $\sigma(\boldsymbol L)=\sigma(Q_{\boldsymbol L})$ is the signature of the linking matrix, and the term \(c_{\boldsymbol L}^2\) is computed from the rotation vector as in Section~\ref{sec:diagrammatic_d3}. The change vector is zero for planar isotopies, Legendrian Reidemeister moves, and the standard contact handle slides.

Using the local matrices and rotation vectors from Section~\ref{sec:diagrammatic_d3}, the change vectors for insertion of a standard cancelling pair, the standard lantern move, and the standard chain move are, for the chosen directions of the moves,
\[
P=(2,0,1,0),\qquad
L=(1,-1,0,-1),\qquad
C=(-10,6,0,-2).
\]
Reversing one of the moves changes the sign of the corresponding vector. Since the vectors \(P,L,C\) are linearly independent in \(\mathbb Q^4\), none of them lies in the span of the other two. Therefore, none of the three moves can be expressed as a sequence of the other two together with planar isotopies, Legendrian Reidemeister moves, and standard contact handle slides.

It remains to show that contact handle slides cannot be expressed by the other contact Kirby moves. For this, define $N(\boldsymbol{L})$ to be the number of components of a contact surgery link $\boldsymbol{L}$ that are smooth unknots in $S^3$. Legendrian isotopies, standard lantern moves, and standard chain moves preserve $N$, while insertion or removal of a standard cancelling pair preserves the parity of $N$. On the other hand, any contact $3$-manifold admits contact surgery links $\boldsymbol{L}$ and $\boldsymbol{L}'$ differing only by a single handle slide such that $N(\boldsymbol{L}')=N(\boldsymbol{L})\pm1$. Indeed, start with any surgery link $\boldsymbol{L}_0$. To create $\boldsymbol{L}$, add two cancelling pairs $L_1^+\cup L_1^-$ and $L_2^+\cup L_2^-$ to $\boldsymbol{L}_0$ such that $L_1^{\pm}$ are unknots and $L_2^{\pm}$ are nontrivial smooth knots. Performing a handle slide of $L_1^-$ over $L_2^-$ creates a surgery link $\boldsymbol{L}'$ of the same contact manifold with one fewer unknotted component.
\end{proof}

\begin{remark}
Using the same methods, it is straightforward to show that the more general chain moves, lantern moves, and lantern destabilizations cannot be expressed as sequences of handle slides, cancellations, and Legendrian isotopies. It is also quite conceivable that the full Theorem~\ref{thm:indep} remains true after removing the adjective ``standard'' throughout. 
\end{remark}

We next ask how the more general contact Kirby moves discussed in Section~\ref{sec:moves} can be expressed in terms of the standard contact Kirby moves. Here we prove the following.

\begin{proposition}\label{prop:relations_known_moves}\hfill
\begin{enumerate}
    \item The more general contact handle slides shown in Figures~\ref{fig:contact_handle_slides1} and~\ref{fig:contact_handle_slides2} (with the blue curves equipped with contact surgery coefficients $(\pm1)$) can be expressed in terms of the two standard contact handle slides from Figure~\ref{fig:braid_intro}, together with insertions or removals of standard cancelling pairs.
    \item The lantern destabilizations can be expressed as sequences of the contact handle slides shown in Figures~\ref{fig:contact_handle_slides1} and~\ref{fig:contact_handle_slides2}, insertions or removals of standard cancelling pairs, and lantern moves.
\end{enumerate}
\end{proposition}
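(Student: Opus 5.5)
For part (1), I would treat the general handle slides as braid relations and push them down to the standard ones. Each handle slide in Figures~\ref{fig:contact_handle_slides1} and~\ref{fig:contact_handle_slides2} has the form $K$ sliding over $L^{\pm}$, with $K$ a $(\pm1)$-surgery component. I would first note that this is an $R$-equivalence on a ribbon surface $R$ whose skeleton is $K\cup L$ joined at one vertex. When both coefficients are $(-1)$, the two factorizations differ by a single braid relation $\tau_\aa^+\tau_\bb^+=\tau_{\aa\bb}^+\tau_\aa^+$. Proposition~\ref{uniq_cont_Kirby} then shows that, up to Legendrian isotopy, the slide is one of the two standard contact handle slides of Figure~\ref{fig:braid_intro}. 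The only check is which of the two models is dictated by the local front near the vertex. This is exactly the orientation and relative-position dichotomy in the proof of Proposition~\ref{uniq_cont_Kirby}, and the two models exhaust it.

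For components carrying a $(+1)$ coefficient, I would use the same conjugation trick on Dehn twist words. A braid relation involving an inverse twist, such as $\tau_\aa^-\tau_\bb^+=\tau_{\tau_\aa^{-1}(\bb)}^+\tau_\aa^-$, follows from the positive braid relation by inserting and cancelling $\tau_\aa^\pm$ pairs. Diagrammatically, I would proceed in three steps.
\begin{enumerate}
    \item Insert a standard cancelling pair $L^{\mp}\cup L^{\pm}$ next to the $(+1)$-component, placed so that the new $(-1)$-copy sits at the right height in $R\times[-\varepsilon,\varepsilon]$.
    \item Perform a standard $(-1)$ handle slide, or its inverse, over that copy.
    \item Remove the cancelling pair, which now consists of the original $(+1)$-component and the leftover copy.
\end{enumerate}
Lemma~\ref{lem:uniq_Leg_link} guarantees that every intermediate link is Legendrian isotopic to the expected one, since each is determined by its curves on $R$ and their height order. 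When $K$ itself is $(+1)$, the same bookkeeping applies, using that the move only reorders and conjugates twists.

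For part (2), I would follow the open book proof of Lisca--Stipsicz and translate each step into a ribbon move. The two sides of a lantern destabilization with $k$ grey strands are, on a suitable ribbon surface $R$, related by two positive stabilizations together with one lantern relation. I would first enlarge the skeleton by a Legendrian arc so that $R$ carries both sides. By Lemma~\ref{lem:Req_after_enlarg}, compatibility with $R$ persists and the grey strands stay outside the support. Each positive stabilization becomes the insertion of a cancelling pair followed by a contact handle slide; the paper asserts this in the proof strategy, and I would verify it locally. The lantern relation, by Proposition~\ref{uniq_cont_Kirby}, is a lantern move up to Legendrian isotopy. The grey strands are handled by sliding them through with the general handle slides from part (1), or by choosing $R$ so that they pass under the support entirely.

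I expect the main obstacle to be the concrete bookkeeping in part (2). Tracking the $k$ grey strands and exhibiting an explicit ribbon $R$, with Dehn twist identifications, that carries both the destabilization picture and the lantern configuration is the hardest step. Part (1) looks essentially formal given Proposition~\ref{uniq_cont_Kirby}.
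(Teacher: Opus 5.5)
Part (1) follows the paper. You derive mixed-sign braid relations from the positive one by inserting and deleting inverse pairs, and read each derivation as pair insertion, standard slide, pair removal. Your appeal to Proposition~\ref{uniq_cont_Kirby} for the all-$(-1)$ slides makes explicit a step the paper leaves implicit.

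One adjustment is needed. With one pair along a $(+1)$ component, one standard slide and one removal, you can only slide the \emph{other} component over it. If the sliding component $\bb$ is the $(+1)$ one and $\aa$ is $(-1)$, the pair must be taken along the resulting curve $\aa\bb=\tau_\aa^+(\bb)$, as in the paper's computation
\[
\tau_\aa^+\tau_\bb^-=\tau_{\aa\bb}^-\tau_{\aa\bb}^+\tau_\aa^+\tau_\bb^-=\tau_{\aa\bb}^-\tau_\aa^+\tau_\bb^+\tau_\bb^-=\tau_{\aa\bb}^-\tau_\aa^+ .
\]
So ``the same bookkeeping'' does not cover this case as stated.

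Part (2) has a genuine gap. As written, you put both sides on one enlarged ribbon $R$ and connect them by pair insertions, handle slides and a lantern relation, each an elementary $R$-equivalence on that $R$, so the composite would be an $R$-equivalence. But the two sides of a lantern destabilization are not $R$-equivalent on any ribbon.
\begin{itemize}
\item For $k=0$ the two sides are a $(+1)$ and a $(-1)$ component versus a single $(+1)$ component (Figures~\ref{fig:lantern_dest_proof1} and~\ref{fig:lantern_dest_proof4}).
\item Enlarge to a ribbon with connected boundary (Lemma~\ref{lem:Req_after_enlarg} and \cite[Lemma~4.11]{Avdek13}). An $R$-equivalence then gives, after rearranging, $\tau_A^+\tau_B^+=\tau_C^+$, with $A,B,C$ homologically nontrivial by Lemma~\ref{lem:inverse_Leg_real}.
\item On $H_1(R;\mathbb Q)$ the intersection form is now nondegenerate and each twist acts as a transvection, so $\tau_C^+-\id$ has rank one.
\item If $[A],[B]$ are independent, $\tau_A^+\tau_B^+-\id$ has rank two.
\item If $[A]=\pm[B]$, it equals $(\tau_A^+)^2-\id$, forcing $[C]=\mu[A]$ with $\mu^2=2$.
\end{itemize}
So positive stabilizations cannot be absorbed into relations in $\MCG(R,\partial R)$. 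The Lisca--Stipsicz argument works with the full open-book monodromy, which contains twists not carried by surgery components. The factorization $\tau_{\dd_1}^+\tau_{\dd_2}^+\tau_{\dd_4}^+\tau_{\aa\bb}^-$ quoted in the paper has four twists, while the $k=0$ diagram has two components, and the destabilizations change the page.

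Your sentence ``each positive stabilization becomes the insertion of a cancelling pair followed by a contact handle slide'' is therefore exactly what needs proof, and it cannot be done inside one ribbon carrying both sides. This is why the paper gives an explicit sequence of fronts (Figures~\ref{fig:lantern_dest_proof1}--\ref{fig:lantern_dest_proof4}):
\begin{itemize}
\item a Legendrian isotopy and three pair insertions, which create enough $(-1)$ components for an all-$(-1)$ lantern move (your one pair per stabilization gives only two);
\item the lantern move and one pair removal;
\item twice, Legendrian isotopies in $S^3$ followed by a handle slide that produces a cancelling pair, which is then removed.
\end{itemize}
For $k>0$ the paper runs the same sequence with the grey strands carried along unchanged in the complement of the supports. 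Sliding them over the involved components, as you suggest, would alter them, and those components no longer exist at the end to slide them back.
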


\begin{proof}
We first prove (1). The contact handle slides shown in Figures~\ref{fig:contact_handle_slides1} and~\ref{fig:contact_handle_slides2} are diagrammatic incarnations of braid-type relations in the mapping class group. Let \(\aa\) and \(\bb\) be simple closed curves intersecting transversely in one point. The two standard contact handle slides used in Theorem~\ref{thm:main} correspond to the basic braid relation
\[
\tau_\aa^+\tau_\bb^+
=
\tau_{\tau_\aa^+(\bb)}^+\tau_\aa^+.
\]
At the level of contact surgery diagrams, inserting or deleting a standard cancelling pair corresponds to inserting or deleting a pair of inverse Dehn twists.

The other contact handle slides arise from the same braid relation, but with one of the twists inverted or with the order of the two twists interchanged. More precisely, they correspond to relations of the form
\[
\tau_\aa^\varepsilon\tau_\bb^\delta
=
\tau_{\tau_\aa^\varepsilon(\bb)}^\delta\tau_\aa^\varepsilon,
\qquad
\tau_\aa^\delta\tau_\bb^\varepsilon
=
\tau_\bb^\varepsilon\tau_{\tau_\bb^{-\varepsilon}(\aa)}^\delta,
\]
where \(\varepsilon,\delta\in\{+,-\}\). These relations follow from the basic braid relation together with the insertion and deletion of inverse pairs.
We illustrate the argument in one representative case, shown in Figure~\ref{fig:contact_handle_as_standard}.

\begin{figure}[htbp]
    \centering
    \begin{overpic}[scale=1]{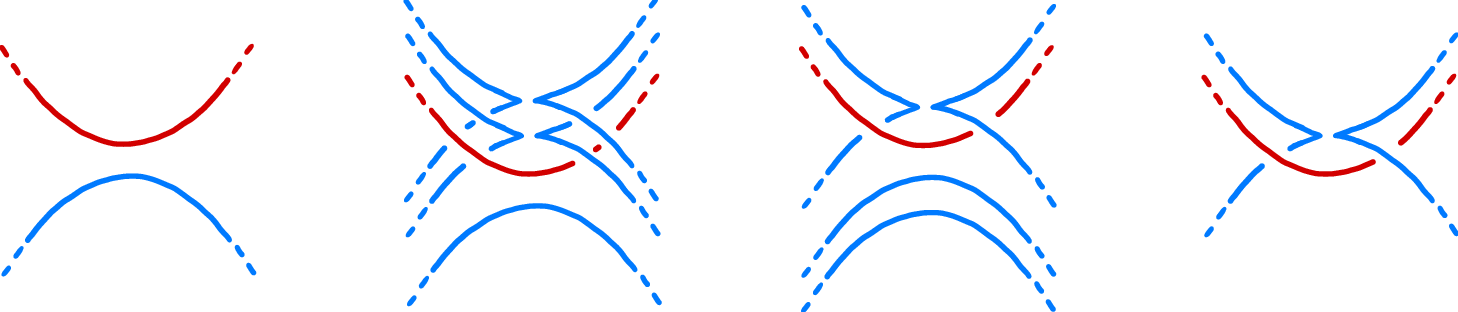}
    \put(-1,14){$-$}
    \put(-1,5){$+$}
    
    \put(25,20){$+$}
    \put(25,18){$-$}
    \put(25,14){$-$}
    \put(25,0){$+$}

    \put(52,20){$+$}
    \put(52,16){$-$}
    \put(52,2){$-$}
    \put(52,0){$+$}

    \put(80,18){$+$}
    \put(80,16){$-$}
    
    \end{overpic}
    \caption{A contact handle slide expressed as the insertion of a standard cancelling pair, a standard contact handle slide, and the removal of a standard cancelling pair.}
    \label{fig:contact_handle_as_standard}
\end{figure}

Set \(\aa\bb=\tau_\aa^+(\bb)\). Then
\[
\begin{aligned}
\tau_\aa^+\tau_\bb^-
&=
\tau_{\aa\bb}^-\tau_{\aa\bb}^+\tau_\aa^+\tau_\bb^- \\[2pt]
&=
\tau_{\aa\bb}^-\bigl(\tau_{\aa\bb}^+\tau_\aa^+\bigr)\tau_\bb^- \\[2pt]
&=
\tau_{\aa\bb}^-\bigl(\tau_\aa^+\tau_\bb^+\bigr)\tau_\bb^-
\qquad\text{by the braid relation, applied backwards} \\[2pt]
&=
\tau_{\aa\bb}^-\tau_\aa^+.
\end{aligned}
\]
In diagrammatic terms, the first equality inserts a cancelling pair, the use of the braid relation is exactly one of the standard contact handle slides, and the final cancellation deletes a cancelling pair. This proves that the corresponding contact handle slide is a composition of standard cancelling-pair moves and a standard contact handle slide.

The remaining cases are obtained in the same way, either by applying the same computation to the inverse braid relation, by interchanging the roles of \(\aa\) and \(\bb\), or by reversing the direction of the move. Hence, every contact handle slide shown in Figures~\ref{fig:contact_handle_slides1} and~\ref{fig:contact_handle_slides2} can be expressed as a sequence of standard contact handle slides together with insertions and removals of standard cancelling pairs.

We now prove (2) for the lantern destabilization with \(k=0\) strands; the general case is obtained similarly.
Let the curves \(\aa,\bb,\aa\bb,\dd_1,\dd_2,\dd_3,\dd_4\) be as in the lantern relation of Example~\ref{ex:lantern_relations}. With our conventions for translating contact surgery coefficients into Dehn twists, the contact surgery link on the left of Figure~\ref{fig:lantern_dest_proof1} represents the factorization
\[
\tau_{\dd_1}^+\tau_{\dd_2}^+\tau_{\dd_4}^+\tau_{\aa\bb}^-.
\]
The lantern relation, together with the two positive destabilizations used in the construction of the Lisca--Stipsicz lantern destabilization~\cite{Lisca_Stipsicz_lantern}, transforms this factorization into $\tau_{\dd_3}^+$, which is represented by the contact surgery link on the right of Figure~\ref{fig:lantern_dest_proof4}.

\begin{figure}[htbp]
    \centering
    \begin{overpic}[scale=1]{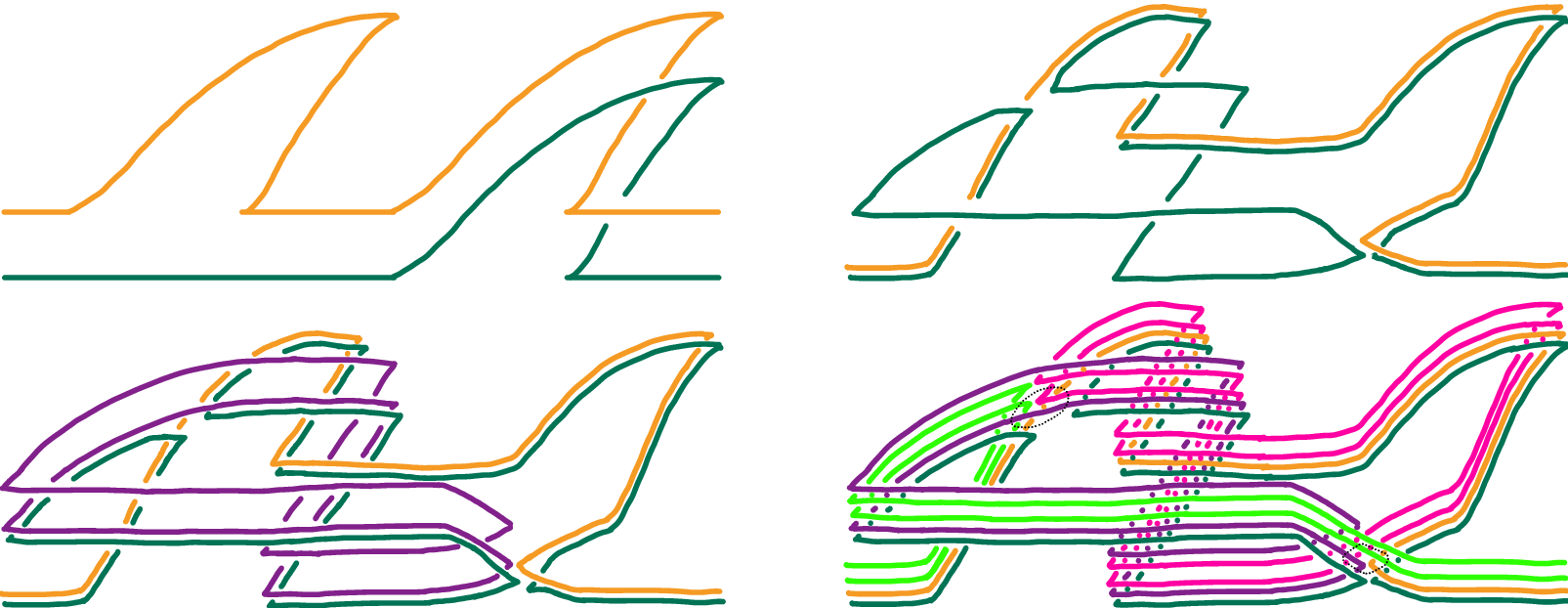}
        \put(-2.5,20.5){+}
        \put(-2.5,24.5){$-$}

        \put(51.5,21.1){$-$}
        \put(51.5,24.5){+}

        \put(-2.5,0.2){$-$}
        \put(-2.5,3.6){$+$}
        \put(-2.5,7){$+$}
        \put(25.5,12.5){$-$}

        \put(51.5,0.2){$-$}
        \put(51.5,3.6){$+$}
        \put(51.5,5.2){$-$}
        \put(51.5,7){$+$}
        \put(79.5,12.4){$-$}
        \put(79.5,14.2){$+$}
        \put(100.2,2.4){$+$}
        \put(100.2,17.5){$-$}
    \end{overpic}
    \caption{Steps (1)--(4) in the proof of the lantern destabilization: the initial diagram, a Legendrian isotopy, the insertion of one standard cancelling pair, and the insertion of two further standard cancelling pairs.}\label{fig:lantern_dest_proof1}
\end{figure}

Figures~\ref{fig:lantern_dest_proof1}--\ref{fig:lantern_dest_proof4} give a diagrammatic realization of this transformation using only the allowed moves. More precisely, Figure~\ref{fig:lantern_dest_proof1} starts with the diagram for
\(\tau_{\dd_1}^+\tau_{\dd_2}^+\tau_{\dd_4}^+\tau_{\aa\bb}^-\), then performs a Legendrian isotopy and inserts three standard cancelling pairs. Figure~\ref{fig:lantern_dest_proof2} applies a lantern move, removes one standard cancelling pair, and then performs Legendrian isotopies of the purple and red components. Figure~\ref{fig:lantern_dest_proof3} continues with a Legendrian isotopy of the pink component and then performs a contact handle slide, followed by the removal of a standard cancelling pair. Finally, Figure~\ref{fig:lantern_dest_proof4} performs one more Legendrian isotopy and a second contact handle slide, again followed by the removal of a standard cancelling pair.

\begin{figure}[htbp]
    \centering
    \begin{overpic}[scale=1]{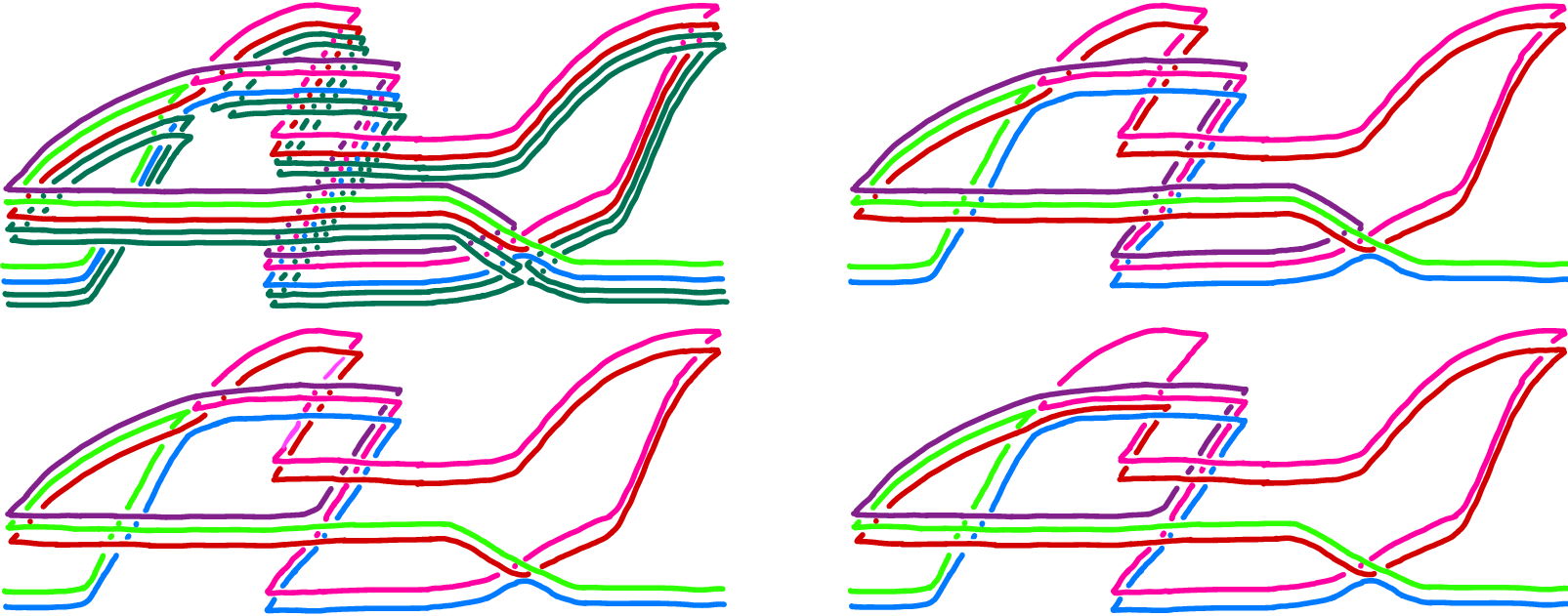}
        \put(-2.5,23.2){$+$}
        \put(-2.5,24.8){$-$}
        \put(-2.5,26.6){$+$}
        \put(25.5,32){$-$}
        \put(25.5,33.8){$+$}
        \put(46.5,22){$+$}
        \put(46.5,21){$-$}

        \put(51.5,24.8){$-$}
        \put(51.5,26.6){$+$}
        \put(79.5,33.8){$+$}
        \put(100.2,22){$+$}
        \put(100.2,21){$-$}

        \put(-2.5,4){$-$}
        \put(-2.5,5.8){$+$}
        \put(25.5,13){$+$}
        \put(46.5,1.3){$+$}
        \put(46.5,0.3){$-$}

        \put(51.5,4){$-$}
        \put(51.5,5.8){$+$}
        \put(79.5,13){$+$}
        \put(100.2,1.3){$+$}
        \put(100.2,0.3){$-$}
    \end{overpic}
    \caption{Steps (5)--(8): a lantern move, the removal of a standard cancelling pair, and Legendrian isotopies of the purple and red components.}\label{fig:lantern_dest_proof2}
\end{figure}

The final diagram is precisely the diagram representing \(\tau_{\dd_3}^+\). Hence the \(k=0\) lantern destabilization is expressed as a sequence of Legendrian isotopies, insertions and removals of standard cancelling pairs, one lantern move, and the contact handle slides shown in Figures~\ref{fig:contact_handle_slides1} and~\ref{fig:contact_handle_slides2}. As explained above, the case with \(k\) additional strands is obtained by applying the same sequence while carrying those strands along unchanged. This proves the claim.
\end{proof}

\begin{figure}[htbp]
    \centering
    \begin{overpic}[scale=1]{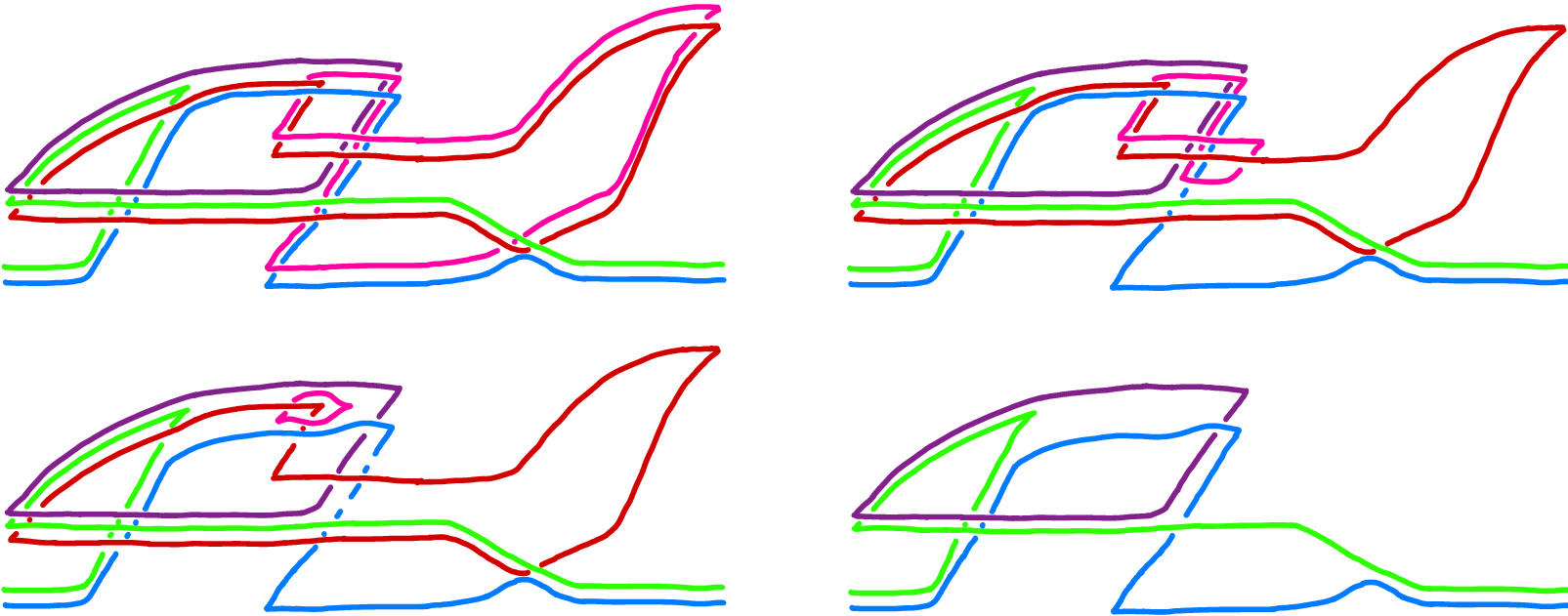}
        \put(-2.5,24.8){$-$}
        \put(-2.5,26.6){$+$}
        \put(25.5,33.8){$+$}
        \put(46.5,22){$+$}
        \put(46.5,21){$-$}

        \put(51.5,24.8){$-$}
        \put(51.5,26.6){$+$}
        \put(79.5,33.8){$+$}
        \put(100.2,22){$+$}
        \put(100.2,21){$-$}

        \put(-2.5,4){$-$}
        \put(-2.5,5.8){$+$}
        \put(22.5,12.8){$+$}
        \put(46.5,1.3){$+$}
        \put(46.5,0.3){$-$}

        \put(51.5,5.8){$+$}
        \put(100.2,1.3){$+$}
        \put(100.2,0.3){$-$}
    \end{overpic}
    \caption{Steps (9)--(12): Legendrian isotopies of the pink component, followed by a contact handle slide and the removal of a standard cancelling pair involving the red and pink components.}\label{fig:lantern_dest_proof3}
\end{figure}

\begin{figure}[htbp]
    \centering
    \begin{overpic}[scale=1]{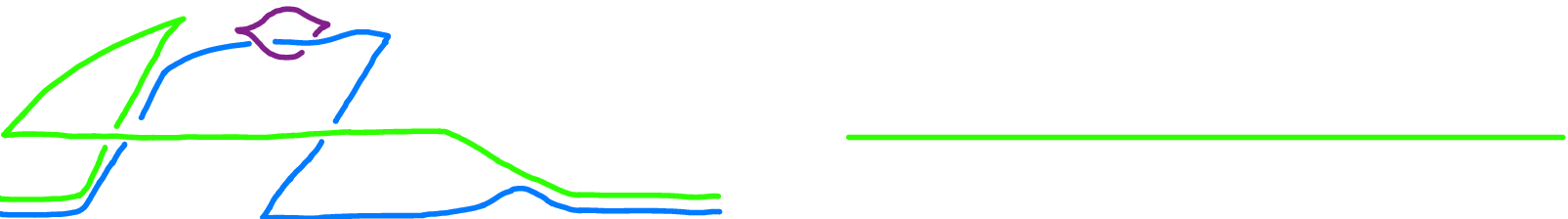}
        \put(20.5,12.8){$+$}
        \put(46.5,1){$+$}
        \put(46.5,0){$-$}

        \put(100.2,4.6){$+$}
    \end{overpic}
    \caption{Steps (13)--(14): a Legendrian isotopy of the purple component, followed by a contact handle slide and the removal of a standard cancelling pair involving the blue and purple components.}\label{fig:lantern_dest_proof4}
\end{figure}


\section{Legendrian knots in contact surgery diagrams}\label{sec:proofLeg_links_Kirby_thm}

Here we prove the contact version of Kirby's theorem for Legendrian links presented in contact surgery diagrams.

\begin{proof}[Proof of Theorem~\ref{prop:Leg_links_Kirby_thm}]
Let $J_i$ be a Legendrian link in a contact manifold $(M_i,\xi_i)$ presented by a Legendrian link in the complement of a surgery presentation $L_i$ of $(M_i,\xi_i)$, for $i=1,2$. The moves listed in Theorem~\ref{prop:Leg_links_Kirby_thm} preserve the equivalence type of the Legendrian links $J_i$. It remains to prove the converse statement, namely that any two equivalent Legendrian links $J_1$ and $J_2$ are related by the standard contact Kirby moves.

If the surgery presentations $L_1$ and $L_2$ coincide, this follows from a result of Ding and Geiges (see the last page of~\cite{Ding_Geiges_slides}): two Legendrian knots $J_1$ and $J_2$ in the complement of a surgery presentation $L$ are equivalent if and only if their front projections are related by a sequence of planar isotopies, Legendrian Reidemeister moves, and contact handle slides of the $J_i$ over components of $L$. (A handle slide corresponds to an isotopy through the belt sphere of a symplectic $4$-handle attached to the symplectization of the contact manifold, as discussed in~\cite{Ding_Geiges_slides}. From a 3-dimensional point of view, this is equivalent to taking a band connected sum with a $tb=-1$ unknot given by the meridional disk of the newly glued-in solid torus, as explained in~\cite{Casals_Etnyre_Kegel}. Thus one can show that after Legendrian Reidemeister moves these contact handle slides can always be expressed as standard contact handle slides.)

It remains to show that the standard contact Kirby moves are sufficient to transform $L_1$ into $L_2$. Ignoring the additional links $J_1$ and $J_2$, this is precisely the statement of Theorem~\ref{thm:main}. However, the links $J_1$ and $J_2$ may intersect the shaded surfaces appearing in Figures~\ref{fig:cancelling_intro}--\ref{fig:chain_intro}, so that the contact Kirby moves cannot be applied directly. We will show that, after performing suitable standard contact handle slides of the links $J_i$ over components of $L_i$, one may always assume that the links $J_i$ are disjoint from the shaded surfaces.

For cancelling pairs, this can always be achieved by an annulus twist, which can in turn be expressed as a sequence of standard contact handle slides, as discussed in~\cite{Casals_Etnyre_Kegel}. The cases of the standard lantern and standard chain moves are illustrated in Figures~\ref{fig:lantern_slide} and~\ref{fig:chain_slide_1}.

First, observe that it suffices to show that the links $J_i$ can be made disjoint from the shaded surface on the right-hand side of Figures~\ref{fig:lantern_intro} and~\ref{fig:chain_intro}, since curves outside the shaded surface remain unchanged and are therefore automatically disjoint from the corresponding shaded surface on the left-hand side.

We begin with the standard lantern move. On the left-hand side of Figure~\ref{fig:lantern_slide}, we depict a Legendrian link $J$ in black intersecting the shaded region transversely in two points. Up to isotopy and symmetry, the relevant intersections reduce to this configuration. On the right-hand side of the same figure, we have performed two standard contact handle slides of the black curve over the green and the red curve that remove these intersection points and make the link disjoint from the shaded region.

\begin{figure}[htbp]
    \centering
    \begin{overpic}[scale=1]{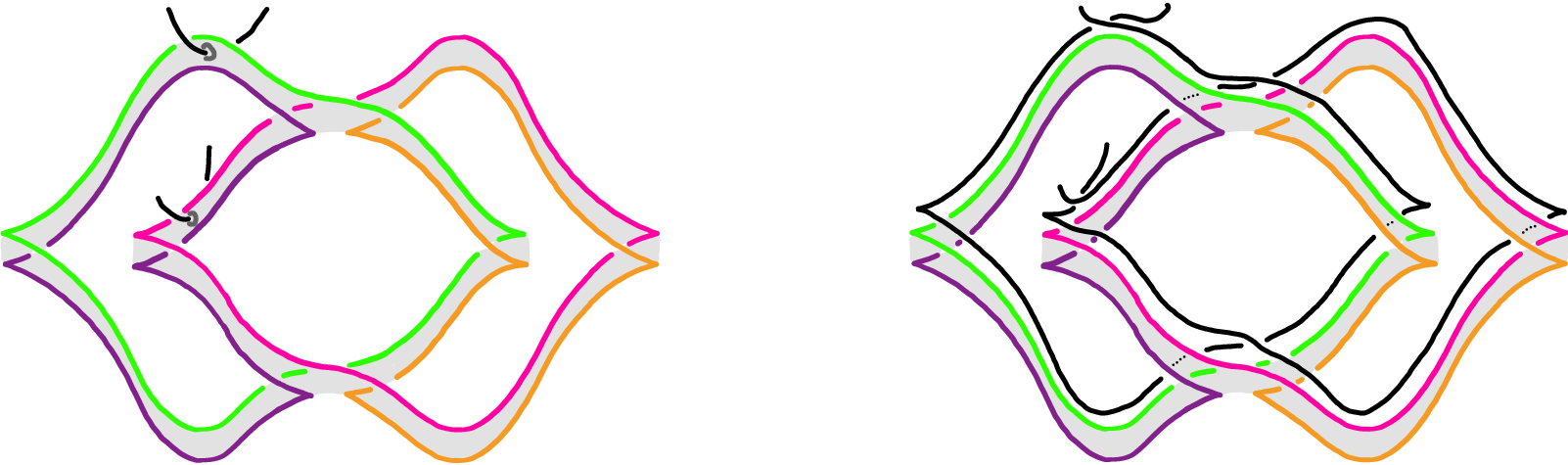}
        \put(-2.5,14){$-$}
        \put(-2.5,12){$-$}
        \put(43.5,14){$-$}
        \put(43.5,12){$-$}

        \put(55.5,14){$-$}
        \put(55.5,12){$-$}
        \put(101.5,14){$-$}
        \put(101.5,12){$-$}
    \end{overpic}
    \caption{Removing intersections of the black curve with the shaded region by handle slides.}\label{fig:lantern_slide}
\end{figure}

The case of the chain move is slightly more involved. In the middle of Figure~\ref{fig:chain_slide_1}, we show a Legendrian link $J$, drawn in black and red, intersecting the shaded surface transversely in two points. Again, by isotopy and symmetry, these are the configurations that need to be considered. We first treat the black intersection point. On the right-hand side of Figure~\ref{fig:chain_slide_1}, we have performed a standard contact handle slide of the black component, thereby removing the corresponding intersection point. On the left-hand side of the same figure, we show the image of the red component after a standard contact handle slide. The resulting red knot now intersects the shaded surface transversely in two points, as indicated there.

\begin{figure}[htbp]
    \centering
    \begin{overpic}[scale=1]{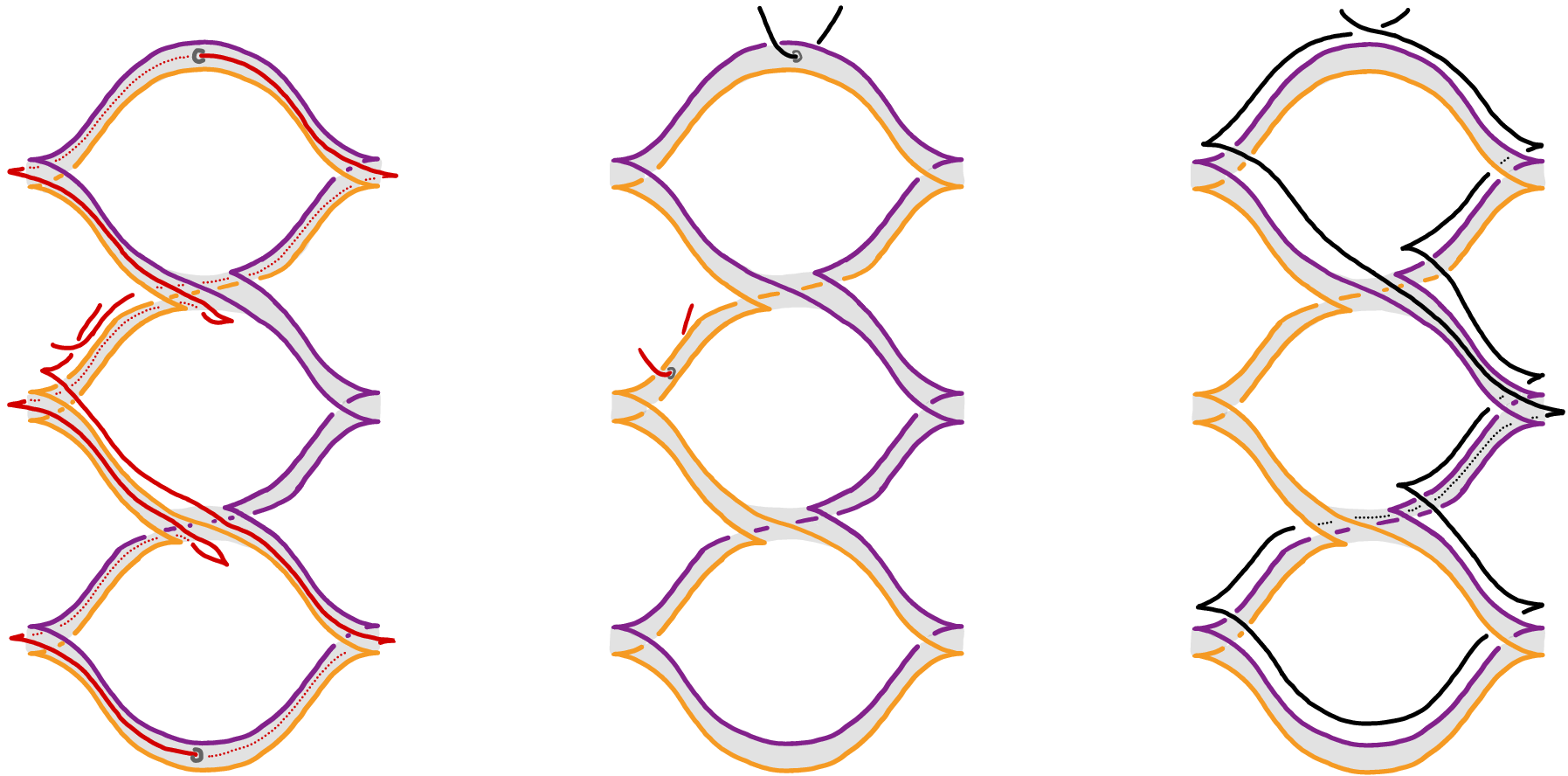}
        \put(25.5,39){$-$}
        \put(25.5,37){$-$}

        \put(62.5,39){$-$}
        \put(62.5,37){$-$}
        
        \put(100.5,39){$-$}
        \put(100.5,37){$-$}
    \end{overpic}
    \caption{Removing intersections of the black curve with the shaded region by handle slides.}\label{fig:chain_slide_1}
\end{figure}

In Figure~\ref{fig:chain_slide_2}, we perform a sequence of Legendrian Reidemeister moves in the complement of the surgery link to show that these new intersections are of the same type as the original black intersection considered above. Consequently, two additional standard contact handle slides suffice to remove both remaining intersections of the red curve with the shaded region. This completes the proof.
\end{proof}

\begin{figure}[htbp]
    \centering
    \begin{overpic}[scale=1]{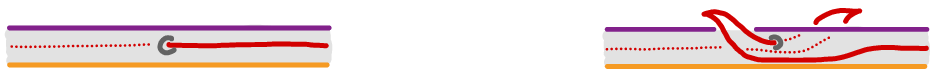}
    \end{overpic}
    \caption{An isotopy of the red curve in the complement of the orange and purple curves.}\label{fig:chain_slide_2}
\end{figure}


\section{Blow-ups and blow-downs in contact geometry}\label{sec:blow_up_discussion}

We now make precise the heuristic mentioned in Remark~\ref{rmk_weak_blowup}.
Let \(M=H_1\cup_h H_2\) be a Heegaard splitting with Heegaard surface
\(\Sigma=\partial H_1=\partial H_2\). If a mapping class \(g\in\MCG(\Sigma)\) extends over the handlebody \(H_1\), then replacing the gluing map \(h\) by \(h\circ g^{-1}\) gives the same \(3\)-manifold. Indeed, the extension of \(g\) gives a self-diffeomorphism of \(H_1\), so the change in the gluing can be absorbed into one of the two handlebodies.

In the surgery picture, this says that inserting into a surgery presentation a framed link on the Heegaard surface representing such a mapping class does not change the presented \(3\)-manifold. In this sense, a mapping class of the Heegaard surface which extends over a handlebody may be regarded as giving a weak version of a blow-up or blow-down. The usual blow-up/down corresponds to the special case of a Dehn twist along a meridional curve.

Now suppose that \((F,\phi)\) is an abstract open book supporting a contact manifold \((M,\xi)\), and set
\[
G=F\times[-\varepsilon,\varepsilon].
\]
The open book determines a contact Heegaard splitting as follows. Restricting the contact structure to \(G\) yields a contact handlebody, and its complement is a contact handlebody as well. Thus \(\partial G\) is a convex Heegaard surface for \((M,\xi)\).

In the contact setting, one has to be more careful than in the purely topological case. A diffeomorphism of \(G\) extending a boundary mapping class need not be a contactomorphism of the contact handlebody; it may change the contact structure on \(G\). Thus, a contact version of the above Heegaard-theoretic mechanism should at least require the boundary mapping class to preserve the dividing set on \(\partial G\), and extend over \(G\) as a contactomorphism of the contact handlebody. After forgetting the contact structure, any such extension is still an ordinary diffeomorphism of \(G\). Therefore, any boundary mapping class arising from this mechanism must lie in the image of the ordinary restriction map
\[
\rho_G\colon \MCG(G)\longrightarrow \MCG(\partial G).
\]

The contact surgery links relevant to us give a special class of boundary mapping classes. They come from the image of the map
\[
j\colon \MCG(F,\partial F)\longrightarrow \MCG(\partial G),
\]
defined as follows. If \(\varphi\in\mathrm{Diff}^+(F,\partial F)\), then \(j(\varphi)\) is represented by the diffeomorphism of \(\partial G\) which is equal to \(\varphi\) on \(F\times\{\varepsilon\}\), and equal to the identity on \(F\times\{-\varepsilon\}\) and on \(\partial F\times[-\varepsilon,\varepsilon]\). Since \(\varphi\) fixes \(\partial F\) pointwise, the mapping class \(j([\varphi])\) preserves the dividing set on the contact Heegaard surface \(\partial G\). Geometrically, \(j\) records the fact that page-supported surgery curves modify the monodromy on one page of the open book and do nothing on the rest of the Heegaard surface.

The following lemma shows that no nontrivial mapping class of this form can be absorbed into the handlebody, even after forgetting the contact structure.

\begin{lemma}\label{lem:trivial_intersection}
Let \(F\) be a compact oriented surface with nonempty boundary, let
\[
G=F\times[-\varepsilon,\varepsilon],
\]
and consider the maps
\[
j\colon \MCG(F,\partial F)\to \MCG(\partial G),
\qquad
\rho_G\colon \MCG(G)\to \MCG(\partial G).
\]
Then
\[
\operatorname{im}(j)\cap \operatorname{im}(\rho_G)=\{1\}.
\]
\end{lemma}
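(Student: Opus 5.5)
The plan is to show directly that if $g=j([\varphi])$ lies in the image of $\rho_G$, then $\varphi$ is already trivial in $\MCG(F,\partial F)$, and hence $g=1$. The tool is the family of product meridian disks of $G=F\times[-\varepsilon,\varepsilon]$ coming from properly embedded arcs in $F$.

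\emph{Step 1: product disks.} Let $a\subset F$ be a properly embedded arc. Then $D_a=a\times[-\varepsilon,\varepsilon]$ is a properly embedded disk in $G$. Its boundary is
\[
\partial D_a=(a\times\{\varepsilon\})\cup(\partial a\times[-\varepsilon,\varepsilon])\cup(a\times\{-\varepsilon\}).
\]
Suppose $g=\rho_G([\Phi])$ for some diffeomorphism $\Phi$ of $G$. Then $\Phi(D_a)$ is a disk in $G$ bounded by a curve isotopic in $\partial G$ to $g(\partial D_a)$. Representing $g$ by the explicit diffeomorphism in the definition of $j$, this curve is
\[
c_a=(\varphi(a)\times\{\varepsilon\})\cup(\partial a\times[-\varepsilon,\varepsilon])\cup(a\times\{-\varepsilon\}).
\]
This uses that $\varphi$ fixes $\partial F$ pointwise, so $\partial\varphi(a)=\partial a$. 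In particular, $c_a$ is null-homotopic in $G$.

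\emph{Step 2: translating to $F$.} The projection $G\to F$ is a homotopy equivalence. It sends $c_a$ to the loop $\varphi(a)\cdot\bar a$; the vertical segments $\partial a\times[-\varepsilon,\varepsilon]$ collapse to points. Hence $\varphi(a)\cdot\bar a$ is null-homotopic in $F$, that is, $\varphi(a)$ is homotopic to $a$ relative to endpoints. For essential arcs in a compact surface with boundary, homotopy relative to endpoints implies isotopy relative to endpoints; this is the classical Baer--Epstein result, see~\cite{FM12}. Applying this, $\varphi(a)$ is isotopic to $a$ rel $\partial a$. Boundary-parallel or inessential arcs cause no trouble, since we only need the arcs chosen in Step 3.

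\emph{Step 3: Alexander method.} Choose a system of disjoint properly embedded arcs $a_1,\dots,a_m$ in $F$ that cuts $F$ into a single disk; this is possible since $\partial F\neq\emptyset$. By Step 2, each $\varphi(a_i)$ is isotopic to $a_i$ rel endpoints. Because the $a_i$ are pairwise disjoint, the standard bigon/disjointness argument from~\cite{FM12} lets us isotope $\varphi$ relative to $\partial F$ so that $\varphi(a_i)=a_i$ for all $i$ simultaneously, and we may further arrange that $\varphi$ is the identity on each $a_i$. Now $\varphi$ restricts to a diffeomorphism of the cut-open disk that is the identity on its boundary. By the Alexander lemma it is isotopic to the identity rel boundary. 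Thus $[\varphi]=1$ in $\MCG(F,\partial F)$, so $g=j([\varphi])=1$.

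I expect the main care to be needed in Steps 2 and 3. First, the homotopy must genuinely be relative to the endpoints on $\partial F$, and not allow sliding along the boundary. This is what guarantees that boundary-parallel Dehn twists, which do act nontrivially on arcs rel endpoints, are detected; it holds because $j$ is defined using $\varphi$ fixed pointwise on $\partial F$ and the identity on the annuli. Second, one must make the individual arc isotopies simultaneous. Both points are standard, so the argument should otherwise be routine.
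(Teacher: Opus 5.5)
Your argument is correct, but it takes a different route from the paper. The paper takes a diffeomorphism $\Psi$ of $G$ whose restriction to $\partial G$ is exactly the representative $j(\varphi)$. It observes that $\Psi$ is then a pseudo-isotopy relative to $\partial F$ from $\id_F$ to $\varphi$, and quotes the fact that for compact surfaces a pseudo-isotopy relative to the boundary can be replaced by an isotopy relative to the boundary.

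You never use the extension as a whole. You only use that the boundaries of the product disks $a\times[-\varepsilon,\varepsilon]$ are sent to curves that are null-homotopic in $G$. Projecting to $F$ shows that $\varphi$ fixes each arc up to homotopy rel endpoints, and Baer--Epstein together with the Alexander method finishes the proof. In effect you unpack the black box the paper cites. Composing the paper's $\Psi$ with the projection $G\to F$ gives a homotopy rel $\partial F$ from $\id_F$ to $\varphi$, and your arc argument is the standard proof that such a homotopy can be upgraded to an isotopy rel $\partial F$.

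The paper's version buys brevity. Yours is self-contained modulo textbook facts from~\cite{FM12}, and it proves slightly more. Since you only need $j(\varphi)$ to take the curves $\partial D_a$ to curves bounding singular disks in $G$, your argument shows that no nontrivial element of $\operatorname{im}(j)$ extends even to a continuous self-map of $G$. Your insistence that the homotopy be rel endpoints, so that boundary-parallel twists are detected, corresponds exactly to the paper's requirement that the pseudo-isotopy be relative to $\partial F$.
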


\begin{proof}
Suppose that \(j([\varphi])\in \operatorname{im}(\rho_G)\). After choosing representatives, we may assume that there is a diffeomorphism \(\Psi\colon G\to G\) whose restriction to \(\partial G\) is \(j(\varphi)\). Thus \(\Psi\) restricts to \(\varphi\) on \(F\times\{\varepsilon\}\), to the identity on \(F\times\{-\varepsilon\}\), and to the identity on \(\partial F\times[-\varepsilon,\varepsilon]\). Thus $\Psi$ defines a pseudo-isotopy relative to $\partial F$ from $\varphi$ to the identity $\id_F$. Since, for compact surfaces, any pseudo-isotopy relative to the boundary can be changed to an isotopy relative to the boundary, it follows that \([\varphi]=1\in\MCG(F,\partial F)\). Therefore \(j([\varphi])=1\), proving the claim.
\end{proof}

Consequently, no nontrivial page-supported contact surgery move can be absorbed into the contact handlebody by this mechanism. Thus, even in this weaker Heegaard-theoretic sense, the open-book setting leaves no room for a contact analogue of blow-up or blow-down.

\appendix


\section{A contact-geometric proof of Kirby's theorem}\label{sec:top_Kirby}

We conclude with a proof sketch of Kirby's theorem for smooth manifolds using contact geometry, via the Giroux correspondence and our main result.

\begin{theorem}
    Two framed links $L_1$ and $L_2$ in $S^3$ represent diffeomorphic $3$-manifolds if and only if $L_1$ can be transformed into $L_2$ by a sequence of handle slides, blow-ups, and blow-downs.
\end{theorem}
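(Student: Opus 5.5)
The easy direction is classical: handle slides and blow-ups or blow-downs do not change the diffeomorphism type of the surgered manifold. For the converse we reduce to Theorem~\ref{thm:main}, using smooth Kirby moves only to relate each framed link to a contact surgery link.

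\emph{Step 1: realize each framed link as a contact surgery link, up to Kirby moves.} Let $L$ be a framed link in $S^3$. Legendrian realize each component and stabilize until $\tb(L_i)-1$ is at most the prescribed framing $f_i$. Then perform contact $(-1)$-surgery on the realization. The remaining discrepancy $f_i-(\tb(L_i)-1)\ge 0$ has to be corrected. For this we add $(+1)$-surgeries on Legendrian push-offs of $L_i$, each stabilized appropriately. Topologically this is a handle slide plus a blow-up/down. More directly: a framed knot with framing $f+1$ equals the framed knot with framing $f$ together with a $(+1)$-framed meridian, by a blow-up. A $(+1)$-framed meridian is contact $(-1)$-surgery on a Legendrian unknot with $\tb=2$, but no such unknot exists. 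Use instead a $\tb=-1$ unknot meridian with contact $(+1)$-surgery, whose smooth framing is $0$. A $0$-framed meridian cancels the knot entirely, so this does not work directly.

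The approach I will commit to is to adjust framings with $\pm1$-framed unknots linked to the component. Blow up a $\pm1$ meridian to shift the framing by $\mp1$. A $-1$ framing is realized by contact $(+1)$-surgery on a $\tb=-2$ unknot (a stabilized unknot, framing $-1$), and a $+1$ framing by a blow-up that is Legendrian-realizable after sliding. The outcome is that every framed link is related by handle slides and blow-ups/downs to the underlying smooth link of some contact $(\pm1)$-surgery link. This bookkeeping is the first place needing care.

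\emph{Step 2: match the contact structures.} Now $L_1$ and $L_2$ are smooth shadows of contact surgery links $\boldsymbol L_1$ and $\boldsymbol L_2$ producing contact structures $\xi_1,\xi_2$ on diffeomorphic manifolds, which after pulling back we view on one $M$. These contact structures need not agree. I will modify $\boldsymbol L_2$, via operations whose smooth shadows are Kirby moves, so that $\xi_2$ becomes contactomorphic to $\xi_1$. Two contact structures on $M$ differ in homotopy class and in isotopy class within the homotopy class. The standard tool is that adding Legendrian unknots with suitable stabilizations and $(\pm1)$-surgeries, whose smooth shadow is a blow-up or a cancelling pair (e.g. a $\tb=-1$ unknot with contact $(+1)$-surgery gives a $0$-framed unknot split off, i.e.\ $S^1\times S^2$ summand; better, a $(+1)$ on $\tb=-2$ gives a $-1$-framed split unknot, a blow-up), allows changing the $d_3$-invariant and the Euler class arbitrarily, and connected summing with an overtwisted $S^3$ (realizable by contact $(+1)$-surgery on a $\tb=-1$ unknot together with a stabilized push-off, smoothly a blow-up pair) makes both structures overtwisted. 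By Eliashberg's classification, overtwisted structures in the same homotopy class are isotopic. So after such modifications $\xi_1\cong\xi_2$.

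\emph{Step 3: apply the main theorem.} By Theorem~\ref{thm:main}, the modified $\boldsymbol L_1$ and $\boldsymbol L_2$ are related by Legendrian isotopies, standard cancelling pairs, standard handle slides, and standard lantern and chain moves. Now take smooth shadows. Legendrian isotopy gives isotopy. A contact handle slide gives a smooth handle slide. A cancelling pair $L^\pm\cup L'^\mp$ has smooth framings $t\pm1,t\mp1$ with linking $t$; a handle slide turns it into a $\pm1$-framed split meridian-type unknot, which is then blown down. For lantern and chain moves, both sides are local surgeries in a genus three handlebody giving the same manifold rel boundary; by Kirby's theorem for that local picture, or by an explicit sequence computed once (as in Lu's treatment of mapping class relations via handle slides and blow-ups), they are related by Kirby moves. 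This last point is the main obstacle: to avoid circularity one must exhibit explicit smooth Kirby-move sequences for the standard lantern and chain moves. I would follow Lu's method, expressing these relations as compositions of handle slides and blow-ups/downs using the linking data recorded in Section~\ref{sec:diagrammatic_d3}.
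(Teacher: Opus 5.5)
Your outline follows the paper's route. You realize each framed link, up to stabilizations and Legendrian-realizable blow-ups, as the shadow of a contact $(\pm1)$-surgery link. You then make the two contact structures homotopic and overtwisted, so that Eliashberg's classification makes them contactomorphic, and apply Theorem~\ref{thm:main}. Finally you translate back, leaving explicit Kirby-calculus sequences for the standard lantern and chain moves (supported in the genus three handlebody) as the remaining computation, exactly as the paper does.

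\textbf{The gap is in Step~2.} A homotopy class of plane fields has two parts:
\begin{itemize}
\item a $2$-dimensional part, the $\mathrm{Spin}^c$ structure, recorded by Gompf's $\Gamma$-invariant;
\item a $3$-dimensional part, measured by $d_3$ when $c_1$ is torsion.
\end{itemize}
The operations you exhibit are split Legendrian unknots with $(\pm1)$-surgery whose shadows are split blow-ups, and the overtwisted $S^3$ summand. Each of these is a contact connected sum with a contact structure on $S^3$. Such a summand leaves the $\mathrm{Spin}^c$ structure, and hence the Euler class, unchanged; it only moves the $3$-dimensional invariant. So your claim that these operations change ``the Euler class arbitrarily'' is false. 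Nothing in your argument makes the $2$-dimensional invariants of $\xi_1$ and $\phi^*\xi_2$ agree.

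This cannot be dodged by choosing the identification $\phi$ well. For a lens space $L(p,q)$ with $p$ large, the mapping class group is far too small to act transitively on the $p$ different $\mathrm{Spin}^c$ structures. The Euler class is also the wrong target, since $c_1$ does not see $2$-torsion in $H^2(M)$; you must match $\Gamma$.

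\textbf{How the paper handles it.} The paper treats this as a separate step before the overtwisted connected sum. It uses topological Kirby moves to arrange $\Gamma(\xi_1,\mathfrak s)=\Gamma(\xi_2,\mathfrak s)$, citing Etnyre--Kegel--Onaran. Concretely, you have to change the rotation numbers of the link components themselves:
\begin{itemize}
\item The $\mathrm{Spin}^c$ structure is read off from the rotation vector modulo $2\operatorname{im}Q$ (compare the class $c_{\boldsymbol L}$ in Section~\ref{sec:mod8inv}), and $H^2(M)\cong\operatorname{coker}Q$.
\item Changing $\rot(L_i)$ by $\pm2$ at fixed $\tb$, by trading a stabilization of one sign for one of the other sign, therefore reaches every $\mathrm{Spin}^c$ structure once the components are stabilized enough.
\item The extra stabilizations are paid for smoothly by blowing up $(-1)$-framed meridians. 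These are realized by contact $(+1)$-surgery on once-stabilized Legendrian unknots.
\end{itemize}
Only after this does your $S^3$-summand argument finish the job.

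\textbf{Two smaller slips.}
\begin{itemize}
\item In Step~1, blowing up an $\varepsilon$-framed meridian shifts the framing of the knot by $+\varepsilon$, not $-\varepsilon$. Only $(-1)$-framed meridians are needed, to bring the required framing down to at most $\overline{\tb}(K)+1$.
\item In Step~3, consider a smooth cancelling pair with framings $t\pm1$ and linking $t$. Sliding one component over the other produces a $0$-framed meridian of the remaining component, not a split $\pm1$-framed unknot. The pair still cancels, but by the standard argument for a knot with a $0$-framed meridian, not by a single blow-down.
\end{itemize}
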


\begin{proof}[Proof sketch]
    It is straightforward to check that handle slides, blow-ups, and blow-downs do not change the diffeomorphism types of the surgered manifolds. We prove the converse. Let $L_1$ and $L_2$ be smooth surgery diagrams of diffeomorphic $3$-manifolds $M_1$ and $M_2$. First, choose Legendrian realizations $\boldsymbol{L}_i$ of the surgery links $L_i$ in $(S^3,\xist)$. Next, modify these Legendrian knots so that contact $(\pm1)$-surgery produces a contact structure $\xi_i$ on $M_i$. This can be done by stabilizing the Legendrian knots (to decrease the framing) or smoothly blowing up the surgery diagrams (to increase the framing). We refer to Figure~11 of~\cite{Ding_Geiges_Stipsicz} for the details. Next, we perform topological Kirby moves to make the Gompf invariants $\Gamma(\xi_i,\mathfrak{s})$ of the contact structures with respect to a fixed spin structure $\mathfrak{s}$ agree; see~\cite{Etnyre_Kegel_Onaran} for details. By performing a contact connected sum with a suitable overtwisted contact structure on $S^3$ (which can be done via topological Kirby moves), we can assume that $(M_1,\xi_1)$ and $(M_2,\xi_2)$ are homotopic as tangential $2$-plane fields~\cite{Gompf,Ding_Geiges_Stipsicz,Etnyre_Kegel_Onaran}. Since both contact structures are overtwisted, it follows from Eliashberg's classification of overtwisted contact structures~\cite{Eliashberg_OT} that $(M_1,\xi_1)$ and $(M_2,\xi_2)$ are contactomorphic. Thus, Theorem~\ref{thm:main} implies that we can transform the surgery descriptions of $(M_1,\xi_1)$ and $(M_2,\xi_2)$ into each other by a sequence of standard contact Kirby moves. To deduce the topological Kirby theorem, it remains to express the lantern and chain moves as a sequence of topological Kirby moves, which is an exercise in Kirby calculus.
\end{proof}


\printbibliography

@book{Gompf_Stipsicz,
  author    = {Gompf, Robert E. and Stipsicz, Andr\'as I.},
  title     = {{$4$}-manifolds and {K}irby calculus},
  series    = {Graduate Studies in Mathematics},
  volume    = {20},
  publisher = {American Mathematical Society},
  address   = {Providence, RI},
  year      = {1999},
  doi       = {10.1090/gsm/020}
}

@article {Lickorish_MCG,
    AUTHOR = {Lickorish, W. B. R.},
     TITLE = {A finite set of generators for the homeotopy group of a
              {$2$}-manifold},
   JOURNAL = {Proc. Cambridge Philos. Soc.},
  FJOURNAL = {Proceedings of the Cambridge Philosophical Society},
    VOLUME = {60},
      YEAR = {1964},
     PAGES = {769--778},
      ISSN = {0008-1981},
   MRCLASS = {54.75},
  MRNUMBER = {171269},
MRREVIEWER = {R.\ H.\ Fox},
       DOI = {10.1017/s030500410003824x},
       URL = {https://doi.org/10.1017/s030500410003824x},
}

@article{Chekanov,
  author  = {Chekanov, Yuri},
  title   = {Differential algebra of {L}egendrian links},
  journal = {Invent. Math.},
  volume  = {150},
  year    = {2002},
  number  = {3},
  pages   = {441--483},
  doi     = {10.1007/s002220200212}
}

@article{Lickorish,
  author  = {Lickorish, W. B. R.},
  title   = {A representation of orientable combinatorial {$3$}-manifolds},
  journal = {Ann. of Math. (2)},
  volume  = {76},
  year    = {1962},
  pages   = {531--540},
  doi     = {10.2307/1970373}
}

@article{Wallace,
  author  = {Wallace, Andrew H.},
  title   = {Modifications and cobounding manifolds},
  journal = {Canadian J. Math.},
  volume  = {12},
  year    = {1960},
  pages   = {503--528},
  doi     = {10.4153/CJM-1960-045-7}
}

@article{Kanda1998,
  author  = {Kanda, Yutaka},
  title   = {On the {T}hurston--{B}ennequin invariant of {L}egendrian knots and non-exactness of {B}ennequin's inequality},
  journal = {Invent. Math.},
  volume  = {133},
  year    = {1998},
  number  = {2},
  pages   = {227--242},
  doi     = {10.1007/s002220050245}
}

@article{Eliashberg_OT,
  author  = {Eliashberg, Y.},
  title   = {Classification of overtwisted contact structures on {$3$}-manifolds},
  journal = {Invent. Math.},
  volume  = {98},
  year    = {1989},
  number  = {3},
  pages   = {623--637},
  doi     = {10.1007/BF01393840}
}

@article{Gervais,
  author  = {Gervais, Sylvain},
  title   = {Presentation and central extensions of mapping class groups},
  journal = {Trans. Amer. Math. Soc.},
  volume  = {348},
  year    = {1996},
  number  = {8},
  pages   = {3097--3132},
  doi     = {10.1090/S0002-9947-96-01509-7}
}

@book{Geiges_book,
  author    = {Geiges, Hansj\"org},
  title     = {An introduction to contact topology},
  series    = {Cambridge Studies in Advanced Mathematics},
  volume    = {109},
  publisher = {Cambridge University Press},
  address   = {Cambridge},
  year      = {2008},
  doi       = {10.1017/CBO9780511611438}
}

@article{Giroux91,
  author  = {Giroux, Emmanuel},
  title   = {Convexit\'e en topologie de contact},
  journal = {Comment. Math. Helv.},
  volume  = {66},
  year    = {1991},
  number  = {4},
  pages   = {637--677},
  doi     = {10.1007/BF02566670}
}

@inproceedings{Giroux02,
  author    = {Giroux, Emmanuel},
  title     = {G\'eom\'etrie de contact: de la dimension trois vers les dimensions sup\'erieures},
  booktitle = {Proceedings of the {I}nternational {C}ongress of {M}athematicians, Vol. {II} ({B}eijing, 2002)},
  pages     = {405--414},
  publisher = {Higher Ed. Press},
  address   = {Beijing},
  year      = {2002}
}

@article{Etnyre_VHM11,
  author  = {Etnyre, John B. and Van Horn-Morris, Jeremy},
  title   = {Fibered transverse knots and the {B}ennequin bound},
  journal = {Int. Math. Res. Not. IMRN},
  year    = {2011},
  number  = {7},
  pages   = {1483--1509},
  doi     = {10.1093/imrn/rnq125}
}

@misc{Etnyre04,
  author = {Etnyre, John B.},
  title  = {Convex surfaces in contact geometry: class notes},
  year   = {2004},
  note    = {Available online at \\\url{https://etnyre.math.gatech.edu/preprints/papers/surfaces.pdf}}
}

@incollection{Etnyre06,
  author    = {Etnyre, John B.},
  title     = {Lectures on open book decompositions and contact structures},
  booktitle = {Floer homology, gauge theory, and low-dimensional topology},
  series    = {Clay Math. Proc.},
  volume    = {5},
  pages     = {103--141},
  publisher = {Amer. Math. Soc.},
  address   = {Providence, RI},
  year      = {2006}
}

@misc{BHH24,
  author        = {Breen, Joseph and Honda, Ko and Huang, Yang},
  title         = {The {G}iroux correspondence in arbitrary dimensions},
  year          = {2024},
  eprint        = {2307.02317},
  archivePrefix = {arXiv},
  primaryClass  = {math.SG},
  url           = {https://arxiv.org/abs/2307.02317}
}

@misc{LicVer1_24,
  author        = {Licata, Joan and V\'ertesi, Vera},
  title         = {Heegaard splittings and the tight {G}iroux correspondence},
  year          = {2024},
  eprint        = {2309.11828},
  archivePrefix = {arXiv},
  primaryClass  = {math.GT},
  url           = {https://arxiv.org/abs/2309.11828}
}

@book{FM12,
  author    = {Farb, Benson and Margalit, Dan},
  title     = {A primer on mapping class groups},
  series    = {Princeton Mathematical Series},
  volume    = {49},
  publisher = {Princeton University Press},
  address   = {Princeton, NJ},
  year      = {2012}
}

@book{Ozbagci_Stipsicz,
  author    = {Ozbagci, Burak and Stipsicz, Andr\'as I.},
  title     = {Surgery on contact 3-manifolds and {S}tein surfaces},
  series    = {Bolyai Society Mathematical Studies},
  volume    = {13},
  publisher = {Springer-Verlag; J\'anos Bolyai Mathematical Society},
  address   = {Berlin; Budapest},
  year      = {2004},
  doi       = {10.1007/978-3-662-10167-4}
}

@article{BM09,
  author  = {Baader, Sebastian and Ishikawa, Masaharu},
  title   = {Legendrian graphs and quasipositive diagrams},
  journal = {Ann. Fac. Sci. Toulouse Math. (6)},
  volume  = {18},
  year    = {2009},
  number  = {2},
  pages   = {285--305},
  url     = {http://afst.cedram.org/item?id=AFST_2009_6_18_2_285_0}
}

@article{Weinstein,
  author  = {Weinstein, Alan},
  title   = {Contact surgery and symplectic handlebodies},
  journal = {Hokkaido Math. J.},
  volume  = {20},
  year    = {1991},
  number  = {2},
  pages   = {241--251},
  doi     = {10.14492/hokmj/1381413841}
}

@article{Gay,
  author  = {Gay, David T.},
  title   = {Explicit concave fillings of contact three-manifolds},
  journal = {Math. Proc. Cambridge Philos. Soc.},
  volume  = {133},
  year    = {2002},
  number  = {3},
  pages   = {431--441},
  doi     = {10.1017/S0305004102006291}
}

@article{Eliashberg_handle,
  author  = {Eliashberg, Yakov},
  title   = {Topological characterization of {S}tein manifolds of dimension {$>2$}},
  journal = {Internat. J. Math.},
  volume  = {1},
  year    = {1990},
  number  = {1},
  pages   = {29--46},
  doi     = {10.1142/S0129167X90000034}
}

@article{Gompf,
  author  = {Gompf, Robert E.},
  title   = {Handlebody construction of {S}tein surfaces},
  journal = {Ann. of Math. (2)},
  volume  = {148},
  year    = {1998},
  number  = {2},
  pages   = {619--693},
  doi     = {10.2307/121005}
}

@article{Lisac_StipsiczI,
  author  = {Lisca, Paolo and Stipsicz, Andr\'as I.},
  title   = {Ozsv\'ath--{S}zab\'o invariants and tight contact three-manifolds. {I}},
  journal = {Geom. Topol.},
  volume  = {8},
  year    = {2004},
  pages   = {925--945},
  doi     = {10.2140/gt.2004.8.925}
}

@article{Lisac_StipsiczII,
  author  = {Lisca, Paolo and Stipsicz, Andr\'as I.},
  title   = {Ozsv\'ath--{S}zab\'o invariants and tight contact three-manifolds. {II}},
  journal = {J. Differential Geom.},
  volume  = {75},
  year    = {2007},
  number  = {1},
  pages   = {109--141},
  url     = {http://projecteuclid.org/euclid.jdg/1175266255}
}

@article{Lisac_StipsiczIII,
  author  = {Lisca, Paolo and Stipsicz, Andr\'as I.},
  title   = {Ozsv\'ath--{S}zab\'o invariants and tight contact three-manifolds. {III}},
  journal = {J. Symplectic Geom.},
  volume  = {5},
  year    = {2007},
  number  = {4},
  pages   = {357--384},
  url     = {http://projecteuclid.org/euclid.jsg/1213883789}
}

@article{Wand,
  author  = {Wand, Andy},
  title   = {Tightness is preserved by {L}egendrian surgery},
  journal = {Ann. of Math. (2)},
  volume  = {182},
  year    = {2015},
  number  = {2},
  pages   = {723--738},
  doi     = {10.4007/annals.2015.182.2.8}
}

@article{Geiges_Onaran,
  author  = {Geiges, Hansj\"org and Onaran, Sinem},
  title   = {Legendrian rational unknots in lens spaces},
  journal = {J. Symplectic Geom.},
  volume  = {13},
  year    = {2015},
  number  = {1},
  pages   = {17--50},
  doi     = {10.4310/JSG.2015.v13.n1.a2}
}

@article{BEE,
  author  = {Bourgeois, Fr\'ed\'eric and Ekholm, Tobias and Eliashberg, Yasha},
  title   = {Effect of {L}egendrian surgery},
  note    = {With an appendix by Sheel Ganatra and Maksim Maydanskiy},
  journal = {Geom. Topol.},
  volume  = {16},
  year    = {2012},
  number  = {1},
  pages   = {301--389},
  doi     = {10.2140/gt.2012.16.301}
}

@article{Avdek_surgery,
  author  = {Avdek, Russell},
  title   = {Combinatorial {R}eeb dynamics on punctured contact 3-manifolds},
  journal = {Geom. Topol.},
  volume  = {27},
  year    = {2023},
  number  = {3},
  pages   = {953--1082},
  doi     = {10.2140/gt.2023.27.953}
}

@article{Ding_Geiges_slides,
  author  = {Ding, Fan and Geiges, Hansj\"org},
  title   = {Handle moves in contact surgery diagrams},
  journal = {J. Topol.},
  volume  = {2},
  year    = {2009},
  number  = {1},
  pages   = {105--122},
  doi     = {10.1112/jtopol/jtp002}
}

@misc{SnapPy,
  author = {Culler, M. and Dunfield, N. and Goerner, M. and Weeks, J.},
  title  = {{SnapPy}, a computer program for studying the geometry and topology of $3$-manifolds},
  note    = {Available online at \url{http://snappy.computop.org}}
}

@article{Lu_Kirbystheorem,
  author  = {Lu, Ning},
  title   = {A simple proof of the fundamental theorem of {K}irby calculus on links},
  journal = {Trans. Amer. Math. Soc.},
  volume  = {331},
  year    = {1992},
  number  = {1},
  pages   = {143--156},
  doi     = {10.2307/2154000}
}

@article{Matveev_Polyak_Kirbytheorem,
  author  = {Matveev, S. and Polyak, M.},
  title   = {A geometrical presentation of the surface mapping class group and surgery},
  journal = {Comm. Math. Phys.},
  volume  = {160},
  year    = {1994},
  number  = {3},
  pages   = {537--550},
  url     = {http://projecteuclid.org/euclid.cmp/1104269709}
}

@article{Ghiggini08,
  author  = {Ghiggini, Paolo},
  title   = {On tight contact structures with negative maximal twisting number on small {S}eifert manifolds},
  journal = {Algebr. Geom. Topol.},
  volume  = {8},
  year    = {2008},
  number  = {1},
  pages   = {381--396},
  doi     = {10.2140/agt.2008.8.381}
}

@article{Lisca_Matic04,
  author  = {Lisca, Paolo and Mati\'c, Gordana},
  title   = {Transverse contact structures on {S}eifert 3-manifolds},
  journal = {Algebr. Geom. Topol.},
  volume  = {4},
  year    = {2004},
  pages   = {1125--1144},
  doi     = {10.2140/agt.2004.4.1125}
}

@misc{regina,
  author = {Burton, B. and Budney, R. and Pettersson, W. and others},
  title  = {Regina: Software for low-dimensional topology},
  note    = {Available online at \url{http://regina-normal.github.io}}
}

@article{Ding_Geiges,
  author  = {Ding, Fan and Geiges, Hansj\"org},
  title   = {A {L}egendrian surgery presentation of contact 3-manifolds},
  journal = {Math. Proc. Cambridge Philos. Soc.},
  volume  = {136},
  year    = {2004},
  number  = {3},
  pages   = {583--598},
  doi     = {10.1017/S0305004103007412}
}

@article{Ding_Geiges_Stipsicz,
  author  = {Ding, Fan and Geiges, Hansj\"org and Stipsicz, Andr\'as I.},
  title   = {Surgery diagrams for contact 3-manifolds},
  journal = {Turkish J. Math.},
  volume  = {28},
  year    = {2004},
  number  = {1},
  pages   = {41--74}
}

@article{Kegel_Legendrian_complement,
  author  = {Kegel, Marc},
  title   = {The {L}egendrian knot complement problem},
  journal = {J. Knot Theory Ramifications},
  volume  = {27},
  year    = {2018},
  number  = {14},
  pages   = {1850067, 36},
  doi     = {10.1142/S0218216518500670}
}

@article{Kegel_Onaran,
  author  = {Kegel, Marc and Onaran, Sinem},
  title   = {Contact surgery graphs},
  journal = {Bull. Aust. Math. Soc.},
  volume  = {107},
  year    = {2023},
  number  = {1},
  pages   = {146--157},
  doi     = {10.1017/S0004972722000375}
}

@article{Legendrian_Reidemeister,
  author  = {Swiatkowski, Jacek},
  title   = {On the isotopy of {L}egendrian knots},
  journal = {Ann. Global Anal. Geom.},
  volume  = {10},
  year    = {1992},
  number  = {3},
  pages   = {195--207},
  doi     = {10.1007/BF00136863}
}

@phdthesis{Kegel_thesis,
  author = {Kegel, Marc},
  title  = {Legendrian knots in surgery diagrams and the knot complement problem},
  school = {Universit\"at zu K\"oln},
  year   = {2017}
}

@article{Lisca_Stipsicz_lantern,
  author  = {Lisca, Paolo and Stipsicz, Andr\'as I.},
  title   = {Contact surgery and transverse invariants},
  journal = {J. Topol.},
  volume  = {4},
  year    = {2011},
  number  = {4},
  pages   = {817--834},
  doi     = {10.1112/jtopol/jtr022}
}

@article{Etnyre_Kegel_Onaran,
  author  = {Etnyre, John and Kegel, Marc and Onaran, Sinem},
  title   = {Contact surgery numbers},
  journal = {J. Symplectic Geom.},
  volume  = {21},
  year    = {2023},
  number  = {6},
  pages   = {1255--1333}
}

@article{Casals_Etnyre_Kegel,
  author  = {Casals, Roger and Etnyre, John and Kegel, Marc},
  title   = {Stein traces and characterizing slopes},
  journal = {Math. Ann.},
  volume  = {389},
  year    = {2024},
  number  = {2},
  pages   = {1053--1098},
  doi     = {10.1007/s00208-023-02662-2}
}

@article{Durst_Kegel_OB,
  author  = {Durst, Sebastian and Kegel, Marc},
  title   = {Computing rotation numbers in open books},
  journal = {J. G\"okova Geom. Topol. GGT},
  volume  = {12},
  year    = {2018},
  pages   = {71--92}
}

@article{Honda_classification,
  author  = {Honda, Ko},
  title   = {On the classification of tight contact structures. {I}},
  journal = {Geom. Topol.},
  volume  = {4},
  year    = {2000},
  pages   = {309--368},
  doi     = {10.2140/gt.2000.4.309}
}

@misc{LicVer2_24,
  author        = {Licata, Joan and Scharitzer, Matthias and V\'ertesi, Vera},
  title         = {The Giroux Correspondence in dimension 3},
  year          = {2026},
  eprint        = {2408.01079},
  archivePrefix = {arXiv},
  primaryClass  = {math.GT},
  url           = {https://arxiv.org/abs/2408.01079}
}

@article{Kirby,
  author  = {Kirby, Robion},
  title   = {A calculus for framed links in {$S^3$}},
  journal = {Invent. Math.},
  volume  = {45},
  year    = {1978},
  number  = {1},
  pages   = {35--56},
  doi     = {10.1007/BF01406222}
}

@article{Ding_Geiges_fillable,
  author  = {Ding, Fan and Geiges, Hansj\"org},
  title   = {Symplectic fillability of tight contact structures on torus bundles},
  journal = {Algebr. Geom. Topol.},
  volume  = {1},
  year    = {2001},
  pages   = {153--172},
  doi     = {10.2140/agt.2001.1.153}
}

@misc{StenhedeAlgorithm,
  author        = {Eric Stenhede},
  title         = {An algorithm to Legendrian realize a curve on a ribbon surface},
  year          = {2026},
  eprint        = {2604.08010},
  archivePrefix = {arXiv},
  primaryClass  = {math.GT},
  url           = {https://arxiv.org/abs/2604.08010}
}

@article{Avdek13,
  author  = {Avdek, Russell},
  title   = {Contact surgery and supporting open books},
  journal = {Algebr. Geom. Topol.},
  volume  = {13},
  year    = {2013},
  number  = {3},
  pages   = {1613--1660},
  doi     = {10.2140/agt.2013.13.1613}
}

@article{Durst_Kegel,
  author  = {Durst, S. and Kegel, M.},
  title   = {Computing rotation and self-linking numbers in contact surgery diagrams},
  journal = {Acta Math. Hungar.},
  volume  = {150},
  year    = {2016},
  number  = {2},
  pages   = {524--540},
  doi     = {10.1007/s10474-016-0660-8}
}

@article{contact_class_algo,
  author  = {Plamenevskaya, Olga},
  title   = {A combinatorial description of the {H}eegaard {F}loer contact invariant},
  journal = {Algebr. Geom. Topol.},
  volume  = {7},
  year    = {2007},
  pages   = {1201--1209},
  doi     = {10.2140/agt.2007.7.1201}
}

@article{contact_class,
  author  = {Ozsv\'ath, Peter and Szab\'o, Zolt\'an},
  title   = {Heegaard {F}loer homology and contact structures},
  journal = {Duke Math. J.},
  volume  = {129},
  year    = {2005},
  number  = {1},
  pages   = {39--61},
  doi     = {10.1215/S0012-7094-04-12912-4}
}

@incollection{ECH,
  author    = {Hutchings, Michael},
  title     = {Lecture notes on embedded contact homology},
  booktitle = {Contact and symplectic topology},
  series    = {Bolyai Soc. Math. Stud.},
  volume    = {26},
  pages     = {389--484},
  publisher = {J\'anos Bolyai Math. Soc.},
  address   = {Budapest},
  year      = {2014},
  doi       = {10.1007/978-3-319-02036-5_9}
}

@article{symplectic_homology,
  author  = {Viterbo, C.},
  title   = {Functors and computations in {F}loer homology with applications. {I}},
  journal = {Geom. Funct. Anal.},
  volume  = {9},
  year    = {1999},
  number  = {5},
  pages   = {985--1033},
  doi     = {10.1007/s000390050106}
}

@incollection{Contact_homology,
  author    = {Eliashberg, Y. and Givental, A. and Hofer, H.},
  title     = {Introduction to Symplectic Field Theory},
  booktitle = {Visions in Mathematics: {GAFA} 2000 Special Volume, Part II},
  pages     = {560--673},
  publisher = {Birkh\"auser Basel},
  address   = {Basel},
  year      = {2010},
  doi       = {10.1007/978-3-0346-0425-3_4},
  isbn      = {978-3-0346-0425-3}
}

\end{document}